\definecolor{purple}{RGB}{128,0,128}
\definecolor{ultramarine}{RGB}{63, 0, 255}
\definecolor{medblue}{RGB}{0, 0, 100}
\definecolor{googleblue}{RGB}{34, 0, 204}
\definecolor{panblue}{RGB}{0,24,150}
\definecolor{carmine}{RGB}{150, 0, 24}
\definecolor{gray}{RGB}{150, 150, 150}
\definecolor{darkgreen}{RGB}{0, 80, 0}
\renewcommand{\langle}{(}
\renewcommand{\rangle}{)}
\newcommand{\DropMtoS}{\ensuremath{\mathtt{SplitM\texttt{\em ->}\,S}}}
\newcommand{\ToBidir}{\mathtt{ToSpecial}}
\newcommand{\zeros}{\kern-1.2pt\vec{\kern1.2pt 0}}
\newcommand{\Unif}{\text{Unif}}
\newcommand{\ryan}[1]{\todo[color=orange!30, inline]{Ryan: #1}}
\newcommand{\blk}{\color{black}}
\setlist{itemsep=1pt,topsep=2pt,parsep=1pt,partopsep=1pt}
\definecolor{mypurple}{rgb}{0.54, 0.01, 0.98}
\tikzset{nv/.style={circle, color=red, fill=red, inner sep=0.5mm}}
\tikzset{rv/.style={circle, draw, thick, minimum size=5.5mm, inner sep=0.5mm}}
\tikzset{fv/.style={rectangle, draw, thick, minimum size=5mm, inner sep=0.5mm}}
\tikzset{lv/.style={circle, color=red, fill=gray!20, draw, thick, minimum size=5.5mm, inner sep=0.5mm}}
\tikzset{mv/.style={lv}}
\tikzset{sv/.style={circle, color=blue, fill=gray!20, draw, thick, minimum size=5.5mm, inner sep=0.5mm}}
\tikzset{slv/.style={circle, draw, fill=gray!20, draw=red, double circle={0.9mm}{black}, minimum size=5.5mm, inner sep=0.5mm}}
\tikzset{smv/.style={slv}}
\tikzset{mdot/.style={circle, minimum size=2mm, inner sep=0mm, fill=red}}
\tikzset{sdot/.style={circle, minimum size=2mm, inner sep=0mm, fill=blue}}
\tikzset{smdot/.style={circle, minimum size=2mm, inner sep=0mm, fill=mypurple}}
\tikzset{rve/.style={ellipse, draw, thick, minimum size=6.5mm, inner sep=0.5mm}}
\tikzset{deg/.style={->, very thick, color=blue}}
\tikzset{degl/.style={->, very thick, color=red}}
\tikzset{degs/.style={->, very thick, color=black}}
\tikzset{ueg/.style={very thick}}
\tikzset{swig hsplit={gap=5pt}}
\tikzset{swig vsplit={gap=5pt}}
\tikzset{vspl/.style={shape=swig vsplit, thick}}
\tikzset{vspll/.style={shape=swig vsplit={gap=10pt, line color right=red!1}, thick}}
\tikzset{vsplr/.style={shape=swig vsplit={line color left=white}, thick}}
\tikzset{hspl/.style={shape=swig hsplit, thick}}
\tikzset{
    old inner xsep/.estore in=\oldinnerxsep,
    old inner ysep/.estore in=\oldinnerysep,
    double circle/.style 2 args={
        circle,
        old inner xsep=\pgfkeysvalueof{/pgf/inner xsep},
        old inner ysep=\pgfkeysvalueof{/pgf/inner ysep},
        /pgf/inner xsep=\oldinnerxsep+#1,
        /pgf/inner ysep=\oldinnerysep+#1,
        alias=sourcenode,
        append after command={
        let     \p1 = (sourcenode.center),
                \p2 = (sourcenode.east),
                \n1 = {\x2-\x1-#1-0.5*\pgflinewidth}
        in
            node [inner sep=0pt, draw, very thick, circle, minimum width=2*\n1,at=(\p1),#2] {}
        }
    },
    double circle/.default={2pt}{blue}
}
\newtheorem{lemma}{Lemma}[section]
\newtheorem{theorem}[lemma]{Theorem}
\newtheorem{proposition}{Proposition}
\newtheorem{definition}[lemma]{Definition}
\newcommand{\G}{\mathcal{G}}
\newcommand{\D}{\mathcal{D}}
\newcommand{\E}{\mathcal{E}}
\newcommand{\calE}{\mathcal{E}}
\newcommand{\B}{\mathcal{B}}
\newcommand{\calL}{\mathcal{L}}
\newcommand{\calS}{\mathcal{S}}
\newcommand{\RR}{R}
\newcommand{\RRs}{{\RR^\sharp}}
\renewcommand{\tilde}{\widetilde}
\newcommand{\can}{\mathtt{can}}
\newcommand{\DAG}{\text{DAG}}
\newcommand{\cmid}{\,|\,}
\newcommand{\ang}[1]{\langle #1 \rangle}
\newcommand{\doo}{{\mathtt{do}}}
\newcommand{\sx}{{\bm{x}}}
\newcommand{\sm}{{\bm{m}}}
\NewDocumentCommand\an{m+g}{\IfNoValueTF{#2}{\operatorname{an}(#1)}{\operatorname{an}_{#1}({#2})}} 
\NewDocumentCommand\pa{m+g}{\IfNoValueTF{#2}{\operatorname{pa}(#1)}{\operatorname{pa}_{#1}({#2})}}
\NewDocumentCommand\pas{m+g}{\IfNoValueTF{#2}{\operatorname{pa}^\sharp(#1)}{\operatorname{pa}^\sharp_{#1}({#2})}}
\NewDocumentCommand\ch{m+g}{\IfNoValueTF{#2}{\operatorname{ch}(#1)}{\operatorname{ch}_{#1}({#2})}}
\NewDocumentCommand\de{m+g}{\IfNoValueTF{#2}{\operatorname{de}(#1)}{\operatorname{de}_{#1}({#2})}}
\NewDocumentCommand\inc{m+g}{\IfNoValueTF{#2}{\operatorname{in}(#1)}{\operatorname{in}_{#1}({#2})}}
\NewDocumentCommand\out{m+g}{\IfNoValueTF{#2}{\operatorname{out}(#1)}{\operatorname{out}_{#1}({#2})}}
\NewDocumentCommand\io{m+g}{\IfNoValueTF{#2}{\operatorname{io}(#1)}{\operatorname{io}_{#1}({#2})}}
\NewDocumentCommand\nb{m+g}{\IfNoValueTF{#2}{\operatorname{nb}(#1)}{\operatorname{nb}_{#1}({#2})}}
\NewDocumentCommand\sib{m+g}{\IfNoValueTF{#2}{\operatorname{sb}(#1)}{\operatorname{sb}_{#1}({#2})}}
\DeclareMathOperator{\paN}{pa}
\newcommand{\cl}{{\operatorname{cl}}}
\newcommand{\calC}{\mathcal{C}}
\newcommand{\calD}{\mathcal{D}}
\newcommand{\calG}{\mathcal{G}}
\newcommand{\calM}{\mathcal{M}}
\newcommand{\calP}{\mathcal{P}}
\newcommand{\dom}[1]{\mathfrak{X}_{#1}}
\newcommand{\exogenize}{\mathfrak{r}}
\newcommand{\slp}{\mathfrak{p}}
\newcommand{\dotcup}{\dot{\cup}}
\newcommand{\pathto}{\mathbin{\dashrightarrow}}
\newcommand{\upathto}{\;\hbox{-\hspace{-0.17em} -\hspace{-0.17em} -}\;}
\newcommand{\bidir}{\mathbin{\leftrightarrow}}
\newcommand{\undir}{\mathbin{-\!\!\!-}}
\title{Distinguishability of causal structures under latent confounding and selection}
\author[1]{Ryan Carey}
\author[2,3]{Marina Maciel Ansanelli}
\author[2,3]{Elie Wolfe}
\author[1]{Robin J. Evans}
\affil[1]{Department of Statistics, University of Oxford}
\affil[2]{Perimeter Institute for Theoretical Physics, 31 Caroline Street North, Waterloo, Ontario Canada N2L 2Y5}
\affil[3]{Department of Physics and Astronomy, University of Waterloo, Waterloo, Ontario, Canada, N2L 3G1}
\date{ }
\begin{document}
\maketitle

\section*{Abstract}

Statistical relationships in observed data can arise 
for several different reasons:
the observed variables may be causally related, 
they may share a latent common cause, or there may be selection bias.
Each of these scenarios can be modelled using 
different causal graphs.
Not all such causal graphs, however, can be distinguished 
by experimental data. In this paper, we formulate the equivalence class of causal graphs as a novel graphical structure, the selected-marginalized directed graph (smDG). That is, we show that two directed acyclic graphs with latent and selected vertices have the same smDG if and only if they are indistinguishable, even when allowing for arbitrary interventions on the observed variables. As a substitute for the more familiar $d$-separation criterion for DAGs, we provide an analogous sound and complete separation criterion in smDGs for conditional independence relative to passive observations.
Finally, we provide a series of sufficient conditions under which two causal structures are indistinguishable when there is only access to passive observations.

\newpage
\tableofcontents
\section{Introduction} \label{sec:intro}

A statistical correlation between two variables $X_a$ and $X_b$ can be explained by three different types of causal mechanisms:

\begin{enumerate}[label=\alph*)]
  \item \textbf{Direct causal influence}, where $X_a$ exerts an influence on $X_b$ or vice-versa.
  \item \textbf{Latent confounding}, where a hidden common cause $X_m$ induces correlation between $X_a$ and $X_b$.
  \item \textbf{Selection effects}, where conditioning on a variable $X_s$ that is influenced by both $X_a$ and $X_b$ creates correlation (e.g.\ when we restrict the study sample).
\end{enumerate}

For example, if we observe a correlation between obesity and mortality in a sample of hospital patients, this could be a) because obesity worsens heart disease, and/or b) because there is a health condition that causes weight loss, while also shortening lifespan, and/or c) because we are not taking data from the general population, but only from the hospital population. While obesity is positively correlated with mortality in the general population, among the hospital population they could be negatively correlated, because when a person in the hospital is underweight this can be due to serious disease.
 
In this work, we want to analyze alternative causal structures, possibly involving hidden common causes and selection effects, that could underlie a given empirical dataset. A causal structure can be represented using a directed acyclic graph (DAG); an example where variables $b$ and $c$ have a shared hidden common cause is shown in \Cref{fig:confounding-dag2}.
Through this paper, visible (observable) vertices will be represented in black with a white background, and marginalized (unobservable) vertices will be represented in red with a grey background.

\begin{figure}[ht]
    \centering
     \begin{subfigure}{.20\textwidth}
        \centering
        \begin{tikzpicture}
            [rv/.style={circle, draw, very thick, minimum size=5.5mm, inner sep=0.5mm}, 
             node distance=20mm, >=stealth]

            \node[mv] (a) {$a$};
            \node[rv, below =4mm of a, xshift=-6mm] (b) {$b$};
            \node[rv, below =4mm of a, xshift=6mm] (c) {$c$};

            \draw[->, very thick] (a) -- (b);
            \draw[->, very thick] (a) -- (c);

        \end{tikzpicture}
        \caption{A DAG.}
        \label{fig:confounding-dag2}
    \end{subfigure}
    \begin{subfigure}{.22\textwidth}
        \centering
        \begin{tikzpicture}
            [rv/.style={circle, draw, very thick, minimum size=5.5mm, inner sep=0.5mm}, 
             node distance=20mm, >=stealth]

            \node[mv] (a) {$a$};
            \node[mv, right =4mm of a] (ap) {$a'$};
            \node[rv, below =4mm of a, xshift=-6mm] (b) {$b$};
            \node[rv, below =4mm of ap, xshift=6mm] (c) {$c$};

            \draw[->, very thick] (a) -- (ap);
            \draw[->, very thick] (a) -- (b);
            \draw[->, very thick] (ap) -- (c);

        \end{tikzpicture}
        \caption{Another DAG.}
        \label{fig:confounding-dag1}
    \end{subfigure}
    \begin{subfigure}{.29\textwidth}
        \centering
        \begin{tikzpicture}
            [rv/.style={circle, draw, very thick, minimum size=5.5mm, inner sep=0.5mm}, 
             node distance=20mm, >=stealth]

            \node[rv] (b) {$b$};
            \node[rv, right =6mm of b] (c) {$c$};
            \node[below=2mm of c, inner sep=0, minimum size=0] (invisibleNode) {};

            \draw[<->, very thick, color=red] (b) -- (c);

        \end{tikzpicture}
        \caption{Marginal DAG (mDAG) associated with both (a) and (b).}
        \label{fig:confounding-mdag}
    \end{subfigure}
    \caption{ }
    \label{fig:confounding}
\end{figure}

Sometimes, no experimental data on the visible variables can distinguish between two distinct DAGs.
For example, consider the DAGs shown in \cref{fig:confounding-dag1,fig:confounding-dag2}. Both causal structures can explain any correlation between $b$ and $c$ in the observational data, and in both the correlation is broken by any intervention on $b$ or $c$. 

To capture information about indistinguishability of causal structures in the case without selection effects, \citet{evans2016graphs} introduced a data structure called \emph{marginalized DAG (mDAG)}, and a procedure that maps DAGs to mDAGs, called \emph{latent projection}. For example, \cref{fig:confounding-mdag} shows the mDAG that is obtained by latent projection of both \ref{fig:confounding-dag1} and \ref{fig:confounding-dag2}.
\citet{evans2016graphs} proved that
two different DAGs that have the same latent projection can realize the exact same sets of probability distributions obtained by passive observation of the visible variables, i.e.\ they are observationally equivalent. Later, \citet{ansanelli2024everything} proved that there exists \emph{some} interventional experiment that can distinguish two DAGs \emph{if and only if} the DAGs have different latent projections. That is, there exists at least one set of data obtained from a very powerful\footnote{As argued in \cite{ansanelli2024everything}, the Observe\&Do probing scheme is \emph{informationally complete}: it provides all of the information about the causal mechanisms that is accessible by interaction with the visible variables (without considering edge interventions~\citep{shpitser2016causal}).} interventional probing scheme called Observe\&Do (that will be defined later on) that is realizable by one DAG but not by the other if and only if these DAGs correspond to different mDAGs.
In this sense, equality of the latent projection is a sound and complete criterion for the interventional indistinguishability of DAGs containing only visible and marginalized vertices.

Causal structures can also represent selection effects using \emph{selected vertices},
such as in the DAG of \Cref{fig:selection}; here, selected vertices are represented in blue with a grey background.
There exist proposed methods for projecting such graphs down to the visible 
variables, such as maximal ancestral graphs (MAG) \citep{richardson2002ancestral},
but these projections lose information about the data-generating process. For example, the two DAGs of \Cref{fig:selection} have the same MAG (which is saturated, i.e.~it allows all joint distributions over the three visible variables), despite the fact that these DAGs are observationally distinguishable.
\Cref{fig:selection-dag1} is compatible with the selected distribution $P(X_a=1,X_b=0,X_c=0 \mid X_s=0)=P(X_a=0,X_b=1,X_c=0 \mid X_s=0)=P(X_a=0,X_b=0,X_c=1 \mid X_s=0)=\frac{1}{3}$, 
while \Cref{fig:selection-dag2} is not.
The general limitation of MAGs is that while they encode all of the conditional 
independence relations between variables, inequality and hierarchical 
constraints induced by selection \citep{lauritzen1998generating,evans2015recovering,armen2018towards} are discarded. Thus the current literature
lacks an analogue of the mDAG that faithfully represents what can be learned about the causal structure from experimental data in the presence of \emph{both} marginalization and selection.

\begin{wrapfigure}{r}{0.5\columnwidth}
    \centering
    \begin{subfigure}{.24\textwidth}
        \centering
        \begin{tikzpicture}
            [rv/.style={circle, draw, very thick, minimum size=5.5mm, inner sep=0.5mm}, 
             node distance=20mm, >=stealth]
            \node[rv] (a) {$a$};
            \node[sv, below =4mm of a] (s) {$s$};
            \node[rv, below =10mm of a, xshift=-12mm] (b) {$b$};
            \node[rv, below =10mm of a, xshift=12mm] (c) {$c$};

            \draw[->, very thick] (a) -- (s);
            \draw[->, very thick] (b) -- (s);
            \draw[->, very thick] (c) -- (s);
        \end{tikzpicture}
        \caption{A DAG}
        \label{fig:selection-dag1}
    \end{subfigure}
    \begin{subfigure}{.24\textwidth}
        \centering
        \begin{tikzpicture}
            [rv/.style={circle, draw, very thick, minimum size=5.5mm, inner sep=0.5mm}, 
             node distance=20mm, >=stealth]
            \node[rv] (a) {$a$};
            \node[sv, below =1mm of a, xshift=-10mm] (s1) {$s_1$};
            \node[sv, below =1mm of a, xshift=10mm] (s2) {$s_2$};
            \node[sv, below =14mm of a] (s3) {$s_3$};
            \node[rv, below =10mm of a, xshift=-12mm] (b) {$b$};
            \node[rv, below =10mm of a, xshift=12mm] (c) {$c$};

            \draw[->, very thick] (a) -- (s1);
            \draw[->, very thick] (b) -- (s1);
            \draw[->, very thick] (a) -- (s2);
            \draw[->, very thick] (c) -- (s2);
            \draw[->, very thick] (b) -- (s3);
            \draw[->, very thick] (c) -- (s3);
        \end{tikzpicture}
        \caption{Another DAG}
        \label{fig:selection-dag2}
    \end{subfigure}
    \caption{Two selected DAGs that are interventionally distinguishable but correspond to the same MAG.
    }
    \label{fig:selection}
\end{wrapfigure}
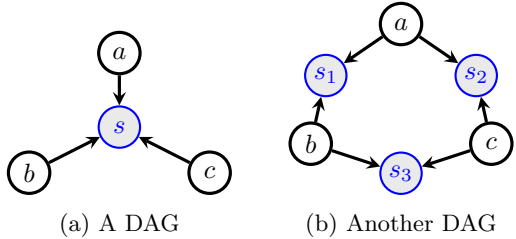

This paper fills this gap by introducing the \emph{selected-latent projection}, a map that takes a DAG with visible, marginalized and selected vertices and maps it to a \emph{selected-marginalized directed graph (smDG)}, a new data structure that generalizes the mDAG.  Analogously to what was done in \citet{ansanelli2024everything} for mDAGs, here we establish the two following results for smDGs:
\begin{itemize}
  \item \textbf{Soundness:} If two DAGs project to the same smDG, they are interventionally equivalent (i.e.~indistinguishable even under access to the Observe\&Do probing scheme, which is very strong).
  \item \textbf{Completeness:} If their smDGs differ, there exists an
        intervention that distinguishes the original DAGs. (Thus the smDG is as 
        granular as possible.)
\end{itemize}

After that, as a secondary goal, we will explore alternative probing schemes for distinguishing different smDGs, such as passive observation or more limited interventional experiments. In particular, we will provide a series of rules to show that two smDGs are observationally equivalent, that is, indistinguishable by passive observations.

\begin{figure}[ht]
    \centering
    \begin{subfigure}{.5\textwidth}
        \centering
        \begin{tikzpicture}
            [rv/.style={circle, draw, very thick, minimum size=5.5mm, inner sep=0.5mm}, 
             node distance=20mm, >=stealth]

            \node[rv] (a) {$a$};
            \node[rv, right =6mm of a, yshift=-6mm] (b) {$b$};
            \node[rv, right =6mm of b, yshift=4mm] (c) {$c$};
            \node[rv, right =6mm of c] (d) {$d$};

            \node[mv, below=4mm of a, xshift=-6mm] (ma) {$m_1$};
            \node[sv, above=4mm of a, xshift=-6mm] (s1) {$s_1$};

            \node[sv, above =6mm of b] (s2) {$s_2$};
            \node[mv, below =6mm of a, xshift=4mm] (mb) {$m_2$};
            \node[mv, below =5mm of c] (mc) {$m_3$};
            \node[mv, below =5mm of d] (md) {$m_4$};
            \node[sv, above =4mm of c, xshift=6mm] (s3) {$s_3$};

            \draw[->, very thick] (b) -- (a);
            \draw[->, very thick] (a) -- (s1);

            \draw[->, very thick] (a) -- (s2);
            \draw[->, very thick] (b) -- (s2);
            \draw[->, very thick] (c) -- (s2);

            \draw[->, very thick] (ma) -- (s1);
            \draw[->, very thick] (ma) -- (a);
            \draw[->, very thick] (mb) -- (a);
            \draw[->, very thick] (mb) -- (b);
            \draw[->, very thick] (mc) -- (c);
            \draw[->, very thick] (md) -- (d);
            \draw[->, very thick] (c) -- (s3);
            \draw[->, very thick] (d) -- (s3);

        \end{tikzpicture}
        \caption{A DAG}
        \label{fig:teaser-dag-1}
    \end{subfigure}
        \begin{subfigure}{.5\textwidth}
        \centering
        \begin{tikzpicture}
            [rv/.style={circle, draw, very thick, minimum size=5.5mm, inner sep=0.5mm}, 
             node distance=20mm, >=stealth]

            \node[rv] (a) {$a$};
            \node[rv, right =6mm of a, yshift=-6mm] (b) {$b$};
            \node[rv, right =6mm of b, yshift=6mm] (c) {$c$};
            \node[rv, right =6mm of c] (d) {$d$};

            \node[mv, below=4mm of a] (m1) {$m_1$};

            \node[mv, below =5mm of c] (m2) {$m_2$};
            \node[mv, below =5mm of d] (m3) {$m_3$};
            \node[sv, above =4mm of c, xshift=5mm] (s1) {$s_1$};

            \draw[->, very thick] (b) -- (a);

            \draw[->, very thick] (c) -- (a);

            \draw[->, very thick] (m1) -- (a);
            \draw[->, very thick] (m2) -- (b);
            \draw[->, very thick] (m2) -- (c);
            \draw[->, very thick] (m3) -- (d);
            \draw[->, very thick] (c) -- (s1);
            \draw[->, very thick] (d) -- (s1);
        \end{tikzpicture}
            \begin{minipage}{.1cm}
            \vfill
            \end{minipage}
    \caption{An alternative DAG}
        \label{fig:teaser-dag-2}
    \end{subfigure}
    \caption{
    Can these DAGs produce different data on 
    $a,b,c,d$ after marginalization of the $m_i$ vertices 
    and selection of the $s_i$ vertices? Spoiler: as we will see, they can be distinguished by interventions but not by passive observations.}
    \label{fig:teaser-dags}
\end{figure}
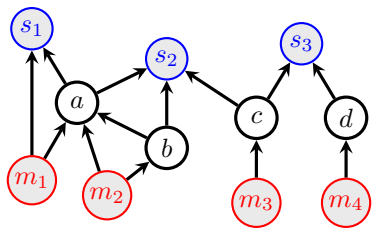
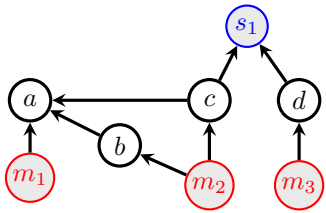

The paper is organized as follows.
In \Cref{sec:related-work}, we will review related literature.
In \Cref{sec:setup}, we will recap DAG models and 
formalize interventional and observational indistinguishability in the presence of selected vertices.
In \Cref{sec:invariances},
we will describe some simplifying operations that can be performed on a 
DAG that is partitioned into visible, marginal, and selected variables, while keeping it interventionally indistinguishable from the original DAG.
We will see that these operations lead us to a canonical DAG, which is invariant to the simplifying operations.
In \Cref{sec:smDGs}, we will present the \emph{selected-marginalized directed graph} (smDG), 
a data structure that captures all information about the canonical DAG.
Then, in \Cref{sec_smDG_captures_everything}, we show that whenever a pair of DAGs have different smDGs,
they can be distinguished by the Observe\&Do interventional probing scheme.
In \Cref{sec:observational-equivalence}, we outline some methods 
for identifying pairs of smDGs that cannot be distinguished by passive observation of the visible variables.
In \Cref{sec:discuss-and-conclude} we will draw final conclusions and outline open questions.

\section{Related work} \label{sec:related-work}
We will now elaborate upon the state of current work on the marginal and selected models of graphs, and their graphical representation.

\sloppypar
For DAGs with visible and marginalized variables, one form of latent projection may be obtained 
with an edge $v_1 \to v_2$
whenever there exists a directed path $v_1 \to m_1\to\cdots \to m_n \to v_2$ through marginalized variables,
and a bidirected edge $v_1 \bidir v_2$
whenever there exists a path ${v_1 \gets m_1 \gets \cdots \gets m_c \to \cdots \to m_n \to v_2}$ with ${c \in \{1,\ldots, n\}}$ through marginalized variables \citep{verma1991equivalence,verma1993graphical}.
This results in a mixed graph that characterizes all of the independence relations of the original DAG.
The problem, however, is that information about many \emph{inequality} constraints is lost.
For example, when three variables have one joint marginalized parent, all distributions are possible; the same cannot be said when three variables have three separate marginalized parents, one for each pair of variables.
As such, the equivalence class is coarsened by representing both of these situations with the same mixed graph.
In full generality, it is not possible to preserve all such information with a graph 
in which each edge only connects two vertices \citep[Corollary 1]{evans2016graphs}.\footnote{The arXiv version has a distinct numbering system to the published version.  Here we use the numbers from the journal.}
As such, \citet{evans2016graphs} has proposed a marginal DAG (mDAG) structure based on a lossless latent projection operation. mDAGs include 
hyperedges that enable them to preserve all of the constraints from the original DAG.

\citet{evans2016graphs}'s lossless latent projection operation can be understood as involving two parts.
First, canonicalization, 
where an arbitrary DAG with marginal and visible variables is transformed into a canonical DAG
that is as simple as possible, while preserving the constraints from the original graph.
For example, the canonical DAG for \Cref{fig:confounding-dag1} is \Cref{fig:confounding-dag2}.
Second, projection, 
where the canonical DAG is replaced with an mDAG; for example, the mDAG for \Cref{fig:confounding-dag2} is \Cref{fig:confounding-mdag}.
Canonicalization is done in a series of substeps.
Whenever a marginalized variable has one or more incoming edges (i.e.~edges from parents), 
a step called \emph{exogenization} allows those edges to be replaced 
with a collection of direct edges from each parent to every child \citep{evans2016graphs}.
For example, in \Cref{fig:confounding-dag1}, the edge $a \to a'$ could be replaced with the edge $a \to c$.
A further step allows a marginalized variable to be removed if it has only one child, which would then allow $a'$ to be 
removed, yielding the graph in \Cref{fig:confounding-dag2}.
A third step allows a marginalized variable to be removed if its children are a subset of those children of another marginalized variable.
Once all these simplification steps are followed, a canonical DAG is obtained.
Importantly, in replacing a canonical DAG with an mDAG, we need to use hyperedges, rather than 
simple bidirected edges, in order to preserve all the information about the marginal confounders \citep{evans2016graphs}.
To be more precise, the formalism represents confounding using a simplicial complex,
since this structure implies that for every hyperedge, all its subsets are also present. 
Then, a \emph{marginal DAG}, or mDAG, is a representation that is defined as consisting of a DAG, which describes direct relationships between observed variables, 
and a simplicial complex, which identifies those sets of variables that are confounded by marginalized common causes.

A set of interventional distributions (obtained, for example, by Pearl's $\doo$-operation)
that are compatible with an mDAG is called its \emph{marginal causal model}.
Any two DAGs (containing visible and marginalized variables) with the same mDAG have an identical marginal causal model. 
On the other hand, if a pair of mDAGs differ, it is always possible to devise an experiment that can tell them apart
by jointly considering the observed and intervened data \citep{ansanelli2024everything}.
The marginal model of an mDAG refers to the set of observational distributions that are Markov-compatible with the structure, 
i.e.~we do not consider the results of interventional experiments.
Another useful property of an mDAG's marginal model is that, if observed variables are discrete, it forms a curved exponential family, which should aid in performing 
Bayesian inference over the observational distribution given finite data and an mDAG structure.
Sometimes, a pair of distinct mDAGs may share the same marginal model, that is be observationally indistinguishable, 
because the constraints on the marginal model are a subset of those of the marginal causal model.
Nonetheless, the set of constraints to the observational distributions on a DAG may be enumerated by the inflation technique \citep{wolfe2019inflation}.

There are different ways in which selection effects might arise in available data.
Past work by \citet{armen2018towards} considers when some variable has been conditioned to a single assignment, 
and this is the notion of selection that we will work with throughout the rest of the paper.
Another possibility is that a variable has been conditioned to a set of assignments, and this may be of interest for future work.
The independence constraints under selection have long been characterized \citep{lauritzen1998generating,richardson2002ancestral}
and may be computed in polynomial time \citep{ali2009markov},
but the non-independence constraints are generally much less well understood \citep{lauritzen1998generating, evans2015recovering, armen2018towards}. 
In the case of log-linear models, we know that
these non-independence constraints cannot generally be summarized using a simple graph, 
as the results of \citeauthor{lauritzen1998generating} have shown that any hierarchical model can be generated.
Our work therefore offers a more complex graphical structure.
Our work differs from that of \citet{evans2015recovering} and \citet{armen2018towards} 
in that we focus on the problem of projecting a DAG down to its visible variables, 
and on establishing equivalence, rather than seeking to enumerate the (in)equality constraints of a selected model.

It is also worth highlighting \citet{chen2024modeling}, 
which discusses a question related to our own:
how one SCM may be modified by intervention and conditioning.
Although it is not their main focus, they define a 
graphical operation \citep[Definition 13]{chen2024modeling}
that describes the result of selection.
The resulting graph does not preserve all information about the interventional models, however.
For instance, given the DAG $\calG: v \to s \gets u$, 
their selection operation yields the mixed graph $v \bidir u$, 
which is the same as the object they obtain by marginalizing $m$
in $\calG': v \gets m \to u$.
The problem is that $\calG$ and $\calG'$ are compatible with different sets of interventional distributions:
in $\calG$, intervening on $X_v$ can change the distribution over $X_u$ under selection.
For example, if $X_u$ and $X_v$ are Bernoulli, and $X_s=0$ precisely when $X_u$ and $X_v$ are equal, 
then intervening $\doo(X_v=0)$ will yield $P(X_u=0 \mid \doo(X_v=0),X_s=0)=1$, 
whereas $P(X_u=0 \mid X_s=0)=\frac{1}{2}$.
In $\calG'$, on the other hand, intervening on $X_u$ will sever the edge from $m$ to $u$, 
and so the distribution of $X_u$ is unaffected.

\section{Setup} \label{sec:setup}
We will now review the key formalism needed for our paper: DAGs, their models, and definitions of equivalence. We will also introduce new Definitions (\ref{def_smi_model}, \ref{def_smo_model}, \ref{def_interv_equiv} and \ref{def_obs_equiv}) that will be useful for the case where there are selection effects.

\subsection{Directed acyclic graphs}

\begin{definition}[Directed Acyclic Graph]
    A directed graph $\D$ is a pair $(W,\E)$, where $W$ is a finite set of vertices and $\E$ is a collection of edges, 
    which are ordered pairs of distinct vertices.  $\D$ is said to be a  \emph{directed acyclic graph} (DAG) if there are no cycles, 
    that is, if there exist no sequences of edges of the form $w_1 \to \cdots \to w_k \to w_1$ where $k \geq 1$.
\end{definition}
If there is an edge $(v,w)$, illustrated as $v \to w$, then $v$ is said to be a \emph{parent} of $w$, and $w$ is said to be a \emph{child} of $v$.
We say that $v$ is a \emph{neighbour} of $w$ if it is either a parent or a child of $w$.
This edge is said to have an \emph{arrowhead} at $w$ and a \emph{tail} at $v$.
We will denote the set of parents of $w$ in $\calD$ by $\pa{\calD}{w}$, and the set of children of $v$ by $\ch{\calD}{v}$, omitting the subscript when the graph is clear from context.

A \emph{path} from $w_0$ to $w_k$ is an alternating sequence of distinct vertices and edges $\langle w_0,e_1,w_1,\ldots$ $e_k,w_k \rangle$, 
such that each edge $e_i$ is between the vertices $w_{i-1}$ and $w_i$, and its length $k$ is equal to the number of edges.
\ryan{Probably we can replace this with a ``sequence of edges'' definition now that \~ all of our analyses use DAGs rather than smDAGs}
If a path is of the form $w_0 \to \cdots \to w_k$, then we call it \emph{directed} and write it as $w_0 \pathto w_k$.
If there is a directed path from $v$ to $w$, of length at least $0$, then $v$ is an ancestor of $w$, and 
we denote the set of ancestors of $w$ as $\an{\calD}{w}$.
We apply definitions of parents, children, and ancestors to sets of vertices so that for a set of vertices $W$, we have
$\pa{\calD}{W} = \bigcup_{w \in W}\pa{\calD}{w}$, and so on.
In general, we will use $A \dotcup B$ to denote a union of vertices, when $A$ and $B$ are disjoint.

Given DAGs $\D = \ang{W,\E}$ and $\D' = \ang{W',\E'}$, if $W' \subseteq W$ and $\E' \subseteq \E$, we will say that $\D'$ is a subgraph of $\D$, 
written as $\D' \subseteq \D$.
The induced subgraph of $\D$ over $A \subseteq W$ is the DAG $\D_A$ with vertices $A$ and edges that have both endpoints inside $A$.

\subsection{Observational equivalence}
When a DAG $\calD=\ang{W,\E}$ represents a causal structure, each of its vertices is associated with a random variable. In this paper, the variable associated with the vertex $a\in W$ will be denoted by $X_a$. Similarly, the set of variables associated with a \emph{set} of vertices $A\subseteq W$ will be denoted by $X_A$. An assignment to $X_a$ will be denoted with the lower case $x_a$, and a set of assignments corresponding to the set of variables $X_A$ will be denoted by $x_A$. 

The state space or \emph{domain} of each variable $X_a$ will be denoted by $\mathfrak{X}_a$. 
In the case of continuous random variables,
we work with probability measures on the unit interval 
$\left[0,1\right]$;
by the Borel‐isomorphism theorem, every distribution on any other standard Borel space can be transported to (and recovered from) one on the unit interval.
We will then denote a density or mass function over $X_A$ with respect to a suitable measure, by $P(x_A)$.
Note that all of our results will be applicable to discrete and continuous random variables.

Data that is generated from a particular causal structure will respect certain constraints.
To begin with, let us consider constraints on observational data, represented by a joint distribution over variables.
This joint distribution (or density) should factorize so that the distribution for each variable can be expressed conditional on the assignments to its parents;
this is called Markov compatibility.

\begin{definition}[Markov Compatibility; Definition 1.2.2, \citealp{pearl2009causality}]
Let $\calD=\ang{W,\E}$ be a DAG. If a probability distribution (or density) $P$ over the variables $X_W$ admits the factorization 
$P(x_W)=\prod_{a\in W} P(x_a \mid x_{\pa{\calD}{a}})$, we say that $\calD$ and $P$ are compatible, or that $P$ is Markov relative to $\calD$. 
Each factor $P(x_a \mid x_{\pa{a}})$ is called a Markov factor, or a \emph{kernel}.
\end{definition}

We remark that in the case of \emph{deterministic} variables, it is possible that no conditional density exists.  We do not consider this possibility as it is not our focus, but the model above may be replaced with the \emph{structural equation model}; under this collection of distributions, each variable is a function of its parents in the graph \citep{evans2016graphs}.

Beyond Markov compatibility, we will place one further requirement: that visible variables 
are a deterministic function of their parents' assignments. 
In general, we will say that a kernel $P(x_v \mid x_{\pa{v}})$ of a variable is \emph{deterministic}
if it outputs $x_v$ as a deterministic function of its parents' values.

That is, a visible variable has an independent source of randomness precisely when it has
a marginalized parent.
This assumption corresponds to the notion of deterministic variables from \citet{geiger1990identifying}, 
and we use it because it makes our findings simpler and more general.  
This choice comes without any loss of generality, since we can always give each 
visible variable a marginalized parent, thereby allowing all variables to be stochastic.

We are now in a position to define the set of observational data realizable by a causal structure, called its \emph{selected-marginal observational model}. 
For this, we will rely on a partition $(V,M,S)$ of the vertices in the DAG into 
those associated with visible, marginalized, and selected variables respectively.
Throughout the paper, we will denote their union $W:=V\dotcup M \dotcup S$.
We will refer to the union of marginalized \emph{and} selected variables as \emph{non-visible variables}.
In the selected data, each selected variable $X_s$, $s \in S$, is fixed to one value
which we conventionally call $0$ \citep{armen2018towards}.
We assume that there is \emph{no} access to data obtained by selecting upon other values, such as $P(X_V \mid X_{s}=1)$.

\begin{definition}[Selected-Marginal Observational Model] \label{def_smo_model}
    \sloppy The selected-marginal observational (SMO) model of the DAG $\calD$ 
    with vertices partitioned as $(V,M,S)$, denoted by $\calM(\calD,V \mid S)$, is the set of all distributions 
    that can be obtained 
    from some $P(X_{V \dotcup M \dotcup S})$ that is Markov relative to
    $\calD$
    and where all visible variables have deterministic kernels
    by
    marginalizing $X_M$ and 
    taking the regular conditional probability distribution given
    $X_S=\zeros$.
    (Distributions where the probability density
    $P(X_S)$ is zero are excluded.)
\end{definition}
Regular conditional probabilities are a generalization 
of conditional probabilities that allow conditioning 
on an event with probability zero, so long as 
the probability density is nonzero,
as described in \Cref{app:regular-conditional-probabilities}.

We now formalize the concept of two DAGs being distinguishable by observational data.

\begin{definition}[Observational Equivalence and Dominance] \label{def_obs_equiv}
    Let $\calD$ and $\calD'$ be two DAGs
    with vertices partitioned as $(V,M,S)$ and $(V,M',S')$ respectively. 
    We say that $\calD$ and $\calD'$ are \emph{observationally equivalent} if their SMO models are identical, i.e.~if $\calM(\calD,V \mid S)=\calM(\calD',V \mid S')$.
More generally, $\calD'$ \emph{observationally dominates} $\calD$ if $\calM(\calD, V\mid S)\subseteq\calM(\calD', V\mid S')$. 
\end{definition}

\subsection{Interventional equivalence}
\label{sec_interv_eq}
In this paper, we will describe equivalence relations between DAGs under intervention, 
and for these results to be as powerful as possible, we want to define a notion of interventional data
that is as rich as possible.
To this end, we let the interventional data include all soft interventions---ones that
set each variable to a distribution over values, rather than to just one value.
For the same reason, we use an Observe\&Do probing scheme \citep{ansanelli2024everything}---one
where the experimenter is allowed to jointly observe the intervened value
of each variable, and the value that it would take if an intervention was not performed.
Following \citet{ansanelli2024everything}, 
we will denote the intervened version of $v$ as $v^\sharp$
and the non-intervened version as $v^\flat$. Then, the distribution 
obtained from the Observe\&Do probing scheme will be of the form $P(X_{V^\sharp},X_{V^\flat} \mid X_S=\zeros)$.

In the interest of clarity, 
let us first define the selected marginal interventional (SMI) distribution obtained from one specific soft intervention
and one specific set of kernels.

\begin{definition}[SMI Distribution] \label{def_smi_pair}
Given a DAG $\calD$ with 
vertices partitioned as $(V, M, S)$,
kernels $P(X_w \mid X_{\pa{w}}),w \in V \dotcup M \dotcup S$,
and a soft intervention $Q(X_{V^\sharp})$, 
the SMI distribution is defined as:

    \begin{gather}
    P_{Q(X_{V^\sharp})}(X_{V^\sharp},X_{V^\flat} \mid X_S=\zeros) = \frac{\int_{\dom{M}} P_{Q(X_{V^\sharp})}(X_{V^\sharp},X_{V^\flat},X_M,X_S=\zeros) \, dx_M}{
    \int_{\dom{V^\sharp,V^\flat,M}} P_{Q(X_{V^\sharp})}(X_{V^\sharp},X_{V^\flat},X_M,X_S=\zeros) \, d(x_{V^\sharp},x_{V^\flat},x_{M})} \nonumber\\
    \text{for } \quad P_{Q(X_{V^\sharp})}(X_{V^\flat},X_{V^\sharp},X_{M}, X_S) = 
Q(X_{V^\sharp})\prod_{y \in V^\flat \dotcup M \dotcup S} P(X_{y} \mid X_{\paN^\sharp(y)}),     
    \label{eq_Markov_interventional}
    \end{gather}

where
$\zeros$ is the zero vector and
$\paN^\sharp(y) := \{z^\sharp \mid z \in \paN(y) \cap V\} \cup (\paN(y) \setminus V)$.
The SMI distribution $P_{Q(X_{V^\sharp})}(X_{V^\sharp},X_{V^\flat} \mid X_S=\zeros)$ 
represents the effect of an intervention that fixes the visible variables to the distribution $Q(X_{V^\sharp})$.
\end{definition}

Note that non-sharp variables are directly influenced by their sharp visible parents and ordinary non-visible parents, whereas the sharp variables are set by the intervened distribution $Q(X_{V^\sharp})$.

By considering all possible kernels and soft interventions, we 
define the totality of interventional data that can come from one causal structure.

\begin{definition}[SMI Model] \label{def_smi_model} \label{def_interventional_compatibility}
For a DAG $\calD$ with vertices partitioned as $(V,M,S)$, 
consider the set of pairs:
\begin{equation} 
    P_* = \big\{\big(Q(X_{V^\sharp}),P_{Q(X_{V^\sharp})}(X_{V^\sharp},X_{V^\flat}\cmid X_S=\zeros)\big
    ) : Q(X_{V^\sharp})\in \calP(\dom{V}) \big\}.
    \label{eq_abc} 
\end{equation}

We say that $P_*$ is a \emph{set of selected-marginal interventional (SMI) pairs compatible with} $\calD$ if 
it includes every intervention $Q(X_{V^\sharp})$ 
in the set of probability measures for $X_V$, i.e.~$\calP(\dom{V})$
such that 
$P_{Q(X_{V^\sharp})}(X_S=\zeros)>0$
along with the SMI distribution,
using a fixed set of kernels
$P(X_w \mid X_{\pa{w}})$ for each vertex $w \in V \dot\cup M \dot\cup S$ 
which are deterministic for all visible variables.

The \emph{selected-marginal interventional (SMI) model} of the DAG $\calD$ with vertices partitioned as $(V,M,S)$, denoted $\mathcal{C}(\calD, V\mid S)$, is the collection of all sets $P_*$ of SMI pairs compatible with $\calD$ 
under the partition $(V,M,S)$.
\end{definition}

To recap, the SMI model of $\cal D$ is a collection of sets of pairs, one set $P_*$ 
for each specific choice of parameters (i.e.~kernels) for the DAG.
Each $P_*$ contains 
a soft intervention $Q(X_{V^\sharp})$ 
paired with its SMI distribution
$P_{Q(X_{V^\sharp})}(X_{V^\sharp,V^\flat}\cmid X_S=\zeros)$---one pair for each possible soft intervention.
Overall, the SMI model describes the visible consequences of all soft interventions 
given all model parameters.

We now highlight four aspects of the SMI model definition.

Firstly, why does $P_*$ contain pairs of distributions
$\big(Q(X_{V^\sharp}),P_{Q(X_{V^\sharp})}(X_{V^\sharp}\cmid X_S=\zeros)\big)$
rather than only distributions $P_{Q(X_{V^\sharp})}(X_{V^\sharp}\cmid X_S=\zeros)$?
Given a soft intervention, the distribution over the intervened variables 
can change with selection: in general $P(X_{V^\sharp} \mid X_S=0) \neq Q(X_{V^\sharp})$.
In our formalism, both distributions are made available, 
which in practical terms means we are assuming that the experimenter 
can remember what experiment they have performed.
For example, in a graph $v \to s$ where $X_S=X_V$,
if the experimenter performs the intervention that sets $Q(x_{v^\sharp})$ to be $\operatorname{Bernoulli}(\frac{1}{2})$,
they will obtain the pair 
$\big(Q(X_{v^\sharp}=0)=Q(X_{v^\sharp}=1)=\frac{1}{2},\,P(X_{v^\sharp}=0 \mid X_s=0)=1\big)$. The fact that $P(X_{V^\sharp} \mid X_S=0) \neq Q(X_{V^\sharp})$ shows us that, even with only one visible variable, it is already possible to detect the presence of selection effects.

Secondly, notice that hard interventions---those that fix $X_V$ to a single value---are included in $P_*$ as the case where $Q(X_{V^\sharp})$ is a point distribution; that is, $Q(X_{V^\sharp})$ is an indicator function $\delta(X_V=x_V)$ 
that is equal to $1$ if $X_V=x_V$ is true, and $0$ otherwise.

Thirdly, the effect of interventions to arbitrary subsets of vertices can always be recovered from $P_*$.
In a graph with vertices $V$, if we want know the effect of intervening $\doo(X_A=x_A)$
where $A \subsetneq V$
then we can choose $Q_{V^\sharp}$ to intervene $\doo(X_A=x_A)$ 
and assign a uniform random distribution to $X_{V \setminus A}$,
and then condition on $X_{Z^\sharp}$ equalling its natural value. 
That is, the interventional distribution $P(X_Z=x_Z\cmid\doo(X_A=x_A))$ is equivalent to
$P_{Q_{V^\sharp}}(X_{Z^\flat}= x_Z \mid X_{A^\sharp}=x_A, X_{Z^\sharp}=x_Z)$ \citep{ansanelli2024everything}.

Fourthly, our definition uses selection, so in some models, 
we will obtain a selected intervened distribution $P_{Q(X_{V^\sharp})}(X_{V^\flat},X_{V^\sharp}\mid X_s=0)$ 
that is undefined.
Conceptually, this represents a situation where the experimenter performs an intervention, 
and then gets back no data.
For instance, perhaps one wants to learn the relationship between having a bank account and 
employment status
but only those with a bank account possess a phone line.
So, an experimenter forces some subjects to go without a bank account,
and attempts a phone survey about employment status, thus obtaining no data.
This scenario makes intuitive sense, so from a formal perspective, 
whenever an intervened distribution $P_{Q(X_{V^\sharp})}(X_{V^\flat},X_{V^\sharp}\mid X_s=0)$
is undefined then we omit it from $P_*$.

We will mention one alternative approach, however.
Instead, it is possible to enforce strict positivity on both the model and the set of available interventions.
In this approach, $Q(X_{V^\sharp})$ is required to have 
a density $q$ such that 
$Q(X_{V^\sharp}) > 0$ for $P$-almost all $x_V \in \dom{V}$.
Then, in \Cref{def_interventional_compatibility}, \eqref{eq_abc} is replaced by:
 	\begin{equation}
		P_* = \big\{\big(Q(X_{V^\sharp}),P_{Q(X_{V^\sharp})}(X_{V^\sharp},X_{V^\flat}\cmid X_S=\zeros)\big) : Q(X_{V^\sharp})\in \calP(\dom{V}) \,\big|\, Q(X_{V^\sharp}) \text{ has full support} \big\}.
  \label{eq_abcd}
	\end{equation}
Such interventions are called \emph{full-support interventions}, in contrast to the unrestricted interventions of \Cref{def_interventional_compatibility}. 
Having presented two alternatives, 
this paper will use unrestricted interventions throughout, 
while all of our results will be valid for both types of probing schemes.

Similar to observational equivalence, we can define interventional equivalence under access to the Observe\&Do probing scheme  as follows.

\begin{definition}[Observe\&Do  Equivalence and Dominance] \label{def_interv_equiv}
    Let $\calD$ and $\calD'$ be two DAGs
    with vertices partitioned as $(V,M,S)$ and $(V,M',S')$ respectively. 
    We say that $\calD$ and $\calD'$ are \emph{Observe\&Do  equivalent} if their SMI models are identical, i.e.~if $\calC(\calD, V\mid S)=\calC(\calD', V\mid S')$,
    and that $\calD'$ \emph{Observe\&Do  dominates} $\calD$ if $\calC(\calD, V\mid S)\subseteq\calC(\calD', V\mid S')$. 
\end{definition}

Whenever we informally say that two DAGs are \emph{interventionally equivalent}, we will be referring to Observe\&Do equivalence.
Clearly, any two DAGs that are Observe\&Do equivalent are also observationally equivalent. However, the converse is not true: for example, the graphs $v_1 \to v_2$ and $v_1 \gets v_2$ are observationally equivalent 
but not Observe\&Do equivalent.

\section{Transformations that maintain interventional equivalence} \label{sec:invariances}

In this section, we will present a series of operations that can be performed on any 
 DAG, such as 
hard interventions, 
full-support interventions, 
interventions to subsets of visible variables, 
and passive observation.

The first action we discuss is called \emph{exogenization}, because it makes all marginal vertices \emph{exogenous} (i.e.~parentless). The second is called \emph{terminalization}, because it  makes all selected vertices \emph{terminal} (i.e.~childless). 

The following proposition defines these operations and establishes that they do not change the SMI model.

\begin{restatable}{proposition}{ExogAndTerm}
Let $\calD$ be a DAG with vertices partitioned as $(V,M,S)$ where \ldots
\begin{enumerate}[label=\alph*)]
\item \label{prop_exogenizing_interv}
$m \in M$. Let $\exogenize_m(\calD)$ be the DAG obtained from $\calD$ by \emph{exogenizing} $m$, that is, by i) adding an edge $l \to k$ from every $l \in \pa{\calD}{m}$ to every $k \in \ch{\calD}{m}$, and ii) deleting the edges $l\to m$ for $l \in \pa{\calD}{m}$. Then, $\calD$ and $\exogenize_m(\calD)$ are Observe\&Do equivalent; that is, $\calC(\calD,V \!\mid\! S) \!=\! \calC(\exogenize_m(\calD),V \!\mid\! S)$.
\item \label{prop_terminalizing}
$s \in S$. Let $\calD_{\underline{s}}$ be the graph obtained from $\calD$ by removing edges outgoing from $s$. Then, $\calD$ and $\calD_{\underline{s}}$ are Observe\&Do equivalent; that is, $\calC(\calD,V \mid S) = \calC(\calD_{\underline{s}},V \mid S)$.
\end{enumerate}

Note that the order in which these operations are applied does not matter.  Therefore, we can define a transformation, denoted by {\tt Exog}, that acts on a DAG by sequentially exogenizing all of the marginalized vertices of a DAG. Similarly,  we can define a transformation, denoted by {\tt Term}, that acts on a DAG by terminalizing all of the selected vertices of a DAG.

\label{prop_exog_term}
\end{restatable}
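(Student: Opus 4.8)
The plan is to prove each part by exhibiting, for every choice of kernels on one graph, a choice of kernels on the other that produces the \emph{identical} set $P_*$ of SMI pairs; carrying this out in both directions yields equality of the two SMI models. Since each transformation is local (part~\ref{prop_exogenizing_interv} rewrites only the kernel of $m$ and of its children, and part~\ref{prop_terminalizing} only the kernels of the children of $s$) and, crucially, does not depend on the intervention $Q(X_{V^\sharp})$, it suffices to show that the joint density over $(X_{V^\sharp},X_{V^\flat},X_S)$ obtained after marginalizing $X_M$ agrees for \emph{every} $Q$; the conditionals given $X_S=\zeros$, the positivity $P_Q(X_S=\zeros)>0$, and hence the whole $Q$-indexed family $P_*$ then coincide. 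Throughout I keep the sharp/flat decorations of visible parents unchanged, since $m\in M$ and $s\in S$ are non-visible and are therefore never intervened upon, so the reparameterizations commute with the intervention structure of \Cref{def_smi_pair}.

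For part~\ref{prop_exogenizing_interv} I argue the two inclusions by reparameterizing the marginalized vertex $m$, whose state space may be taken to be $[0,1]$. For $\calC(\calD,V\mid S)\subseteq\calC(\exogenize_m(\calD),V\mid S)$, given kernels on $\calD$ I use the functional (noise-outsourcing) representation $X_m=g(X_{\pa{\calD}{m}},U)$ with $U\sim\Unif[0,1]$ independent, declare the now-exogenous $m$ of $\exogenize_m(\calD)$ to carry $U$, and set each child kernel to $P(x_k\mid x_{\pa{\calD}{k}\setminus\{m\}},g(x_{\pa{\calD}{m}},u))$; marginalizing $U$ reproduces exactly $\int P(x_m\mid x_{\pa{\calD}{m}})\prod_k P(x_k\mid\ldots,x_m)\,dx_m$. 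For the reverse inclusion I instead make the new $m$ of $\calD$ the stochastic kernel $X_m=\mathrm{encode}(X_{\pa{\calD}{m}},U)$ that bijectively packs its parents' values together with fresh noise $U$ (using a Borel isomorphism $\dom{\pa{\calD}{m}}\times[0,1]\cong[0,1]$), and let each child decode $X_m$ to recover $(x_{\pa{\calD}{m}},U)$ before applying the $\exogenize_m(\calD)$ kernel; this is forced because in $\exogenize_m(\calD)$ the children see $\pa{\calD}{m}$ directly, whereas in $\calD$ they must receive that information through the single continuous channel $X_m$. In both directions determinism of visible children is preserved (a deterministic kernel composed with a deterministic decode/recompute is still deterministic), and a change of variables shows the marginalized joints agree for every $Q$.

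For part~\ref{prop_terminalizing} the argument is a direct kernel manipulation. Because the selected value is fixed to $X_s=0$, the edge $s\to c$ carries the constant $0$ in the conditioned model, so I absorb it: from kernels on $\calD$, define $\calD_{\underline{s}}$ kernels by $P'(x_c\mid x_{\pa{\calD}{c}\setminus\{s\}}):=P(x_c\mid x_{\pa{\calD}{c}\setminus\{s\}},x_s=0)$ for each child $c$ of $s$, leaving the (unchanged) kernel of $s$ and all others intact. Evaluating the Markov factorization at $X_S=\zeros$ then gives, term by term, the same unnormalized density over $X_{V\dotcup M}$ in both graphs, so the regular conditional distributions given $X_S=\zeros$ (as in \Cref{app:regular-conditional-probabilities}) coincide. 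Conversely, from $\calD_{\underline{s}}$ kernels I extend each child kernel to $\calD$ by making it constant in $x_s$, which agrees at $x_s=0$ and is irrelevant elsewhere after conditioning. Determinism is again preserved, since fixing or ignoring an argument of a deterministic map keeps it deterministic.

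Finally, for the order-independence note I give intrinsic descriptions of the fully transformed graphs. For terminalization this is immediate: terminalizing one selected vertex never creates an out-edge of another, so the steps delete disjoint edge sets and {\tt Term}$(\calD)$ is just $\calD$ with all edges leaving $S$ removed, independent of order. For exogenization I prove by induction on the number of processed vertices the invariant that, after exogenizing any subset $E\subseteq M$ in any order, an edge $u\to w$ is present iff $w\notin E$ and $\calD$ contains a directed path $u\pathto w$ whose strictly-intermediate vertices all lie in $E$ (acyclicity is preserved along the way, as a new edge $u\to w$ cannot close a cycle absent in $\calD$); taking $E=M$ shows {\tt Exog}$(\calD)$ is the order-free graph with $u\to w$ iff $w\notin M$ and $\calD$ has a directed $u\pathto w$ path with all interior vertices in $M$. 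I expect the main obstacle to be the reverse inclusion of part~\ref{prop_exogenizing_interv}: one must check that routing the grandparents' values and the exogenous noise through the single variable $X_m$ genuinely reproduces the $\exogenize_m(\calD)$ joint for every intervention while preserving determinism, which is exactly where the standard-Borel encoding and the $Q$-independence of the reparameterization carry the weight.
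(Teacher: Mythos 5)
Your proposal is correct, and its overall architecture (two explicit kernel reparameterizations per part, checked to be independent of the intervention $Q$) matches the paper's proof in Appendix~\ref{app_exog_term}. Two of your three constructions are essentially the paper's own: for part~(b) you absorb $X_s=0$ into the children's kernels exactly as the paper does (your reverse direction, making kernels constant in $x_s$, is the paper's subgraph-dominance lemma, \Cref{le:subgraph}, in disguise); and for the reverse inclusion of part~(a) your Borel-isomorphism packing of $(X_{\pa{\calD}{m}},U)$ into $X_m$ with decoding children is the same device as the paper's $X_m=(\bar{X}_m,\bar{X}^1_m)$ where the second component records the parents' values. Where you genuinely diverge is the forward inclusion of part~(a): the paper builds a ``library'' variable $X_m=\times_{\alpha\in\dom{\RR_m}}X_m^{\alpha}$ with one independent component $X_m^{\alpha}\sim P(X_m\mid X_{\RR_m}=\alpha)$ per parent assignment, and lets each child select the component indexed by the parents it now sees directly, whereas you invoke noise outsourcing, $X_m=g(X_{\pa{\calD}{m}},U)$ with $U\sim\Unif[0,1]$ carried by the exogenous vertex, and have the children recompute $g$. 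The two realizations encode the same idea, but yours is cleaner measure-theoretically in the continuous case: the paper's uncountable product (and the evaluation of a random coordinate of it) raises measurability subtleties that the functional representation avoids, at the mild cost of citing the noise-outsourcing/functional-representation theorem for standard Borel spaces. Finally, you supply an inductive path-characterization argument for order-independence of {\tt Exog} and {\tt Term}; the paper merely asserts this, so your proposal is actually more complete on that point.
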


The operation of exogenization of a marginalized node, which was first presented by \citet{evans2016graphs}, is exemplified in \cref{fig:invariances-1}.

\begin{figure}[ht]
    \centering
\begin{subfigure}{.31\textwidth}
    \centering
    \begin{tikzpicture}
        [rv/.style={circle, draw, very thick, minimum size=5.5mm, inner sep=0.6mm}, 
         node distance=12mm, >=stealth]

    \node[rv] (v1) {$v_1$};
    \node[mv, right=5.5mm of v1] (m) {$m$};
    \node[rv, below right=5.5mm and 5.5mm of m] (v2) {$v_2$};
    \node[rv, above right=5.5mm and 5.5mm of m] (v3) {$v_3$};

    \draw[->, very thick] (v1) -- (m);
    \draw[->, very thick] (m) -- (v2);
    \draw[->, very thick] (v1) -- (v2);
    \draw[->, very thick] (m) -- (v3);
    \end{tikzpicture}
    \caption{A DAG $\calD$.}
    \label{fig:dag-to-exogenize}
\end{subfigure}
\begin{subfigure}{.31\textwidth}
    \centering
    \begin{tikzpicture}
        [rv/.style={circle, draw, very thick, minimum size=5.5mm, inner sep=0.6mm}, 
         node distance=12mm, >=stealth]

    \node[rv] (v1) {$v_1$};
    \node[mv, right=5.5mm of v1] (m) {$m$};
    \node[rv, below right=5.5mm and 5.5mm of m] (v2) {$v_2$};
    \node[rv, above right=5.5mm and 5.5mm of m] (v3) {$v_3$};

    \draw[->, very thick] (v1) -- (v2);
    \draw[->, very thick] (v1) -- (v3);
    \draw[->, very thick] (m) -- (v2);
    \draw[->, very thick] (m) -- (v3);
    \end{tikzpicture}
    \caption{The DAG {\tt Exog}($\calD$).}
    \label{fig:exognized-dag}
\end{subfigure}
\caption{Example of the application of \cref{prop_exog_term}a).} \label{fig:invariances-1}
\end{figure}

The proof of \cref{prop_exog_term} is in \Cref{app_exog_term}.
Exogenization is already known to preserve
 Observe\&Do equivalence 
when there are no selected variables \citep{ansanelli2024everything}.
We merely prove that after subsequent selection, the models remain equivalent.
For selection, the proof idea is that after edges from $s$ to its children are removed, those children
can behave just as they would when $X_s=0$, and then we will recover the same selected distribution.

Once {\tt Exog} and {\tt Term} have been performed, 
we will obtain a DAG $\mathtt{Exog}\circ \mathtt{Term}(\calD)$, 
wherein the marginalized vertices are parentless and the selected vertices are childless. 
It follows that the marginal and selected vertices in the resulting DAG are not neighbours of any vertices of their own type. It is still possible, however, to have marginalized vertices that neighbour selected vertices.

Now, we proceed to the next two operations that preserve Observe\&Do equivalence. They consist of merging marginalized vertices that share a common selected child, and merging selected vertices that share a common marginalized parent.

\begin{restatable}{proposition}{MergingRestatable}
\label{prop_merging}
Let $\calD$ be a DAG with vertices partitioned as $(V, M, S)$ where $\calD=\mathtt{Exog}\circ\mathtt{Term}(\calD)$ and\ldots
\begin{enumerate}[label=\alph*),ref=\labelcref{prop_merging}\alph*)]
\item 
\label{le:merging-endovars-projection}
Let $m_1, m_2 \in M$, 
and $\ch{\calD}{m_1} \cap \ch{\calD}{m_2} \cap S \neq \emptyset$.
Obtain the modified DAG $\tilde{\calD}$ by merging $m_1$ and $m_2$ into one vertex $m$ with $\ch{\tilde{\calD}}{m}=\ch{\calD}{m_1} \cup \ch{\calD}{m_2}$.
Then, $\calD$ and $\tilde{\calD}$ are Observe\&Do equivalent; that is, $\calC(\calD,V \mid S)=\calC(\tilde{\calD},V\mid S)$.
\item 
\label{le:merging-selected-projection}
Let $s_1, s_2 \in S$,
and $\pa{\calD}{s_1} \cap \pa{\calD}{s_2} \cap M \neq \emptyset$.
Obtain the modified DAG $\tilde{\calD}$ by merging $s_1$ and $s_2$ into one vertex $s$ with $\pa{\tilde{\calD}}{s}=\pa{\calD}{s_1} \cup \pa{\calD}{s_2}$.
Then, $\calD$ and $\tilde{\calD}$ are Observe\&Do equivalent; that is, $\calC(\calD,V \mid S)=\calC(\tilde{\calD},V\mid (\{s\} \cup S) \setminus \{s_1,s_2\} )$.
\end{enumerate}

The order in which these operations are applied does not matter.  Therefore, we can define a transformation, denoted by {\tt MergeM}, that acts on a DAG by merging all of the marginalized nodes that can be merged by a). Similarly, we can define a transformation, denoted by {\tt MergeS}, that acts on a DAG by merging all of the selected nodes that can be merged by b).
\end{restatable}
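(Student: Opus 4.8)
The plan is to prove each part by establishing the two inclusions $\calC(\calD,V\mid S)\subseteq\calC(\tilde\calD,V\mid\cdot)$ and $\calC(\tilde\calD,V\mid\cdot)\subseteq\calC(\calD,V\mid S)$ separately, each via an explicit correspondence between parametrizations (choices of kernels). Because both operations touch only non-visible vertices while soft interventions act only on visible vertices, it suffices to match kernels: once a parametrization of one DAG is mapped to a parametrization of the other inducing the same joint law over $X_{V\dotcup M\dotcup S}$ after marginalizing $X_M$ and conditioning on $X_S=\zeros$, the corresponding sets $P_*$ of SMI pairs agree for every intervention $Q(X_{V^\sharp})$ simultaneously, since the encoding of non-visible variables commutes with $Q$ and with the $\doo$-surgery. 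Throughout we use $\calD=\mathtt{Exog}\circ\mathtt{Term}(\calD)$, so the $m_i$ are parentless randomness sources and the $s_j$ are childless selection filters.

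For part a), the easy inclusion is $\calC(\calD)\subseteq\calC(\tilde\calD)$: given kernels on $\calD$ with independent exogenous $X_{m_1},X_{m_2}$, set $X_m:=(X_{m_1},X_{m_2})$ on the product domain with the product law, let each child of $m$ that was a child of exactly one $m_i$ read the corresponding coordinate, and let the common children---in particular $s^*\in\ch{\calD}{m_1}\cap\ch{\calD}{m_2}\cap S$---read both coordinates exactly as they read $(X_{m_1},X_{m_2})$ in $\calD$; this reproduces the joint law identically. The substantive inclusion is $\calC(\tilde\calD)\subseteq\calC(\calD)$, where a single source $X_m$ shared by all children must be reconstructed from two independent sources. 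The idea is to use the common selected child $s^*$ as a post-selection correlator: put $X_{m_1},X_{m_2}$ on $\dom{m}$ with $P(X_{m_1})=P(X_m)$ and $X_{m_2}$ full-support, let children of $m_i$ read $X_{m_i}$, and define the kernel of $s^*$ so that the event $X_{s^*}=\zeros$ enforces $X_{m_1}=X_{m_2}$ while applying the original selection weight at this common value. Conditioning on $X_{s^*}=\zeros$ then collapses $(X_{m_1},X_{m_2})$ onto the diagonal with marginal $P(X_m)$, so all children again effectively share one source distributed as $X_m$.

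Part b) is dual and proven the same way with parents/children and marginalized/selected exchanged. The easy inclusion $\calC(\calD)\subseteq\calC(\tilde\calD)$ sets $X_s:=(X_{s_1},X_{s_2})$ (running both original kernels, possible since $\pa{\tilde\calD}{s}=\pa{\calD}{s_1}\cup\pa{\calD}{s_2}$) and selects $X_s=\zeros$. The substantive inclusion $\calC(\tilde\calD)\subseteq\calC(\calD)$ must reproduce the selection weight of the merged vertex, which in $\tilde\calD$ is an arbitrary function $f=P(X_s=\zeros\mid X_{\pa{\tilde\calD}{s}})$ of all parents jointly, using in $\calD$ only a product $P(X_{s_1}=\zeros\mid X_{\pa{\calD}{s_1}})\,P(X_{s_2}=\zeros\mid X_{\pa{\calD}{s_2}})$ of two separately factored selection kernels. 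This is where the common marginalized parent $m^*\in\pa{\calD}{s_1}\cap\pa{\calD}{s_2}\cap M$ is essential: since $X_{m^*}$ is read by both $s_1$ and $s_2$ and then marginalized, it supplies shared randomness, and a ``shared seed'' choice---letting $X_{m^*}$ encode a candidate assignment of the remaining parents against which each kernel checks its own inputs---makes $\int P(X_{s_1}=\zeros\mid\cdot,X_{m^*})\,P(X_{s_2}=\zeros\mid\cdot,X_{m^*})\,dP(X_{m^*})$ equal any prescribed $f$.

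The main obstacle, in both hard inclusions, is purely measure-theoretic: the diagonal coupling in part a) and the shared-seed construction in part b) are singular (deterministic, measure-zero) selection kernels when the $m_i$ or $m^*$ are continuous on $[0,1]$, so conditioning on $X_S=\zeros$ must be read through regular conditional probabilities (\Cref{app:regular-conditional-probabilities}), with the relevant densities shown nonzero so that $P(X_S=\zeros)>0$ is preserved. I plan to handle this by first giving the construction on finite state spaces, where the indicators are honest kernels and verification is direct, and then transferring to the standard-Borel setting via the Borel-isomorphism reduction to $[0,1]$ and a regular-conditional-probability limiting argument. Finally, the order-independence claim, and hence the well-definedness of $\mathtt{MergeM}$ and $\mathtt{MergeS}$, follows because each elementary merge preserves the SMI model (so any sequence does) and merging is confluent: a merge propagates the ``shares a common selected child'' (resp.\ ``shares a common marginalized parent'') relation, so the terminal graph groups the marginalized (resp.\ selected) vertices into the connected components of that relation, a partition independent of the order of merges.
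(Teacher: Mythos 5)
Your proposal takes essentially the same route as the paper's proof: Cartesian-product encodings for the easy inclusions, and for the hard inclusions the diagonal/indicator coupling through the shared selected child in part (a) and a ``seed'' coordinate inside the shared marginalized parent in part (b) (the paper samples the seed uniformly over $\dom{\pa{\calD}{s_1}\setminus\pa{\calD}{s_2}}$, has $s_1$ select the event that the seed matches its unshared parents, and has $s_2$ apply the merged weight at the seed), with the resulting constants cancelling upon conditioning on $X_S=\zeros$, exactly as in the paper. One small correction: in part (a) the auxiliary copy $X_{m_2}$ must be \emph{uniform} (constant density) rather than merely full-support, since with density $g$ the post-selection law of the common source becomes proportional to $P'(x_m)\,g(x_m)$ instead of $P'(x_m)$; the paper's choice $X_{m_2}\sim\Unif(\dom{m})$ is what makes the extra factor a constant that cancels.
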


For example, this implies that 
since the marginalized vertices $m_1$ and $m_2$ in 
\Cref{fig:smm-invariances-e-before} share the selected child $s$,
the DAG in \Cref{fig:smm-invariances-e-after} where the marginal variables are merged 
is Observe\&Do equivalent 
to the original.
Since vertices $s_1$ and $s_2$ 
in \Cref{fig:smm-invariances-f-before} 
share the parent $m$,
when we merge them, 
the resulting graph \Cref{fig:smm-invariances-f-after}, 
then it is also equivalent to its original.

\begin{figure}[ht]
    \centering
\begin{subfigure}{.24\textwidth}
    \centering
    \begin{tikzpicture}
        [rv/.style={circle, draw, very thick, minimum size=5.5mm, inner sep=0.6mm}, 
         node distance=12mm, >=stealth]

    \node[mv] (m1) {$m_1$};
    \node[mv, right = 5.5mm of m1] (m2) {$m_2$};
    \node[rv, right = 5.5mm of m2] (v3) {$v_3$};
    \node[rv, below = 5.5mm of m2] (v2) {$v_2$};
    \node[rv, below = 5.5mm of m1] (v1) {$v_1$};
    \node[sv, below = 5.5mm of v3] (s) {$s$};

    \draw[->, very thick] (m1) -- (s);
    \draw[->, very thick] (m1) -- (v1);
    \draw[->, very thick] (m2) -- (s);
    \draw[->, very thick] (m2) -- (v2);
    \draw[->, very thick] (v3) -- (s);
    \end{tikzpicture}
    \caption{A DAG $\calD_1$.}
    \label{fig:smm-invariances-e-before}
\end{subfigure}
\begin{subfigure}{.25\textwidth}
    \centering
    \begin{tikzpicture}
        [rv/.style={circle, draw, very thick, minimum size=5.5mm, inner sep=0.6mm}, 
         node distance=12mm, >=stealth]

    \node[mv] (y) {$m$};
    \node[rv, right = 5.5mm of y] (v3) {$v_3$};
    \node[rv, below = 5.5mm of y] (v2) {$v_2$};
    \node[rv, left = 5.5mm of v2] (v1) {$v_1$};
    \node[sv, below = 5.5mm of v3] (s) {$s$};

    \draw[->, very thick] (y) -- (s);
    \draw[->, very thick] (y) -- (v1);
    \draw[->, very thick] (y) -- (v2);
    \draw[->, very thick] (v3) -- (s);
    \end{tikzpicture}
    \caption{The DAG {\tt MergeM}($\calD_1$).}
    \label{fig:smm-invariances-e-after}
\end{subfigure}
\begin{subfigure}{.24\textwidth}
    \centering
    \begin{tikzpicture}
        [rv/.style={circle, draw, very thick, minimum size=5.5mm, inner sep=0.6mm}, 
         node distance=12mm, >=stealth]

    \node[rv] (v1) {$v_1$};
    \node[rv, right = 5.5mm of v1] (v2) {$v_2$};
    \node[mv, right = 5.5mm of v2] (m) {$m$};
    \node[sv, below = 5.5mm of v1] (s1) {$s_1$};
    \node[sv, below = 5.5mm of v2] (s2) {$s_2$};
    \node[rv, below = 5.5mm of m] (v3) {$v_3$};

    \draw[->, very thick] (v1) -- (s1);
    \draw[->, very thick] (v2) -- (s2);
    \draw[->, very thick] (m) -- (s1);
    \draw[->, very thick] (m) -- (s2);
    \draw[->, very thick] (m) -- (v3);
    \end{tikzpicture}
    \caption{A DAG $\calD_2$.}
    \label{fig:smm-invariances-f-before}
\end{subfigure}
\begin{subfigure}{.25\textwidth}
    \centering
    \begin{tikzpicture}
        [rv/.style={circle, draw, very thick, minimum size=5.5mm, inner sep=0.6mm}, 
         node distance=12mm, >=stealth]

    \node[rv] (v1) {$v_1$};
    \node[rv, right = 5.5mm of v1] (v2) {$v_2$};
    \node[mv, right = 5.5mm of m2] (m) {$m$};
    \node[sv, below = 5.5mm of v2] (y) {$s$};
    \node[rv, below = 5.5mm of m] (v3) {$v_3$};

    \draw[->, very thick] (v1) -- (y);
    \draw[->, very thick] (v2) -- (y);
    \draw[->, very thick] (m) -- (y);
    \draw[->, very thick] (m) -- (v3);
    \end{tikzpicture}
    \caption{The DAG {\tt MergeS}($\calD_2$).}
    \label{fig:smm-invariances-f-after}
\end{subfigure}
\caption{Example of the application of 
\Cref{prop_merging}.
} \label{fig:smm-invariances-ef}
\end{figure}
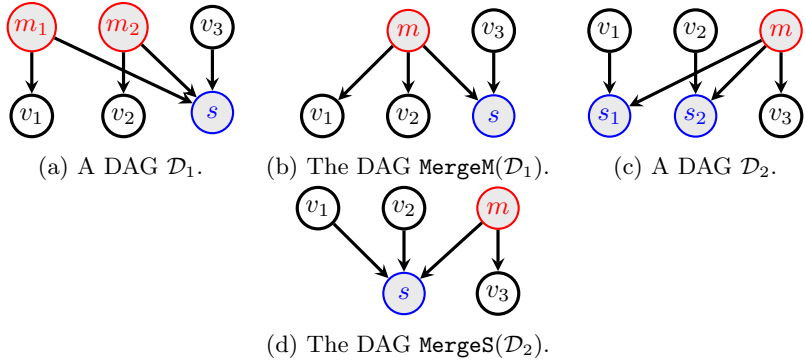

The proof of \cref{prop_merging} is in \Cref{app:smm-invariances-e}.
For Proposition~\labelcref{le:merging-endovars-projection}, to prove that $\tilde{\calD}$ can realize any data that is generated by $\calD$, 
our approach is to let $X_m$ equal the Cartesian product of $X_{m_1}$ and $X_{m_2}$. Conversely, to prove that $\calD$ can realize any data that is realizable by $\tilde{\calD}$, we let both variables $X_{m_1}$ and $X_{m_2}$ of $\calD$ sample over the whole domain of $X_m$ in $\tilde{\calD}$, and take their shared selected child to be $0$ 
precisely when $X_{m_1}=X_{m_2}$.
For Proposition~\labelcref{le:merging-selected-projection}, to prove that $\tilde{\calD}$ can realize any data that is generated by $\calD$, we 
will let $X_s$ equal Cartesian product of $X_{s_1}$ and $X_{s_2}$. Conversely, to show that $\calD$ can realize any data generated by $\tilde{\calD}$, we will let $X_m$ (the shared parent of $s_1,s_2$)
be the Cartesian product of two variables, one sampled uniformly over the domain of $X_{\pa{\calD}{s_1}}$ and the other sampled uniformly over the domain of $X_{\pa{\calD}{s_2}}$. Furthermore, we take $s_1$ and $s_2$ to equal $0$ precisely when the two variables that compose $X_m$ equal $X_{\pa{\calD}{s_1}}$ and $X_{\pa{\calD}{s_2}}$, respectively. Then, under selection, $X_{s_1}$ and $X_{s_2}$ will behave just as if they had direct access to $\pa{\calD}{s_1}\cup \pa{\calD}{s_2}$. 

After applying \cref{prop_exog_term,prop_merging}, we obtain the DAG
$\mathtt{MergeS}\circ\mathtt{MergeM}\circ\mathtt{Exog}\circ\mathtt{Term}(\calD)$ where each 
marginal or selected variable has at most one non-visible neighbour, which must be of the opposite type. However, each non-visible variable may still have multiple visible neighbours.

The next operation that preserves Observe\&Do equivalence deals with arrows $m\to s$ where the marginalized vertex $m$ has multiple visible children and/or the selected vertex $s$ has multiple visible parents, such as in \Cref{fig_deleteedge_a}. This operation transforms the DAG into one where every marginalized vertex either has only visible children or it has exactly one selected child and one visible child, and every selected vertex either has only visible parents or it has exactly one marginalized parent and one visible parent, such as in \Cref{fig_deleteedge_b}.

\begin{restatable}{proposition}{SplittingArrows} \label{prop:splitarrows}
    Let $\calD$ be a DAG with vertices partitioned as $(V,M,S)$ such that $\calD=\mathtt{MergeS}\circ\mathtt{MergeM}\circ\mathtt{Exog}\circ\mathtt{Term}(\calD)$, 
    and $\calD$ contains the edge $m \to s$, where $m\in M$ and $s\in S$. Let $V_s$ denote all visible vertices which are parents of $s$, i.e.~$V_s\coloneqq V \cap \pa{\calD}{s}$, and similarly let $V_m$ denote all visible vertices which are children of $m$, i.e.~$V_m\coloneqq V \cap \ch{\calD}{m}$. If either $V_s$ or $V_m$ are non-singleton sets (that is, if $m$ has more than one visible child or $s$ has more than one visible parent), then we perform the following operations to create a new DAG $\tilde{\calD}$:
    \begin{enumerate}
        \item For every pairing of visible nodes $\{a,b\}$ with $a \in V_s$ and $b \in V_m$       
        add the new selected vertex $s_{ab}$ and the new marginalized vertex $m_{ab}$ 
        along with the three new edges
               $a \to s_{ab} \gets m_{ab} \to b$. 
        \par\noindent Note that we consider $\{a,b\}$ a valid pairing regardless of whether or not $a\in V_m$ or $b\in V_s$. Consequently, if a single vertex $v$ lies in \emph{both} $V_s$ and $V_m$, then the self-pairing $\{v,v\}$ must also be accounted for.

        \item Delete the edge $m\to s$.
    \end{enumerate}
    Then, $\calD$ and $\tilde{\calD}$ are Observe\&Do equivalent; that is, $\calC(\calD,V \mid S)=\calC(\tilde{\calD},V\mid S\cup S')$
    where $S'$ denotes the set of added selected vertices.

The order of application of these operations on different edges $m\to s$ does not matter.  Therefore, we can define a transformation, denoted by $\DropMtoS$, that acts on a DAG by  applying the steps above to each edge $m \to s$ 
until every edge $m \to s$ satisfies that $m$ has only one visible child 
and $s$ has only one visible parent.
    
\end{restatable}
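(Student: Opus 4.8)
The plan is to prove the two inclusions $\calC(\calD,V\mid S)\subseteq\calC(\tilde{\calD},V\mid S\cup S')$ and $\calC(\tilde{\calD},V\mid S\cup S')\subseteq\calC(\calD,V\mid S)$ separately, each by explicitly simulating one DAG's kernels with the other's, exactly in the style of \cref{prop_merging}. Throughout I use the standing hypothesis $\calD=\mathtt{MergeS}\circ\mathtt{MergeM}\circ\mathtt{Exog}\circ\mathtt{Term}(\calD)$: it forces $m$ to be exogenous with $\ch{\calD}{m}=V_m\dotcup\{s\}$ (its only non-visible neighbour is $s$) and $s$ to be terminal with $\pa{\calD}{s}=V_s\dotcup\{m\}$ (its only non-visible neighbour is $m$). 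Since $m$ and $s$, as well as every added $m_{ab}$ and $s_{ab}$, are sources and sinks respectively, $\tilde{\calD}$ is acyclic and every modified assignment below is a genuine kernel of the actual graph parents; moreover the whole analysis can be carried out conditionally on the (unchanged) values of all vertices outside $\{m,s\}\cup\{m_{ab},s_{ab}\}$, so I describe only the local construction, writing $M'=\{m_{ab}\}$ and $S'=\{s_{ab}\}$.

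For $\calC(\tilde{\calD})\subseteq\calC(\calD)$ the idea is to \emph{bundle}. Given kernels for $\tilde{\calD}$ -- a law for the retained $X_m$, a law for each $X_{m_{ab}}$, the selection kernels $P(X_s=0\mid X_{V_s})$ and $P(X_{s_{ab}}=0\mid X_a,X_{m_{ab}})$, and the deterministic assignments $X_b=g_b(X_m,\{X_{m_{ab}}\}_{a},\dots)$ -- I let the single latent $X_m$ of $\calD$ be the Cartesian product of the retained value with all the $X_{m_{ab}}$, reuse each $g_b$ on the corresponding coordinates (which $b$ can read since $m\to b$ in $\calD$), and set the single selection kernel to the product $P(X_s=0\mid X_{V_s},X_m)=P(X_s=0\mid X_{V_s})\prod_{(a,b)}P(X_{s_{ab}}=0\mid X_a,X_{m_{ab}})$. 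Marginalizing this one $X_m$ integrates out exactly the retained latent and all $X_{m_{ab}}$, and conditioning on the single event $X_s=0$ imposes exactly the product of the individual selection weights; hence the SMI distribution agrees for every soft intervention $Q(X_{V^\sharp})$, giving the same $P_*$.

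For the reverse inclusion $\calC(\calD)\subseteq\calC(\tilde{\calD})$ I must reproduce, using only the pairwise gadgets $a\to s_{ab}\gets m_{ab}\to b$ together with the retained confounder $m$ and the $V_s$-only selection $s$, the effect of the original joint selection, which couples $X_{V_s}$ with $X_m$ and hence (since $X_m$ drives all of $V_m$) can encode an arbitrary high-order interaction among $V_s\dotcup V_m$. Writing the contribution of the original $m,s$ as the fibre measure
\[
\rho_v(x_{V_m})\;=\;\int \mathbbm{1}\big[x_{V_m}=f(x_m)\big]\,P\big(X_s=0\mid X_{V_s}=v,\,X_m=x_m\big)\,dP(x_m),
\]
a non-negative measure on the $X_{V_m}$-space for each value $v$ of $X_{V_s}$, the target is to select $X_{V_s}=v$ with weight proportional to $\|\rho_v\|$ and, conditionally, to draw $X_{V_m}\sim\rho_v/\|\rho_v\|$. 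I realise this with three devices. First, \emph{selection forces agreement}: take each $X_{m_{ab}}$ uniform on $\dom{a}$ and let $s_{ab}=0$ precisely when $X_{m_{ab}}=X_a$, so that under selection $X_{m_{ab}}=X_a$ and every child $b$ gains effective access to all of $X_{V_s}$. Second, the retained $m$ supplies a single shared source of randomness $R$, and I set $X_b=g_b(R,X_{V_s},\dots)$, choosing the $g_b$ so that the joint pushforward of $R$ equals $\rho_v/\|\rho_v\|$; because every $b\in V_m$ reads the \emph{same} $R$ the required confounding among $V_m$ is reproduced, and because any law on a standard Borel space is a pushforward of the uniform law the match is always attainable. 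Third, the surviving selection $s$ carries the marginal weight $P(X_s=0\mid X_{V_s}=v)\propto\|\rho_v\|$. Together these reproduce $\rho_v$, hence the original selected distribution under every intervention.

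The main obstacle is precisely this reverse inclusion: a priori the single edge $m\to s$ realises a genuinely joint selection effect on $V_s\dotcup V_m$, whereas $\tilde{\calD}$ offers only pairwise gadgets plus one shared confounder and one $V_s$-local selection, so it is not obvious that nothing is lost; the forcing-plus-shared-mask construction above is what resolves it. Making it rigorous requires care with several points I would treat explicitly: the regular-conditional and undefined-selection conventions together with the positivity clause of \cref{def_smi_model} (noting that forcing $X_{m_{ab}}=X_a$ conditions on a density-positive, possibly measure-zero event, exactly as in the proof of \cref{prop_merging}); the fact that the $V_m$-vertices carry other parents and sharp/flat intervened copies, so that $\rho_v$ and the $g_b$ must be taken conditionally on those; and the corner case of a self-pairing $\{v,v\}$ with $v\in V_s\cap V_m$, where $m_{vv}$ is both a parent of $v$ and forced to equal $X_v$, which must be checked not to over-constrain the model. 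Finally, commutativity across distinct edges $m\to s$ is immediate, as different edges produce disjoint gadgets $\{m_{ab},s_{ab}\}$ and delete disjoint edges, so $\DropMtoS$ is well defined.
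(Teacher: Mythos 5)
Your proposal is correct and follows essentially the same route as the paper's proof in \Cref{app_split_arrows}: the bundling direction (Cartesian products on $m$ and on $s$) is identical, and your reverse direction uses the same three devices the paper does --- uniform $X_{m_{ab}}$ with $X_{s_{ab}}=0$ forcing $X_{m_{ab}}=X_{a^\sharp}$, the marginal selection weight $P(X_s=0\mid X_{V_s})$ placed on the retained $s$, and shared randomness from the retained $m$ to reproduce the joint conditional law of $X_{V_m}$ given $X_{V_s}$ and selection. The only substantive difference is how that conditional law is realized: the paper makes $X_m$ a ``library,'' a Cartesian product of independent samples $X_m^{x_{V_s}}\sim P(X_m\mid X_{V_s}=x_{V_s},X_s=0)$ indexed by the assignments $x_{V_s}\in\dom{V_s}$, from which each child $b$ reads the entry picked out by its forced gadget parents and then applies its original kernel; you instead invoke noise outsourcing, pushing a single uniform $R$ through functions $g_b(R,x_{V_s},\dots)$ whose joint pushforward equals the normalized fibre measure $\rho_v/\|\rho_v\|$. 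Both are valid: the paper's library is fully explicit (no representation theorem needed) and matches the construction it already uses for exogenization, while yours avoids the assignment-indexed product (delicate for continuous $\dom{V_s}$) at the cost of appealing to the standard Borel kernel-representation lemma. The points you flag but defer --- sharp/flat bookkeeping, the proportionality constant arising from the uniform gadgets, and the regular-conditional-probability treatment of the measure-zero forcing event --- are precisely the details the paper's computation carries out, so nothing in your outline would fail once they are filled in.
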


The proof of \Cref{prop:splitarrows} is presented in \Cref{app_split_arrows}. The intuition is that all that
the paths in \Cref{fig_deleteedge_a} that go through $m \to s$ do is to
allow children of $m$ to depend arbitrarily on parents of $s$.
So, we need a way to assign the children of $m$ 
in \Cref{fig_deleteedge_b}
a distribution $P(X_{v_3},X_{v_4} \mid X_{v_1},X_{v_2},X_S=\zeros)$
that matches the same distribution in \Cref{fig_deleteedge_a}.
To achieve this, we will sample each $X_{m_{ij}}$ uniformly over the domain of $X_{v_i}$
and let each $X_{s_{ij}}$ equal zero precisely when $X_{m_{ij}}=X_{v_i}$.
Then, under selection, each child variable $v_j$ essentially has access to the 
assignment of each parent $v_i$.
We can then use $X_m$ to ensure that $X_{v_3}$ and $X_{v_4}$ have the right distribution 
given any joint assignment $x_{v_1},x_{v_2}$.
Conversely, we can reproduce the behaviour of $m_{13},m_{14}$ in $\calD$ by having the
variable $X_m$ be a Cartesian product of $X_{m_{13}}$ and $X_{m_{14}}$.

\Cref{fig_deleteedge-loop} provides a second example of the application of \Cref{prop:splitarrows}, where the vertices $v_1$ and $v_2$ are simultaneously parents of $s$ and children of $m$.

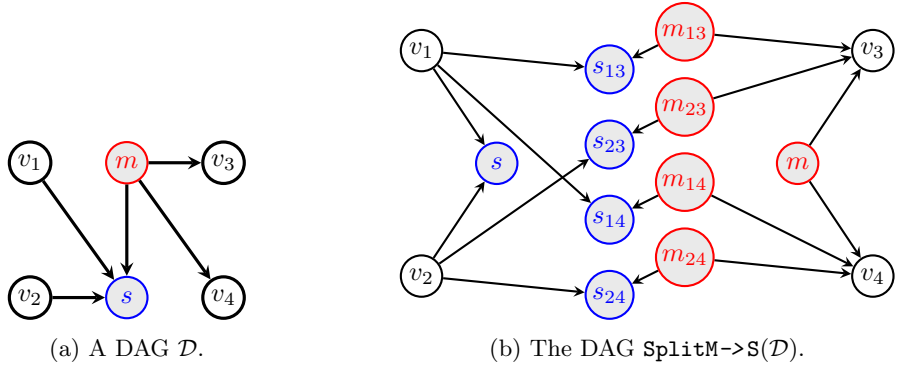
\begin{figure}[ht]
    \centering
    \begin{subfigure}{.45\textwidth}
    \centering
\begin{tikzpicture}
    [rv/.style={circle, draw, very thick, minimum size=5.5mm, inner sep=0.6mm}, >=stealth]
    
    \node[rv] (v1) {$v_1$};
    \node[mv, right=7mm of v1] (m) {$m$};
    \node[rv, right=7mm of m] (v3) {$v_3$};
    
    \node[rv, below=12mm of v1] (v2) {$v_2$};
    \node[sv, right=7mm of v2] (s) {$s$};
    \node[rv, right=7mm of s] (v4) {$v_4$};
    
    \draw[->, very thick] (v1) -- (s);
    \draw[->, very thick] (v2) -- (s);
    \draw[->, very thick] (m) -- (s);
    \draw[->, very thick] (m) -- (v3);
    \draw[->, very thick] (m) -- (v4);
\end{tikzpicture}

    \caption{A DAG $\calD$.} 
    \label{fig_deleteedge_a}
    \end{subfigure}
    \begin{subfigure}{.45\textwidth}
    \centering
        \begin{tikzpicture}[>=stealth]
\node[rv] (v1) at (0,  1.5) {$v_1$};
\node[rv] (v2) at (0, -1.5) {$v_2$};

\node[rv] (v3) at (6,  1.5) {$v_3$};
\node[rv] (v4) at (6, -1.5) {$v_4$};

\node[sv] (s12) at (1, 0) {$s$};
\draw[->, thick] (v1) -- (s12);
\draw[->, thick] (v2) -- (s12);

\node[mv] (m34) at (5, 0) {$m$};
\draw[->, thick] (m34) -- (v3);
\draw[->, thick] (m34) -- (v4);

\node[sv] (s13) at (2.5,  1.25) {$s_{13}$};
\node[mv] (m13) at (3.5,  1.75) {$m_{13}$}; 

\draw[->, thick] (v1) -- (s13);
\draw[->, thick] (m13) -- (s13);
\draw[->, thick] (m13) -- (v3);

\node[sv] (s14) at (2.5, -0.75) {$s_{14}$};
\node[mv] (m14) at (3.5, -0.25) {$m_{14}$}; 

\draw[->, thick] (v1) -- (s14);
\draw[->, thick] (m14) -- (s14);
\draw[->, thick] (m14) -- (v4);

\node[sv] (s23) at (2.5,  0.25) {$s_{23}$};
\node[mv] (m23) at (3.5,  0.75) {$m_{23}$}; 

\draw[->, thick] (v2) -- (s23);
\draw[->, thick] (m23) -- (s23);
\draw[->, thick] (m23) -- (v3);

\node[sv] (s24) at (2.5, -1.75) {$s_{24}$};
\node[mv] (m24) at (3.5, -1.25) {$m_{24}$};

\draw[->, thick] (v2) -- (s24);
\draw[->, thick] (m24) -- (s24);
\draw[->, thick] (m24) -- (v4);
    \end{tikzpicture}
    \caption{The DAG $\DropMtoS(\calD)$.} 
    \label{fig_deleteedge_b}
    \end{subfigure}   
    \caption{First example of the application of \Cref{prop:splitarrows}.}
    \label{fig_deleteedge}
\end{figure}

\begin{figure}[ht]
    \centering
    \begin{subfigure}{.45\textwidth}
    \centering
\begin{tikzpicture}
    [rv/.style={circle, draw, very thick, minimum size=5.5mm, inner sep=0.6mm}, >=stealth]
    
    \node[rv] (v1) {$v_1$};
    \node[sv, right=7mm of v1] (s) {$s$};
    \node[mv, above=7mm of s] (m) {$m$};
    \node[rv, right=7mm of s] (v4) {$v_2$};
    \draw[->, very thick] (m) -- (v1);
    \draw[->, very thick] (v1) -- (s);
    \draw[->, very thick] (v4) -- (s);
    \draw[->, very thick] (m) -- (s);
    \draw[->, very thick] (m) -- (v4);
\end{tikzpicture}

    \caption{A DAG $\calD$.} 
    \label{fig_deleteedge_a-loop}
    \end{subfigure}
    \begin{subfigure}{.45\textwidth}
    \centering
        \begin{tikzpicture}[>=stealth]
    \node[rv] (v1) {$v_1$};
    \node[sv, right=7.5mm of v1, yshift=4mm] (s12) {$s$};
    \node[mv, right=7.5mm of v1, yshift=-4mm] (m) {$m$};
    \node[rv, right=20mm of v1] (v4) {$v_2$};

\draw[->, thick] (v1) -- (s12);
\draw[->, thick] (v4) -- (s12);
\draw[->, thick] (m) -- (v1);
\draw[->, thick] (m) -- (v4);

 \node[sv, above=5mm of v1, xshift=7mm] (s14) {$s_{12}$};
 \node[sv, below=5mm of v4, xshift=-7mm]  (s21) {$s_{21}$};
\node[mv, above=5mm of v4, xshift=-7mm] (m14) {$m_{12}$};
 \node[mv, below=5mm of v1, xshift=7mm]  (m21) {$m_{21}$};

\draw[->, thick] (m21) -- (v1);
\draw[->, thick] (m21) -- (s21);
\draw[->, thick] (v4) -- (s21);

\draw[->, thick] (v1) -- (s14);
\draw[->, thick] (m14) -- (s14);
\draw[->, thick] (m14) -- (v4);

\node[sv, right=7mm of v4] (s24) {$s_{22}$};
\node[mv, above=5mm of s24] (m24) {$m_{22}$};

\node[sv, left=7mm of v1] (s11) {$s_{11}$};
\node[mv, above=5mm of s11] (m11) {$m_{11}$};

\draw[->, thick] (v4) -- (s24);
\draw[->, thick] (m24) -- (s24);
\draw[->, thick] (m24) -- (v4);

\draw[->, thick] (v1) -- (s11);
\draw[->, thick] (m11) -- (s11);
\draw[->, thick] (m11) -- (v1);
    \end{tikzpicture}
    \caption{The DAG $\DropMtoS(\calD)$,  which visually resembles the Pok\'emon Magnemite.} 
    \label{fig_deleteedge_b-loop}
    \end{subfigure}   
    \caption{Second example of the application of \Cref{prop:splitarrows}.} \label{fig_deleteedge-loop}
\end{figure}
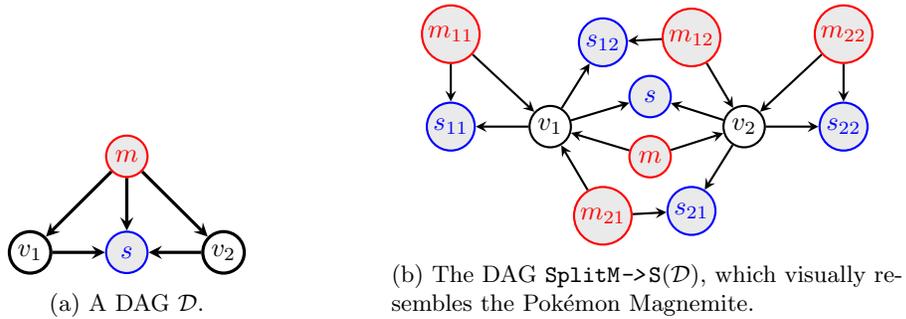

In the resulting DAG 
$\DropMtoS \circ \mathtt{MergeS}\circ\mathtt{MergeM}\circ\mathtt{Exog}\circ\mathtt{Term}(\calD)$,
whenever two non-visible variables are neighbours, 
each has precisely one visible neighbour.
Essentially, our DAG now consists of:
1) marginal variables with exclusively visible children, 
2) selected variables with exclusively visible parents, 
3) directed edges $a\to b$ between visible vertices, and
4) paths $a \to s_{ab} \gets m_{ab} \to b$ between visible vertices $a,b$, which will be called \emph{special edges} between $a$ and $b$. 

After selection, both directed edges and special edges 
allow $X_{b}$ to vary in response to an intervention to $X_a$. This is the idea behind the next operation that preserves Observe\&Do equivalence: replacing directed edges $a\to b$ by special edges $a \to s_{ab} \gets m_{ab} \to b$ whenever $a$ has a selected child and $b$ has a marginalized parent.

\begin{restatable}{proposition}{interchangeedges} \label{lemma_interchangeedges}
    Let $\calD$ be a DAG with vertices partitioned as $(V,M,S)$ such that $\calD= \DropMtoS \circ \mathtt{MergeS}\circ\mathtt{MergeM}\circ\mathtt{Exog}\circ\mathtt{Term}(\calD)$, and let $a\to b$ be an edge of $\calD$
    where the vertex $a$ has at least one selected child and the vertex $b$ has at least one marginalized parent.
    Let the DAG $\calD'$ be constructed by 
    replacing $a \to b$ with a special edge; that is, with
   a path $a \to s_{ab} \gets m_{ab} \to b$
    where $s_{ab}$ is a selected vertex and $m_{ab}$ is a marginalized vertex.
    Then, $\calD$ and $\calD'$ are Observe\&Do equivalent; that is, $\calC(\calD,V \mid S)=\calC(\calD',V\mid S \cup \{s_{ab}\})$.

    The order in which the directed edges are transformed into special edges does not matter. Therefore, we can define a transformation, denoted by $\ToBidir$, that acts on a DAG by replacing every directed edge between visible vertices with a special edge, whenever the parent has a selected child and the child has a marginalized parent. 
\end{restatable}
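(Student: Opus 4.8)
The plan is to prove the claimed equality of SMI models by establishing Observe\&Do dominance in both directions: that every set of SMI pairs realizable by $\calD$ is realizable by $\calD'$, and conversely. As in the proofs of \cref{prop_merging,prop:splitarrows}, I would fix an arbitrary choice of kernels on one graph, construct a matching choice on the other, and check that the resulting selected interventional distributions agree for every soft intervention $Q(X_{V^\sharp})$ simultaneously, so that the two sets of SMI pairs coincide. The key structural fact that makes the argument uniform in $Q$ is that both the new selected collider $s_{ab}$ and the edge into $b$ read the \emph{intervened} value $X_{a^\sharp}$ of $a$ (since $a \in V$, we use $\paN^\sharp$).

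For the direction $\calC(\calD,V\mid S)\subseteq\calC(\calD',V\mid S\cup\{s_{ab}\})$, which is the easier one, I would start from kernels for $\calD$ and build kernels for $\calD'$ that pin the new marginalized vertex to $a$. Concretely, let $X_{m_{ab}}\sim\Unif(\dom{a})$, let the kernel of $s_{ab}$ set $X_{s_{ab}}=\zeros$ exactly when $X_{m_{ab}}=X_{a^\sharp}$, and let $b$ keep its old deterministic kernel but read $X_{m_{ab}}$ in the slot previously occupied by $X_a$ (its parent set changes only by swapping $a$ for $m_{ab}$); all other kernels are copied verbatim. Conditioning on $X_{s_{ab}}=\zeros$ forces $X_{m_{ab}}=X_{a^\sharp}$, so $X_b$ behaves exactly as in $\calD$; moreover $P(X_{s_{ab}}=\zeros\mid X_a)$ is the constant $1/|\dom{a}|$ (or, in the continuous case, a constant density, handled by the regular-conditional-probability machinery), so the extra selection factor cancels under normalization and does not perturb the law of $X_{a^\sharp}$. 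Hence the SMI pair produced by $\calD'$ matches that of $\calD$ for every $Q$.

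The substantive direction is $\calC(\calD',V\mid S\cup\{s_{ab}\})\subseteq\calC(\calD,V\mid S)$. Here I would analyse what the special edge does under selection. Fixing kernels on $\calD'$ and integrating out $m_{ab}$ under the constraint $X_{s_{ab}}=\zeros$, its net effect factorises into two pieces: a reweighting of $X_a$ by $g(x_a):=P_{\calD'}(X_{s_{ab}}=\zeros\mid X_a=x_a)=\int P(x_{m_{ab}})\,P(X_{s_{ab}}=\zeros\mid X_a=x_a,x_{m_{ab}})\,dx_{m_{ab}}\in[0,1]$, and an induced stochastic channel $\nu_{x_a,x_R}(x_b)=\int P(x_{m_{ab}})\,P(X_{s_{ab}}=\zeros\mid X_a=x_a,x_{m_{ab}})\,\delta\big(x_b=f_b(x_{m_{ab}},x_R)\big)\,dx_{m_{ab}}$ from $a$ and the remaining parents $R$ of $b$ (the set $R$ coincides in $\calD$ and $\calD'$) to $b$. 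A short normalisation check gives $\int \nu_{x_a,x_R}(x_b)\,dx_b=g(x_a)$, so $\nu_{x_a,x_R}/g(x_a)$ is a bona fide conditional law of $X_b$. I would then reproduce both pieces in $\calD$, and this is exactly where the two hypotheses are used: the reweighting $g$ is absorbed into the selected child $s_0$ of $a$ (which exists by hypothesis) by replacing its domain with a Cartesian product and selecting the new coordinate to be $\zeros$ with probability $g(x_a)$, which is legitimate because $s_0$ already has $a$ as a parent; and the channel is realised by carving a fresh $\Unif[0,1]$ coordinate $U$ out of the marginalized parent $m_0$ of $b$ (which exists by hypothesis) via a Cartesian product, letting the other children of $m_0$ ignore $U$, and defining $b$'s deterministic kernel to be the quantile (inverse-CDF) map of $\nu_{x_a,x_R}/g(x_a)$ evaluated at $U$. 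Copying all remaining kernels, the selected interventional joint of $\calD$ then matches that of $\calD'$ for every $Q$.

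I expect this reverse direction to be the main obstacle: one must verify that the single selected collider decomposes cleanly into an $a$-reweighting and an independent channel, that the residual randomness of $m_{ab}$ after selection can be matched by one fresh uniform drawn from $m_0$ through a quantile map (this is precisely where determinism of the visible $X_b$ forces the use of $b$'s marginalized parent), and that the reweighting can be reattached to $s_0$ without disturbing its original role. Finally, for the ``order does not matter'' claim defining $\ToBidir$, I would observe that transforming an eligible edge $a\to b$ only gives $a$ one more selected child ($s_{ab}$) and $b$ one more marginalized parent ($m_{ab}$); since eligibility of $a\to b$ already required $a$ to possess a selected child and $b$ to possess a marginalized parent, no previously ineligible edge becomes eligible, and each transformation introduces disjoint fresh vertices. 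Hence the set of edges to be replaced is determined by $\calD$ alone and the operations commute, giving a well-defined transformation $\ToBidir$.
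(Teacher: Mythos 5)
Your proposal is correct and follows essentially the same route as the paper: the forward direction is the paper's construction verbatim (uniform $X_{m_{ab}}$, equality-indicator selection, $b$ reading $X_{m_{ab}}$ in the slot of $X_a$, with the constant selection factor cancelling under normalization), and your reverse-direction decomposition into the reweighting $g(x_a)=P'(X_{s_{ab}}=0\mid X_a=x_a)$ absorbed by an existing selected child of $a$ plus the normalized channel $\nu/g$ installed as $b$'s kernel is exactly the paper's choice of kernels, which multiplies $P''(X_{s_{ab}}=0\mid X_a)$ into the kernel of a selected child $s$ of $a$ and gives $b$ the kernel $P''(X_b\mid X_{\pa{\calD''}{b}\setminus\{m_{ab}\}},X_{s_{ab}}=0)$, whose Bayes expansion is precisely your $\nu_{x_a,x_R}/g(x_a)$. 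The only cosmetic differences are that the paper interposes an auxiliary DAG $\calD''$ carrying both the directed edge and the special edge (so that the needed conditional independences can be read off before concluding by dominance), whereas you compute directly on $\calD'$, and that you make the determinism of the visible $X_b$ explicit via an inverse-CDF draw from a fresh uniform coordinate of $b$'s marginalized parent, where the paper simply invokes the convention that a marginalized parent licenses a stochastic kernel.
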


The proof of \cref{lemma_interchangeedges}  is presented in \Cref{app_interchange}. The intuition is 
that to produce a distribution from $\calD$ on $\calD'$, 
we will set $X_{s_{ab}}$ to equal $0$ if and only if $X_a$ equals $X_{m_{ab}}$, 
and set $X_{b}$ to depend on $X_{m_{ab}}$ in $\calD'$ in the same way that it depends on $X_a$ in $\calD$. 
Conversely, to reproduce a distribution from $\calD'$ on $\calD$, 
we can obtain $P'(X_{b}\mid X_{s_{ab}}=0,X_{\pa{\calD}{b}})$ from $\calD'$
and use this as the kernel for $X_{b}$ in $\calD$.

At first sight it may be unclear why we choose to replace the simple directed edges $a \to b$ 
with the more complicated-looking special edges. We do that because in many cases the inverse transformation, that is replacing special edges with regular directed edges, would induce a cycle. Therefore, since our aim is to conflate these two indistinguishable patterns to obtain a data structure that captures all interventional equivalences, we must use \cref{lemma_interchangeedges} to transform directed edges into special edges wherever possible.

Our final two operations that preserve Observe\&Do equivalence consist in removing \emph{redundant} marginalized and selected vertices, which are marginalized vertices whose children are a subset of the children of another marginalized vertex or selected vertices whose parents are a subset of the parents of another selected vertex. An example is shown in  \Cref{fig_remove_redund}. The removal of redundant marginalized vertices in the case without selection effects was already proven to preserve observational equivalence by \citet{evans2016graphs}, and Observe\&Do equivalence by \citet{ansanelli2024everything}.

\begin{restatable}{proposition}{removalrestatable}
\label{prop_remove_redund}
Let $\calD$ be a DAG with vertices partitioned as $(V, M, S)$ such that  $\calD=\ToBidir\circ\DropMtoS\circ\mathtt{MergeS}\circ\mathtt{MergeM}\circ\mathtt{Exog}\circ\mathtt{Term}(\calD)$, and\ldots
\begin{enumerate}[label=\alph*)]
    \item There are $m_1, m_2 \in M$ such that $\ch{m_1}\subseteq \ch{m_2}$. Let  $\calD_{-m_1}$ be the subgraph of $\calD$ on vertices other than $m_1$. Then, $\calD$ and $\calD_{-m_1}$ are Observe\&Do equivalent, so $\calC(\calD,V \mid S) = \calC(\calD_{-m_1},V \mid S)$.
    \item There are $s_1, s_2\in S$ such that $\pa{\calD}{s_1}  \subseteq \pa{s_2}$. Let  $\calD_{-s_1}$ be the subgraph of $\calD$ on vertices other than $s_1$. Then, $\calD$ and $\calD_{-s_1}$ are Observe\&Do equivalent, so $\calC(\calD,V \mid S) = \calC(\calD_{-s_1},V \mid S \setminus \{s_1\})$.
\end{enumerate}

 The order in which redundant marginalized and selected vertices are removed does not matter. Therefore, we can define a transformation, denoted by {\tt RmvRedM}, that acts on a DAG by applying a) until no redundant marginalized vertices remain. Similarly, we can define a transformation, denoted by {\tt RmvRedS}, that acts on a DAG by applying b) until no redundant selected vertices remain.
\end{restatable}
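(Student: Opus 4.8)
The plan is to prove each part by establishing the two inclusions of the SMI models separately, reducing everything to the preservation of the full joint distribution over $X_{V \dotcup M \dotcup S}$ under a relabelling of the non-visible vertices. Since each SMI distribution in \Cref{def_smi_pair} is obtained from the interventional joint of \eqref{eq_Markov_interventional} purely by marginalizing $X_M$ and taking the regular conditional given $X_S = \zeros$ — operations that touch only the non-visible coordinates while leaving $X_V$ fixed — it suffices to exhibit, for every choice of kernels on one graph, a choice of kernels on the other that yields the same interventional joint (for every soft intervention $Q(X_{V^\sharp})$) after this relabelling. Because interventions modify only the visible factors, which the constructions below never alter, the matching will hold simultaneously for all $Q(X_{V^\sharp})$, so it is enough to track the joint.

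For part a), one inclusion is immediate: since $\calD_{-m_1}$ is a subgraph of $\calD$, any kernels for $\calD_{-m_1}$ extend to $\calD$ by assigning $m_1$ an arbitrary kernel and having every child of $m_1$ ignore the coordinate $X_{m_1}$; the extra exogenous variable is then marginalized away, reproducing the same model, so $\calC(\calD_{-m_1}, V \mid S) \subseteq \calC(\calD, V \mid S)$. The reverse inclusion is where the redundancy hypothesis $\ch{\calD}{m_1} \subseteq \ch{\calD}{m_2}$ is used. Given kernels on $\calD$ (where, by exogenization, $m_1$ and $m_2$ are parentless), I would let the single vertex $m_2$ of $\calD_{-m_1}$ take values in the Cartesian product $\dom{m_2} \times \dom{m_1}$ with the product distribution $P(X_{m_2}) P(X_{m_1})$, and then redefine each child's kernel: every child of $m_1$ — which by hypothesis is also a child of $m_2$ — reads the $\dom{m_1}$-component exactly where it previously read $X_{m_1}$, while children of $m_2$ alone ignore that component. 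This reproduces the joint of $\calD$ verbatim under the identification $X_{m_2} \leftrightarrow (X_{m_2}, X_{m_1})$, so after marginalization and selection the SMI models coincide.

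Part b) is the mirror image on the selected side, and I would prove it by the dual argument. The easy inclusion $\calC(\calD_{-s_1}, V \mid S \setminus \{s_1\}) \subseteq \calC(\calD, V \mid S)$ follows by giving $s_1$ the constant kernel $X_{s_1} \equiv 0$ in $\calD$: conditioning on $X_{s_1} = 0$ then becomes vacuous (a probability-one event) and, since $s_1$ is terminal after $\mathtt{Term}$, it influences nothing downstream, so the selected conditional is unchanged. For the reverse inclusion, using $\pa{\calD}{s_1} \subseteq \pa{\calD}{s_2}$, I would let the merged vertex $s_2$ in $\calD_{-s_1}$ take values in $\dom{s_2} \times \dom{s_1}$ with kernel $P(X_{s_2} = y_2 \mid X_{\pa{\calD}{s_2}}) \cdot P(X_{s_1} = y_1 \mid X_{\pa{\calD}{s_1}})$ — well-defined because $s_2$ already sees every parent of $s_1$ — and declare its selected value to be $(0,0)$. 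Selecting $X_{s_2} = (0,0)$ is then exactly selecting $X_{s_1} = 0$ and $X_{s_2} = 0$ jointly, so the joint conditioned on the selection event matches that of $\calD$. The selection probabilities coincide under the identification, so positivity (the requirement $P(X_S = \zeros) > 0$) is preserved in both directions.

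The main obstacle I anticipate is not the algebra of the Cartesian-product relabellings — which merely repackage independent sources of noise — but verifying that these manipulations interact correctly with all three operations at once: marginalization of $X_M$, the measure-zero conditioning on $X_S = \zeros$ handled through regular conditional probabilities, and the reparented interventional factorization $P(X_y \mid X_{\paN^\sharp(y)})$. The marginal case without selection is already established in \citet{evans2016graphs, ansanelli2024everything}; the genuinely new content is confirming that absorbing a redundant marginalized vertex still works once a selection event is imposed, and that the entirely symmetric absorption of a redundant \emph{selected} vertex is valid — in particular that combining two selected vertices into one via a product kernel does not change the regular conditional distribution of $X_V$ given $X_{S} = \zeros$, even on the null event.
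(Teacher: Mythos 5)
Your proposal is correct and follows essentially the same route as the paper's proof: the easy inclusions via subgraph dominance (the paper's \Cref{le:subgraph}), part a) via a Cartesian-product relabelling of $X_{m_2}$ carrying an independent copy of $X_{m_1}$, and part b) via the product kernel $P(X_{s_2}=0\mid\cdot)\,P(X_{s_1}=0\mid\cdot)$ absorbed into the single selected vertex $s_2$ (your $(0,0)$-valued construction is the same kernel written differently). Your observation that interventions touch only visible factors, so the matching of joints holds uniformly over all $Q(X_{V^\sharp})$, is exactly the (implicit) reduction the paper uses.
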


The proof of \cref{prop_remove_redund} is presented in \Cref{app_proof_removeredund}.
Part (a) follows from the fact that $\calD$ and $\calD_{-m_1}$ have the same
Observe\&Do model by \citet[Theorem 1]{ansanelli2024everything}.
For part (b), the idea is that we can take the variable $X_{s_2}$ in $\calD_{-s_1}$ to 
equal zero when $X_{s_1}=0$ and $X_{s_2}=0$ in $\calD$.

\begin{figure}[ht]
    \centering
    \begin{subfigure}{.24\textwidth}
    \centering
    \begin{tikzpicture}
        [rv/.style={circle, draw, very thick, minimum size=5.5mm, inner sep=0.6mm}, 
         node distance=12mm, >=stealth]

    \node[mv] (y) {$m_2$};
    \node[rv, below = 5.5mm of y] (v2) {$v_2$};
    \node[rv, left = 5.5mm of v2] (v1) {$v_1$};
    \node[rv, right = 5.5mm of v2] (v3) {$v_3$};
    \node[mv, above = 5.5mm of v3] (m1) {$m_1$};
    \node[sv, below = 5.5mm of v2] (s2) {$s_2$};
    \node[sv, below = 5.5mm of v3] (s1) {$s_1$};

    \draw[->, very thick] (y) -- (v3);
    \draw[->, very thick] (y) -- (v1);
     \draw[->, very thick] (m1) -- (v3);
    \draw[->, very thick] (m1) -- (v2);
    \draw[->, very thick] (y) -- (v2);
    \draw[->, very thick] (v3) -- (s1);
    \draw[->, very thick] (v3) -- (s2);
    \draw[->, very thick] (v2) -- (s1);
    \draw[->, very thick] (v2) -- (s2);
    \draw[->, very thick] (v1) -- (s2);
    \end{tikzpicture}
    \caption{A DAG $\calD$.}
    \label{fig:smm-invariances-prop5-before}
\end{subfigure} \hspace{5mm}
\begin{subfigure}{.6\textwidth}
    \centering
    \begin{tikzpicture}
        [rv/.style={circle, draw, very thick, minimum size=5.5mm, inner sep=0.6mm}, 
         node distance=12mm, >=stealth]

    \node[mv] (y) {$m_2$};
    \node[rv, below = 5.5mm of y] (v2) {$v_2$};
    \node[rv, left = 5.5mm of v2] (v1) {$v_1$};
    \node[rv, right = 5.5mm of v2] (v3) {$v_3$};
    \node[sv, below = 5.5mm of v2] (s2) {$s_2$};

    \draw[->, very thick] (y) -- (v3);
    \draw[->, very thick] (y) -- (v1);
    \draw[->, very thick] (y) -- (v2);
    \draw[->, very thick] (v3) -- (s2);
    \draw[->, very thick] (v2) -- (s2);
    \draw[->, very thick] (v1) -- (s2);
    \end{tikzpicture}
    \caption{The DAG $\mathtt{RmvRedS}\circ\mathtt{RmvRedM}(\calD)$.}
    \label{fig:smm-invariances-prop5-after}
\end{subfigure}
    \caption{Example of the application of \Cref{prop_remove_redund}.}
    \label{fig_remove_redund}
\end{figure}
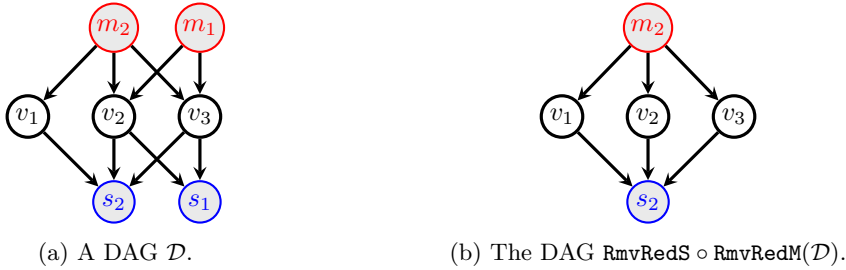

After applying all of the operations of 
\Cref{prop_exog_term,prop_merging,prop:splitarrows,lemma_interchangeedges,prop_remove_redund}
on an initial DAG with vertices partitioned as $(V,M,S)$,
we obtain a DAG $\calD$ with partition $(V,M',S')$, such that
\begin{equation}
    \calD=\mathtt{RmvRedS}\circ\mathtt{RmvRedM}\circ \ToBidir \circ\texttt{SplitM->S}\circ\mathtt{MergeS}\circ\mathtt{MergeM}\circ\mathtt{Exog}\circ\mathtt{Term}(\calD). \nonumber
\end{equation}

When this condition is satisfied, we will call $\calD$
a \emph{canonical DAG}.
In a canonical DAG, the marginal (or selected) variables with only visible neighbours 
will be called the
\emph{marginal (or selected) variables of the faces}; the reason for this will be evident from
\Cref{def:slp-special-case}.

\section{Selected-marginalized directed graphs (smDGs)} \label{sec:smDGs}

In this section, we will formulate a data structure that already encodes all of the regularities of the canonical DAG; this is a generalization of the mDAG~\citep{evans2016graphs} for the case where there are selection effects. 
The main idea is to view the canonical DAG as
a set of visible vertices connected by three kinds of relationships:
latent confounding, 
selection effects,
and paths of the form $a \to b$ or $a \to s' \gets m' \to b$.
To represent latent confounding and selection effects, our new data structure will 
use two \emph{independence systems}:

\begin{definition}[Independence System]
An \emph{independence system} $\B$ over a finite set $V$ is a collection of subsets of $V$
that contains the empty set $\emptyset \in \B$, and such that the system is closed under taking subsets:
if $B \in \B$ and $A \subseteq B$ then $A \in \B$. The elements of $\B$ are called \emph{faces}.
We call $A\in \B$ a \emph{maximal face} of $\B$ if it is not a strict subset of any other set in $\B$. 
\end{definition}

We will use an independence system $\calL$ to represent  latent confounding, 
and another independence system $\calS$ to represent selection effects.
Each subset $V_m\in \calL$ corresponds to the visible children of a marginalized vertex $m$ in the canonical DAG. An independence system neatly captures the fact that $\mathtt{RmvRedM}$ has removed all marginalized variables whose children are a subset of another marginalized variable's children. The original definition of mDAGs~\citep{evans2016graphs} uses a \emph{simplicial complex}, which is an independence system that contains every singleton set, i.e.~such that $\{v\}\in \B$ for all $v\in V$. 
Our reason for relaxing this assumption is that $\{v\} \not \in \calL$ represents the case where $X_v$ must be a deterministic function of its visible parents.

Each subset $V_s\in \calS$ corresponds to the visible parents of one selected vertex $s$ of the canonical DAG. Similar to the case of latent confounding, 
nested selected variables have been removed by $\mathtt{RmvRedS}$,
making an independence system a natural representation of this structure. 
The presence or absence of a vertex $\{v\} \in \calS$
in the selected independence system is meaningful, as it can be detected with soft interventions.

Recall that \Cref{lemma_interchangeedges} 
has shown that whenever $a$ has selected children and $b$ has marginalized parents, a path $a \to b$ can be replaced by a path $a \to s \gets m \to b$ while preserving Observe\&Do equivalence. 
In our new data structure, both types of paths  $a \to b$ and $a \to s \gets m \to b$ will be represented by a directed edge from $a$ to $b$, and the presence or absence of a selected face that includes $a$ and a marginalized face that includes $b$ is what will tell us what is the type of path that appears in the corresponding canonical DAG. With this, note that this directed graph in the new data structure need not be acyclic. Our new data structure is defined as:

\begin{definition}[smDG] \label{def_smgraph}
A \emph{selected-marginalized directed graph} (smDG) $\G$ is a quadruple $(V,\E,\calL, \calS)$ where $(V,\E)$ is a 
directed graph called the \emph{directed structure},
$\calL$ is the \emph{marginalized independence system},
and $\calS$ is the \emph{selected independence system}.
(The directed structure is allowed to contain cycles and even self-loops $v \to v$.)
\end{definition}

It will be useful to fix some notation for an smDG $\calG=(V,\E, \calL, \calS)$.
The parents of a vertex $b$, denoted $\pa{\calG}{b}$, 
are the set of vertices such that $a \to b$ is in $\calE$.
This includes both the cases where the canonical graph contains the path
$a \to b$ or a path $a \to s \gets m \to b$.
The subgraph $\calG_{V'} = (V',\E', \calL', \calS')$ 
is the smDG with sets of edges $\E'$, marginalized faces $\calL'$, and selected faces $\calS'$ in $\G$
that contain only those vertices $V'$.

When we draw smDGs in this paper, the graph $(V, \E)$ will have ordinary black edges, 
the maximal faces of $\calL$ will be represented by red hyperedges with arrowheads,
and the maximal faces of $\calS$ will be represented by blue hyperedges without arrowheads.
When a maximal marginal (or selected) face contains only one vertex $v$, we will illustrate it 
using a red dot incoming to $v$
(or a blue dot with an undirected edge to $v$),
as shown in \Cref{fig:example-slpb}.

If the directed structure of an smDG is acyclic, all of the vertices are in some face of the marginalized independence system\footnote{This is required because, as already discussed, \citet{evans2016graphs} uses a simplicial complex instead of an independence system to define mDAGs.}, and 
the selected independence system contains only the empty set, 
then the smDG is simply an mDAG~\citep{evans2016graphs}.

We connect a canonical DAG to an smDG via the operation of  \emph{selected-latent projection}, named in analogy to the latent projection operation \citep{Verma1988soundness,evans2016graphs}.
This definition will use the notion of a closure $\cl(A)$ of a set of vertices $A$, 
defined as the set of all subsets of $A$.

\begin{definition}[Selected-latent projection] \label{def:slp-special-case}
Let $\calD^\text{can}$ be a canonical DAG with vertices partitioned as $(V,M,S)$.
Its selected-latent projection (SLP), denoted $\slp(\calD^\text{can},V \mid S)$, is the smDG $(V,\E,\calL, \calS)$ where the directed structure 
$(V,\E)$ is the induced subgraph $\calD^{\text{can}}_V$
with an added edge $a \to b$ for each path $a \to s \gets m \to b$ 
where $a,b \in V,m \in M,s \in S$,
the marginalized independence system $\calL$ is the union $\bigcup_{m \in M}\cl(\ch{\calD^\text{can}}{m}\cap V)$,
and the selected independence system $\calS$ is the union $\bigcup_{s \in S} \cl(\pa{\calD^\text{can}}{s}\cap V)$.

For a non-canonical DAG $\calD$  with vertices partitioned as $(V,M,S)$, 
we define its selected-latent projection $\slp(\calD,V \mid S)$
as the SLP of $\calD^\text{can}$, where $\calD^\text{can}$
is the canonical DAG obtained from $\calD$ via
the rules described in \Cref{sec:invariances}.
\end{definition}

\begin{figure}[ht]
    \centering
    \begin{subfigure}{.5\textwidth}\centering
        \begin{tikzpicture}
            [rv/.style={circle, draw, very thick, minimum size=5.5mm, inner sep=0.5mm}, 
             node distance=20mm, >=stealth]

            \node[rv] (a) {$a$};
            \node[rv, right =13mm of a, yshift=-10mm] (b) {$b$};
            \node[rv, right =6mm of b, yshift=4mm] (c) {$c$};
            \node[rv, right =6mm of c] (d) {$d$};

            \node[mv, below=4mm of a, xshift=-6mm] (ma) {$m_1$};
            \node[sv, above=4mm of a, xshift=-6mm] (s1) {$s_1$};

            \node[sv, above =12mm of b] (s2) {$s_2$};
            \node[mv, below =12mm of a, xshift=2mm] (mb) {$m_2$};
            \node[mv, below =4mm of a, xshift=7mm] (mab) {$m_5$};
            \node[sv, below =-5mm of a, xshift=13mm] (sab) {$s_4$};
            \node[mv, below =5mm of c] (mc) {$m_3$};
            \node[mv, below =5mm of d] (md) {$m_4$};
            \node[sv, above =4mm of c, xshift=6mm] (s3) {$s_3$};

            \draw[->, very thick] (mab) -- (a);
            \draw[->, very thick] (mab) -- (sab);
            \draw[->, very thick] (b) -- (sab);
            \draw[->, very thick] (a) -- (s1);

            \draw[->, very thick] (a) -- (s2);
            \draw[->, very thick] (b) -- (s2);
            \draw[->, very thick] (c) -- (s2);

            \draw[->, very thick] (ma) -- (s1);
            \draw[->, very thick] (ma) -- (a);
            \draw[->, very thick] (mb) -- (a);
            \draw[->, very thick] (mb) -- (b);
            \draw[->, very thick] (mc) -- (c);
            \draw[->, very thick] (md) -- (d);
            \draw[->, very thick] (c) -- (s3);
            \draw[->, very thick] (d) -- (s3);
        \end{tikzpicture}
        \caption{A canonical DAG}
        \label{fig:example-dag}
    \end{subfigure}
    \begin{subfigure}{.5\textwidth}\centering
        \begin{tikzpicture}[rv/.style={circle, draw, very thick, minimum size=5.5mm, inner sep=0.5mm}, 
                            node distance=20mm, >=stealth]
            \pgfsetarrows{latex-latex};

            \node[rv] (a) {$a$};
            \node[rv, right =7mm of a, yshift=-9mm] (b) {$b$};
            \node[rv, right =7mm of b, yshift=9mm] (c) {$c$};
            \node[mdot] (mab) at (a.south) [yshift=-7mm,xshift=2.5mm] {};
            \node[mdot] (mc) at (c.south) [yshift=-6mm] {};
             \node[sdot] (s) at (b.north) [yshift=15mm] {};
             \node[rv, right =7mm of c] (d) {$d$};
            \node[mdot] (md) at (d.south) [yshift=-6mm] {};
            \node[sdot, right=2.5mm of c, yshift=5mm](se){};
            \draw[->, very thick,color=red] (md) -- (d);
             
            \draw[-, very thick, color=blue] (a) -- (s);
            \draw[-, very thick, color=blue] (b) -- (s);
            \draw[-, very thick, color=blue] (c) -- (s);
            \draw[->, very thick] (b) -- (a);
            \draw[->, very thick,color=red] (mab) -- (a);
            \draw[->, very thick,color=red] (mab) -- (b);
            \draw[->, very thick,color=red] (mc) -- (c);

            \draw[-, very thick,color=blue] (se) -- (c);
            \draw[-, very thick,color=blue] (se) -- (d);
            \draw[->, very thick, loop left, looseness=10] (a) to (a);
        \end{tikzpicture}
            \begin{minipage}{.1cm}
            \vfill
            \end{minipage}
    \caption{The selected latent projection of (a)}
        \label{fig:example-slpb}
    \end{subfigure}
    \caption{
    Under marginalization of $m_1,m_2,m_3,m_4,m_5$ and selection of 
    $s_1,s_2,s_3,s_4$, we obtain the selected-latent projection shown in (b).
    }
    \label{fig:example-slp}
\end{figure}
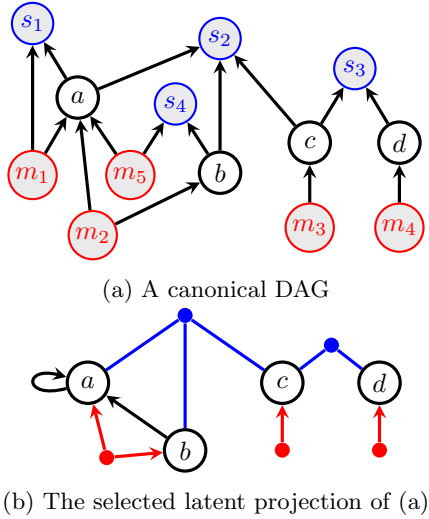

For instance, the selected-latent projection of both the non-canonical DAG of \Cref{fig:teaser-dag-1} and the canonical DAG of
\Cref{fig:example-dag}  (that are related by $\ToBidir$) is the smDG of \Cref{fig:example-slpb}.

It is also possible to define the canonical graph associated with any smDG as follows.
\begin{definition}[Canonical graph associated with an smDG] \label{def_inverse_slp}
The \emph{canonical graph} associated with an smDG $\G = (V,\E,\calL, \calS)$ is the graph $\can(\G)$ obtained by
starting with the graph $(V,\emptyset)$ and, 
\begin{enumerate}
    \item for each edge $a \to b \in \E$ where $a$ is an element of some $V_s \in \calS$ 
    and $b$ is an element of some $V_m \in \calL$,
    add a marginalized vertex $m_{ab}$, a selected vertex $s_{ab}$
    and a path $a \to s_{ab}\gets m_{ab}\to b$;
    \item for each other edge $a \to b \in \E$, add an edge $a \to b$;
\item for each maximal face $V_m\in \calL$, add a marginalized vertex $m$ and edges from $m$ to each vertex in $V_m$;
    \item for each maximal face $V_s$ in $\calS$, add a selected vertex $s$ and edges from each vertex in $V_s$ to $s$.
\end{enumerate}
In the case where $\can(\G)$ is acyclic, we call it the canonical DAG associated with $\G$.
\end{definition}

In the case where $\can(\G)$ is acyclic, the canonical DAG associated with $\calG$ can be mapped back to $\calG$ via selected-latent projection:

\begin{proposition} \label{prop:inverse_slp_inverts_slp}
Any canonical DAG is the canonical graph of its selected-latent projection 
(up to a renaming of its non-visible variables).
\end{proposition}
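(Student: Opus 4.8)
The plan is to prove the two constructions are mutually inverse by a direct vertex-and-edge matching: starting from a canonical DAG $\calD^{\text{can}}$ with partition $(V,M,S)$, I would form $\calG \coloneqq \slp(\calD^{\text{can}},V\mid S)$ and then $\calD' \coloneqq \can(\calG)$, and exhibit a bijection between the non-visible vertices of $\calD^{\text{can}}$ and those of $\calD'$ which, together with the identity on $V$, is a graph isomorphism (this is exactly ``equality up to renaming of the non-visible variables''). The first step is to read off the structural normal form that being a fixed point of the full pipeline $\mathtt{RmvRedS}\circ\mathtt{RmvRedM}\circ\ToBidir\circ\DropMtoS\circ\mathtt{MergeS}\circ\mathtt{MergeM}\circ\mathtt{Exog}\circ\mathtt{Term}$ forces. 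Concretely: every $m\in M$ is parentless and every $s\in S$ is childless; by $\mathtt{MergeS}$ and $\DropMtoS$ each marginalized vertex either has only visible children (a \emph{face} vertex) or has children exactly $\{s,b\}$ for a unique selected $s$ and unique visible $b$; dually each selected vertex either has only visible parents or has parents exactly $\{m,a\}$. Since $\mathtt{MergeM}$/$\mathtt{MergeS}$ forbid a selected vertex with two marginalized parents (or the reverse), pairing each special $m$ with its unique selected child is a bijection, and the leftover non-visible vertices organize into gadgets $a\to s\gets m\to b$.

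Next I would compute $\calG=(V,\E,\calL,\calS)$ from \Cref{def:slp-special-case}: the directed structure $\E$ is the set of visible-to-visible edges of $\calD^{\text{can}}$ together with one edge $a\to b$ per gadget; $\calL$ is the independence system generated by $\{\ch{\calD^{\text{can}}}{m}\cap V : m\in M\}$; and $\calS$ is generated by $\{\pa{\calD^{\text{can}}}{s}\cap V : s\in S\}$. The core of the argument is then to run $\can$ (\Cref{def_inverse_slp}) and check the four steps return $\calD^{\text{can}}$. For the edges, the key observation is that a directed edge $a\to b\in\E$ becomes a gadget (step~1) precisely when $a$ lies in a selected face and $b$ in a marginalized face. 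A gadget of $\calD^{\text{can}}$ contributes $\{a\}\in\calS$ and $\{b\}\in\calL$, so step~1 fires and reconstructs it; conversely a plain visible-to-visible edge $a\to b$ with $a$ in a selected face and $b$ in a marginalized face would mean $a$ has a selected child and $b$ a marginalized parent, contradicting $\ToBidir$-closedness, so those are exactly the edges reproduced verbatim by step~2.

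For the vertices, steps~3 and~4 create one marginalized (resp.\ selected) vertex per maximal face of $\calL$ (resp.\ $\calS$), and I would match these against the marginalized (resp.\ selected) vertices of $\calD^{\text{can}}$, invoking $\mathtt{RmvRedM}$- and $\mathtt{RmvRedS}$-closedness to rule out nested faces so that distinct maximal faces come from distinct vertices.

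The hard part will be precisely this vertex correspondence in the presence of gadgets. Each gadget into $b$ contributes a singleton visible-children set $\{b\}$ to $\calL$ \emph{in addition} to being reconstructed by step~1, and dually each gadget out of $a$ contributes $\{a\}$ to $\calS$; so the delicate point is to show, using the $\mathtt{RmvRed}$ reductions, that such a singleton never functions as an ``extra'' maximal face that would make step~3 (resp.\ step~4) manufacture a marginalized (resp.\ selected) vertex with no counterpart in $\calD^{\text{can}}$, nor conversely leave some face vertex of $\calD^{\text{can}}$ unaccounted for. In other words, the crux is to verify that the vertices produced by step~1 (gadgets) and those produced by steps~3--4 (faces) together biject cleanly with $M\dotcup S$, with every maximal face of $\calL$ and $\calS$ realized by exactly one non-visible vertex. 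Once that accounting is pinned down, assembling the vertex bijection with the identity on $V$ into the claimed isomorphism is routine.
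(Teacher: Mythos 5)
Your strategy coincides with the paper's: classify the constituents of a canonical DAG into plain visible-to-visible edges, gadgets $a \to s_{ab} \gets m_{ab} \to b$, and face vertices; push each through $\slp$ and then $\can$; and check that everything returns unchanged. Your treatment of the edges (step~1 of \Cref{def_inverse_slp} fires exactly for gadgets, step~2 exactly for plain edges, by $\ToBidir$-closedness) is correct and matches the paper. However, the step you defer as ``the hard part''---showing that gadget-contributed singleton faces never make steps~3--4 manufacture extra face vertices---is not routine, and in fact the statement you propose to verify is \emph{false} under a literal reading of \Cref{def:slp-special-case,def_inverse_slp}, so the completion you sketch cannot succeed as stated.

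Concretely, let $\calD$ be the bare gadget: $V=\{a,b\}$, $M=\{m_{ab}\}$, $S=\{s_{ab}\}$, with edges $a \to s_{ab}$, $m_{ab}\to s_{ab}$, $m_{ab}\to b$. This is a canonical DAG (it is, for instance, the canonical form of the DAG with edges $a\to b$, $a\to s'$, $m'\to b$: after $\ToBidir$ creates the gadget, $\mathtt{RmvRedM}$ deletes $m'$ because $\operatorname{ch}(m')=\{b\}\subseteq\{s_{ab},b\}=\operatorname{ch}(m_{ab})$, and $\mathtt{RmvRedS}$ deletes $s'$ for the dual reason). Its SLP has $\E=\{(a,b)\}$, $\calL=\cl(\{b\})$ and $\calS=\cl(\{a\})$, so $\{b\}$ and $\{a\}$ are \emph{maximal} faces, contributed solely by the gadget. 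Step~1 of \Cref{def_inverse_slp} correctly rebuilds the gadget, but steps~3 and~4, read literally, then add a fresh marginalized vertex with child $b$ and a fresh selected vertex with parent $a$, neither of which has a counterpart in $\calD$; hence $\can(\slp(\calD,V\mid S))$ has four non-visible vertices while $\calD$ has two. Note that $\mathtt{RmvRed}$-closedness is exactly what \emph{creates} this situation---it deleted the face vertices that would have been the counterparts---so invoking it cannot repair the accounting. The repair must instead be made on the reconstruction side: steps~3--4 have to be read as skipping any maximal singleton face that is already realized by a step-1 gadget (equivalently, the step-3/4 vertex for such a face must be identified with the step-1 gadget vertex). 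The paper's own proof makes this identification tacitly, in the unproved sentence ``no other vertices or edges are added''; so you have put your finger on a genuine soft spot of the paper, but flagging the crux and promising that the $\mathtt{RmvRed}$ reductions will close it is not a proof, and the closing argument you describe would fail on the example above.
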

\begin{proof}
    The edges of a canonical DAG $\calD$
    can be partitioned into:
    a) edges $a \to b$ where $a,b \in V$ and $\neg (a \in \pa{S}) \wedge (b \in \ch{M})$ (due to $\ToBidir$);
    b) paths $a \to s \gets m \to b$ where $a,b \in V,s\in S,m\in M$;
    c) edges $m \to v$ where $m$ is a marginal variable of a face and $v \in V$;
    d) edges $v \to s$ where $s$ is a selected variable of a face and $v \in V$.
Each edge $a \to b$ in $\calD$ is mapped to $a \to b$ in the smDG $\calG$ by \Cref{def:slp-special-case} and then back to $a \to b$ in $\calD$ by \Cref{def_inverse_slp}.
Each path $a \to s \gets m \to b$ in $\calD$ is mapped to $a \to b$ in $\calG$ and then back to 
$a \to s_{a,b} \gets m_{a,b} \to b$ in $\calD$.
Each marginal variable of a face $m$ with children $V_m$ is mapped to a maximal face in $\calL$ and then back to a 
marginal variable $m'$ with children $V_m$.
Each selected variable of a face $s$ with parents $V_s$ is mapped to a maximal face in $\calS$ 
and then back to a selected variable $s'$ with parents $V_s$.
No other vertices or edges are added by \Cref{def_inverse_slp}, so 
the function thus defined is a left-inverse of the SLP.
\end{proof}

For an arbitrary smDG, however, the canonical graph is not guaranteed to be acyclic.
For example, in the two vertex smDG $\G:a\to b\to a$, the canonical graph  $\can(\G)$ is equal to $\G$ itself.
 By \Cref{prop:inverse_slp_inverts_slp}, such an smDG cannot be a selected-latent projection for any DAG,
which means that it does not represent any realistic data-generating process.
When an smDG does have a canonical DAG, that is, when it can be lifted to a DAG via the process described in \Cref{def_inverse_slp}, we call it \emph{liftable}:

\begin{definition}[Liftable smDG]
    An smDG $\calG$ is said to be \emph{liftable} if its canonical graph is acyclic.
\end{definition}

An smDG will \emph{not} be liftable when its directed edges contain a cycle that has no edge that satisfies condition 1 of \Cref{def_inverse_slp}:

\begin{proposition} \label{prop:acyclic-canonical-dag}
An smDG $\calG=(V,\cal E, L, S)$  is liftable
if and only if for every cycle in $\calG$, at least one edge $a \to b$ in that cycle has 
$a \in \bigcup \calS$
and $b \in \bigcup \calL$. 
\end{proposition}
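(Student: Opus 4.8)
The plan is to reduce the acyclicity of the canonical graph $\can(\calG)$ to a purely combinatorial condition on the directed structure $(V,\E)$, by showing that every directed cycle of $\can(\calG)$ must live entirely among the visible vertices $V$. To this end I would first inspect each kind of vertex that \Cref{def_inverse_slp} introduces beyond $V$ and argue that none of them can lie on a directed cycle. The marginalized vertex $m$ added for a maximal face $V_m \in \calL$ (step 3) has only outgoing edges, so it is a source; the selected vertex $s$ added for a maximal face $V_s \in \calS$ (step 4) has only incoming edges, so it is a sink. Similarly, in the gadget $a \to s_{ab} \gets m_{ab} \to b$ created by step 1 for a type-(1) edge, the vertex $m_{ab}$ is a source and $s_{ab}$ is a collider, hence a sink. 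Since a source has no incoming edge and a sink has no outgoing edge, none of these auxiliary vertices can appear on any directed cycle, so every directed cycle of $\can(\calG)$ uses only vertices of $V$ and only edges of $\can(\calG)$ whose endpoints both lie in $V$.

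Next I would identify exactly which edges of $\can(\calG)$ join two visible vertices. Every edge incident to a face vertex $m$ or $s$ has a non-visible endpoint, and the gadget built for a type-(1) edge $a \to b$ likewise produces only edges incident to the non-visible $s_{ab}$ or $m_{ab}$; crucially, because $s_{ab}$ is a collider, this gadget creates \emph{no} directed route from $a$ to $b$. Hence the only edges of $\can(\calG)$ between two visible vertices are the images under step 2 of the edges $a \to b \in \E$ that fail the type-(1) condition, i.e.\ those with $\neg(a \in \bigcup \calS \wedge b \in \bigcup \calL)$. In other words, the induced subgraph of $\can(\calG)$ on $V$ is precisely the directed graph obtained from $(V,\E)$ by deleting every edge $a \to b$ with $a \in \bigcup \calS$ and $b \in \bigcup \calL$.

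Combining the two observations, $\can(\calG)$ contains a directed cycle if and only if this pruned visible subgraph does, i.e.\ if and only if $\calG$ has a cycle none of whose edges satisfies $a \in \bigcup \calS$ and $b \in \bigcup \calL$. Taking the contrapositive yields the claim: $\can(\calG)$ is acyclic --- equivalently, $\calG$ is liftable --- exactly when every cycle of $\calG$ contains at least one edge $a \to b$ with $a \in \bigcup \calS$ and $b \in \bigcup \calL$. The only delicate point, and the step I expect to require the most care, is the verification that the special-edge gadget genuinely severs directed connectivity from $a$ to $b$ (via the collider at $s_{ab}$) and that the face vertices are pure sources or pure sinks; once these structural facts are pinned down, the equivalence follows immediately.
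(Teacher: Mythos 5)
Your proposal is correct and follows essentially the same route as the paper's own proof: the face vertices and gadget vertices of $\can(\calG)$ are pure sources or pure sinks, so every directed cycle lives in the induced visible subgraph, whose edges are exactly those of $\E$ that fail the step-(1) condition of \Cref{def_inverse_slp}. If anything, your writeup is more careful than the paper's terse argument (which appears to contain typos, attributing cycle edges to step (1) rather than step (2) and swapping $\calL$ and $\calS$ in the final condition), in particular in verifying explicitly that the gadget $a \to s_{ab} \gets m_{ab} \to b$ severs directed connectivity from $a$ to $b$ via the collider at $s_{ab}$.
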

\begin{proof}
In the canonical graph, marginalized vertices lack parents 
and selected vertices lack children, so cycles can only arise 
from edges created at step (1) of \Cref{def_inverse_slp}.
Thus a cycle will arise in the canonical graph precisely when there is a 
cycle in the smDG, and no edge $a \to b$ in that cycle 
satisfies the conditions for step (1), 
i.e.~$a \not \in \bigcup \calL$ or
$b \not \in \bigcup \calS$.
\end{proof}

In principle, to read the conditional independence relations between variables in an smDG, it is possible 
to transform it into a canonical DAG and then use the D-separation criterion of \citet{geiger1990identifying}.
Unlike the lowercase d-separation criterion, this criterion is specifically designed to deduce independence relations
where particular variables are known to be a deterministic function of their parents.
It will sometimes be more convenient, however, to apply a criterion directly to an smDG, 
just as the m-separation criterion is convenient for evaluating mDAGs.
As such, in \Cref{sec:sm-sep}, we have defined sm-separation, a criterion that
extends m-separation and is a sound and complete for evaluating independence relations 
in the selected-latent projection of any canonical DAG.

\section{Interventional inequivalence of smDGs} \label{sec_smDG_captures_everything}
\subsection{Distinct smDGs are Observe\&Do distinguishable} \label{subsec_smDG_obs_and_do_distinguishable}

So far, we have established that DAGs with the same selected-latent projection cannot be 
distinguished by the Observe\&Do probing scheme.
We now prove that all DAGs with different selected-latent projections are Observe\&Do distinguishable.
These results jointly imply that the set of liftable smDGs
(as can be identified by \Cref{prop:acyclic-canonical-dag}) are isomorphic 
to the equivalence classes of DAGs that share the same SMI model.

That DAGs with different mDAGs are Observe\&Do distinguishable was established by
 \citet{ansanelli2024everything}. 
 To generalize this result to smDGs, we will need to consider not just how to distinguish 
 a relationship $a \to b$ with a relationship where $a$ and $b$ 
 are in the same marginalized face, but also how to 
 distinguish each of these from a scenario where $a$ and $b$ are in the same selected face.

\begin{restatable}[Same smDG $\Leftrightarrow$ Same SMI model]{theorem}{smgraphSMIEquivalence} \label{thm:diff-slp-implies-diff-model}
The DAGs $\calD$ and $\calD'$ with vertices partitioned as $(V,M,S)$ and $(V, M', S')$ respectively 
are Observe\&Do  equivalent, i.e.~$\calC(\D,V \mid S)=\calC(\calD',V \mid S')$, if and only if they have the same selected-latent projection, i.e.~$\slp(\calD,V \mid S)=\slp(\calD',V\mid S')$.
\end{restatable}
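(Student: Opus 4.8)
The plan is to prove the two directions separately. Soundness ($\Leftarrow$) follows almost immediately from the earlier propositions, while completeness ($\Rightarrow$) requires constructing distinguishing experiments and is the substantive part.

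\emph{Soundness.} First I would note that the chain of \Cref{prop_exog_term,prop_merging,prop:splitarrows,lemma_interchangeedges,prop_remove_redund} shows that every DAG is Observe\&Do equivalent to its canonical DAG. By \Cref{prop:inverse_slp_inverts_slp}, the canonical DAG is recovered from its selected-latent projection up to a renaming of non-visible vertices, and such a renaming leaves the SMI model unchanged. Hence $\slp(\calD,V\mid S)=\slp(\calD',V\mid S')$ forces $\calD$ and $\calD'$ to have isomorphic canonical DAGs, so $\calC(\calD,V\mid S)=\calC(\calD',V\mid S')$.

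\emph{Completeness.} For the converse I would argue the contrapositive: if the projections differ then the SMI models differ. Using soundness it suffices to treat canonical $\calD$ and $\calD'$, so a difference in projections is a genuine structural difference in one of the three components of the smDG --- the directed structure $\E$, the marginalized independence system $\calL$, or the selected independence system $\calS$ --- and for each I would exhibit kernels (hence a set $P_*$ of SMI pairs) realizable by one DAG but not the other. If a \emph{directed edge} $a\to b$ is present in only one projection, I would isolate the influence of $a$ on $b$ by intervening on the other visible parents of $b$ and reading off $b^\flat$ under selection, as in the directed-structure recovery of \citet{ansanelli2024everything}: the edge (whether realized as $a\to b$ or as a special edge $a\to s\gets m\to b$) lets $b$ respond to the intervention on $a$, while its absence does not. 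If a maximal \emph{marginalized face} $V_m$ differs, I would sever all directed paths among its vertices by hard interventions and reproduce the non-factorizable correlation that a single shared latent parent generates but disjoint confounders cannot, reusing the inflation-style argument of \citet{evans2016graphs,ansanelli2024everything}. If a maximal \emph{selected face} $V_s$ differs, I would choose kernels making the selection event $X_S=\zeros$ depend jointly and irreducibly on every vertex of $V_s$; the resulting selected distribution factorizes in the DAG lacking that face but not in the one containing it --- the phenomenon that separates the two selected DAGs of \Cref{fig:selection}.

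\emph{Main obstacle.} I expect the selected-face case, together with the interplay among the three components, to be the crux. The difficulty is to show that a selection-induced joint constraint on $V_s$ cannot be mimicked in the other DAG by some combination of smaller selected faces, extra directed edges, or latent confounding. This calls for an irreducibility argument dual to the one used for marginalized faces, together with care that the interventions isolating one feature do not inadvertently activate another: in particular a special edge $a\to s\gets m\to b$ behaves like a directed edge under selection yet simultaneously touches a selected and a marginalized vertex, so the directed-structure and face recoveries must be carried out in a compatible order rather than independently.
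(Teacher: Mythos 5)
Your soundness direction matches the paper's: the canonicalization chain plus \Cref{prop:inverse_slp_inverts_slp} is exactly how the ``if'' side is proved. The completeness direction, however, has two genuine gaps. First, your directed-edge recovery rests on the claim that absence of $a\to b$ in the projection means $X_b$ cannot respond to an intervention on $X_a$. Under selection this is false: if $a$ and $b$ merely share a selected child, then intervening on $X_a$ already changes the selected distribution of $X_b$ (this is precisely the phenomenon the paper uses in \Cref{sec:related-work} to critique the projection of \citet{chen2024modeling}, where $v\to s\gets u$ exhibits interventional dependence of $X_u$ on $X_v$ after conditioning on $X_s=0$). Consequently ``response to intervention'' is not a sound edge detector here, and the paper's \Cref{le_diff_directed_edge} instead uses a \emph{two-sided} datum --- $X_b$ perfectly copies the intervened value of $X_a$, while the selected law of $X_a$ is unmoved by intervening $X_b$ --- together with a factorization argument over the worst-case alternative subgraph (\Cref{fig_subgraph}), in which $a$ and $b$ share a marginalized parent, share a selected child, and are joined by a reverse special edge $b\to s_4\gets m_4\to a$. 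Showing that this conjunction of interventional facts forces the kernel $P(X_{s_1}=0\mid X_a=0,X_b=1)$ to vanish, and hence contradicts the reverse intervention, is the substantive content you would still need to supply.

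Second, your trichotomy over $(\E,\calL,\calS)$ silently absorbs the self-loop case $a=b=v$ (canonical pattern $v\to s\gets m\to v$), which the paper isolates as a separate first case (\Cref{le_diff_self_loop}) because no ``intervene on one vertex, observe another'' scheme can reach it: the only distinguishing datum is that the selected distribution of $X_{v^\flat}$ varies with the value assigned to $X_{v^\sharp}$ under a \emph{simultaneous} intervention and observation of the same vertex. This is exactly where Observe\&Do is strictly stronger than Observe-or-Do --- the paper proves in \Cref{subsec_smDG_obs_or_do_distinguishable} and \Cref{app_obs_or_do} that two DAGs differing only in a self-loop are Observe-or-Do indistinguishable --- so any completeness argument phrased, as yours is, in terms of intervening on some vertices and reading off responses on others provably cannot close this case. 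Relatedly, the paper's four lemmas are chained in a fixed order (self-loops, then remaining edges, then $\calL$ given equal directed structure, then $\calS$ given equal directed structure and $\calL$); you correctly anticipate that the recoveries must be ordered compatibly, but you locate the crux in the selected-face case, which the paper dispatches with a comparatively clean parity construction, whereas the real difficulties are the two just described.
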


The proof is detailed in \Cref{app_diff_slp_diff_model} but we summarize our approach here.
A pair of SLPs can differ in four ways:
(1) different self-loops (i.e.~edges from a vertex to itself), 
(2) other differences in their directed structures,
(3) different marginalized independence systems,
(4) different selected independence systems.
For case (1), we show that if an SLP contains a self-loop $(v,v)$, i.e.~if the canonical DAG contains a path $v\to s \gets m\to v$, and we ignore all of the other variables, say by setting them to constant values,
and we simultaneously observe $X_{v^\flat}$ and intervene on $X_{v^\sharp}$, 
then the value of $X_{v^\flat}$ might depend on the value of $X_{v^\sharp}$.\footnote{\label{footnote_sec6} This might seem paradoxical at first sight, but here is a scenario where this could occur: suppose that a person's income ($m$) affects both their likelihood of having a phone ($s$) and of having a bank account ($v$). Further imagine that having a bank account is helpful in getting a phone plan, so it increases one's likelihood having a phone. The causal structure of such a scenario is given by $v\gets m \to s$ and $v \to s$. An experimenter asks what fraction of the population has a bank account, via a phone survey, so it is subjected to selection effects on $s$. Suppose that before conducting this survey, the experimentalist opens a bank account for some of the subjects ($v^\sharp$). Some low-income people will now gain access to a phone, and will therefore appear in the survey data. So, if the survey asks ``Did you have a bank account \emph{before} the experiment started?" ($v^\flat$), we will see a higher percentage of the answer ``no" than what we would have seen without the intervention---something that cannot happen if
the self-loop is not present.} 
For case (2), we show that if an smDG contains a directed edge $a\to b$, then 
$X_{b}$ can statistically depend on $X_a$ when $X_a$ is intervened even while 
the distribution of $X_a$ is unchanged by an intervention to $X_{b}$.
For case (3), we show that only with a marginalized face can we see
a conditional dependence between a set of variables that disappears
whenever all but one of the variables is intervened upon.
Finally, for case (4), we show that only with a selected face
we can see a conditional dependence between a set of variables
that persists whenever all but one of its variables is intervened upon.

\subsection{Distinct smDGs are not always Observe-Or-Do distinguishable} \label{subsec_smDG_obs_or_do_distinguishable}
So far, we have chosen the most powerful probing scheme possible to make sure the smDG structure captures all  important differences between causal structures.
Under weaker probing schemes, however, some pairs of smDGs 
may be indistinguishable.

One kind of weaker probing scheme is the Observe-Or-Do scheme of \citet{ansanelli2024everything}, where
each variable is allowed to be observed or intervened, but 
not both at once. We provide a formal definition of the Observe-Or-Do  in \cref{app_obs_or_do}. Under access to this probing scheme only, the pair of graphs in \Cref{fig_obs_or_do} cannot be distinguished even though 
they have distinct smDGs.

\begin{figure}[h!]
\centering

    \begin{subfigure}{0.35\textwidth}
    \centering
    \begin{tikzpicture}[>=stealth]
        \node[rv] (a) at (0,0) {$v$};
        \node[mv] (m) at (7mm, -7mm) {$m$};
        \node[sv] (s) at (7mm, 7mm) {$s$};

        \draw[->, very thick] (m) -- (a);
        \draw[->, very thick] (a) -- (s);
        \draw[->, very thick] (m) -- (s);
    \end{tikzpicture}
    \caption{A DAG where $v$ has a self-loop} \label{fig_obs_or_do1}
    \end{subfigure}
    \hspace{5mm}
    \begin{subfigure}{0.35\textwidth}
    \centering
    \begin{tikzpicture}[>=stealth]
        \node[rv] (a) at (0,0) {$v$};
        \node[mv] (m) at (7mm, -7mm) {$m$};
        \node[sv] (s) at (7mm, 7mm) {$s$};
        \draw[->, very thick] (m) -- (a);
        \draw[->, very thick] (a) -- (s);
    \end{tikzpicture}
    \caption{A DAG where $m \to s$ is removed} \label{fig_obs_or_do2}
    \end{subfigure}

\caption{Two canonical DAGs whose selected-latent projections differ in the 
self-loop $v \to v$.}
\label{fig_obs_or_do}
\end{figure}
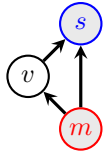
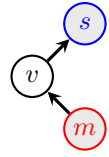

The selected-latent projections of
\Cref{fig_obs_or_do1} and \Cref{fig_obs_or_do2} have identical marginal and selected 
independence systems (which just contain the empty set and the set $\{v\}$).
The SLPs differ, however, in their directed structure:
in the SLP of \Cref{fig_obs_or_do1}, the directed structure contains the self-loop $v \to v$, 
which is not present in the SLP of \Cref{fig_obs_or_do2}.
In \Cref{app_obs_or_do}, we establish that these DAGs \emph{cannot} be distinguished by the Observe-or-Do probing scheme.

Note that this is an important difference from the case without selection: as proven in \citet{ansanelli2024everything}, when two causal structures without selected vertices are associated with different mDAGs, they are distinguishable even by the weaker Observe-or-1Do probing scheme, where a variable cannot be simultaneously observed and intervened upon \emph{and} we are restricted to hard interventions that set variables to the value $0$.

\section{Observational equivalence between smDGs} \label{sec:observational-equivalence}

Up to this point, we have studied how causal structures can be distinguished when one has access to interventions. In particular, our main focus was the very powerful Observe\&Do probing scheme. In this section, we go to the other extreme, investigating the weakest probing scheme: passive observation of the visible variables. 

While different liftable graphs can always be distinguished by the Observe\&Do probing scheme, there are passively equivalent smDGs. As such, it is useful to define observational equivalence and dominance between liftable smDGs. This is defined via their canonical DAGs:

\begin{definition}[Observational Equivalence of smDGs]
Let $\calG$ and $\calG'$ be two liftable smDGs that have the same set of vertices $V$. We say that $\calG$ and $\calG'$ are \emph{observationally equivalent} if their canonical DAGs are observationally equivalent; that is, if $\calM(\can(\calG),V\cmid S)=\calM(\can(\calG'),V\cmid S')$, where $S$ is the set of selected vertices in $\can(\calG)$ and  $S'$ is the set of selected vertices in $\can(\calG')$. 

\end{definition}

The problem of establishing observational equivalence classes of causal structures is complicated, 
and it is not fully solved even in the case without selection effects. In fact, as pointed out by \citet{ansanelli2025observational}, at the level of four visible vertices there are already mDAGs for which it is unknown whether they are observationally equivalent.

Therefore, in this section we do not aim to completely solve this problem for the case with selection effects. Instead, we will simply present a list of rules that give sufficient---but not necessary---conditions for two smDGs to be observationally equivalent.

\subsection{Adding marginalized faces}
\label{sec_added_marg_node}

Our first rule for proving observational equivalence, though not interventional equivalence, gives conditions under which a marginalized face can be \emph{added} without changing the selected-marginal observational model. In many cases, this added marginalized face will enable the application of our subsequent rule, \Cref{prop:smDG-split}, where this would not be otherwise possible. 

\begin{restatable}{proposition}{SelectedToMarginalObsEq} \label{prop:selected-to-marginal}
Let $\calG=(V,\cal E, L, S)$ be a liftable smDG
that includes a maximal selected face $V_s \in \calS$. 
Assume that:
\begin{itemize}
\item $V_s$ includes all its parents, i.e.~$\pa{\calG}{V_s}\subseteq V_s$;
\item every vertex $v\in V_s$ belongs to some marginalized face of $\calL$; 
and 
\item every marginalized face $V_m\in \calL$ that intersects with $V_s$ is subsumed by $V_s$, 
i.e.~$V_s\cap V_m\neq \emptyset \implies V_m\subseteq V_s$. 
\end{itemize}
Let $\calG'=(V,\cal E, L', S)$ be the smDG constructed from $\calG$ by adding 
$\cl(V_s)$ to the marginalized independence system, 
i.e.~$\calL'=\calL\cup\cl(V_s)$. 
Then, $\calG$ and $\calG'$ are observationally equivalent.

\end{restatable}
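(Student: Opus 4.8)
The plan is to prove the two inclusions $\calM(\can(\calG),V\mid S)\subseteq\calM(\can(\calG'),V\mid S')$ and its reverse separately, working throughout with the canonical DAGs. First I would record how $\can(\calG)$ and $\can(\calG')$ differ. By \Cref{def_inverse_slp}, passing from $\calL$ to $\calL'=\calL\cup\cl(V_s)$ makes $V_s$ a maximal marginalized face (maximal by the third hypothesis) while every maximal face of $\calL$ contained in $V_s$ ceases to be maximal. Hence $\can(\calG')$ arises from $\can(\calG)$ by deleting the marginalized vertices of all faces contained in $V_s$---which, by the second hypothesis, jointly cover $V_s$---and replacing them with a single marginalized vertex $m^\ast$ whose children are exactly $V_s$; every other vertex and edge, in particular the selected vertex $s$ whose parents are $V_s$, is common to both graphs. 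I would also note that, by the first and third hypotheses, $V_s$ is ancestrally closed: no visible vertex, no face confounder, and no special edge outside $V_s$ points into $V_s$, so the sub-process generating $X_{V_s}$ is self-contained and structurally identical in the two graphs apart from the confounder swap.

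For the forward inclusion (the easy direction), given any parametrisation of $\can(\calG)$ I would reproduce its SMO distribution on $\can(\calG')$ by letting $X_{m^\ast}$ be the Cartesian product of independent copies of the removed small confounders and letting each $v\in V_s$ read off, from $X_{m^\ast}$, exactly the coordinates it read from its small-face parents. Since the children of $m^\ast$ are all of $V_s$, this reproduces the joint law exactly, and every remaining kernel---including that of $s$ and of all other selected vertices---is copied verbatim, so the selected conditionals agree. This direction uses no selection.

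The reverse inclusion is the crux, and here I would exploit that selecting on the \emph{entire} set $V_s$ through $s$ is, under conditioning, as expressive as confounding all of $V_s$. Using ancestral closedness, I would factor every selected conditional as
\[
P(x_V\mid X_S=\zeros)\ \propto\ \beta(x_{V_s})\,D(x_{V_s},x_B),
\]
where $x_B$ collects the vertices outside $V_s$, the factor $\beta(x_{V_s})$ is the joint sub-probability that $X_{V_s}=x_{V_s}$ and that all selected vertices depending only on $X_{V_s}$ (among them $s$) equal $0$, and $D$ gathers the downstream variables together with all boundary and external selection events. Because $D$ involves none of the confounders being changed, it is the \emph{same} function of $(x_{V_s},x_B)$ in both graphs; thus it suffices to realise, in $\can(\calG)$, a factor $\beta$ proportional to the target $\beta'$ produced by $\can(\calG')$. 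I would write $\beta(x_{V_s})=\rho(x_{V_s})\,w(x_{V_s})$, where $\rho$ is the law of $X_{V_s}$ together with the $V_s$-internal selections other than $s$, and $w(x_{V_s})=P(X_s=0\mid x_{V_s})$ is the free kernel of $s$. Choosing the small confounders so that each $X_v$, $v\in V_s$, has full support (possible by the second hypothesis, which gives every $v$ a marginalized parent) and neutralising the remaining $V_s$-internal selected vertices makes $\rho$ full support; then setting $w=c\,\beta'/\rho$ with $c=\min_x \rho(x)/\beta'(x)$ yields $w\in[0,1]$ and $\beta=c\,\beta'$, while all downstream and external kernels are kept identical to those of $\can(\calG')$.

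The main obstacle is precisely this reverse inclusion, and within it the passage from the discrete to the continuous case. For discrete $X_{V_s}$ the reweighting above is exact, because the selected vertex $s$ can single out any positive-probability configuration, so the full-support law $\rho$ can be reshaped into $c\,\beta'$ with a bounded acceptance function. In the continuous case one must instead exhibit a \emph{realisable} full-support law $\rho$ that dominates $\beta'$ with a bounded Radon--Nikodym derivative, so that $w=c\,\beta'/\rho$ remains in $[0,1]$; I would address this by choosing the confounders' laws (within the regular-conditional-probability framework of \Cref{app:regular-conditional-probabilities}) to match the marginals of $\beta'$ and dominate it, which is the delicate step that must be carried out with care. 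The remaining checks---that the selection event keeps positive probability, so that no distribution is vacuously excluded, and that the order of the cover faces and special-edge vertices does not affect $\beta$---follow directly from the factorisation above.
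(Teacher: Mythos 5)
Your skeleton is essentially the paper's own argument. The paper factorizes $P(X_V\mid X_S=\zeros)$ into a $V_s$-block and a downstream block using the district factorization of \citet{evans2018margins} (your ancestral-closedness argument plays exactly this role, and your hypothesis-checking that no marginalized face straddles the boundary of $V_s$ is the same observation), and then notes that $\can(\calG)$ is already \emph{saturated} on the first factor: since every $v\in V_s$ has a marginalized parent and all of $V_s$ shares the selected child $s$, one can sample $X_{V_s}$ uniformly and choose the kernel of $s$ so that the conditional $P(X_{V_s}\mid X_S=\zeros)$ equals any target. Your explicit acceptance function $w=c\,\beta'/\rho$ is a concrete instantiation of that same rejection-sampling idea, and your forward inclusion (letting $X_{m^\ast}$ be a Cartesian product of the removed small confounders) is the standard simulation argument the paper leaves implicit. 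So both directions, and the key saturation lemma, coincide with the paper's proof.

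The one genuine defect is your proposed repair for the continuous case, which is the wrong mechanism. You suggest exhibiting a realisable full-support law $\rho$ that \emph{dominates} $\beta'$ with bounded Radon--Nikodym derivative, so that $w=c\,\beta'/\rho\in[0,1]$. But reweighting $\rho$ by any acceptance function (bounded or not) can only produce conditionals absolutely continuous with respect to $\rho$, while the target $\beta'$ produced by the common confounder $m^\ast$ in $\can(\calG')$ can be \emph{singular} with respect to every law realisable by the small faces of $\calL$ --- e.g.\ $X_a=X_b=X_{m^\ast}$ uniform on $[0,1]$ yields the uniform diagonal coupling, which no product-type $\rho$ dominates, and since uncountably many mutually singular such couplings exist, no single $\rho$ can dominate them all. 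The mechanism that actually works, and the one the paper tacitly relies on (``choosing the selection condition on $X_s$ according to the desired $P(X_{V_s}\mid X_S=0)$''), is to let $X_s$ include \emph{deterministic constraint components} and condition on probability-zero events via the regular conditional probabilities of \Cref{app:regular-conditional-probabilities} --- exactly the device $\mathbf{1}_{\{X_{m_{ab}}=X_a\}}$ used in the proofs of \Cref{lemma_interchangeedges} and \Cref{prop:splitarrows}. Conditioning on such null events (e.g.\ on $X_a-X_b=0$), combined with density reweighting, escapes the absolute-continuity barrier; bounded reweighting alone does not. To your credit you flagged this step as the delicate one, but the specific fix you sketch would fail, whereas the rest of your argument goes through as in the paper.
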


\Cref{fig_running_obs_ex1} gives an example of the application of \Cref{prop:selected-to-marginal}. The idea behind this proposition is as follows: a selected vertex $s$ can induce correlation between its ancestors, which is mediated by the parents of $s$. For example, the DAG $a\to b \to s \gets c$ can explain correlations between $X_a$ and $X_c$ if we condition on $X_s=0$, but \emph{not} if we further condition on $X_b$. In the special case where the ancestors of a selected vertex $s$ coincide with its parents, the presence of $s$ can induce \emph{any} correlation between its ancestors, without any mediation. That is, it is at least as powerful as a marginalized vertex $m$ pointing to all of the ancestors of $s$. The proof of this proposition is supplied in \Cref{app_selected_to_marginal}.

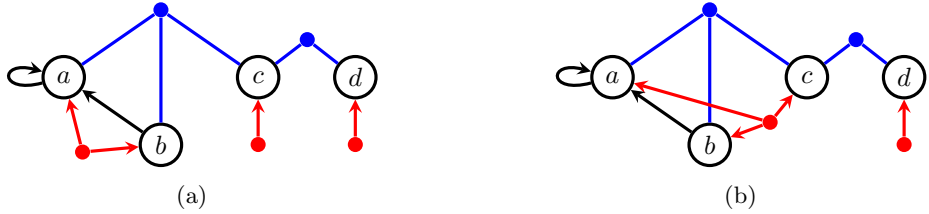
\begin{figure}[ht]
    \centering
    \begin{subfigure}{.5\textwidth}\centering
        \centering
        \begin{tikzpicture}[rv/.style={circle, draw, very thick, minimum size=5.5mm, inner sep=0.5mm}, 
                            node distance=20mm, >=stealth]
            \pgfsetarrows{latex-latex};

            \node[rv] (a) {$a$};
            \node[rv, right =7mm of a, yshift=-9mm] (b) {$b$};
            \node[rv, right =7mm of b, yshift=9mm] (c) {$c$};
            \node[mdot] (mab) at (a.south) [yshift=-7mm,xshift=2.5mm] {};
            \node[mdot] (mc) at (c.south) [yshift=-6mm] {};
             \node[sdot] (s) at (b.north) [yshift=15mm] {};
             \node[rv, right =7mm of c] (d) {$d$};
            \node[mdot] (md) at (d.south) [yshift=-6mm] {};
            \node[sdot, right=2.5mm of c, yshift=5mm](se){};
            \draw[->, very thick,color=red] (md) -- (d);
             
            \draw[-, very thick, color=blue] (a) -- (s);
            \draw[-, very thick, color=blue] (b) -- (s);
            \draw[-, very thick, color=blue] (c) -- (s);
            \draw[->, very thick] (b) -- (a);
            \draw[->, very thick,color=red] (mab) -- (a);
            \draw[->, very thick,color=red] (mab) -- (b);
            \draw[->, very thick,color=red] (mc) -- (c);
            \draw[-, very thick,color=blue] (se) -- (c);
            \draw[-, very thick,color=blue] (se) -- (d);
            \draw[->, very thick, loop left, looseness=10] (a) to (a);
        \end{tikzpicture}
            \begin{minipage}{.1cm}
            \vfill
            \end{minipage}
            \caption{}
        \label{fig_running_obs_ex1a}
    \end{subfigure}
        \begin{subfigure}{0.45\textwidth}\centering
        \begin{tikzpicture}[rv/.style={circle, draw, very thick, minimum size=5.5mm, inner sep=0.5mm}, 
                            node distance=20mm, >=stealth]
            \pgfsetarrows{latex-latex};
            \node[rv] (a) {$a$};
            \node[rv, right =7mm of a, yshift=-9mm] (b) {$b$};
            \node[rv, right =7mm of b, yshift=9mm] (c) {$c$};
            \node[mdot, right =4mm of b, yshift=3mm] (m) {};
             \node[sdot] (s) at (b.north) [yshift=15mm] {};
             \node[rv, right =7mm of c] (d) {$d$};
            \draw[-, very thick, color=blue] (a) -- (s);
            \draw[-, very thick, color=blue] (b) -- (s);
            \draw[-, very thick, color=blue] (c) -- (s);
            \draw[->, very thick, color=red] (m) -- (a);
            \draw[->, very thick,color=red] (m) -- (b);
            \draw[->, very thick,color=red] (m) -- (c);
           \node[sdot, right=2.5mm of c, yshift=5mm](se){};
            \draw[-, very thick,color=blue] (se) -- (c);
            \draw[-, very thick,color=blue] (se) -- (d);
             \node[mdot] (md) at (d.south) [yshift=-6mm] {};
            \draw[->, very thick,color=red] (md) -- (d);
            \draw[->, very thick] (b) -- (a);
            \draw[->, very thick, loop left, looseness=10] (a) to (a);
        \end{tikzpicture}
            \begin{minipage}{.1cm}
            \vfill
            \end{minipage}
            \caption{}
        \label{fig_running_obs_ex1b}
    \end{subfigure}
    \caption{Two smDGs that can be shown observationally equivalent by \Cref{prop:selected-to-marginal}.}
    \label{fig_running_obs_ex1}
\end{figure}

\subsection{Removing special edges}
\label{sec_remove_special_obs}

Our second rule for proving observational equivalence will pertain to the removal of special edges. 
Specifically, whenever the start and endpoint of a special edge 
are in the same marginal face, 
it will be possible to sever that edge while 
preserving observational equivalence.

\begin{restatable}{proposition}{ObsRemoveSpecialEdges}
 \label{prop:special-edge-in-district-removal}
 Let $\calG=(V,\cal E, L, S)$ be a liftable smDG. Suppose that $\calG$ contains an edge $a\to b$ such that $a$ is an element of some $V_s\in \mathcal{S}$ and $b$ is an element of some $V_m\in \mathcal{L}$. Further suppose that $a$ and $b$ belong to the same face of the marginalized independence system, that is, that $a$ is also an element of $V_m$. Then, $\calG$ is observationally equivalent to the smDG $\calG'$ obtained by starting from $\calG$ and removing the edge $a\to b$.
\end{restatable}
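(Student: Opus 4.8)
The plan is to work with the canonical DAGs throughout and to compare the two SMO models $\calM(\can(\calG),V\mid S)$ and $\calM(\can(\calG'),V\mid S')$ directly, establishing the two inclusions separately. By \Cref{def_inverse_slp}, $\can(\calG')$ is exactly $\can(\calG)$ with the special path $a\to s_{ab}\gets m_{ab}\to b$ induced by the edge $a\to b$ deleted; in particular $\can(\calG)$ carries one extra selected vertex $s_{ab}$ and one extra marginalized vertex $m_{ab}$, so that $S=S'\dotcup\{s_{ab}\}$. Two features of $\can(\calG)$ do all the work. Because $a,b\in V_m$, the marginalized vertex $m^*$ created for the face $V_m$ satisfies $m^*\to a$ and $m^*\to b$, so that $a$ and $b$ are confounded by a common latent; and because $a\in V_s$, the selected vertex $s^*$ created for the face $V_s$ has $a\in\pa{\can(\calG)}{s^*}$, so that $a$ has a selected child.

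First I would prove the routine inclusion $\calM(\can(\calG'),V\mid S')\subseteq\calM(\can(\calG),V\mid S)$. Given any kernels for $\can(\calG')$, I reproduce the same selected distribution on $X_V$ in $\can(\calG)$ by \emph{deactivating} the special path: set the kernel of $m_{ab}$ to a point mass, let the kernel of $b$ ignore its parent $m_{ab}$, and choose the kernel of $s_{ab}$ so that $X_{s_{ab}}=0$ almost surely. Then conditioning on $X_{s_{ab}}=0$ is vacuous and $m_{ab}$ is marginalized away, so $\can(\calG)$ realizes precisely the distribution produced by $\can(\calG')$.

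The substantive inclusion is $\calM(\can(\calG),V\mid S)\subseteq\calM(\can(\calG'),V\mid S')$. It is convenient to note first that, by the equivalence between a special path and a directed edge in \Cref{lemma_interchangeedges} (applicable because $a$ has the selected child $s^*$ and $b$ has the marginalized parent $m^*$), the special path makes $X_b$ respond to the realized value of $X_a$ exactly as a directed edge $a\to b$ would. The claim thus reduces to showing that such a response can instead be routed through the shared latent $m^*$, using the selection on $a$ to reinstate it. Concretely, I would enrich $X_{m^*}$ so that it additionally samples a plan predicting how $X_a$ will depend on the remaining parents of $a$, together with the matching value that $X_b$ should take; I let the freely chosen kernel of $a$ read this plan, let $X_b$ read the predicted value off $m^*$ rather than off $a$, and let the kernel of $s^*$ keep implementing the selection contributed by the other members of $V_s$. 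Conditioning on $X_{s^*}=\zeros$ then filters the realized $X_a$ against the plan and couples $m^*$ to it --- this is the same collider-induced coupling among the parents of a selected vertex that underlies \Cref{prop:selected-to-marginal} --- so that $X_b$ recovers exactly the dependence on $X_a$ that the edge carried, while $m^*$ continues to confound all of $V_m$.

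The main obstacle is the verification of this forward construction in full generality. Unlike the usual ``the latent encodes the joint'' argument, the shared latent $m^*$ is sampled upstream of $a$ and so cannot simply contain the realized value $X_a$; the dependence must be manufactured through the selection event, and I cannot enforce agreement between the latent's guess and $X_a$ with the kernel of $s^*$, since $s^*$ observes only the visible parents $V_s$ and not the latent. The work therefore lies in choosing the joint law of the enriched latent and the kernels of $a$, $b$, and $s^*$ so that, after conditioning on $X_{s^*}=\zeros$ and on all the other selections, the induced law of $X_V$ matches that of $\can(\calG)$ --- reproducing the marginal behaviour of $X_a$, the influence of $a$'s other parents, the selection effects contributed by the remaining vertices of $V_s$, and the positivity of the selection event. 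I expect this bookkeeping, rather than any single idea, to be the crux of the proof.
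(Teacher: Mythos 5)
Your setup and the easy inclusion are fine, and you have correctly isolated the roles of the two hypotheses: $a\in V_m$ supplies a latent $m^*$ shared by $a$ and $b$ through which the dependence must be routed, and $a\in V_s$ supplies a home for a residual selection on $X_a$. But the substantive inclusion has a genuine gap, and it sits exactly where you defer to ``bookkeeping''. Your mechanism is guess-and-filter: $m^*$ samples a plan for $X_a$ together with a matching value for $X_b$, and selection ``filters the realized $X_a$ against the plan''. No selected vertex of $\calG'$ can perform that filtering: $s_{ab}$ has been deleted, and $s^*$ (the selected vertex of the face $V_s$) has only visible parents, so its kernel cannot compare $X_a$ with the latent guess --- as you yourself concede. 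And if instead $X_a$ deterministically follows the plan, so that no filtering is needed, the construction fails outright whenever $a$ has other parents. Concretely, let $a$ have a visible parent $u$ and set $X_a=X_u\oplus R$ with $R$ uniform randomness supplied by $m^*$, let $X_{m_{ab}}$ be uniform, $X_{s_{ab}}=0$ iff $X_a=X_{m_{ab}}$, and $X_b=X_{m_{ab}}$; under selection $X_b=X_a=X_u\oplus R$ with $(X_u,X_a)$ uniform and independent. In your construction $X_b$ is a function of the enriched $m^*$ alone, hence independent of $X_u$ given $m^*$, and since any admissible selection event is measurable in the visibles $V_s$ only, no conditioning can make an $m^*$-measurable $X_b$ equal $X_u\oplus R$ almost surely. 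The collider coupling you import from \Cref{prop:selected-to-marginal} couples the visible parents of $s^*$ to each other; it couples $m^*$ to $X_a$ only through the kernel of $a$ --- which is precisely the object you have not constructed.

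The paper closes this gap with a change-of-measure idea absent from your sketch. It keeps the vertex $s_{ab}$ (now with single parent $a$; this is redundant given $a\in V_s$ and is absorbed by $\mathtt{RmvRedS}$), enriches $m'$ ($=m^*$) to carry an \emph{independent} copy of $X_{m_{ab}}$ --- no plan and no matched $b$-value --- lets $b$ read this copy exactly as it read $m_{ab}$, and reproduces the selection-induced coupling between $X_{m_{ab}}$ and $X_a$ by \emph{tilting the kernel of $a$}: $P'\big(X_a \mid X_{m'}=(\bar x_{m'},x_m),\ldots\big)=\frac{P(X_{s_{ab}}=0\mid X_a,\,x_m)}{P(X_{s_{ab}}=0\mid X_a)}\,P(X_a\mid \bar x_{m'},\ldots)$, set to zero where the denominator vanishes, while the retained vertex carries the residual factor $P(X_{s_{ab}}=0\mid X_a)$. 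Multiplying the tilted kernel by this residual factor cancels the denominator and recovers the original joint over $(X_{m'},X_m,X_a,X_b,\ldots)$ factor by factor, which is the whole proof. One further caution: your opening appeal to \Cref{lemma_interchangeedges}, reading the special edge as a directed edge $a\to b$, cannot be made into a formal reduction, since replacing a special edge by a directed edge can create a cycle (the paper flags exactly this as the reason $\ToBidir$ only runs in one direction); the argument has to go through $m^*$ directly, as above.
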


The proof is provided in \Cref{app_proof_special_edge_obs_removal}. All of the edges $a\to b$ described correspond to special edges $a\to s\gets m\to b$ in the canonical DAG, where $s$ is selected and $m$ is marginalized. The intuition is that if $a$ and $b$ share a common marginalized parent $m'$, selecting on $s$ can establish (through $m'$) all of the correlation between $a$ and $b$ (and the correlation between $b$ and the ancestors of $a$) that could have been established through the special edge.

From this, we can extract the following corollary:
\begin{restatable}{corollary}{SelfLoopSMOEquivalent}\label{prop:self-loop-smo-equivalent}
Let $\calG$ be a liftable smDG containing the self-loop $a\to a$. Let  $\calG'$ be the smDG constructed by removing this self-loop. Then, $\calG$ and $\calG'$ are observationally equivalent.
\end{restatable}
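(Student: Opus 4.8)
The plan is to derive this corollary directly from \Cref{prop:special-edge-in-district-removal} by specializing that proposition to the degenerate case $b=a$. The only real work is to check that the hypotheses of \Cref{prop:special-edge-in-district-removal} are automatically satisfied whenever a \emph{liftable} smDG carries a self-loop $a\to a$; once that is done, the conclusion is immediate.

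First I would argue that liftability forces $a$ to belong simultaneously to a selected face and to a marginalized face. The self-loop $a\to a$ is a cycle of length one, so by \Cref{prop:acyclic-canonical-dag} the liftability of $\calG$ requires that some edge in this cycle---necessarily the unique edge $a\to a$---has its tail in $\bigcup\calS$ and its head in $\bigcup\calL$. Hence there exist faces $V_s\in\calS$ and $V_m\in\calL$ with $a\in V_s$ and $a\in V_m$. Intuitively, this is exactly the assertion that the self-loop lifts to a \emph{special} edge $a\to s_{aa}\gets m_{aa}\to a$ in the canonical graph rather than to a direct edge $a\to a$; the latter would make the canonical graph cyclic and so violate liftability.

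With this in hand, the edge $a\to b$ with $b:=a$ meets every condition of \Cref{prop:special-edge-in-district-removal}: its tail $a$ lies in $V_s\in\calS$, its head $b=a$ lies in $V_m\in\calL$, and the requirement that $a$ and $b$ share a common marginalized face holds trivially since $a=b\in V_m$. Applying \Cref{prop:special-edge-in-district-removal} then shows that deleting the edge $a\to a$ preserves the selected-marginal observational model, so $\calG$ and $\calG'$ are observationally equivalent, as claimed. I do not expect a genuine obstacle here; the single point requiring care is the invocation of \Cref{prop:acyclic-canonical-dag}, which certifies that a self-loop in a liftable smDG necessarily corresponds to a special edge and thus places us inside the scope of the earlier proposition.
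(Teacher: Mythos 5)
Your proof is correct and takes essentially the same route as the paper: the paper's own two-line argument likewise notes that a self-loop in a liftable smDG forces $a$ into both a marginalized face and a selected face, and then invokes \Cref{prop:special-edge-in-district-removal} in the degenerate case $b=a$. Your only addition is to justify the face-membership claim explicitly via \Cref{prop:acyclic-canonical-dag}, which is exactly the reasoning the paper leaves implicit.
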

\begin{proof}
    In any liftable smDG with a self-loop $a\to a$, the vertex $a$ is included in both a marginalized face and a selected face. Therefore, this is the special case of \Cref{prop:special-edge-in-district-removal} where $a$ is the same vertex as $b$.
\end{proof}

We showed in \Cref{subsec_smDG_obs_or_do_distinguishable} a particular smDG where the self-loop $v \to v$
was undetectable by the Observe-Or-Do probing scheme. \Cref{prop:self-loop-smo-equivalent} now establishes that at least as far as passive observation goes, 
such self-loops can \emph{never} be detected.

\blk

With this proposition, we can show for example that the smDG of \Cref{fig_running_obs_ex1b} is observationally equivalent to the smDG of \Cref{fig_running_obs_ex2}.

\begin{figure}[ht]
    \centering
        \begin{tikzpicture}[rv/.style={circle, draw, very thick, minimum size=5.5mm, inner sep=0.5mm}, 
                            node distance=20mm, >=stealth]
            \pgfsetarrows{latex-latex};
            \node[rv] (a) {$a$};
            \node[rv, right =7mm of a, yshift=-9mm] (b) {$b$};
            \node[rv, right =7mm of b, yshift=9mm] (c) {$c$};
            \node[sdot, right=2.5mm of c, yshift=5mm](se){};
            \node[mdot, right =4mm of b, yshift=3mm] (m) {};
             \node[sdot] (s) at (b.north) [yshift=15mm] {};
             \node[rv, right =7mm of c] (d) {$d$};
            \draw[-, very thick, color=blue] (a) -- (s);
            \draw[-, very thick, color=blue] (b) -- (s);
            \draw[-, very thick, color=blue] (c) -- (s);
            \draw[->, very thick, color=red] (m) -- (a);
            \draw[->, very thick,color=red] (m) -- (b);
            \draw[->, very thick,color=red] (m) -- (c);
            \draw[-, very thick,color=blue] (se) -- (c);
            \draw[-, very thick,color=blue] (se) -- (d);
               \node[mdot] (md) at (d.south) [yshift=-6mm] {};
            \draw[->, very thick,color=red] (md) -- (d);
        \end{tikzpicture}
    \caption{An smDG that can be shown observationally equivalent to \Cref{fig_running_obs_ex1b} by \Cref{prop:special-edge-in-district-removal} (and \Cref{prop:self-loop-smo-equivalent}).}
    \label{fig_running_obs_ex2}
\end{figure}

\subsection{Removing selected faces} \label{subsec:remove-selected}
We say that a collider $a\to c \leftarrow b$ is \emph{shielded} if the parents $a$ and $b$ are connected via a directed edge, either $a\to b$ or $b\to a$. In a DAG without marginalized vertices, if a selected vertex has no unshielded colliders among its ancestors, 
that is if ``moralization'' has been performed, then the selected vertex may be removed without altering 
the selected distribution \citep[][Chapter 3]{lauritzen1996graphical}. 
In the presence of marginalization, 
however, it is possible to remove a selected vertex in a wider range of circumstances.
Instead of necessarily requiring that the  parents $a,b$ of a collider be connected by a directed edge, it also suffices that they share a common marginalized parent.

In this section, we will formulate a condition on smDGs where, based on the idea described above, a maximal selected face can be removed. Note that this condition is \emph{not} simply that the canonical DAG of the smDG is such that ancestors of the corresponding selected vertex $s$ obey the condition above; that is, include no colliders $a\to c \leftarrow b$ where $a$ and $b$ are not connected via a directed edge nor a marginalized common cause. This is because there are cases where the canonical DAG does not obey this condition, but some non-canonical DAG that corresponds to the same smDG does. For example, the DAG $s\gets a \to s' \gets m' \to b\to s$, where $a$ points to $b$ with a special edge and both $a$ and $b$ are parents of a selected vertex $s$, is canonical. However, in this DAG the vertices $a$ and $b$ are not connected via a directed edge nor share a common marginalized parent, so apparently one cannot remove the vertex $s$. However, if one looks at the non-canonical DAG $s\gets a \to b\to s$, which is interventionally equivalent to $s\gets a \to s' \gets m' \to b\to s$ by \Cref{lemma_interchangeedges}, one concludes that $s$ can be removed.

\begin{restatable}{proposition}{SelectedNodeRemovalObsEqTwo} \label{prop:smDG-split}  
Let $\calG=(V,\cal E, L, S)$ be a liftable smDG, let $V_s\in \calS$ be one of its maximal selected faces, 
and let $\calG_{\an{V_s}}$ be the subgraph of $\calG$ over the set $V_s$ and all of its ancestors. Suppose that:
\begin{enumerate}[label=\alph*)]
   \item There are no cycles in $\calG_{\an{V_s}}$. \label{cond:no-cycles}
    \item In $\calG_{\an{V_s}}$, vertices that    share a child or are in the same selected face  are neighbours or are in the same marginalized face. \label{cond:smDG2}
    \item All of the maximal marginalized faces of $\calL$ that include vertices of $\an{\calG}{V_s}$ are disjoint.
    \label{sublemma:disjoint_districts} \label{cond:smDG3}
    \item  For each maximal marginalized face $V_m\in \calL$ that includes vertices of $\an{\calG}{V_s}$, 
    the parents of $V_m$ that are not in $V_m$ are shared by all the vertices of $V_m$,
    i.e.~$\pa{\calG}{V_m} \setminus V_m \subseteq \bigcap_{v \in V_m}\pa{\calG}{v}$.
    \label{cond:smDG4}
    \end{enumerate}
Construct the independence system $\calS'$ by starting from $\calS$ and then deleting $V_s$ and all of its subsets.
Define the smDG $\calG'=(V,\cal E, L, S')$.
Then, $\calG$ and $\calG'$ are observationally equivalent.
\blk
\end{restatable}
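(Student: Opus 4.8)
The plan is to argue entirely at the level of canonical DAGs, since observational equivalence of liftable smDGs is \emph{defined} through them: I must show $\calM(\can(\calG),V\mid S)=\calM(\can(\calG'),V\mid S')$. By \Cref{def_inverse_slp}, passing from $\calS$ to $\calS'$ deletes exactly the selected vertex $s$ with $\pa{\can(\calG)}{s}=V_s$; the only side effect is that an edge $a\to b$ whose tail $a$ lies in no selected face of $\calG'$ stops lifting to a special edge, and (exactly as in the $s\gets a\to s'\gets m'\to b\to s$ example preceding the statement) I would neutralise this by first using \Cref{lemma_interchangeedges} to fix a convenient DAG representative, or simply note that the construction below reproduces the full joint over $X_V$ and is therefore insensitive to this edge-type bookkeeping. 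So the real content is that the selected sink $s$ can be deleted without changing the selected-marginal observational model.

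One inclusion holds with no hypotheses: for $\calM(\can(\calG'),V\mid S')\subseteq\calM(\can(\calG),V\mid S)$, extend any parametrization of $\can(\calG')$ to $\can(\calG)$ by giving $s$ the constant kernel $X_s\equiv 0$, so that $\{X_s=\zeros\}$ has probability one and conditioning on it is vacuous; the selected distribution over $X_V$ is unchanged.

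The substance is the reverse inclusion. Fix a parametrization of $\can(\calG)$. Since $s$ is childless with parents $V_s$, conditioning on $X_s=\zeros$ reweights the joint by $w(x_{V_s}):=P(X_s=\zeros\mid x_{V_s})$, a function of $x_{V_s}$ alone. Because every marginalized vertex of a canonical DAG is exogenous, distinct marginalized faces are realized by \emph{mutually independent} latents; the task is therefore to reproduce the factor $w$ using only independent latents, the deterministic visible kernels, and the directed edges that $\calG'$ already possesses. Conditions (c) and (d) let me treat each maximal marginalized face $V_m$ meeting $\an{\calG}{V_s}$ as a single free source of randomness: disjointness (c) keeps the faces independent, and the common external-parent set (d) lets the face's latent deterministically generate the whole vector $X_{V_m}$ from shared inputs, so its prior can impose an arbitrary within-face joint law. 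The moralization condition (b) is exactly what permits $w$ to be absorbed: any two members of $V_s$ that $w$ couples are either in a common face (hence driven by one latent, which can carry the coupling) or joined by a directed edge (so the downstream vertex reads the upstream one and bakes the coupling into its kernel). The acyclic order from (a) then lets me process $\an{\calG}{V_s}$ sequentially, upgrading this pairwise coverage to the full, possibly higher-order, joint dependence encoded by $w$, just as a complete DAG realizes every joint law. Folding $w$ into the latent priors of the ancestral faces reproduces $P(X_{\an{\calG}{V_s}}\mid X_S=\zeros)$ in $\calG'$ with no selection on $s$; the unchanged downstream kernels and the surviving selected faces then act on this law and reproduce the full $\calG$ selected distribution.

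The main obstacle is making this folding rigorous: verifying that the $w$-reweighted ancestral law is genuinely Markov-compatible with $\calG'_{\an{V_s}}$, i.e.\ that selection on $s$ creates no dependence the surviving independent latents and edges cannot express. The delicate step is upgrading the pairwise statement (b) to a valid joint factorization along the order from (a) while respecting the disjoint-block structure enforced by (c)--(d); this is where the four conditions must be used in concert rather than separately. I would first settle the latent-free special case, recovering the classical removal of a selected sink with a complete parent set \citep[][Chapter~3]{lauritzen1996graphical}, and then extend it face by face, mirroring the explicit kernel constructions of \Cref{prop_merging,lemma_interchangeedges}.
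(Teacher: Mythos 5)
Your intuition is the paper's intuition, and your easy inclusion (re-inserting $s$ with the constant kernel $X_s\equiv 0$, modulo edge-type conversion via \Cref{lemma_interchangeedges}) is fine. The gap sits exactly where you flag ``the delicate step,'' and it is genuine rather than cosmetic. First, ``folding $w$ into the latent priors of the ancestral faces'' cannot work as stated: $w(x_{V_s})$ in general couples variables lying in \emph{distinct} maximal marginalized faces, and by condition (c) those faces are realized by mutually independent exogenous latents, which must remain independent in any parametrization of $\calG'$. Multiplicatively reweighting independent priors cannot create dependence between them, so the cross-face part of the coupling has to be routed through directed edges and the deterministic visible kernels; proving that this routing is always jointly consistent is precisely the content you defer. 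Second, the appeal to ``just as a complete DAG realizes every joint law'' is inapt: condition (b) gives only \emph{pairwise} shielding (adjacency or co-membership in a face), not completeness of $\calG_{\an{V_s}}$, so after peeling one vertex off the weight along your topological order, the residual normalizing factor depends on a set of variables that need not all be visible to the next vertex processed. This is exactly the point where a naive sequential absorption fails and where some separation or decomposition argument must be supplied.

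The paper closes this gap with machinery your proposal lacks, and it is worth seeing how, because it formalizes your own slogan that ``the face's latent can carry the coupling.'' It first eliminates special edges (\Cref{lemma_eq_tilde}), using \Cref{lemma_interchangeedges} when the endpoints do not share a marginalized face and \Cref{prop:special-edge-in-district-removal} when they do --- the second case is not bookkeeping, since converting such edges to directed edges can create cycles. It then \emph{adds} edges: each district is made complete and shared children across districts are shielded (\Cref{lemma_eq_shielded}); this is licensed by district factorization and the saturation guaranteed by conditions (c)--(d) together with \Cref{prop_exog_term}, and it upgrades the pairwise condition (b) into the literal absence of unshielded colliders in $\an{s}$. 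Only then does the key removal lemma apply (\Cref{le:no-unshielded-colliders}), whose proof is a d-separation path-shortening argument showing that conditioning on a selected sink with no unshielded colliders among its ancestors yields a law realizable without $s$; finally the added edges and the special-edge conversions are undone. This ``add shielding edges, delete $s$, remove the edges'' sandwich is the missing idea: it replaces the direct Markov-compatibility verification you would otherwise owe. Without it, or an equivalent substitute for \Cref{le:no-unshielded-colliders}, your face-by-face folding remains a plan rather than a proof.
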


When one applies \Cref{prop:special-edge-in-district-removal} before \Cref{prop:smDG-split}, the condition a), which prohibits cycles in $\calG_{\an{V_s}}$, is equivalent to the weaker requirement that every cycle of $\calG_{\an{V_s}}$ has at least one edge $a\to b$ in the cycle such that $a\in \bigcup \calS$ and both $a$ and $b$ are in the same marginal face, that is, $a,b\in V_m$ for some $V_m\in \cal L$. This is because all such cycles will be broken by \Cref{prop:special-edge-in-district-removal}.

The proof of \Cref{prop:smDG-split} is in \Cref{app_selected_removal2}. The general idea is that a selected vertex establishes correlation between its ancestors, but if all colliders in the ancestry of a selected vertex are connected via a directed edge or a marginalized common cause, then any such correlations can be reproduced without using the selected vertex. The  conditions of \Cref{prop:smDG-split} guarantee that this is the case in \emph{some} DAG that has $\calG$ as its selected-latent projection.

With \Cref{prop:smDG-split}, we can show that the smDG of \Cref{fig_running_obs_ex2} is observationally equivalent to the smDG of \Cref{fig_running_obs_ex3}. Note that here it was important that we had previously applied \Cref{prop:selected-to-marginal} to \Cref{fig_running_obs_ex1a}: if the  marginalized face $\{a,b,c\}$ was not present, the condition b) of  \Cref{prop:smDG-split} would not be satisfied for $V_s=\{a,b,c\}$. Therefore, by making the subgraph $\calG_{\an{\calG}{V_s}}$ saturated, \Cref{prop:selected-to-marginal} enabled the application of \Cref{prop:smDG-split}. Note, however, that  \Cref{prop:smDG-split} can also be applied in some cases where $\calG_{\an{\calG}{V_s}}$ is not saturated; \Cref{fig_ex_unshielded} is an example.

\begin{figure}[ht]
    \centering
        \begin{tikzpicture}[rv/.style={circle, draw, very thick, minimum size=5.5mm, inner sep=0.5mm}, 
                            node distance=20mm, >=stealth]
            \pgfsetarrows{latex-latex};
            \node[rv] (a) {$a$};
            \node[rv, right =7mm of a, yshift=-9mm] (b) {$b$};
            \node[rv, right =7mm of b, yshift=9mm] (c) {$c$};
            \node[mdot, right =4mm of b, yshift=3mm] (m) {};
             \node[rv, right =7mm of c] (d) {$d$};
             \draw[->, very thick, color=red] (m) -- (a);
            \draw[->, very thick,color=red] (m) -- (b);
            \draw[->, very thick,color=red] (m) -- (c);
           \node[sdot, right=2.5mm of c, yshift=5mm](se){};
            \draw[-, very thick,color=blue] (se) -- (c);
           \draw[-, very thick,color=blue] (se) -- (d);
               \node[mdot] (md) at (d.south) [yshift=-6mm] {};
            \draw[->, very thick,color=red] (md) -- (d);
       \end{tikzpicture}
    \caption{An smDG that can be shown observationally equivalent to \Cref{fig_running_obs_ex2} by \Cref{prop:smDG-split}.}    
    \label{fig_running_obs_ex3}
\end{figure}
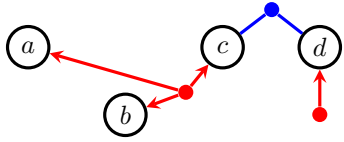

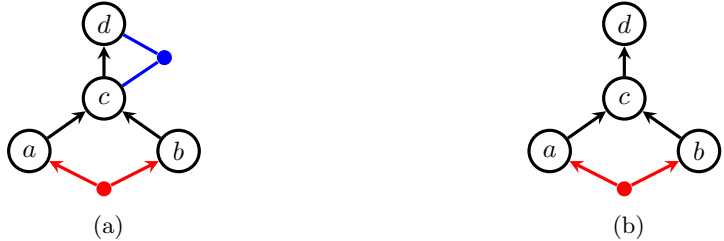
\begin{figure}[h!]
     \begin{subfigure}{0.45\textwidth}\centering
        \begin{tikzpicture}[rv/.style={circle, draw, very thick, minimum size=5.5mm, inner sep=0.5mm}, 
                            node distance=20mm, >=stealth]
            \pgfsetarrows{latex-latex};
            \node[rv] (a) {$a$};
            \node[rv, right =4mm of a, yshift=7mm] (c) {$c$};
            \node[rv, right =4mm of c, yshift=-7mm] (b) {$b$};
            \node[rv, above =4mm of c] (d) {$d$};
            \node[mdot, below =8mm of c] (m) {};
            \node[sdot, right=4mm of d,yshift=-4.5mm](sb){};
             \draw[-, very thick,color=blue] (sb) -- (c);
              \draw[-, very thick,color=blue] (sb) -- (d);
             \draw[->, very thick, color=red] (m) -- (a);
            \draw[->, very thick,color=red] (m) -- (b);
            \draw[->, very thick] (a) -- (c);
            \draw[->, very thick] (b) -- (c);
            \draw[->, very thick] (c) -- (d);
        \end{tikzpicture}
            \begin{minipage}{.1cm}
            \vfill
            \end{minipage}
            \caption{ }
        \end{subfigure}
            \begin{subfigure}{0.45\textwidth}\centering
        \begin{tikzpicture}[rv/.style={circle, draw, very thick, minimum size=5.5mm, inner sep=0.5mm}, 
                            node distance=20mm, >=stealth]
            \pgfsetarrows{latex-latex};
            \node[rv] (a) {$a$};
            \node[rv, right =4mm of a, yshift=7mm] (c) {$c$};
            \node[rv, right =4mm of c, yshift=-7mm] (b) {$b$};
            \node[rv, above =4mm of c] (d) {$d$};
            \node[mdot, below =8mm of c] (m) {};
             \draw[->, very thick, color=red] (m) -- (a);
            \draw[->, very thick,color=red] (m) -- (b);
            \draw[->, very thick] (a) -- (c);
            \draw[->, very thick] (b) -- (c);
            \draw[->, very thick] (c) -- (d);
        \end{tikzpicture}
            \begin{minipage}{.1cm}
            \vfill
            \end{minipage}
            \caption{ }
        \end{subfigure}
    \caption{Two smDGs that can be shown observationally equivalent by \Cref{prop:smDG-split}, for $V_s=\{c,d\}$. This is an example where  \Cref{prop:smDG-split} can be applied even though $\calG_{\an{\calG}{V_s}}$ is not saturated.}
    \label{fig_ex_unshielded}
\end{figure}

\subsection{Leveraging mDAG equivalence for smDG equivalence} \label{subsec:leveraging-mdag}

Now, we present our last technique for showing observational equivalence between smDGs: leveraging the known rules to prove observational equivalence between mDAGs. To do so, we will take the canonical DAGs of our smDGs, and treat their selected vertices as visible. If the resulting DAGs are observationally equivalent, then so are 
the original smDGs.

\begin{proposition} \label{prop:equivalence}
Let $\calG=(V,\cal E, L, S)$ and  $\calG'=(V,\cal E', L', S)$  be liftable smDGs without deterministic vertices; that is, where every vertex is included in some marginalized face\footnote{
We require each vertex to have marginalized parents because 
visible variables in an mDAG are never required to 
be a deterministic function of their visible parents, 
and so the mDAG equivalence rules as currently stated can only be applied in this case.
As soon as the mDAG is generalized to have deterministic variables, 
and there are rules for their equivalence, 
then \Cref{prop:equivalence} can be trivially extended
to accommodate their equivalences.}.
Construct the mDAGs $\calG_\text{mDAG}=(\tilde{V},\tilde{\E}, \calL, \emptyset)$ and $\calG'_\text{mDAG}=(\tilde{V},\tilde{\E'}, \calL', \emptyset)$ by transforming every selected vertex of $\can(\calG)$ and  $\can(\calG')$ into a visible vertex, and taking the latent projection thereof. If the mDAGs $\calG_\text{mDAG}$ and $\calG'_\text{mDAG}$ are observationally equivalent, then the smDGs $\calG$ and $\calG'$ are also observationally equivalent.

\end{proposition}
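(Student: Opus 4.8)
The plan is to express the selected-marginal observational model of each smDG as the image of the corresponding mDAG's \emph{marginal} model under one fixed ``condition-on-selection'' operation, and then to observe that equal marginal models are carried by that operation to equal SMO models.

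First I would set up the key identity. Write $D=\can(\calG)$ and let $\Sigma$ be the set of selected vertices of $D$ (so that $\calM(\can(\calG),V\mid\Sigma)$ is the SMO model appearing in the definition of observational equivalence of smDGs). Let $D^{\mathrm{vis}}$ be the DAG obtained from $D$ by reclassifying every vertex of $\Sigma$ as visible, equipping each such vertex with a private marginalized parent so that it may remain the same stochastic function of its parents that it was under selection. By \Cref{def_smo_model}, every element of $\calM(\can(\calG),V\mid\Sigma)$ arises from a Markov law on $D$ by marginalizing $X_M$ and taking the regular conditional given $X_\Sigma=\zeros$; the joint law $P(X_V,X_\Sigma)$ obtained \emph{before} conditioning is precisely a law realizable by $D^{\mathrm{vis}}$ after marginalizing its latent vertices, and conversely every such joint law, conditioned on $X_\Sigma=\zeros$, lies in the SMO model. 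Writing $\mathrm{Cond}_{\Sigma}$ for the map sending a joint law $P(X_V,X_\Sigma)$ to the regular conditional $P(X_V\mid X_\Sigma=\zeros)$ (and discarding laws with $P(X_\Sigma=\zeros)=0$), this yields
\[
\calM(\can(\calG),V\mid\Sigma)=\mathrm{Cond}_{\Sigma}\big(\calM(D^{\mathrm{vis}})\big),
\]
where $\calM(D^{\mathrm{vis}})$ denotes the marginal model of $D^{\mathrm{vis}}$ over $V\cup\Sigma$.

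Next I would invoke the latent-projection theory for mDAGs: the marginal model of $D^{\mathrm{vis}}$ depends only on its latent projection, which is by construction $\calG_{\text{mDAG}}$, so $\calM(D^{\mathrm{vis}})=\calM(\calG_{\text{mDAG}})$, and likewise $\calM(D'^{\mathrm{vis}})=\calM(\calG'_{\text{mDAG}})$. The hypothesis that $\calG_{\text{mDAG}}$ and $\calG'_{\text{mDAG}}$ are observationally equivalent mDAGs is exactly the statement that these two marginal models, as sets of distributions over the common vertex set $\tilde V=V\cup\Sigma$, coincide. Applying the \emph{same} operation $\mathrm{Cond}_{\Sigma}$ to two identical sets of joint laws produces two identical sets of conditional laws, so
\[
\calM(\can(\calG),V\mid\Sigma)=\mathrm{Cond}_{\Sigma}\big(\calM(\calG_{\text{mDAG}})\big)=\mathrm{Cond}_{\Sigma}\big(\calM(\calG'_{\text{mDAG}})\big)=\calM(\can(\calG'),V\mid\Sigma),
\]
which is precisely observational equivalence of $\calG$ and $\calG'$.

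The main obstacle will be justifying the first identity rigorously rather than the (essentially formal) conditioning step, and two points require care. The first is the determinism bookkeeping flagged by the footnote: a selected vertex of a face has only visible parents, so to preserve its freedom to be a stochastic kernel once it is treated as a visible mDAG vertex it must be given a marginalized parent; this is what makes $\calM(D^{\mathrm{vis}})$ the full, unconstrained joint model rather than a deterministically restricted subset, and it is why the hypothesis restricts to the regime in which visible variables are never forced to be deterministic (so that the standard mDAG-equivalence rules even apply). The second is that the operation $\mathrm{Cond}_{\Sigma}$ must be \emph{literally the same} on both sides, which needs the selected vertex set $\Sigma$ and the event $X_\Sigma=\zeros$ to correspond across the two graphs; this is guaranteed because the very notion of observational equivalence of the two mDAGs presupposes the common labelled vertex set $\tilde V$, whence $\tilde V\setminus V$ is the same $\Sigma$ in each. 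Finally, the measure-theoretic legitimacy of passing to regular conditional distributions on the nonzero-density event $X_\Sigma=\zeros$ simply reuses the machinery already invoked for \Cref{def_smo_model}.
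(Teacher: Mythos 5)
Your proof is correct and follows essentially the same route as the paper's: both express the SMO model of $\can(\calG)$ as the image, under conditioning on $X_\Sigma=\zeros$, of the marginal observational model of the graph in which the selected vertices are reclassified as visible, and then transport the hypothesised equality of the two mDAG marginal models through that single conditioning map. Your explicit handling of the determinism bookkeeping---giving each reclassified selected vertex a private marginalized parent so it remains stochastic, which is invisible at the mDAG level because singleton faces are automatic in Evans' simplicial-complex formalism---is a welcome refinement of a point the paper's brief proof leaves implicit.
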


\begin{proof}
   Since  $\calG_\text{mDAG}$ and $\calG'_\text{mDAG}$ are observationally equivalent,  $\can(\calG_\text{mDAG})$ and $\can(\calG'_\text{mDAG})$  have the same marginal observational model. The selected-marginal observational model of $\can(\calG)$ is obtained by conditioning the distributions of the marginal observational model of $\can(\calG_\text{mDAG})$ on an assignment of values to the selected variables. The same is true for  $\can(\calG')$ and $\can(\calG'_\text{mDAG})$.  Since the set of selected vertices of  $\can(\calG)$ and $\can(\calG')$ is the same, their selected-marginal observational models are also equal.
\end{proof}

For example, this proposition together with the rule for proving observational equivalence of mDAGs presented in Proposition 5 of \citet{evans2016graphs} can show the observational equivalence between the smDGs of \Cref{fig_running_obs_ex3}  and \Cref{fig_running_obs_ex5}. 

The steps shown in this section answer the question posed in the example of \Cref{fig:teaser-dags}: since the SLP of \Cref{fig:teaser-dag-1} is \Cref{fig_running_obs_ex1} and the SLP of \Cref{fig:teaser-dag-2} is \Cref{fig_running_obs_ex5}, we learn that those two DAGs are observationally equivalent. (Since they correspond to different smDGs, we also know that they are interventionally inequivalent.)

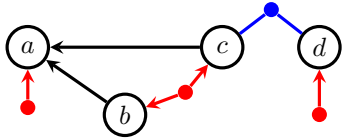
\begin{figure}[ht]
    \centering
        \centering
        \begin{tikzpicture}[rv/.style={circle, draw, very thick, minimum size=5.5mm, inner sep=0.5mm}, 
                            node distance=20mm, >=stealth]
            \pgfsetarrows{latex-latex};
            \node[rv] (a) {$a$};
            \node[rv, right =7mm of a, yshift=-9mm] (b) {$b$};
            \node[rv, right =7mm of b, yshift=9mm] (c) {$c$};
            \node[mdot, right =4mm of b, yshift=3mm] (m) {};
            \node[sdot, right=2.5mm of c, yshift=5mm](se){};
             \node[rv, right =7mm of c] (d) {$d$};
              \node[mdot, below =4mm of a] (ma) {};
             \draw[->, very thick, color=red] (ma) -- (a);
            \draw[->, very thick,color=red] (m) -- (b);
            \draw[->, very thick,color=red] (m) -- (c);
            \draw[-, very thick,color=blue] (se) -- (c);
            \draw[-, very thick,color=blue] (se) -- (d);
            \draw[->, very thick] (b) -- (a);
            \draw[->, very thick] (c) -- (a);
               \node[mdot] (md) at (d.south) [yshift=-6mm] {};
            \draw[->, very thick,color=red] (md) -- (d);
        \end{tikzpicture}
            \begin{minipage}{.1cm}
            \vfill
            \end{minipage}
    \caption{An smDG that can be shown to be observationally equivalent to \Cref{fig_running_obs_ex3} by \Cref{prop:equivalence} together with Proposition 5 of \citet{evans2016graphs}. That is, we treat all of the selected vertices of the canonical DAG as visible, apply Proposition 5 and then take the selected-latent projection.}
    \label{fig_running_obs_ex5}
\end{figure}

The complete set of currently known rules for proving observational equivalence between mDAGs is compiled in \citet{ansanelli2025observational}. Via \Cref{prop:equivalence}, all of these rules may be employed to establish observational equivalence between smDGs.

\section{Discussion and conclusion} \label{sec:discuss-and-conclude}
The first contribution of our work is to completely characterize all of the information about the causal structure that can be obtained from the Observe\&Do interventional probing scheme, which is as powerful as possible, in the presence of selection and marginalization.
We have established that the SMI model is invariant 
to several kinds of graphical transformations (\cref{sec:invariances}), which can involve 
merging and then splitting non-visible components of the graph.
This series of changes maps an arbitrary  DAG to a canonical DAG, 
and ultimately, to the smDG, a cyclic graph with marginal and selected independence systems,
which more naturally represents the structure under marginalization and selection.
We have further established that two DAGs have an identical selected marginal interventional model 
if and only if their smDGs are the same. 

Our second contribution is to investigate weaker probing schemes.
In the Observe-Or-Do probing scheme, defined in \cref{app_obs_or_do}, a pair of distinct smDGs are sometimes indistinguishable.
For the case where only passive observation is allowed, we have presented further rules for removing or replacing 
selected variables, and have discussed how rules known to show observational equivalence between mDAGs
may be applied to the case of smDGs.

We now highlight five possible extensions of the present work.
Firstly, the selected-latent projection is complete for Observe\&Do equivalence, but we still do not have a complete characterization of observational equivalence between DAGs. In fact this is not yet known even for the case without selection~\citep{ansanelli2025observational},
and future work could address this gap. Secondly, we also do not have a complete characterization of equivalence for the case of weaker interventional probing schemes, such as for the Observe-or-Do probing scheme. This is unlike the case without selection, where both Observe\&Do equivalence and Observe-or-Do equivalence is completely characterized by the mDAG structure~\citep{ansanelli2024everything}. Thirdly, we have treated the case where a variable is \emph{selected} to a single value 
so that under selection, their children would be unaffected by them, which is why 
we were able to remove outgoing edges from selected vertices without altering the selected model.
More generally, we could consider scenarios where variables are \emph{conditioned} to a 
set of values, and where outgoing edges from selected variables would have to be considered.
Fourthly, we focused on the causal models of liftable smDGs, regarding non-liftable 
smDGs as not physically realizable because they yield a cyclic canonical graph.
Physical or not, however, there exist formal models for cyclic causal graphs 
\citep{bongers2021foundations}, 
which could be used to assign interventional distributions to cyclic canonical graphs, 
and thus to non-liftable smDGs.
Fifthly, although our Observe\&Do interventional scheme is highly general, 
it excludes the edge interventions of \citet{shpitser2016causal}, 
and further work could seek to evaluate the circumstances in which edge interventions 
can distinguish a pair of DAGs.

\section*{Acknowledgements}

MMA thanks Robert Spekkens for conceptual discussions regarding interventional probing schemes in the presence of selection effects. RC thanks Open Philanthropy for supporting this work via the
Future of Humanity Institute scholarship program.
MMA and EW were supported by the Perimeter Institute for Theoretical Physics. Research at Perimeter Institute is supported in part by the Government of Canada through the Department of Innovation, Science and Economic Development and by the Province of Ontario through the Ministry of Colleges and Universities. MMA is also supported by the Natural Sciences and Engineering Research Council of Canada (Grant No. RGPIN-2024-04419).

\appendix\clearpage

\section{Background on Regular Conditional Probabilities} \label{app:regular-conditional-probabilities}

A \emph{regular conditional probability} is a way to define conditional probabilities given a $\sigma$-algebra or random variable, even when conditioning on events of probability zero.  
Formally, let $(\Omega,\mathcal{F},\mathbb{P})$ be a probability space and let $\mathcal{H}\subseteq\mathcal{F}$ be a sub-$\sigma$–algebra.  
A \emph{regular conditional probability (RCP)} of $\mathbb{P}$ given $\mathcal{H}$ is a family of probability measures
\[
\bigl\{\mathbb{P}(\,\cdot\mid\mathcal{H})(\omega)\bigr\}_{\omega\in\Omega}
\]
such that for every $A\in\mathcal{F}$ the map $\omega\mapsto\mathbb{P}(A\mid\mathcal{H})(\omega)$ is $\mathcal{H}$-measurable and satisfies
\[
\int_{H}\mathbb{P}(A\mid\mathcal{H})(\omega)\,d\mathbb{P}(\omega)=\mathbb{P}(A\cap H)\qquad\text{for all }H\in\mathcal{H}.
\]
When $\mathcal{H}$ is the $\sigma$–algebra generated by a random variable $Y$, writing $\mathbb{P}(A\mid Y=y)$ for the kernel $\omega\mapsto\mathbb{P}(A\mid\mathcal{H})(\omega)$ gives the familiar \emph{conditional distribution of $A$ given $Y=y$}.  
Thus ordinary conditional probabilities like $\mathbb{P}(X\in B\mid Y=y)$ are special cases of RCPs, while the general definition works even when conditioning on probability-zero events or on $\sigma$-algebras not generated by countable partitions.

Regular conditional probabilities are only needed for 
our results to generalize to continuous random variables.
To see where we rely on them, one can see the proof of
\Cref{lemma_interchangeedges},
where we simulate a causal relationship $a \to b$
using a path $a \to s \gets m \to b$ by letting 
$X_m$ be sampled randomly over the state space of $a$,
and then letting $X_s=0$ only when $a=m$.
In such cases, the probability of $a=m$ may be zero, 
but
the use of regular conditional probability ensures that 
given $X_s=0$,
$b$ depends on $a$ just as it would in the graph $a \to b$.

\section{Proofs that the transformations of Section~\ref{sec:invariances} maintain Observe\&Do equivalence}

\subsection{Proof of \Cref{prop_exog_term}} \label{app_exog_term}

Before establishing interventional \emph{equivalence}, we begin with a useful lemma that shows interventional \emph{dominance}.

\begin{lemma}[Interventional dominance of subgraphs] \label{le:subgraph}
If $\calD$ is a DAG with vertices partitioned as $(V,M,S)$ and $\calD'$ is a DAG with vertices
partitioned as $(V,M',S')$, 
where $\calD$ is a subgraph of $\calD'$, and $M \subseteq M'$, $S \subseteq S'$, 
then $\calD'$ Observe\&Do dominates $\calD$. That is, $\calC(\calD, V \mid S) \subseteq \calC(\calD', V\mid S')$.
\end{lemma}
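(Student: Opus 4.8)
The plan is to prove dominance directly from \Cref{def_smi_model}: I will show that every set $P_*$ of SMI pairs compatible with $\calD$ is also compatible with $\calD'$. So fix an arbitrary $P_* \in \calC(\calD, V \mid S)$, witnessed by a family of kernels $\{P(X_w \mid X_{\pa{\calD}{w}})\}_{w \in V \dotcup M \dotcup S}$ that are deterministic on the visible variables, and construct kernels on $\calD'$ as follows. For each original vertex $w \in V \dotcup M \dotcup S$, I reuse the same kernel, simply ignoring the extra parents that $w$ may acquire in $\calD'$: since $\calD$ is a subgraph of $\calD'$ we have $\pa{\calD}{w} \subseteq \pa{\calD'}{w}$ (the same vertices, possibly more of them), so $P'(X_w \mid X_{\pa{\calD'}{w}}) := P(X_w \mid X_{\pa{\calD}{w}})$ is a well-defined kernel, and it is still a deterministic function whenever $w$ is visible. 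For each newly introduced vertex $w \in (M' \setminus M) \dotcup (S' \setminus S)$, I assign the constant kernel $\delta(X_w = 0)$, which depends on none of its parents. These kernels are therefore admissible for $\calD'$.

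The key step is to show that, under any fixed soft intervention $Q(X_{V^\sharp})$, the kernels just constructed reproduce the very same SMI distribution as $\calD$. I claim that the joint interventional density over $(X_{V^\flat}, X_{V^\sharp}, X_M, X_S)$ obtained from $\calD'$ after integrating out the extra marginalized vertices $M' \setminus M$ and fixing the extra selected vertices $S' \setminus S$ to $\zeros$ agrees with the corresponding joint density from $\calD$. Inspecting the factorization in \eqref{eq_Markov_interventional}, every factor indexed by a vertex $y \in V^\flat \dotcup M \dotcup S$ is identical in the two graphs: the kernel is the same, the extra parents are ignored, and the $\sharp$-relabelling $\paN^\sharp(y)$ of the surviving (that is, the $\calD$-)parents is unchanged. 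Each extra vertex contributes a factor $\delta(X_w = 0)$ that depends on nothing and that no surviving factor references; integrating such a factor over $x_w$ for $w \in M' \setminus M$, and evaluating it at $x_w = 0$ for $w \in S' \setminus S$, yields $1$ in both cases. Hence the two joint densities coincide, their normalising denominators in \eqref{eq_Markov_interventional} coincide, and in particular $P'_{Q}(X_{S'} = \zeros) = P_{Q}(X_S = \zeros)$, so the positivity condition $P_{Q}(X_S = \zeros) > 0$ that selects which interventions enter $P_*$ is the same for both graphs.

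Forming the regular conditional distribution given $X_{S'} = \zeros$ in $\calD'$ and given $X_S = \zeros$ in $\calD$ then produces identical SMI distributions $P_{Q}(X_{V^\sharp}, X_{V^\flat} \mid X_S = \zeros)$ for every admissible $Q$; the intervention domain $\calP(\dom{V})$ is common because $V$ is shared. Consequently the set of SMI pairs generated by the constructed kernels on $\calD'$ is exactly $P_*$, so $P_* \in \calC(\calD', V \mid S')$. As $P_*$ was arbitrary, $\calC(\calD, V \mid S) \subseteq \calC(\calD', V \mid S')$, which is the asserted dominance. The same argument applies verbatim to the full-support variant \eqref{eq_abcd}.

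The difficulty here is not conceptual but a matter of careful bookkeeping: one must verify that the constant extra vertices genuinely decouple — that no surviving factor depends on them, that integrating or evaluating them contributes precisely the factor $1$, and that this leaves both the normalising denominator and the positivity of $P_{Q}(X_S = \zeros)$ undisturbed. The measure-theoretic care demanded by continuous variables (the point-mass kernels $\delta(X_w = 0)$ and the regular conditional probability given $X_{S'} = \zeros$) is where the argument must be written out precisely, but it follows the same pattern already invoked elsewhere in the paper.
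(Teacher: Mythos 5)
Your proposal is correct and follows essentially the same route as the paper's own proof of \Cref{le:subgraph}: reuse the kernels from $\calD$ on $\calD'$ (ignoring any extra parents) and fix the additional variables in $M'\setminus M$ and $S'\setminus S$ to a constant value, so that the factorization in \eqref{eq_Markov_interventional} and hence the SMI pairs are unchanged. The only difference is that you write out the bookkeeping (extra factors integrating or evaluating to $1$, preservation of determinism and of the positivity condition) that the paper leaves implicit.
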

\begin{proof}
Let $P_*$ be a set of pairs in the SMI model of $\calD$ produced by 
some kernels $P(X_y \mid X_{\pa{y}})$.
Consider the same kernels in $\calD'$, but set the additional 
variables $(M' \setminus M)$ and $(S' \setminus S)$ to constant values.
Since intervention is only possible on visible variables
and the kernels are equal,
we will recover $P_*$, meaning that $P_* \in \calC(\calD', V\mid S')$.
This is true for every $P_* \in \calC(\calD, V \mid S)$ so
$\calC(\calD, V \mid S) \subseteq \calC(\calD', V\mid S')$.
\end{proof}

We now restate Proposition \ref{prop_exog_term}.

\ExogAndTerm*

\begin{proof}
    \textbf{Proof of a)}
    This result would be immediate from 
\citet[Theorem 1]{ansanelli2024everything}
except that our smDGs have deterministic 
variables, so we instead construct a proof from first principles.

As shorthand, we will denote:
\begin{itemize}
\item the exogenized graph by
$\tilde{\calD}:=\exogenize_m(\calD)$,
\item the parents in $\calD$ of a vertex $z$
by $R_z:=\pa{\calD}{z}$,
\item the sharp parents in $\calD$ of a vertex $z$
by $R^\sharp_z:=\pas{\calD}{z}$,
\item the parents in $\tilde{\calD}$ by
$\tilde{R}_z:=\pa{\tilde{\calD}}{z}$.
\end{itemize}

\ding{212} \textbf{Proof of $\calC(\calD,V \mid S) \subseteq \calC(\tilde{\calD},V \mid S)$:}

Let $P_*\in \calC(\calD, V \mid S)$ be a set of SMI pairs compatible with $\calD$, and let $P(X_{y} \mid X_{\RR_y})$ for $y\in V\dotcup M\dotcup S$ be the kernels associated with $P_*$. In  $\tilde{\calD}$, the vertex $m$ does not have parents. We will define kernels in  $\tilde{\calD}$ in such a way that the variable $X_m$ encodes a library of values, one for each assignment to the variables $X_{\pa{\calD}{m}}$, and the children of $m$ look only at the element of this library that correspond to the actual value of $X_{\RR_m}$:
\begin{itemize}
    \item $P'(X_m=\bm{x}_m) = 
    \prod_{\alpha \in \dom{\RR_m}} P(X_m =x^{\alpha}_m \mid X_{\RR_m}=\alpha)$
    , where each $\bm{x}_m=\times_{\alpha \in \dom{\RR_m}}x^{\alpha}_m$. That is,  $X_m$ is a Cartesian product of variables, one for each assignment $\alpha$ to the parents $R_m$ of $m$,
    where $P'(X_m^{\alpha}) = P(X_m \mid X_{R_m}=\alpha)$. 
    \item $P'(X_w \mid \sx_m, X_{\RR_w\setminus m}) = P(X_w\mid X_m=x^{\alpha}_m,x_{\RR_w \setminus m})$ for each 
    $w \!\in\! \ch{\calD}{m}$ and $\sx_m$
    \item $P'(X_w\mid X_{\RR_w}) = P(X_w\mid X_{\RR_w})$ otherwise.
\end{itemize}

Let $P'_*\in \calC(\tilde{\calD}, V \mid S)$ be the set of SMI pairs realizable by $\tilde{\calD}$ with the choice of parameters above, and $Y:=(V^\flat \dotcup M \dotcup S)\setminus(\{m\}\cup \ch{\calD}{m})$.   Every pair $\big(Q(X_{V^\sharp}),\,P'_{Q(X_{V^\sharp})}(X_{V^\sharp\dotcup V^\flat}\cmid X_s=0)\big) \in P'_*$ must obey \eqref{eq_Markov_interventional}. Therefore, the joint distribution is:
	\begin{align*}
	&P'_{Q(X_{V^\sharp})}(x_{V^\sharp\dotcup V^\flat},x_M,x_S)\\
    &
    =Q(x_{V^\sharp}) 
    \displaystyle\prod_{\alpha \in \dom{{\RR_m}}}\hspace{-1mm} P(X_m =x^{\alpha}_m \mid X_{\RR^\sharp_m}=\alpha)  \displaystyle\prod_{w\in\ch{m}}  P\big(x_w\mid X_m=x^{\alpha}_m,x_{R_w \setminus m}\big)
    \displaystyle\prod_{y \in Y} P(x_y \mid x_{R^\sharp_y})
	\end{align*}

After marginalizing over $\bm{x}_m$, 
the only term that does not go to $1$ is the term corresponding to the actual value of $x_{\RR^\sharp_m}$. That is:
	\begin{align*}
	&P'_{Q(X_{V^\sharp})}(x_{V^\sharp\dotcup V^\flat},x_{M\setminus\{m\}},x_S)\\
    &=Q(x_{V^\sharp})\displaystyle
    \!\int_{x_{\RR_m} \in \dom{{\RR_m}}}\hspace{-9mm}
    P\big(X_m =x^{x_{\RR_m}}_m \,\big|\, X_{\RR^\sharp_m}=x_{\RR_m}\big) 
    \!\!\!\!\displaystyle\prod_{w\in\ch{m}}
    \!\!\!\! P\big(x_w \,\big|\, X_m=
    x^{{\RR^\sharp_m}}_m,x_{\RR^\sharp_w\setminus m}\big) 
    \!\displaystyle\prod_{y \in Y}\! P(x_y \mid x_{\RR^\sharp_y}) 
    \, dx_{\RR^\sharp_m} \\
    &=P_{Q(X_{V^\sharp})}(x_{V^\sharp\dotcup V^\flat},x_{M\setminus\{m\}},x_S).
	\end{align*}
    Since the joint distribution is the same, the probabilities of $X_S=\zeros$ will also be the same, i.e.~$P'_{Q(X_{V^\sharp})}(X_{S}=\zeros)=P_{Q(X_{V^\sharp})}(X_{S}=\zeros)$. Thus, by marginalizing $X_{M\setminus\{m\}}$ and selecting on $X_{S}=\zeros$, we get:
\begin{equation*}
P'_{Q(X_{V^\sharp})}(X_{V^\sharp\dotcup V^\flat}\cmid X_{S}=\zeros)=P_{Q(X_{V^\sharp})}(X_{V^\sharp\dotcup V^\flat}\cmid X_S=\zeros).
\end{equation*}

Therefore, our choice of parameters gives $P'_*=P_*$. This shows that the SMI model of $\tilde{\calD}$ contains every element of the SMI model of $\calD$; that is, $\calC(\calD, V \mid S) \subseteq \calC(\tilde{\calD}, V \mid S)$.

\ding{212} \textbf{Proof of $\calC(\calD,V \mid S) \supseteq \calC(\tilde{\calD},V \mid S)$:}

Let $P'_*\in \calC(\tilde{\calD}, V \mid S)$ be a set of SMI pairs realizable by $\tilde{\calD}$, and let $P'(X_{y} \mid X_{\RR_y}), y\in V\dotcup M\dotcup S$ be the kernels associated with $P'_*$. Define the following kernels in $\calD$:
\begin{itemize}
    \item for each $\bar{x}_m,\bar{x}^1_m$,
    $P(X_m=(\bar{x}_m,\bar{x}^1_m) \mid X_{\RR_m}) = P'(X_m=\bar{x}_m ) \textbf{1}_{\{\bar{x}_{1}=X_{\RR_m}\}}$,
    where the indicator variable $\textbf{1}_{\{Y=Z\}}$ is equal to $1$ when $Y=Z$, and $0$ otherwise. That is, $X_m$ contains two variables, one that behaves as $X_m$ in $\tilde{\calD}$ and other that records the values of the parents of $m$.
    \item $P(X_w \!\mid \!X_m\!=\!(\bar{x}_m,\bar{x}^1_m),x_{\RR_w\setminus m})\!=\!P'(X_w\mid X_m=\bar{x}_m, X_{\RR_m}=\bar{x}_1, x_{\RR_w\setminus (\RR_m\cup\{m\})})$ for $w \in \ch{\calD}{m}$.
    \item $P(X_w \mid X_{\RR_w}) = P'(X_w \mid X_{\RR_w})$ otherwise.
\end{itemize}

Letting $Y:=(V^\flat \dotcup M \dotcup S)\setminus(\{m\}\cup \ch{\calD}{m}) $, the joint distribution realized by this choice of kernels is:
\begin{align*}
    &P_{Q(X_{V^\sharp})}(x_{V^\sharp},x_{V^\flat},x_{M\setminus\{m\}},x_S) \\
    &=\prod_{y \in Y} \int_{\bar{x}_m}\int_{\bar{x}_1} P'(x_y \mid 
    x_{\tilde{\RR}^{\sharp}_y}
    )  
    P'(X_m=\bar{x}_m )\, \textbf{1}_{\{\bar{x}_{1}=x_{\RR^\sharp_m}\}} \, Q(x_{V^\sharp})\\ & \quad\qquad\qquad\qquad\times\prod_{w\in\ch{\calD}{m}}P'\big(x_w \,\big|\, X_m=\bar{x}_m, X_{\RR_m}=\bar{x}_1, x_{\RR^\sharp_w\setminus (\RR^\sharp_m \cup\{m\})}\big) \,d\bar{x}_m \, d\bar{x}_1 \\
    &= P'_{Q(X_{V^\sharp})}(x_{V^\sharp},x_{V^\flat},x_{M\setminus\{m\}},x_S)
\end{align*}

By similar reasoning as in the previous case, this equality implies that the entire set of SMI pairs $P_*'$ is realizable by $\calD$, thus showing that $\calC(\calD,V \mid S) \supseteq \calC(\tilde{\calD},V \mid S)$.

   \textbf{Proof of b)}

    \ding{212} \textbf{Proof of $\calC(\calD,V\mid S) \supseteq  \calC(\calD_{\underline{s}},V \mid S)$:}
	Follows from \Cref{le:subgraph}, since $\calD_{\underline{s}}$ is a subgraph of $\calD$ with the same set of marginalized and selected vertices.
	
	\ding{212} \textbf{Proof of $\calC(\calD, V \mid S) \subseteq \calC(\calD_{\underline{s}}, V \mid S)$:} 
    The idea of the proof
    is that when an edge $s\rightarrow y$ is removed, we can simply set the variable associated with the child $y$ to the distribution that it would take in $\calD$ when $X_S=\zeros$.

    We will once again use $\RR_z$ as shorthand 
    for $\pa{\calD}{z}$. 
    We will let $\underline{\RR}_z$ denote the parents of $z$ in $\calD_{\underline{s}}$,
    and $\RR^\sharp_z$ the sharp parents of $z$ in $\calD$,
    and $\underline{\RR}^\sharp_z$ the sharp parents of $z$ 
    in  $\calD_{\underline{s}}$.
	Let $P_*\in \calC(\calD, V \mid S)$ be a set of SMI pairs compatible with $\calD$, and let $P(X_{y} \mid X_{\RR_y}), y\in V\dotcup M\dotcup S$ be the kernels associated with $P_*$. Consider now the following choice of kernels for $\calD_{\underline{s}}$:
	\begin{align}
	&y\in W \setminus \{s\} \Rightarrow	P'(X_y \mid X_{\underline{\RR}_y}) =\begin{cases}  P(X_y \mid X_{\RR_y\setminus\{s\}}, X_s=0) & y \in \ch{\calD}{s} \\[6pt] P(X_y\mid X_{\RR_y}) & \text{otherwise.} \end{cases} \label{eq_proof_b_param1} 
	\end{align}

	Let $P'_*\in \calC(\calD_{\underline{s}}, V \mid S)$ be the set of SMI pairs compatible with $\calD_{\underline{s}}$ that is obtained from the choice of parameters above. 
    Every pair $\big(Q(X_{V^\sharp}),\,P'_{Q(X_{V^\sharp})}(X_{V^\sharp\dotcup V^\flat}\cmid X_s=0)\big) \in P'_*$ must obey \eqref{eq_Markov_interventional}. Therefore, 
    letting $Y:=V^\flat \dotcup M \dotcup S \setminus \{s\}$,
    the joint distribution is:
	\begin{align*}
	&P'_{Q(X_{V^\sharp})}(X_{V^\sharp\dotcup V^\flat},X_M,X_{S \setminus s},X_{s}=0)\\
    &=Q(X_{V^\sharp})\,P'(X_s=0 \mid X_{\underline{\RR}^\sharp_s})\,\prod_{y \in Y} P'(X_y \mid X_{\underline{\RR}^\sharp_y}) \\
    &=Q(X_{V^\sharp}) \,P(X_s=0\mid X_{\RR^\sharp_s}) \, 
    \hspace{-3mm}\prod_{y \in Y\setminus \ch{\calD}{s}} \hspace{-3mm} 
    P(X_y \mid X_{\RR^\sharp_y})
    \hspace{-3mm}\prod_{y \in Y\cap \ch{\calD}{s}} \hspace{-3mm} 
    P(X_y \mid X_{\RR^\sharp_y \setminus \{s\}},X_s=0)
    \\
&= P_{Q(X_{V^\sharp})}(X_{V^\sharp\dotcup V^\flat},X_M,X_{S \setminus s},X_s=0).
	\end{align*}

Since the joint distribution is the same, the probabilities of $X_S=\zeros$ will also be the same, i.e.~$P'_{Q(X_{V^\sharp})}(X_{S}=\zeros)=P_{Q(X_{V^\sharp})}(X_{S}=\zeros)$. Thus, by marginalizing $X_M$ and selecting on $X_{S\setminus \{s\}}=\zeros$, we obtain:
\begin{equation*}
P'_{Q(X_{V^\sharp})}(X_{V^\sharp\dotcup V^\flat}\cmid X_{S}=\zeros)=P_{Q(X_{V^\sharp})}(X_{V^\sharp\dotcup V^\flat}\cmid X_S=\zeros).
\end{equation*}

Therefore, the choice of parameters \eqref{eq_proof_b_param1} gives $P'_*=P_*$. This shows that the SMI model of $\calD_{\underline{s}}$ contains every element of the SMI model of $\calD$; that is, $\calC(\calD, V \mid S) \subseteq \calC(\calD_{\underline{s}}, V \mid S)$.

\end{proof}

\subsection{Proof of \Cref{prop_merging}} \label{app:smm-invariances-e}

\MergingRestatable*

We will begin by proving part (a).
To show that every SMI pair compatible with $\calD$  is also compatible with $\tilde{\calD}$ 
(where $m_1$ and $m_2$ are merged), 
our approach will be to define the variable $X_m$ in $\tilde{\calD}$ 
as the Cartesian product of variables $X_{m_1}$ and $X_{m_2}$ from $\calD$.
To demonstrate the converse, the idea will be that even though $m_1$ and $m_2$ are separate in $\calD$, we can use the common selected child to make sure that the variables $X_{m_1}$ and $X_{m_2}$ are always equal, so that all their children can treat them as though they 
were a single variable.

\begin{proof}[Proof of \Cref{prop_merging} part (a)]

As shorthand, let $\RR_z:=\pa{\calD}{z}$ and $\tilde{\RR}_z:=\pa{\tilde{\calD}}{z}$.
For sharp variables, let 
$\RR^\sharp_z:=\pas{\calD}{z}$ and 
$\tilde{\RR}^\sharp_z:=\pas{\tilde{\calD}}{z}$.

\ding{212} \textbf{Proof that $\calC(\calD,V \mid S) \subseteq \calC(\tilde{\calD},V \mid S)$:}

To reproduce a set of SMI pairs $P_*$ that is realizable by $\calD$ in $\tilde{\calD}$, we simply define $X_m$ in $\tilde{\calD}$ as the Cartesian product of $X_{m_1}$ and $X_{m_2}$. The distribution over these variables is chosen to be the same as it is in $\calD$. It is not hard to see that this choice allows for $P_*$ to be realized in $\tilde{\calD}$.

\ding{212} \textbf{Proof that $\calC(\calD,V \mid S) \supseteq \calC(\tilde{\calD},V \mid S)$:}

Define $\sm=\{m_1,m_2\}$.

Let $P'_*\in \calC(\tilde{\calD}, V \mid S)$ be an arbitrary set of SMI pairs compatible with $\tilde{\calD}$, and let $P'(X_{y} \mid X_{\tilde{\RR}_y}), y\in V \dotcup M' \dotcup S$ be the kernels associated with $P'_*$.  Consider now the following choice of kernels for $\calD$:

\begin{itemize}
	\item $P(X_{m_1}=x_m) = P'(X_m=x_m)$.
	\item $P(X_{m_2}=x_m) \sim \Unif(\dom{m})$, i.e. a uniform distribution over the domain of $X_m$.
	\item $P(X_s=0 \mid X_{{m_1}}=x_{{m_1}},  X_{{m_2}}=x_{{m_2}}, X_{\RR_s \setminus \sm}) = P'(X_s=0 \mid X_m=x_{m_1}, X_{\tilde{\RR}_s \setminus m}) \textbf{1}_{\{x_{m_1}=x_{m_2}\}}$ for every $x_{\sm}$ and for an arbitrarily chosen $s \in S \cap \ch{m_1} \cap \ch{m_2}$, 
	where the indicator variable $\textbf{1}_{\{Y=Z\}}$ is equal to $1$ when $Y=Z$, and $0$ otherwise. 
	\item $P(X_y \cmid  X_{{m_1}}=x_{m_1},X_{\RR_y \setminus m_1}) = P'(X_y \cmid  X_m=x_{m_1}, X_{\tilde{\RR}_y \setminus \{m\}})$ for every $x_{\sm}$ and
    $y \in \ch{\calD}{m_1} \setminus \{s\}$. Note that if $y$ is also a child of $m_2$ in $\calD$, this choice of parameters says that $y$ ignores the parent $m_2$.
        \item $P(X_y \cmid  X_{{m_2}}=x_{m_2},X_{\RR_y \setminus m_2}) = P'(X_y \cmid  X_m=x_{m_2}, X_{\tilde{\RR}_y \setminus \{m\}})$ for every $x_{\sm}$ and
    $y \in \ch{\calD}{m_2} \setminus \ch{\calD}{m_1}$.
	\item $P(X_y \mid X_{\RR_y}) = P'(X_y \mid X_{\tilde{\RR}_y})$ 
    for $y \in V\dotcup M\dotcup S\setminus (\sm \cup \ch{\calD}{\sm} \cup \{s\})$.
\end{itemize}

With this choice of parameters, selecting on $X_s=0$ ensures that $X_{m_1}=X_{m_2}$. 

Let $P_*\in \calC(\calD, V \mid S)$ be the set of SMI pairs 
obtained by the kernels above. 
Every pair $\big(Q(X_{V^\sharp}),P_{Q(X_{V^\sharp})}(X_{V^\sharp\dotcup V^\flat}\cmid X_{S}=\zeros)\big) \in P_*$ must obey \eqref{eq_Markov_interventional}; therefore, the joint distribution over all variables of $\calD$ except for $X_{m_1}$ and $X_{m_2}$ must satisfy, 
with $Y:= (V^\flat \dotcup M \dotcup S)\setminus \{m_1,m_2,s\}$, 
for any assignments $x_{V^\sharp \dotcup V^\flat \dotcup (M \setminus \{m_1,m_2\}) \dotcup S\setminus s}$:
\begin{align*}
	&P_{Q(X_{V^\sharp})}(x_{V^\sharp \dotcup V^\flat \dotcup (M \setminus \{m_1,m_2\}) \dotcup S\setminus s},X_s=0) \\
    &= \int_{x_{m_1}}\int_{x_{m_2}} Q(x_{V^\sharp})\,P(x_{m_1})\,P(x_{m_2}) \,P(X_s=0 \mid x_{m_1},x_{m_2},x_{\RRs_s \setminus \sm}) \prod_{y \in Y} P(x_y\mid x_{\RRs_y}) \,dx_{m_2}\,dx_{m_1}\\
    & =c \int_{x_{m_1}}\int_{x_{m_2}} \hspace{-3mm} Q(x_{V^\sharp})\,P'(X_m=x_{m_1}) \,
    P'(X_s=0 \mid X_m=x_{m_1},x_{\tilde{\RR}^\sharp_s\setminus \{m\}}) \, \textbf{1}_{\{x_{m_1}=x_{m_2}\}} \hspace{-3mm} \prod_{y \in Y \setminus \ch{\tilde{\calD}}{m}} \hspace{-5mm}P'(x_y\mid x_{\RRs_y}) \\
    &\qquad \hspace{-6mm}\prod_{y \in \ch{\calD}{m_1} \setminus s}\hspace{-6mm} P'(x_y \cmid  X_m=x_{m_1}, x_{\tilde{\RR}^\sharp_y \setminus \{m\}})
    \hspace{-6mm}\prod_{y \in \ch{\calD}{m_2} \setminus \ch{\calD}{m_1}}\hspace{-6mm} P'(x_y \cmid  X_m=x_{m_2}, x_{\tilde{\RR}^\sharp_y \setminus \{m\}})
    \, dx_{m_2}\,dx_{m_1} \\
    &= c
    \int_{x_{m}} \hspace{-3mm} Q(x_{V^\sharp})\,P'(x_{m})\,P'(X_s=0 \mid x_{\tilde{\RR}^\sharp_s}) \hspace{-4mm}\prod_{y \in Y \setminus \ch{\tilde{\calD}}{m}} \hspace{-5mm} P'(x_y\mid x_{\tilde{\RR}^\sharp_y})\hspace{-4mm} \prod_{y \in Y \cap \ch{\tilde{\calD}}{m}} \hspace{-6mm} P'(x_y\mid x_m, x_{\tilde{\RR}^\sharp_y\setminus m}) \, dx_m\\
    &=c
    P'_{Q(X_{V^\sharp})}(x_{V^\sharp \dotcup V^\flat \dotcup (M \setminus \{m_1,m_2\}) \dotcup S\setminus s},X_s=0)
\end{align*}
where the proportionality constant $c \in \mathbb{R}$  comes from the uniform distribution on $X_{m_2}$.

One consequence of this equation is that 
$P_{Q(X_{V^\sharp})}(X_{S}=\zeros) = c
P'_{Q(X_{V^\sharp})}(X_{S}=\zeros)$. This implies that:
\begin{equation}
	P_{Q(X_{V^\sharp})}(X_{V^\sharp\dotcup V^\flat}\cmid X_{S}=\zeros)=
    \frac{c
    P'_{Q(X_{V^\sharp})}(X_{V^\sharp\dotcup V^\flat},X_{S}=\zeros)}
    {
    c
    P'_{Q(X_{V^\sharp})}(X_{S}=\zeros)}=P'_{Q(X_{V^\sharp})}(X_{V^\sharp\dotcup V^\flat}\cmid X_S=\zeros). \nonumber
\end{equation}

Thus, every SMI pair of $\calC(\tilde{\calD},V\mid S)$ is in $\calC(\calD,V \mid S)$.
\end{proof}

We will now prove part (b).
To prove that every SMI pair compatible with $\calD$  is also compatible with $\tilde{\calD}$ 
(where $s_1$ and $s_2$ are merged), 
we will choose the variable $X_s$ in $\tilde{\calD}$ 
so that its domain contains all of the information from $X_{s_1}$ and $X_{s_2}$.
To show the converse, the idea 
is to use $m$
to uniformly sample over the joint domain of all parents of $s_1$ and $s_2$, 
and for $s_1$ and $s_2$ to 
select the event where $m$ contains the values of all parents of $s_1$ and $s_2$.
Then, $s_1$ can depend on $X_m$ in $\calD$
just as $s$ depends on $X_m$ in $\tilde{\calD}$.

\begin{proof}[Proof of \Cref{prop_merging} part (b)]

\ding{212} \textbf{Proof that $\calC(\calD,V \mid S) \subseteq \calC(\tilde{\calD},V\mid S \cup \{s\} \setminus \{s_1,s_2\})$:} 

To reproduce a set of SMI pairs $P_*$ that is realizable by $\calD$ in $\tilde{\calD}$, we simply define $X_s$ in $\tilde{\calD}$ as the Cartesian product of $X_{s_1}$ and $X_{s_2}$. The distribution over these variables is chosen to be the same as it is in $\calD$. It is not hard to see that this choice allows for $P_*$ to be realized in $\tilde{\calD}$.

\ding{212} \textbf{Proof that $\calC(\calD,V \mid S) \supseteq \calC(\tilde{\calD},V\mid S \cup \{s\} \setminus \{s_1,s_2\})$:}

As shorthand, let $\RR_z:=\pa{\calD}{z}$ and $\tilde{\RR}_z:=\pa{\tilde{\calD}}{z}$.
For sharp variables, let 
$\RR^\sharp_z:=\pas{\calD}{z}$ and 
$\tilde{\RR}^\sharp_z:=\pas{\tilde{\calD}}{z}$.
Let $\bm{s}:=\{s_1,s_2\}$, and let $S':=S\cup \{s\}\setminus \bm{s}$ be the set of selected vertices of $\tilde{\calD}$. 

First let $m\in \RR_{s_1} \cap \RR_{s_2}\cap M$ be a marginalized vertex that is a parent of both $s_1$ and $s_2$ in $\calD$. 
In $\tilde{\calD}$, by assumption, $m$ is a parent of $s$.

Let $P'_*\in \calC(\tilde{\calD}, V \mid S')$ be a set of SMI pairs compatible with $\tilde{\calD}$, and let $P'(X_{y} \mid X_{\RR_y}), y\in V\dotcup M\dotcup S'$ be the kernels associated with $P'_*$. Consider now the following choice of kernels for $\calD$:
\begin{itemize}
	\item Let the variable $X_m$ associated with $m$ in $\calD$
    equal a Cartesian product of independent variables $\bar{X}_m \times \bar{X}_{1} $,
    where $\bar{X}_m$ is distributed like $X_m$ in $\tilde{\calD}$ and
    $\bar{X}_{1} \sim \Unif(\dom{\RR_{s_1}\setminus \RR_{s_2}})$.
    So,
    $P(X_m=\langle\bar{x}_m, \bar{x}_{1}\rangle) \propto P'(X_m=\bar{x}_m)$    for any $\bar{x}_m, \bar{x}_{1}$.
\item $P(X_{s_1}=0 \cmid  X_m=\langle\bar{x}_m, \bar{x}_{1}\rangle, X_{\RR_{s_1}\setminus m}=x_{\RR_{s_1}\setminus \{m\}})=  \textbf{1}_{\{\bar{x}_{1}=x_{\RR_{s_1}\setminus \RR_{s_2}}\}}$
for each $x_{\RR_{s_1}\setminus m},\bar{x}_m,\bar{x}_{1}$. That is, selecting on $X_{s_1}=0$ establishes that $\bar{x}_{1}$ equals $x_{\RR_{s_1}\setminus \RR_{s_2}}$. 
    \item $P(X_{s_2}=0 \cmid  X_m=\langle\bar{x}_m, \bar{x}_{1}\rangle, X_{\RR_{s_2}\setminus \{m\}}=x_{\RR_{s_2}\setminus \{m\}})\\ 
                =P'(X_s=0\cmid  X_m=\bar{x}_m, X_{\RR_{s_1}\setminus\RR_{s_2}}=\bar{x}_{1}, X_{\RR_{s_2}\setminus \{m\}}=x_{\RR_{s_2}\setminus \{m\}})$
                for each $x_{\RR_{s_2}\setminus m}$, $\bar{x}_m$, $\bar{x}_{1}$.
                That is, $X_{s_2}$ depends on $\bar{X}_m$, $\bar{X}_{1}$ and $X_{\RR_{s_2}\setminus\{m\}}$ similarly to how $X_s$ depends on  $X_m$, $X_{\RR_{s_1}\setminus \RR_{s_1}}$ and $X_{\RR_{s_2}\setminus\{m\}}$ in $\tilde{\calD}$. 
	\item $P(X_y \mid  X_m=\langle\bar{x}_m, \bar{x}_{1}\rangle, X_{\RR_{y} \setminus \{m\}}) = P'(X_y \mid X_m=\bar{x}_m, X_{\RR_{y} \setminus \{m\}})$ for each $\bar{x}_m, \bar{x}_{1}$ for
    $y \in \ch{\calD}{m} \setminus \bm{s}$.
	\item  $P(X_y \mid X_{\RR_{y}}) = P'(X_y \mid X_{\RR_{y}})$ for $y \notin \ch{\calD}{m} \cup \{m\}$.
\end{itemize}

Let $P_*\in \calC(\calD, V \mid S)$ be the set of SMI pairs compatible with $\calD$ that is obtained from these kernels. Every given element $\big(Q(X_{V^\sharp}),P_{Q(X_{V^\sharp})}(X_{V^\sharp\dotcup V^\flat}\cmid X_{S}=\zeros)\big) \in P_*$ must obey \eqref{eq_Markov_interventional}; therefore, 
letting $Y=V^\flat \dotcup (M \setminus m) \dotcup (S \setminus \bm{s})$,
the joint distribution 
$P_{Q(X_{V^\sharp})}(x_{V^\sharp\dotcup V^\flat\dotcup M\dotcup (S\setminus s)}\dotcup X_{s_1}=0,X_{s_2}=0)$ must, for any assignment $x_{V^\sharp\dotcup V^\flat\dotcup (M\setminus m)\dotcup (S\setminus s)}$, satisfy:
\begin{align}
	&P_{Q(X_{V^\sharp})}(x_{V^\sharp\dotcup V^\flat\dotcup (M\setminus m)\dotcup (S\setminus s)},X_{s_1}=X_{s_2}=0) \nonumber\\
	&= \int_{x_m} Q(X_{V^\sharp})\,P(x_m) \,P(X_{s_1}=0\cmid X_{\RRs_{s_1}}) \,P(X_{s_2}=0\cmid X_{\RRs_{s_2}}) \prod_{y \in Y} P(X_y \mid X_{\RRs_{y}})\, dx_m \nonumber\\
	&=c \hspace{-1mm}\int_{\bar{x}_m,\bar{x}_{1}}\hspace{-5mm}Q(x_{V^\sharp})\,P'(X_m=\bar{x}_m) \,
    \textbf{1}_{\{ \bar{x}_{1}=x_{\RRs_{s_1}\setminus \RRs_{s_2}}\}}
    \nonumber\\ \!\!\!
        &\qquad \times P'(X_s=0\cmid  X_m=\bar{x}_m, X_{\RRs_{s_1}\setminus\RRs_{s_2}}=\bar{x}_{1}, X_{\RRs_{s_2}\setminus \{m\}}=x_{\RR_{s_2}\setminus \{m\}})  \\
         &\qquad \times\prod_{y \in Y \setminus \ch{\calD}{m}}\!\!\! P'(x_y \mid x_{\RRs_{y}}) 
    \hspace{-2mm}\prod_{y \in Y \cap \ch{\calD}{m}}\hspace{-2mm} P'(x_y \mid X_m=\bar{x}_m,x_{\RRs_{y}\setminus \{m\}})\,d\bar{x}_m \, d\bar{x}_{1}  \nonumber\\
	&=c     \int_{x_m} P'(x_m) 
    \prod_{y \in Y} P'(x_y \mid x_{\RRs_{y}}) \, P'(X_s=0 \mid x_{\tilde{\RR}^\sharp_{s}}) \, Q(x_{V^\sharp}) dx_m \nonumber\\
    &= c
    P'_{Q(X_{V^\sharp})}(x_{V^\sharp\dotcup V^\flat\dotcup (M\setminus m)\dotcup (S\setminus s)},X_s=0)
	\label{eq_aaaa1}
\end{align}
where the proportionality constant $c \in \mathbb{R}$  comes from the uniform distribution on $\bar{X}_1$.

By marginalizing $X_{V^\sharp\dotcup V^\flat},X_{M\setminus m}$ 
and conditioning $X_{S\setminus s}=0$,
we obtain 
$$P_{Q(X_{V^\sharp})}(X_{S}=\zeros) =c P'_{Q(X_{V^\sharp})}(X_{S'}=\zeros).$$ 
Together with \eqref{eq_aaaa1}, this implies that:
\begin{equation*}
	P_{Q(X_{V^\sharp})}(X_{V^\sharp\dotcup V^\flat}\cmid X_{S}=\zeros)=P'_{Q(X_{V^\sharp})}(X_{V^\sharp\dotcup V^\flat}\cmid X_{S'}=\zeros).
\end{equation*}

Thus $P_*=P'_*$, so the SMI of $\calD$ dominates the SMI of $\tilde{\calD}$.

\end{proof}

\subsection{Proof of \Cref{prop:splitarrows}} \label{app_split_arrows}
We begin by restating the result.

\SplittingArrows*

The proof is as follows.

\begin{proof}

\ding{212} \textbf{Proof that $\calC(\calD,V \mid S)\supseteq \calC(\tilde{\calD},V\mid S\cup S')$.}

To reproduce a set of SMI pairs $P'_*$ that is realizable by $\tilde{\calD}$ in $\calD$, we simply 
define $X_m$ in $\calD$ as the Cartesian product of the variable $X_m$ in $\tilde{\calD}$ and every variable $X_{m_{ab}}$, and  define $X_s$ in $\calD$ as the Cartesian product of the variable $X_s$ in $\tilde{\calD}$ and every variable $X_{s_{ab}}$. The distribution over these variables is chosen to be the same as it is in $\tilde{\calD}$. It is not hard to see that this choice allows for $P'_*$ to be realized in $\calD$.

\ding{212} \textbf{Proof that $\calC(\calD,V \mid S)\subseteq\calC(\tilde{\calD},V\mid S\cup S')$.}

As shorthand, we will let $\RR_z:=\pa{\calD}{z}$ and $\tilde{\RR}_z:=\pa{\tilde{\calD}}{z}$ for a vertex $z$.
For sharp variables, let 
$\RR^\sharp_z:=\pas{\calD}{z}$ and 
$\tilde{\RR}^\sharp_z:=\pas{\tilde{\calD}}{z}$.

Let $P_* \in \calC(\calD,V\mid S)$ be an arbitrary set of SMI pairs compatible with $\calD$, with associated kernels $P(X_y \mid X_{\RR_y}), y \in V\dotcup M\dotcup S$. We will show that $P_*$ is also realizable by $\tilde{\calD}$. The idea is as follows: the kernels associated with the marginal and selected variables $X_{m_{ab}}$ and $X_{s_{ab}}$ for each $a \in V_s,b \in V_m$ will be chosen in such a way that $b$ has access to the value of $X_a$. Furthermore, the vertex $m$ in $\tilde{\calD}$ will be associated with a library of variables $X_m^{x_{V_s}}$, one for each assignment of values $x_{V_s}$ for the variables in $X_{V_s}$. Each variable  $X_m^{x_{V_s}}$ will follow the distribution of $X_m$ in $\calD$ conditioned on $X_s=0$ and on $X_{V_s}=x_{V_s}$. Since the vertices $b\in V_m$ have access to the value of every variable $X_{V_s}$, they can look at the library provided by $m$ and pick the variable  $X_m^{x_{V_s}}$ that corresponds to the values that the variables $X_{V_s}$ actually take.

Formally, we choose the kernels on $\tilde{\calD}$ to be:
\begin{itemize}
    \item For each $a \in V_s,b \in V_m$:
    \begin{itemize}
        \item[*] $X_{m_{ab}} \sim \operatorname{Unif}(\dom{a})$,
        \item[*] $P'(X_{s_{ab}}=0\mid X_{m_{ab}},X_{a}) = \textbf{1}_{\{ X_{m_{ab}}=X_a \}}$.
    \end{itemize}
    \item 
    $P'(X_m=\times_{x_{V_s} \in \dom{{V_s}}}x^{x_{V_s}}_m) = 
    \prod_{x_{V_s} \in \dom{{V_s}}} P(X_m =x^{x_{V_s}}_m \mid X_{V_s}=x_{V_s},X_s=0)
    $ for each assignment $\times_{x_{V_s} \in \dom{{V_s}}}x^{x_{V_s}}_m$,
    i.e.~$X_m$ is a Cartesian product of variables $X^{x_{V_s}}_m$
    where $P'(X_m^{x_{{V_s}}}) = P(X_m \mid X_{V_s},X_s=0)$. 
    \item $P'(X_s \mid X_{V_s}) = P(X_s \mid X_{V_s})=\int_{\dom{m}} P(X_s\cmid X_{V_s},x_{m})\, P(x_m)\, dx_m$.
    \item For each $b \in {V_m}$, let $m_{{V_s}b}$ be the set of added marginalized parents $\{m_{ab} \mid a \in {V_s}\}$. Then:
    
    $P'(X_b \mid X_m=x_m,X_{m_{{V_s}}}=x_{m_{{V_s}b}},X_{\tilde{\RR}_b\setminus\{m,m_{{V_s}b}\}}) = P\big(X_b \,\big|\, X_m=x_m^{x_{m_{{V_s}b}}},X_{\pa{\calD}{b}\setminus\{m\}}\big)$, where $x_m=\times_{x_{V_s} \in \dom{{V_s}}}x^{x_{V_s}}_m$.
    \item $P'(X_y \mid X_{\tilde{\RR}_y})=P(X_y \mid X_{\RR_y})$ for $y \in (V \setminus {V_m}) \dotcup (M\setminus m) \dotcup (S\setminus s)$.
\end{itemize}

Define $Y = V^\flat \dotcup (M\setminus m) \dotcup (S \setminus s)$, and $m_{V_sV_m}=\{m_{V_sb}\mid b\in V_m\}$ as the set of all added marginalized vertices. Furthermore, define the shorthand notation $\bm{x}_m=\times_{x_{V_s} \in \dom{{V_s}}} x_m^{x_{V_s}}$.
Let $P'_*\in \calC(\tilde{\calD}, V \mid S\cup S')$ be the set of SMI pairs compatible with $\tilde{\calD}$ that is obtained from the choice of parameters above. Every given element $\big(Q(X_{V^\sharp}),P'_{Q(X_{V^\sharp})}(X_V\cmid X_{S}=\zeros)\big) \in P'_*$ has to obey \eqref{eq_Markov_interventional}; therefore, the joint distribution over all variables of $\tilde{\calD}$ except for $X_m$ and $X_{m_{V_sV_m}}$ has to be:
\begin{align*}
& P'_{Q(X_{V^\sharp})}\big(X_{V^\sharp\dotcup V^\flat\dotcup (M\setminus m)\dotcup (S \setminus s)}=x_{V^\sharp\dotcup V^\flat\dotcup (M\setminus m)\dotcup (S \setminus s)},X_{S'\dotcup s}=\zeros\big) 
\\
 &=\int_{\bm{x}_m}\int_{x_{m_{{V_s}{V_m}}} }
 Q(x_{V^\sharp})
 P'(X_s=0 \mid x_{{V_s}^\sharp})\,
 P'(X_m=x_m)
 \Big\{\prod_{b \in V_s}  P'\big(x_b \mid x_{m}, x_{m_{V_sb}},x_{\tilde{\RR}^\sharp_b\setminus \{m,m_{{V_s}b}\}}\big)
\\ 
 &\qquad \times \prod_{a \in {V_m}} P'(x_{m_{ab}})P'(X_{s_{ab}}=0\mid x_{m_{ab}}, x_{{V_s}^\sharp})\Big\}
 \prod_{y \in Y \setminus {V_m}}P'(x_y \mid x_{\tilde{\RR}^\sharp_y})
 \, d{x_{m_{{V_s}{V_m}}}} \, d\bm{x}_m
 \\ 
  & = c\int_{\bm{x}_m}\int_{x_{m_{{V_s}{V_m}}} }
  \hspace{-7mm}Q(x_{V^\sharp})\hspace{-2mm}
 \prod_{a,b \in {V_s} \times {V_m}} \hspace{-2mm}
 \textbf{1}_{\{x_{m_{ab}}=x_{a^\sharp}\}}
 P(X_s=0 \mid x_{{V_s}^\sharp})
 \prod_{x'_{V_s} \in \dom{{V_s}}} \hspace{-2mm}
 P(X_m =x^{x'_{V_s}}_m \mid X_{V_s^\sharp}=x'_{V_s},X_s=0)
\\
 &\qquad \times P\big(x_b \,\big|\, X_{m}=x_{m}^{x_{m_{{V_s}b}}},x_{\RRs_b\setminus \{m\}}\big)
 \prod_{y \in Y \setminus {V_m}}P(x_y \mid x_{\RRs_y}) \,
 d{x_{m_{{V_s}{V_m}}}} \, d\bm{x}_m
 \\ 
&=c
\int_{x_m^{x_{V_s^\sharp}}}
Q(x_{V^\sharp})
P(X_s=0 \mid x_{{V_s}^\sharp}) 
P\big(X_m=x_m^{x_{V_s^\sharp}} \,\big|\, x_{{V_s}^\sharp}, X_s=0\big) 
\\ & \qquad\times\int_{\bm{x}_m\setminus\big\{x_m^{x_{V_s^\sharp}}\big\}}\prod_{x'_{V_s} \neq x_{V_s^\sharp}} P(X_m=x^{x'_{V_s}}_m\mid X_{{V_s}^\sharp}=x'_{V_s},X_s=0) \, d\big(\bm{x}_m\setminus\big\{x_m^{x_{V_s^\sharp}}\big\}\big) 
\\&
\qquad \times\prod_{b \in {V_m}} P\big(x_b \,\big|\, X_m= x_m^{x_{V_s^\sharp}},x_{\RRs_b\setminus\{m\}}\big)
\prod_{y \in Y \setminus {V_m}}P(x_y \mid x_{\RRs_y}) 
\, dx_m^{x_{V_s^\sharp}} \\
&= c
\int_{x_m^{x_{V_s^\sharp}}}
Q(x_{V^\sharp}) 
P(X_m=x_m^{x_{V_s^\sharp}},X_s=0 \mid x_{{V_s}^\sharp})
\prod_{y \in Y} P(x_y \mid x_{\RRs_y}) \, d{x_m^{x_{V_s^\sharp}}} \\
&= c
 P_{Q(X_{V^\sharp})}(x_{V^\sharp\dotcup V^\flat\dotcup (M\setminus m)\dotcup (S \setminus s)},X_s=0)
\end{align*}
where the proportionality constant $c \in \mathbb{R}$  comes from the uniform distribution on $X_{m_{V_sV_m}}$, and the penultimate step removes the integral over all parts of $\bm{x}_m$ other than $x^{x_{V_s^\sharp}}_m$, 
because they are independent of all other variables, 
and so are marginalized out.
\ryan {Still plenty of room to improve this, maybe improve the $\bm{x}_m$ notation, among other things.}

By marginalizing $M$ and conditioning upon $X_S=\zeros$ we will cancel out $c$ and
recover the marginal selected distribution 
from $\calD$, and so any SMI pairs compatible with $\calD$ can be reproduced in $\tilde{\calD}$.

\end{proof}

\subsection{Proof of \Cref{lemma_interchangeedges}} \label{app_interchange}
    
We begin by restating the proposition.

\interchangeedges*

\begin{proof}
\ding{212} \textbf{Proof that $\calC(\calD,V \mid S) \subseteq \calC(\calD',V\mid S\cup \{s_{ab}\})$.}

The idea is that we can choose the distribution on $X_{m_{ab}}$ and $X_{s_{ab}}$ in such a way that, after selecting on $X_{s_{ab}}=0$, $b$ has access to the value of $X_a$.

Let $P_* \in \calC(\calD,V\mid S)$ be an arbitrary set of SMI pairs compatible with $\calD$, with associated kernels $P(X_y \mid X_{\pa{\calD}{y}}), y \in V\dotcup M\dotcup S$. Now, we define a set of kernels on $\calD'$.

\begin{itemize}
\item $X_{m_{ab}} \sim \operatorname{Unif}(\dom{a})$.
\item $P'(X_{s_{ab}}=0 \mid X_{m_{ab}},X_a) = \mathbf{1}_{\{X_{m_{ab}} = X_a\}}$.
\item $P'(X_{b} \mid X_{\pa{\calD'}{b}}=x_{\pa{\calD'}{b}})=P(X_{b} \mid X_a=x_{m_{ab}},X_{\pa{\calD}{b} \setminus a}=x_{\pa{\calD}{b} \setminus a})$. 
Note that $b$ has a marginalized parent in $\calD'$, so its distribution is allowed to be stochastic given 
its parent's values.
\item $P'(X_y \mid X_{\pa{\calD'}{y}})=P(X_y \mid X_{\pa{\calD}{y}})$ for $y \in (V \setminus b) \dotcup M \dotcup S$.
\end{itemize}

Then, letting 
$Y=(V^\flat \setminus {v'}^\flat) \cup M \dotcup S $,
this choice of kernels gives the joint distribution:
\begin{align*}
& P'_{Q(X_{V^\sharp})}(x_{V^\sharp\dotcup V^\flat\dotcup M\dotcup S},X_{s_{ab}}=0) \\
&= \int_{x_{m_{ab}}} \!\! Q(x_{V^\sharp})\, P'(x_{m_{ab}}) \, P'(X_{s_{ab}}=0 \mid x_{a^\sharp},x_{m_{ab}}) \, P'(x_{b} \mid x_{\pas{\calD'}{b}}) \prod_{y \in Y} P'(x_y \mid x_{\pas{\calD'}{y}}) \, dx_{m_{ab}}\\
&= c \int_{x_{m_{ab}}} Q(x_{V^\sharp})\,\mathbf 1_{\{x_{m_{ab}} = x_{a^\sharp}\}} \,
   P(x_{b} \mid X_{a^\sharp}=x_{m_{ab}},x_{\pas{\calD}{b} \setminus a^\sharp})\,
   \prod_{y \in Y} P(x_y \mid x_{\pas{\calD}{y}}) \,dx_{m_{ab}}\\
&=c \, Q(x_{V^\sharp}) \, P(x_{b} \mid X_{a^\sharp}=x_{a^\sharp},x_{\pas{\calD}{b}\setminus a^\sharp}) \prod_{y \in Y} P(x_y \mid x_{\pas{\calD}{y}})\\
&= cP_{Q(x_{V^\sharp})}(x_{V^\sharp\dotcup V^\flat\dotcup M\dotcup S})
\end{align*}
where the proportionality constant $c \in \mathbb{R}$  comes from the uniform distribution on $X_{m_{ab}}$.

From this, we conclude that 
$P_{Q(X_{V^\sharp})}(X_{V^\sharp\dotcup V^\flat} \mid X_{S\dotcup s_{ab}}=\zeros) = P'_{Q(X_{V^\sharp})}(X_{V^\sharp\dotcup V^\flat} \mid X_{S}=\zeros)$, thus showing that the set of SMI pairs $P_*$ can be reproduced by $\calD'$.

\ding{212} \textbf{Proof that $\calC(\calD,V \mid S) \supseteq \calC(\calD',V\mid S\cup\{s_{ab}\})$}

To help with this proof, define the DAG $\calD''$ obtained by starting from $\calD'$ and adding an edge $a \to b$. That is, $\calD''$ has \emph{both} a regular edge and a special edge between $a$ and $b$. Clearly, $\calD''$ interventionally dominates $\calD'$. Now, we will show that $\calD$ interventionally dominates $\calD''$, which implies that   $\calD$ interventionally dominates $\calD'$.

Let $P''_* \in \calC(\calD'',V\mid S\cup \{s_{ab}\})$ be an arbitrary set of SMI pairs compatible with $\calD''$, with associated kernels $P''(X_y \mid X_{\pa{\calD''}{y}})$, $y \in V\dotcup M\dotcup S\cup \{s_{ab}\}$. 
To reproduce the same set of SMI pairs in $\calD$, define the following kernels: 
\begin{itemize}
\item $P(X_s \mid X_{\pa{\calD}{s}}) = P''(X_s \mid X_{\pa{\calD}{s}})P''(X_{s_{ab}}=0 \mid X_a)$, where $s$ is any
arbitrarily selected child of $a$ in $\calD$.
\item $P(X_{b} \mid X_{\pa{\calD}{b}}) = P''(X_{b} \mid X_{\pa{\calD''}{b}\setminus\{m_{ab}\}},X_{s_{ab}}=0)$, and
\item $P(X_y \mid X_{\pa{\calD}{y}})=P''(X_y \mid X_{\pa{\calD''}{y}})$ for $y \in V\dotcup M\dotcup S \setminus \{s,b\}$,
\end{itemize}

Expanding $P''(X_b \mid X_{\pa{\calD''}{b}\setminus \{m_{ab}\}},X_{s_{ab}}=0)$, we get:
\begin{align*}
& P''(X_b \mid X_{\pa{\calD''}{b}\setminus \{m_{ab}\}},X_{s_{ab}}=0) \\
   &= \int_{x_{m_{ab}}} P''(X_b, x_{m_{ab}} \mid X_{\pa{\calD''}{b}\setminus \{m_{ab}\}},X_{s_{ab}}=0)\,dm_{ab} \\
   &= \int_{x_{m_{ab}}} P''(X_b \mid x_{m_{ab}}, X_{\pa{\calD''}{b}\setminus \{m_{ab}\}},X_{s_{ab}} = 0)\,
            P''(x_{m_{ab}} \mid X_{\pa{\calD''}{b}\setminus \{m_{ab}\}},X_{s_{ab}} = 0)\,dm_{ab} \\
&= \int_{x_{m_{ab}}} P''(X_b \mid x_{m_{ab}}, X_{\pa{\calD''}{b}\setminus \{m_{ab}\}})\,
            P''(x_{m_{ab}} \mid X_a, X_{s_{ab}} = 0) \,dm_{ab}   \\
   &= \int_{x_{m_{ab}}} P''(X_b \mid x_{m_{ab}}, X_{\pa{\calD''}{b}\setminus \{m_{ab}\}})\,
            P''(x_{m_{ab}} \mid X_a)\frac{P''(X_{s_{ab}}=0\mid x_{m_{ab}},X_a)}{P''(X_{s_{ab}}=0 \mid X_a)}\, dm_{ab} 
\end{align*}
where in the third equality we used $m_{ab} \perp_{\calD''} \pa{\calD''}{b} \setminus \{a,m_{ab}\} \mid a,s_{ab}$ and $b\perp_{\calD''} s\mid \pa{\calD''}{b}$, and in the last equality we used Bayes' theorem.

Letting $Y:=V^\flat \dotcup M \dotcup S \setminus \{s,b\}$, this choice of kernels for $\calD$ gives the following joint distribution:
	\begin{align*}
    &P_{Q(X_{V^\sharp})}(x_{V^\sharp},x_{V^\flat},x_{M},X_{S},X_{s_{ab}}=0) \\
    &=Q(x_{V^\sharp}) 
    P''(X_{s} \mid X_{\pas{\calD}{s}})P''(X_{s_{ab}}=0 \mid X_{a^\sharp})
    P''(X_b \mid X_{\pas{\calD''}{b}\setminus\{m_{ab}\}},X_{s_{ab}}=0)
    \prod_{y \in Y} P(x_y \mid x_{\pas{y}}) \\
    &=Q(x_{V^\sharp}) \,
    P''(X_{s} \mid X_{\pas{\calD}{s}})\prod_{y \in Y} P(x_y \mid x_{\pas{y}}) \\
    &
   \qquad\times \int_{x_{m_{ab}}} \hspace{-0mm}P''(X_b \mid x_{m_{ab}}, X_{\pa{\calD''}{b}\setminus \{m_{ab}\}})\,
            P''(x_{m_{ab}} \mid X_a)P''(X_{s_{ab}}=0\mid x_{m_{ab}},X_a)\,dm_{ab}  \\
    &= P''_{Q(X_{V^\sharp})}(x_{V^\sharp},x_{V^\flat},x_{M},X_{s_{ab}}=0,X_{S\setminus s_{ab}},X_{s_{ab}}=0).
	\end{align*}

By marginalizing $X_M$ and conditioning on $X_{S}=\zeros$, we then obtain
$P_{Q(X_{V^\sharp})}(X_{V^\sharp\dotcup V^\flat} \mid X_{S}=\zeros) = P''_{Q(X_{V^\sharp})}(X_{V^\sharp\dotcup V^\flat} \mid X_{S\dotcup \{s_{ab}\}}=\zeros)$, 
proving the result.

\end{proof}

\subsection{Proof of \Cref{prop_remove_redund}}
\label{app_proof_removeredund}

\removalrestatable*

\begin{proof} 
\textbf{Proof of a).}
This result would be immediate from 
\citet[Theorem 1]{ansanelli2024everything}
except that our smDGs have deterministic 
variables, so we will prove it from first principles.

\textbf{Proof that $\calC(\calD,V \mid S) \supseteq \calC(\calD_{-m_1},V \mid S)$:}
Follows from \Cref{le:subgraph}
and the fact that $\calD_{-m_1}$ is a subgraph of $\calD$.

\textbf{Proof that $\calC(\calD,V \mid S) \supseteq \calC(\calD_{-m_1},V \mid S)$:}

Let $P_* \in \calC(\calD,V \mid S)$ 
be a set of SMI pairs compatible 
with $\calD$ and let 
$P(X_y \mid X_{\pa{\calD}{y}})$,
$y \in V \dotcup M \dotcup S$ be the 
kernels associated with $P_*$.
Then, consider the following choice of 
kernels for $\calD_{-m_1}$.

\begin{itemize}
\item Let $X_m$ in $\calD$ 
be a Cartesian product of independent 
variables $\bar{X}_{m_1} \times \bar{X}_{m_2}$, 
where each $\bar{X}_{m_i}$ 
is distributed in $\calD_{-{m_1}}$
like $X_{m_i}$ in $\calD$.
\item Each child of $m_i$ 
then depends on $\bar{X}_{m_i}$ in 
place of $X_{m_i}$, for $i=1,2$.
\item Every variable $y \not \in \{m_1,m_2\} \cup \ch{\calD}{m_1}$ has the same 
kernel in $\calD_{-m_1}$ as in $\calD$.
\end{itemize}
It is straightforward to verify that 
the selected interventional distributions 
then are identical in $\calD_{-m_1}$ and $\calD$.

\textbf{Proof of b).}	
\ding{212}	\textbf{Proof that $\calC(\calD,V \mid S) \supseteq \calC(\calD_{-s_1},V \mid S \setminus \{s_1\})$.} Follows from \cref{le:subgraph}, since $\calD'$ is a subgraph of $\calD$ with the same set of marginalized vertices and with selected vertices $ S \setminus \{s_1\}\subseteq S$.
	
\ding{212} \textbf{Proof that $\calC(\calD,V \mid S) \subseteq \calC(\calD_{-s_1},V \mid S \setminus \{s_1\})$.}	
	Let $P_*\in \calC(\calD, V \mid S)$ be a set of SMI pairs compatible with $\calD$, and let $P(X_{y} \mid X_{\paN_\calD({y})})$, $y\in W$ be the kernels associated with $P_*$.  Consider now the following choice of kernels for $\calD_{-s_1}$:
	\begin{itemize} 
    \item $P'(X_{s_2}=0 \mid X_{\pa{\calD_{-s_1}}{s_2}}) =P(X_{s_2}=0 \mid X_{\pa{\calD}{s_2}})\,P(X_{s_1}=0 \mid X_{\pa{\calD}{s_1}})$; this is possible because $\pa{D}{s_1}\subseteq\pa{D}{s_2}$.
\item $P'(X_{y} \mid X_{\pa{\calD_{-s_1}}{y}})=P(X_{y} \mid X_{\pa{\calD}{y}}) \quad \forall y\in V\dotcup M\dotcup S \setminus \{s_1,s_2\}$.
	\end{itemize}

Then:
\begin{align*}
	&P'_{Q(X_{V^\sharp})}(X_{V^\sharp\dotcup V^\flat\dotcup M \dotcup (S\setminus\{s_1,s_2\})},X_{s_2}=0)\\
    &=\prod_{y \in V^\flat \dotcup M \dotcup S \setminus \{s_1,s_2\}}  Q(X_{V^\sharp})\, P'\big(X_y \,\big|\, X_{\pas{\calD_{-s_1\!}}{y}}\big) \, P'\big(X_{s_2}=0 \,\big|\, X_{\pas{\calD_{-s_1}\!}{s_2}} \big) \\
	&= 
	\prod_{y \in V^\flat \dotcup M \dotcup S \setminus \{s_1,s_2\}} Q(X_{V^\sharp})\, P\big(X_y \,\big|\, X_{\pas{\calD}{y}}\big) \, P\big(X_{s_2}=0 \,\big|\, X_{\pas{\calD}{s_2}}\big)\, P\big(X_{s_1}=0 \,\big|\, X_{\pas{\calD}{s_1}}\big)\\
    &= P_{Q(X_{V^\sharp})}(X_{V^\sharp\dotcup V^\flat\dotcup M\dotcup (S\setminus s_1)},X_{s_1}=0,X_{s_2}=0).
\end{align*}

From this we obtain:
\begin{equation*}
	P'_{Q(X_{V^\sharp})}(X_{V_\sharp},X_{V^\flat}\cmid X_{S \setminus \{s_1\}}=0)=P_{Q(X_{V^\sharp})}(X_{V^\sharp\dotcup V^\flat}\cmid X_S=\zeros).
\end{equation*}
 
So $P'_*=P_*$, thus every set of SMI pairs compatible with $\calD$ can be reproduced by $\calD_{-s_1}$.

\end{proof}

\section{Proof that different SLP implies interventional inequivalence (\Cref{thm:diff-slp-implies-diff-model})}\label{app_diff_slp_diff_model}
Throughout this section,
we will use the following shorthand: for respective assignments $(x_{a_1},...,x_{a_N})$ to variables $X_{a_1},...,X_{a_N}$, 
we will write $[x_{a_1}x_{a_2}...x_{a_N}]_{a_1...a_N}$, 
e.g.\ $[01]_{ab}$ is an alternative notation for the assignment $X_a=0,X_b=1$.

Here, we will prove that if two DAGs have different selected-latent projections, then they can  be distinguished by the Observe\&Do probing scheme. We begin with the case where the selected-latent projections differ in their 
directed structure.

\begin{restatable}[Different self-loop implies distinguishable]{lemma}{DiffDirectedStructureOne} \label{le_diff_self_loop}
Let $\calD$ be a DAG with vertices partitioned as $(V, M, S)$
and $\calD'$ a DAG with vertices partitioned as $(V, M', S')$.
If the selected latent projection $\slp(\calD,V \mid S)$ 
contains a self-loop edge $(v,v)$ that is not in 
$\slp(\calD',V\mid S')$
then $\calD$ is not interventionally dominated by $\calD'$; that is, $\calC(\calD,V \mid S) \not \subseteq \calC(\calD',V\mid S')$.
\end{restatable}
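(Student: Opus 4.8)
The plan is to reduce to canonical DAGs and then split the argument into a \emph{constructive} half (exhibiting a model of $\calD$ whose selected interventional behaviour cannot be matched) and an \emph{impossibility} half (a $d$-separation argument in the Observe\&Do twin graph of $\calD'$). First I would invoke the canonicalization results of \Cref{sec:invariances}: since $\slp$ is defined through the canonical DAG and each transformation there preserves the SMI model, I may assume without loss of generality that both $\calD$ and $\calD'$ are canonical. By \Cref{def:slp-special-case}, the self-loop $(v,v)$ in $\slp(\calD,V\mid S)$ cannot come from the induced subgraph $\calD_V$ (an acyclic graph has no self-loops), so it must be produced by the special-edge rule; hence the canonical DAG $\calD$ contains a path $v \to s \gets m \to v$ with $s\in S$, $m\in M$, consistent with how liftable self-loops arise in \Cref{prop:acyclic-canonical-dag}. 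Dually, the absence of the self-loop in $\slp(\calD')$ means $\calD'$ contains no path $v \to s' \gets m' \to v$.

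Next I would build a specific set of kernels on $\calD$. Put $X_m\sim\Unif\{0,1\}$, let $X_v=X_m$ (a legal deterministic kernel, since $v$ has the marginal parent $m$), set the special-edge selection kernel $P(X_s=0\mid X_v,X_m)=\mathbf 1\{X_v=1\text{ or }X_m=1\}$, force every other selected vertex to $0$ with probability one, and fix every remaining visible and marginalized vertex to the constant $0$. In the Observe\&Do factorization of \Cref{def_smi_pair}, $s$ reads its visible parent through the sharp copy, so $X_s=0\iff X_{v^\sharp}=1\text{ or }X_m=1$, while $X_{v^\flat}=X_m$. Taking the full-support intervention $Q=\Unif(\dom{V})$ (so the pair survives for both probing schemes, and indeed $P_Q(X_S=\zeros)=\tfrac34>0$), a direct computation over the three surviving cells of $(X_{v^\sharp},X_m)$ gives $P_Q(X_{v^\flat}=0\mid X_{v^\sharp}=0,X_S=\zeros)=0$ but $P_Q(X_{v^\flat}=0\mid X_{v^\sharp}=1,X_S=\zeros)=\tfrac12$. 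Hence in the resulting pair $X_{v^\flat}\not\perp X_{v^\sharp}\mid X_{V^\sharp\setminus\{v^\sharp\}}$, the conditioning on the remaining sharp variables being vacuous since nothing depends on them. Call the resulting compatible set of SMI pairs $P_\ast$.

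Finally I would show $P_\ast\notin\calC(\calD',V\mid S')$ by proving that every model of $\calD'$ makes $X_{v^\flat}\perp X_{v^\sharp}\mid X_{V^\sharp\setminus\{v^\sharp\}},X_{S'}=\zeros$ for $Q=\Unif$. Working in the Observe\&Do graph of $\calD'$, whose non-sharp vertices $y$ have parents $\pas{\calD'}{y}$, note that sharp visible vertices are sources, flat visible vertices are leaves, marginalized vertices are sources (by exogenization), and selected vertices are sinks (by terminalization). Conditioning on $(V^\sharp\setminus\{v^\sharp\})\cup S'$, I would enumerate the active paths from $v^\sharp$ to the leaf $v^\flat$: the path must leave $v^\sharp$ into a selected child $s'$ (flat children are leaves and dead-end), it activates the collider at $s'$ (selected, hence conditioned), and it must enter $s'$ from a marginalized parent $m'$ (a sharp-visible parent would be a conditioned, and thus blocked, fork). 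Because $\DropMtoS$ guarantees that a marginalized vertex feeding a selected vertex has exactly one further, visible child, the path is forced to continue $m'\to b^\flat$ and can terminate at the leaf $v^\flat$ only if $b=v$, i.e.\ exactly when $\calD'$ contains $v\to s'\gets m'\to v$. Since $\calD'$ has no such self-loop, there is no active path, so by soundness of $d$-separation the independence holds. Consequently the $Q=\Unif$ pair of any $P_\ast'\in\calC(\calD',V\mid S')$ (if present at all) satisfies the independence that $P_\ast$ violates, so $P_\ast\neq P_\ast'$ for every such $P_\ast'$, giving $\calC(\calD,V\mid S)\not\subseteq\calC(\calD',V\mid S')$.

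The main obstacle is the path enumeration in $\calD'$: I must use the canonical structure carefully to rule out long confounding chains $v^\sharp\to s_1'\gets m_1'\to s_2'\gets\cdots\to v^\flat$, and the key leverage is precisely that $\DropMtoS$ forbids a marginalized vertex from having two selected children, which collapses every candidate chain into a single special edge that must close back onto $v$. A secondary technical point is justifying the conditioning on the probability-zero event $X_{S'}=\zeros$, which I would handle through the regular conditional probabilities of \Cref{app:regular-conditional-probabilities}; a further minor case is that the $Q=\Unif$ pair might simply be absent from $P_\ast'$ when $P_Q(X_{S'}=\zeros)=0$, which only makes the two sets of pairs differ more readily.
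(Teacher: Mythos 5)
Your proposal is correct and shares the paper's two-part skeleton (a witness model on $\calD$ in which the selected distribution of $X_{v^\flat}$ depends on $X_{v^\sharp}$, followed by an impossibility argument for $\calD'$), but both halves are executed differently, and in ways worth noting. For the witness, the paper uses a two-bit latent $X_m=(X_m^1,X_m^2)$ and compares two \emph{hard} interventions $\doo(X_{v^\sharp}=0)$ versus $\doo(X_{v^\sharp}=1)$, obtaining selected probabilities $0$ versus $1/4$; your single-bit model with the one soft intervention $Q=\Unif$ packs the same violation ($0$ versus $1/2$) into a single SMI pair, and because $Q$ has full support it also certifies the claim verbatim under the full-support probing scheme of \eqref{eq_abcd}, which the paper's point-mass interventions do not directly. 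For the impossibility half, the paper argues algebraically: it writes out the intervened joint, uses the structural fact that no marginalized parent of $v$ shares a selected child with $v$ to factorize after marginalizing $M'$, and concludes $P_{Q(X_{V^\sharp})}(X_{v^\flat}\mid X_{\ch{v}_{S'}}=\zeros)$ is independent of $Q(X_{v^\sharp})$ --- notably conditioning only on the selected children of $v$ rather than all of $S'$. Your $d$-separation enumeration in the Observe\&Do twin graph reaches the same conclusion but handles conditioning on the whole of $S'$ explicitly, which is arguably the more careful treatment; its correctness hinges exactly where you located it, on the canonical-form guarantee that a marginalized vertex feeding a selected vertex has precisely one selected child and one visible child, which kills chains $v^\sharp\to s_1'\gets m_1'\to s_2'\gets\cdots$. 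One small misattribution: the ``at most one selected child per marginalized vertex'' property is secured by $\mathtt{MergeS}$ (\Cref{prop_merging}b, since two selected children of one marginalized vertex would share a marginalized parent and be merged), with $\DropMtoS$ then ensuring the one-visible-child part; this does not affect the validity of your argument, since the combined canonical form you invoke does hold. Also note that, since you only need soundness of $d$-separation, the presence of deterministic visible kernels is harmless, as you implicitly assume.
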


\begin{proof}

Our approach will be to construct a model for $\calD$ that realizes a set of SMI pairs that cannot be reproduced by $\calD'$. In our model, the variables  $X_s$ and $X_v$ only take two possible values each, while the variable $X_m$ takes four possible values. Let $X_m=(X_m^1,X_m^2)$, where the variables  $X_m^1$ and $X_m^2$ are each binary.

The canonical DAG  of $\slp(\calD,V \mid S)$ contains a path $v \to s \gets m \to v$.
So let:
\begin{itemize}
    \item $X_m^1 \sim \operatorname{Bernoulli}(\frac{1}{2})$. 
    \item $X_m^2 \sim \operatorname{Bernoulli}(\frac{1}{2})$.
    \item $X_v = 0$ if and only if $X_m^1=X_m^2=0$. Otherwise, $X_v=1$.
    \item $X_s = 1$ if and only if $X_m^1=X_v=0$. Otherwise, $X_s=0$.
    \item Every variable in $X_{V\dotcup M\dotcup S\setminus \{m,v,s\}}$ ignores its parents, and takes the constant value $0$.
\end{itemize}

The observational distribution generated by this model gives $P(X_v=0\cmid X_s=0)=0$, while all other visible variables are fixed to the value $0$. 
If we perform an intervention that sets $X_{v^\sharp}$ to the value $0$, denoted $\doo(X_{v^\sharp}=0)$, then the resulting distribution over the natural variable $X_{v^\flat}$ gives $P_{\doo(X_{v^\sharp}=0)}(X_{v^\flat}=0 \mid X_{s}=0)=0$, while all of the other visible variables are still fixed to the value $0$.   If we perform an intervention that sets $X_{v^\sharp}$ to the value $1$, denoted $\doo(X_{v^\sharp}=1)$, then the resulting distribution over the natural variable $X_{v^\flat}$ gives $P_{\doo(X_{v^\sharp}=1)}(X_{v^\flat}=0 \mid X_{s}=0)=1/4$, while all of the other visible variables are still fixed to the value $0$. This is an example where the selected 
distribution over $v^\flat$ depends on the value of $v^\sharp$.\footnote{For a concrete illustrative scenario, see \cref{footnote_sec6}.}

Now, we will prove that this set of data cannot be reproduced by $\calD'$.

In the canonical DAG of $\slp(\calD',V\mid S')$, we will allow for the possibility that $v$ still has selected children and marginalized parents, but there are no edges from marginalized parents of $v$ to selected children of $v$. Let $\ch{v}_{S'}:=\ch{v}\cap S'$ denote the set of all selected children of $v$. When we perform an intervention $Q(X_{V^\sharp})$ that sets all of the variables in $X_{V^\sharp\setminus\{v^\sharp\}}$ to the value $0$ and does something arbitrary to $v^\sharp$, the resulting data gives:
\begin{align*}
    &P_{Q(X_{V^\sharp})}(X_{v^\sharp}X_{v^\flat}X_{M'}X_{\ch{v}_{S'}}=\zeros) \\
    &= \,Q(X_{v^\sharp})\,P(X_{v^\flat}\cmid X_{\pas{\calD'}{v}\cap V}=\zeros, X_{\pa{\calD'}{v}\cap{M'}}) \,P(X_{M'}) &\\ &\qquad\times P\big(X_{\ch{v}_{S'}}=\zeros \,\big|\, X_{v^\sharp}, X_{\pas{\calD'}{\ch{v}_{S'}}\cap V\setminus\{v\}}=\zeros,X_{\pa{\calD'}{\ch{v}_{S'}}\cap M'}\big).
\end{align*}

Since none of the variables of $\ch{v}_{S'}$ share any marginalized parent with $v$, by marginalizing $M'$ we get:
\begin{align*}
    &P_{Q(X_{V^\sharp})}(X_{v^\sharp}X_{v^\flat}, X_{\ch{v}_{S'}}=\zeros) \\
    &= Q(X_{v^\sharp}) \, P(X_{v^\flat}\cmid X_{\pas{\calD'}{v}\cap V}=\zeros)\,P(X_{\ch{v}_{S'}}=\zeros\cmid X_{v^\sharp}, X_{\pas{\calD'}{\ch{v}_{S'}}\cap V\setminus\{v\}}=\zeros).
\end{align*}

This implies: 
\begin{equation*}
    P_{Q(X_{V^\sharp})}(X_{v^\flat}\cmid X_{\ch{v}_{S'}}=\zeros)=P_{Q(X_{V^\sharp})}(X_{v^\flat})=P(X_{v^\flat}\cmid X_{\pas{\calD'}{v}\cap V}=\zeros),
\end{equation*}
which is independent of what is the specific intervention $Q(X_{v^\sharp})$ being performed. Therefore, $\calD'$ cannot reproduce the set of data described, because there the selected distribution over $v^\flat$ depends on $v^\sharp$.

\end{proof}

\begin{restatable}[Different directed edge implies distinguishable]{lemma}{DiffDirectedStructureTwo} \label{le_diff_directed_edge}
Let $\calD$ be a DAG with vertices $V \dotcup M \dotcup S$
and $\calD'$ a DAG with vertices $V \dotcup M' \dotcup S'$.
If the selected latent projections $\calG=\slp(\calD,V \mid S)$ 
and $\calG'=\slp(\calD',V\mid S')$
contain the same self-loops
but $\calG$ contains an edge $a\to b$
that is not in $\calG'$
then $\calD$ is not interventionally dominated by $\calD'$; that is, $\calC(\calD,V \mid S) \not \subseteq \calC(\calD',V\mid S')$.
\end{restatable}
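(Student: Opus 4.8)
The plan is to adapt the strategy of \Cref{le_diff_self_loop} to a genuine pair of distinct vertices, exploiting the \emph{directional} signature of the edge $a\to b$: under the Observe\&Do scheme the flat copy $X_{b^\flat}$ can be made to respond to an intervention on $X_{a^\sharp}$, whereas the absence of $a\to b$ from $\calG'$ will force $X_{b^\flat}\perp X_{a^\sharp}$ once every other visible variable is frozen. Since $\calG$ and $\calG'$ share the same self-loops, the differing edge $a\to b$ necessarily has $a\neq b$, so the self-loop case is genuinely disjoint from \Cref{le_diff_self_loop}.

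First I would reduce to canonical DAGs: by \Cref{prop_exog_term,prop_merging,prop:splitarrows,lemma_interchangeedges,prop_remove_redund} we have $\calC(\calD,V\mid S)=\calC(\calD^{\text{can}},V\mid S)$ and likewise for $\calD'$, while $\slp$ is defined through the canonical DAG, so I may assume $\calD,\calD'$ are already canonical. In $\calD$ the edge $a\to b\in\calG$ comes either from a direct edge $a\to b$ between visible vertices or from a special edge $a\to s_{ab}\gets m_{ab}\to b$; in either case I build a model in which $X_{b^\flat}$ \emph{copies} the intervened value $X_{a^\sharp}$. Concretely, all variables are binary, every visible vertex other than $a,b$ and every non-visible vertex not on the chosen $a\to b$ connector is frozen to the constant $0$ (so that $X_S=\zeros$ has positive probability), and $b$ is set to reproduce $a$: directly $X_b=X_a$ in the first case, or, in the second case, via the mechanism of \Cref{lemma_interchangeedges} ($X_{m_{ab}}\sim\Unif$, $X_{s_{ab}}=0$ iff $X_{m_{ab}}=X_a$, and $X_b=X_{m_{ab}}$), so that conditioning on $X_{s_{ab}}=0$ forces $X_b=X_a$.

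Next I would exhibit the distinguishing pairs. Consider the point-mass interventions $Q^{(x)}$ that fix $X_{a^\sharp}=x$ and every other visible variable to $0$; in $\calD$ these give $P_{Q^{(x)}}(X_{b^\flat}=x\mid X_S=\zeros)=1$, so the $b^\flat$-marginals for $x=0$ and $x=1$ differ. I then claim no parametrisation of $\calD'$ can match both pairs. The key structural fact, read off \Cref{def_smi_pair}, is that in the interventional twin graph every flat visible variable and every selected variable is a sink while marginalized variables are sources, and each non-sharp variable reads the \emph{sharp} copies of its visible parents. Conditioning on $X_S=\zeros$ and on the frozen sharp variables (roots, hence opening no colliders), I trace all active paths out of $a^\sharp$: such a path can only leave $a^\sharp$ into a flat visible child (a non-conditioned sink, a dead end because $a\to b\notin\calG'$), or into a selected child $s$, where conditioning opens the collider and continues $a^\sharp\to s\gets m$ through a marginalized parent $m$; but in a canonical DAG such an $m$ has at most one visible child, so the only way to reach $b^\flat$ is $m\to b$, which is exactly the special edge $a\to s\gets m\to b$, i.e.\ $a\to b\in\calG'$ — excluded. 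Hence $a^\sharp$ is D-separated from $b^\flat$ given $X_S$, so by soundness of the D-separation criterion for deterministic systems \citep{geiger1990identifying} one has $X_{b^\flat}\perp X_{a^\sharp}\mid X_S=\zeros$ for every parametrisation of $\calD'$; therefore $P'_{Q^{(0)}}(X_{b^\flat}\mid X_S=\zeros)=P'_{Q^{(1)}}(X_{b^\flat}\mid X_S=\zeros)$ (and if one of these selected distributions is undefined in $\calD'$, the two models already disagree on which pairs exist). Either way $\calD'$ cannot reproduce the set $P_*$ realised by $\calD$, giving $\calC(\calD,V\mid S)\not\subseteq\calC(\calD',V\mid S')$.

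The main obstacle is the D-separation step: I must verify carefully that freezing the remaining visible variables really blocks every route by which selection on $X_S=\zeros$ could couple $X_{a^\sharp}$ to the marginalized parents that $X_{b^\flat}$ reads, using the canonical structure (marginalized vertices parentless with a single selected child and a single visible child, selected vertices childless) and handling determinism via the D-separation criterion rather than plain d-separation. The remaining bookkeeping — positivity of $P(X_S=\zeros)$, and the identification of conditioning on $a^\sharp$ under a $\operatorname{Bernoulli}(\tfrac12)$ intervention with the point-mass intervention $Q^{(x)}$ — is routine.
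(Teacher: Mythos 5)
Your proposal is correct, but it takes a genuinely different route from the paper's own proof. The paper deliberately never pairs an intervention on a vertex with observation of that same vertex's natural value: it uses three single-target regimes, $\doo(X_a=0)$, $\doo(X_a=1)$ and $\doo(X_b=1)$ as in \eqref{eq_everything_proof1}, and rules the data out in $\calG'$ by analysing the most permissive subgraph (\Cref{fig_subgraph1}), in which $a$ and $b$ may share a marginalized parent, share a selected child $s_1$, be joined by a reverse special edge $b \to s_4 \gets m_4 \to a$, and carry self-loops. In that setting the shared selected child really does transmit dependence from the intervened $X_a$ to the \emph{observed} $X_b$ (selection opens the collider $a \to s_1 \gets b$), so no d-separation argument is available and the paper instead runs a factorization argument: the copy behaviour under $\doo(X_a)$ forces $P(X_{s_1}=0 \mid X_a=0, X_b=1)=0$, and this same kernel reappears in the $\doo(X_b=1)$ regime, yielding the contradiction. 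You sidestep this difficulty by exploiting the full Observe\&Do scheme: intervening on \emph{all} visible variables with point masses means, by \Cref{def_smi_pair}, that every selected vertex and every flat variable reads the \emph{sharp} (constant) copies of its visible parents, so the shared-selected-child channel is neutralized --- its selection weight involves only constants and factors out of the posterior --- and the only selection weights that can couple the value of $X_{a^\sharp}$ to a marginalized parent of $b^\flat$ are special-edge vertices, which by the canonical-form invariants ($m$ adjacent to $s$ has exactly one visible child, $s$ exactly one visible parent) would force exactly the excluded edge $a \to b$. Your bookkeeping on the remaining points is sound: adding the probability-one frozen sharps to the conditioning set blocks paths through other sharp parents of selected faces without opening colliders (sharps are roots), and the identification of the two point-mass regimes with conditioning on $a^\sharp$ under a $\operatorname{Bernoulli}(\tfrac12)$ intervention, which you correctly flag as needed to turn a within-distribution independence into an across-regime equality, is routine by linearity of \eqref{eq_Markov_interventional} in $Q$. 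The trade-off is worth noting: the paper's proof, precisely because it never uses $v^\sharp$ and $v^\flat$ of the same vertex simultaneously, also establishes distinguishability under the weaker Observe-or-Do scheme --- a strengthening the paper records explicitly at the end of \Cref{app_diff_slp_diff_model} and contrasts with \Cref{le_diff_self_loop} --- whereas your argument intrinsically requires Observe\&Do, since you intervene on $b^\sharp$ while reading $b^\flat$. What you gain in exchange is a shorter and structurally cleaner impossibility step: two regimes instead of three, and a transparent D-separation argument in place of the case analysis over \Cref{fig_subgraph1}.
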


\begin{proof}
We will specify a set of SMI pairs in $\calC(\calD,V \mid S)$
that is not in $\calC(\calD',V \mid S)$.
This distribution will produce the following data:
\begin{subequations}
    \label{eq_everything_proof1}
	\begin{align}
		&P(X_{V\setminus\{a\}}\cmid X_S=\zeros,\doo(X_a=x_a))=[x_a]_{b} \cdot [0]_{V\setminus\{a,b\}} \text{ for } x_a \in \{0,1\}\label{eq_everything_proof1b} \\
		&P(X_{V\setminus\{b\}}\cmid X_S=\zeros,\doo(X_b=1))=[0]_{V\setminus \{b\}}. \label{eq_everything_proof1c}
	\end{align}
\end{subequations}	
 
That is, when we intervene $X_a$ to $0$ or $1$, then $X_b$ copies it, 
but when we intervene $X_b$ to $1$, then $X_a$ is $0$.
Meanwhile, every other visible variable is fixed to a point distribution on $0$. 
This data is clearly jointly realizable by $\can(\calG)$: if the edge $a\to b$ in $\calG$ corresponds to a regular edge $a\to b$ in the canonical DAG, then we can satisfy both conditions above by choosing the kernels $P(X_a\cmid X_{\pa{a}})=[0]_a$ and $P(X_b\cmid X_{\pa{b}})=X_a$. If the edge $a\to b$ in $\calG$ corresponds to a special edge $a\to s_{ab}\gets m_{ab}\to b$ in the canonical DAG, then we can satisfy both conditions above by choosing the kernels $P(X_a\cmid X_{\pa{a}})=[0]_a$, $P(X_b\cmid X_{\pa{b}})=X_{m_{ab}}$, $P(X_{m_{ab}})\sim \operatorname{Unif}(\dom{a})$, and $X_{s_{ab}}=0$ if and only if $X_a=X_{m_{ab}}$.

We will now prove that the set of data of \eqref{eq_everything_proof1b} and \eqref{eq_everything_proof1c} is \emph{not} jointly realizable by $\can(\calG')$. 

First notice that, since all of the other visible variables are fixed to a point distribution, the only causal paths between $a$ and $b$ that are open in $\can(\calG')$ are through marginalized parents and/or selected children. Therefore, it is sufficient to look at the subgraph over $a$, $b$, their marginalized parents and their selected children. If the SMI distributions over $X_a$ and $X_b$ of \eqref{eq_everything_proof1} are not jointly realizable by this subgraph, then \eqref{eq_everything_proof1} cannot be realized by the full graph. \Cref{fig_subgraph1} shows the best case scenario for this subgraph, where $a$ and $b$ share both a selected child and a marginalized parent, 
there is a path $b \to s_4 \gets m_4 \to a$, 
and both variables contain self-loops. As promised, the smDG associated with the DAG of \Cref{fig_subgraph1} contains no edge $a\to b$. 
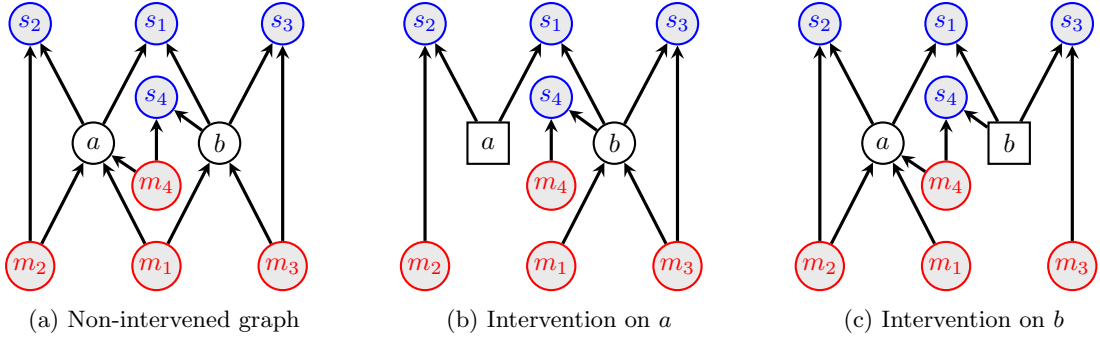
\begin{figure}[h!]
    \centering
    \begin{subfigure}{0.28\textwidth}
    \begin{tikzpicture}[>=stealth]
    \node[rv] (a) at (0,0) {$a$};
    \node[rv, right=11mm of a] (b) {$b$};
    \node[mv, below=13mm of a] (m1) at ($(a)!0.5!(b)$) {$m_1$};
    \node[sv, above=13mm of a] (s1) at ($(a)!0.5!(b)$) {$s_1$};
    \node[mv, below = 13mm of a] (m2) at ($(a)!-0.5!(b)$) {$m_2$};
    \node[sv, above = 13mm of a] (s2) at ($(a)!-0.5!(b)$) {$s_2$};
    \node[sv, above =13mm of a] (s3) at ($(b)!-0.5!(a)$) {$s_3$};
    \node[mv, below =13mm of a] (m3) at ($(b)!-0.5!(a)$) {$m_3$};
    \node[sv, below =4mm of s1] (s4) {$s_4$};
    \node[mv, above =4mm of m1] (m4) {$m_4$};

    \draw[->, very thick] (m1) -- (a);
    \draw[->, very thick] (m1) -- (b);
    \draw[->, very thick] (m2) -- (a);
    \draw[->, very thick] (m2) -- (s2);
    \draw[->, very thick] (m3) -- (b);
    \draw[->, very thick] (m3) -- (s3);
    \draw[->, very thick] (m4) -- (s4);
    \draw[->, very thick] (m4) -- (a);
    
    \draw[->, very thick] (a) -- (s1);
    \draw[->, very thick] (a) -- (s2);
    
    \draw[->, very thick] (b) -- (s1);
    \draw[->, very thick] (b) -- (s3);
    \draw[->, very thick] (b) -- (s4);
\end{tikzpicture}
\caption{Non-intervened graph} \label{fig_subgraph1}
\end{subfigure}\hspace{9mm}
    \begin{subfigure}{0.28\textwidth}
    \begin{tikzpicture}[>=stealth]
    \node[rv,rectangle] (a) at (0,0) {$a$};
    \node[rv, right=11mm of a] (b) {$b$};
    \node[mv, below=13mm of a] (m1) at ($(a)!0.5!(b)$) {$m_1$};
    \node[sv, above=13mm of a] (s1) at ($(a)!0.5!(b)$) {$s_1$};
    \node[mv, below = 13mm of a] (m2) at ($(a)!-0.5!(b)$) {$m_2$};
    \node[sv, above = 13mm of a] (s2) at ($(a)!-0.5!(b)$) {$s_2$};
    \node[sv, above =13mm of a] (s3) at ($(b)!-0.5!(a)$) {$s_3$};
    \node[mv, below =13mm of a] (m3) at ($(b)!-0.5!(a)$) {$m_3$};
    \node[sv, below =4mm of s1] (s4) {$s_4$};
    \node[mv, above =4mm of m1] (m4) {$m_4$};

    \draw[->, very thick] (m1) -- (b);
    \draw[->, very thick] (m2) -- (s2);
    \draw[->, very thick] (m3) -- (b);
    \draw[->, very thick] (m3) -- (s3);
    \draw[->, very thick] (m4) -- (s4);
    
    \draw[->, very thick] (a) -- (s1);
    \draw[->, very thick] (a) -- (s2);
    
    \draw[->, very thick] (b) -- (s1);
    \draw[->, very thick] (b) -- (s3);
    \draw[->, very thick] (b) -- (s4);
\end{tikzpicture}
\caption{Intervention on $a$} \label{fig_subgraph2}
\end{subfigure}\hspace{9mm}
    \begin{subfigure}{0.28\textwidth}
    \begin{tikzpicture}[>=stealth]
    \node[rv] (a) at (0,0) {$a$};
    \node[rv, right=11mm of a,rectangle] (b) {$b$};
    \node[mv, below=13mm of a] (m1) at ($(a)!0.5!(b)$) {$m_1$};
    \node[sv, above=13mm of a] (s1) at ($(a)!0.5!(b)$) {$s_1$};
    \node[mv, below = 13mm of a] (m2) at ($(a)!-0.5!(b)$) {$m_2$};
    \node[sv, above = 13mm of a] (s2) at ($(a)!-0.5!(b)$) {$s_2$};
    \node[sv, above =13mm of a] (s3) at ($(b)!-0.5!(a)$) {$s_3$};
    \node[mv, below =13mm of a] (m3) at ($(b)!-0.5!(a)$) {$m_3$};
    \node[sv, below =4mm of s1] (s4) {$s_4$};
    \node[mv, above =4mm of m1] (m4) {$m_4$};

    \draw[->, very thick] (m1) -- (a);
    \draw[->, very thick] (m2) -- (a);
    \draw[->, very thick] (m2) -- (s2);
    \draw[->, very thick] (m3) -- (s3);
    \draw[->, very thick] (m4) -- (s4);
    \draw[->, very thick] (m4) -- (a);
    
    \draw[->, very thick] (a) -- (s1);
    \draw[->, very thick] (a) -- (s2);
    
    \draw[->, very thick] (b) -- (s1);
    \draw[->, very thick] (b) -- (s3);
    \draw[->, very thick] (b) -- (s4);
\end{tikzpicture}
\caption{Intervention on $b$} \label{fig_subgraph3}
\end{subfigure}
    \caption{(a) Canonical DAG of the subgraph of $\calG'$ over $a$ and $b$. (b) Graph that represents the data realized by (a) after an intervention on $a$ (when we omit the natural value $X_{a^\flat}$). (c) Graph that represents the data realized by (a) after an intervention on $b$ (when we omit the natural value $X_{b^\flat}$).}
    \label{fig_subgraph}
\end{figure}

An interventional distribution $P(X_{b}\cmid X_S=\zeros,\doo(X_a=x_a)$ is realizable by  \Cref{fig_subgraph1} if it can be factorized as in \Cref{fig_subgraph2}:
\begin{align}
    &P(X_{b}\cmid X_S=\zeros,\doo(x_a)) = \frac{P(X_{b},X_S=\zeros \mid \doo(x_a))}{P(X_S=\zeros \mid \doo(x_a))} \nonumber\nonumber\\ 
    &=\frac{P(X_b,X_{s_3}=0)\,P(X_{s_1}=0\cmid x_{a},X_{b})\,P(X_{s_2}=0 \mid x_{a})\, P(X_{s_{4}}=0\mid X_{b})}{P(X_S=\zeros \mid \doo(x_{a}))}. 
     \label{eq_everything_factorization1b}
\end{align}

We know from \eqref{eq_everything_proof1b} that $P(X_b=1 \mid X_S=\zeros,\doo(X_a=1))>0$, so from \eqref{eq_everything_factorization1b}, it follows that:
i) $P(X_b=1,X_{s_3}=0)>0$, and
ii) $P(X_{s_4}=0 \mid X_b=1)>0$.
Furthermore, from \eqref{eq_everything_proof1b}, $P(X_b=0 \mid X_S=\zeros,\doo(X_a=0))>0$, so from \eqref{eq_everything_factorization1b}, it follows that:
iii) $P(X_{s_2}=0 \mid x_a=0)>0$.

From \eqref{eq_everything_proof1b}, we also have $P(X_{b}=1\cmid X_S=\zeros,\doo(X_a=0))=0$. Together with facts (i-iii), \eqref{eq_everything_factorization1b} then implies that we must have $P(X_{s_1}=0\cmid X_a=0,X_b=1)=0$.

Note that $P(X_{s_1}=0\cmid X_a=0,X_b=1)$ is also going to appear as one of the kernels of the decomposition of a probability $P(X_{a}=0\cmid X_S=\zeros,\doo(X_b=1)$ that is realizable by  \Cref{fig_subgraph1}. Therefore, we conclude that $P(X_{a}=0\cmid X_S=\zeros,\doo(X_b=1)=0$, thus contradicting \eqref{eq_everything_proof1c}. 

We conclude that the observation \eqref{eq_everything_proof1} is \emph{not} jointly realizable by $\can(\calG')$.
\end{proof}

So far, we have proven that two liftable smDGs that differ in their directed structure can necessarily be distinguished by the Observe\&Do probing scheme. Now, we will look at liftable smDGs that share the same directed structure, but are different in their marginalized or selected independence system.

\begin{restatable}[Different marginalized independence system implies distinguishable]{lemma}{DiffMarginalStructure} \label{le_diff_marginal_structure}
Let $\calD$ be a DAG with vertices $V \dotcup M \dotcup S$
and $\calD'$ a DAG with vertices $V \dotcup M' \dotcup S'$.
If the selected latent projections $\calG=\slp(\calD,V \mid S)$ and $\calG'=\slp(\calD',V\mid S')$
have the same directed structure, but
the marginal independence system of $\calG$ 
contains a maximal face $V_m$ that is not in $\calG'$,
then $\calD$ is not interventionally dominated by $\calD'$; that is, $\calC(\calD,V \mid S) \not \subseteq \calC(\calD',V\mid S')$.
\end{restatable}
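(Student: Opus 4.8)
The plan is to mirror the structure of the proofs of \Cref{le_diff_self_loop,le_diff_directed_edge}: I will construct an explicit model on $\calD$ realizing a set of SMI pairs $P_*$, and then argue that no choice of kernels on $\calD'$ can reproduce $P_*$. Write $V_m = \{v_1,\ldots,v_k\}$ for the maximal marginalized face of $\calG$ that is absent from $\calG'$. Since marginalized independence systems are closed under taking subsets, $V_m\notin\calL'$ means that $V_m$ is contained in no face of $\calG'$, hence no marginalized vertex of $\can(\calG')$ has all of $V_m$ among its children.

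For the model on $\calD$ I would work with the canonical DAG $\can(\calG)$, which contains a marginalized vertex $m$ whose visible children are exactly $V_m$. Set $X_m\sim\operatorname{Bernoulli}(\tfrac12)$, let each $X_{v_i}=X_m$ deterministically (ignoring all other parents, which is permitted because each $v_i$ has the marginalized parent $m$), fix every other visible variable to the constant $0$, and fix every selected variable to the constant $0$, so that $P(X_S=\zeros)=1$ and conditioning on selection is trivial. Since each $v_i^\flat$ is computed from $\paN^\sharp(v_i)$ and equals $X_m$ regardless of the intervention, the resulting $P_*$ has two salient features: (A) under passive observation (and, for the flat copies, under any intervention) the variables $X_{v_1},\ldots,X_{v_k}$ are perfectly correlated, all equal to a single fair bit, with every other visible variable fixed to $0$; and (B) for each $i$, intervening on $V_m\setminus\{v_i\}$ leaves $X_{v_i}$ uniform and independent of the intervened values.

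To show $P_*\notin\calC(\calD',V\mid S')$, I would assume for contradiction that some kernels on $\can(\calG')$ reproduce $P_*$ and restrict attention to the subgraph induced by $V_m$, their common marginalized parents, and their common selected children. Because the data pins every other visible variable to $0$, no visible vertex outside $V_m$ can serve as a randomizing common ancestor, so all correlation among $X_{V_m}$ must be generated within this subgraph. Factorizing the relevant Observe\&Do distributions as in the proof of \Cref{le_diff_directed_edge}, property (B) forces every contribution from a shared selected child (a conditioned collider) to drop out—such a collider makes the observed variable depend on the intervened ones—and the same reasoning eliminates the special edges recorded in the directed structure. Hence the perfect correlation (A) must be produced entirely by marginalized common parents; but each $v_i^\flat$ is a deterministic function of its non-visible parents once its visible parents are pinned to $0$, and no single marginalized vertex of $\can(\calG')$ parents all of $V_m$, so the impossibility of realizing an irreducible $k$-way perfectly-correlated distribution from confounders supported on proper subsets of $V_m$ yields the contradiction. (When $k=1$ this degenerates to the statement that $\{v\}\notin\calL'$ forces $X_v$ to be a deterministic function of its visible parents, hence constant under our muting, contradicting its being a fair bit.)

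The main obstacle is exactly this final step: rigorously establishing that, once property (B) has eliminated the selected-child and special-edge contributions, the remaining marginalized confounders of $\calG'$—none of which covers all of $V_m$—cannot reproduce the perfect correlation (A). This is where the argument must combine the collider/selection bookkeeping of \Cref{le_diff_directed_edge} with the determinism of visible variables and the subset-closure of $\calL'$; I expect the care to lie in isolating the correct subgraph and tracking which factors survive conditioning on $X_{S'}=\zeros$ under each of the two interventions used to witness (A) and (B).
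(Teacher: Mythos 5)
Your witness distribution is essentially the paper's: perfect correlation among $X_{V_m}$ sourced from the shared confounder, every other visible variable pinned to $0$, and invariance of the surviving marginals under interventions on subsets of $V_m$ (the paper uses a general weight $p\in(0,1)$ and interventions on every $T\in 2^{V_m}$; your $p=\tfrac12$ and your choice of interventions are immaterial differences), and your $k=1$ case matches the paper's case (a). The gap is exactly the step you flag, and it is genuine, in two parts. First, the treatment of selection: you assert that property (B) ``forces every contribution from a shared selected child to drop out'' of a factorization. That is not a correct description, and no factorization argument of this kind closes the issue --- a shared selected child can perfectly well manufacture perfect correlation (independent fair bits $X_a,X_b$ with $X_s=0$ iff $X_a=X_b$), so nothing formally ``drops out''. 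What the paper proves instead is a positivity/support statement: property (B) says the lone unintervened vertex has full support under interventions on the other $k-1$ vertices; by Bayes' theorem this forces $P(X_{S'}=\zeros \mid X_{V_m}=x_{V_m})>0$ for \emph{every} assignment $x_{V_m}$, whence the support of $P(X_{V_m}\mid X_{S'}=\zeros)$ equals the support of $P(X_{V_m})$. This support-equivalence step is how selection is actually neutralized, converting the selected problem into an unselected one.

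Second, the final impossibility you invoke --- that confounders supported on proper subsets of $V_m$ cannot realize an irreducible $k$-way perfectly correlated distribution --- is the mathematical heart of the lemma, not a routine verification, and the paper does not prove it from scratch either: it appeals to the common-ancestor theorem for perfect correlation \citep{Steudel_Ay}, as packaged in Lemma 8 of \citet{ansanelli2024everything}. Concretely, once the support reduction is done, perfect correlation of $X_{V_m}$ in the unselected distribution requires a common ancestor; this ancestor cannot lie in $V\setminus V_m$ (those variables are constant $0$), cannot lie in $V_m$ itself (a do-intervention on it would break the correlation among the rest, contradicting your intervention-robustness data), hence must be a common marginalized parent --- which $\can(\calG')$ lacks precisely because $V_m$ is in no face of $\calL'$. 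This same argument, rather than any ``elimination of special edges'', is what disposes of directed edges among $V_m$, which both SLPs may share since their directed structures agree. Without the Bayes/support-equivalence step and without citing or proving the common-ancestor theorem, your outline cannot be completed as written.
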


\begin{proof}
Since $V_m$ is a maximal face in $\calG$ but not $\calG'$,
the vertices of $V_m$ have a shared marginalized parent in $\can(\calG)$, but not in $\can(\calG')$. 

We consider two cases:
(a) $V_m=\{a\}$, and
(b) $\lvert V_m\rvert>1$.
In case (a), $\can(\calG)$ has a pattern $m \to a$ that is not present in $\can(\calG')$.
In $\calG$, it is clearly possible to observe $X_{a} \mid X_S=0 \sim \operatorname{Bernoulli}(\frac{1}{2})$. In $\calG'$, $a$ cannot be in any element of the marginal independence system, 
so $X_a$ is a deterministic function of its parents. Thus, $X_a$ cannot be Bernoulli if other variables are held constant, 
proving the result for this case.

The rest of our proof covers  case (b).

We obtain data in the following ways:
(i) passive observation of all visible variables; and (ii) for each element $T\in 2^{V_m}$ of the powerset of $V_m$, a do-intervention on $X_T$, and passive observation on the remaining visible variables. Suppose that the data obtained from these interventions is:
	 \begin{subequations}
  \label{eq_everything_proof2}
	 	\begin{align}
	 		\label{eq_everything_proof2a}
	 		&\text{(i)}&&	    P(X_{V} \mid X_{S_\DAG}=0)=\left\}p[0,\dots,0]_{V_m}+(1-p)[1,\dots,1]_{V_m}\right\} [0]_{V\setminus {V_m}} \\
	 		&\text{(ii)} &&	P(X_{V\setminus T}\cmid X_{S_\DAG}=\zeros,\doo(X_{T}))=\left\{p[0,\dots,0]_{{V_m}\setminus T}+(1-p)[1,\dots,1]_{{V_m}\setminus T}\right\} [0]_{V\setminus {V_m}} \; \forall T\in 2^{V_m}
	 		\label{eq_everything_proof2b}
	 	\end{align}
	 \end{subequations}
where $p \in (0,1)$
and $S_\DAG$ is the set of selected vertices of the DAG in consideration, be it $\can(\calG)$ or $\can(\calG')$. The two equations above say that all of the variables in $X_{V_m}$ are perfectly correlated in the data obtained from passive observations, and performing do-interventions on a subset $X_T\subseteq X_{V_m}$ does not change the marginals over the remaining variables of $V_m$. Meanwhile, all of the visible variables in $V\setminus {V_m}$ are fixed to a point distribution on $0$. This data is jointly realizable by $\can(\calG)$: if the correlation between the variables of $X_{V_m}$ that is presented in \eqref{eq_everything_proof2a} is established through the shared marginalized parent, then interventions will not change the marginal distributions. We will now show that this set of data is \emph{not} jointly realizable by $\can(\calG')$. 

This proof will use and extend the proof of a similar result \citep[Lemma 8]{ansanelli2024everything}. 

From \eqref{eq_everything_proof2b}, 
when we perform do-interventions on every vertex except for one, i.e.~$T={V_m}\setminus\{t\}$, 
the remaining vertex $t$ has full support. Therefore, for any $x_t\in\dom{t}$ and $x_T\in\dom{T}$:
\begin{align}
0&<P(X_{t}=x_t\cmid X_{S'}=\zeros,\doo(X_{T}=x_T))& \nonumber\\
&= \frac{P(X_{S'}=\zeros\cmid X_{t}=x_t,\doo(X_{T}=x_T))\,P(X_{t}=x_t\cmid \doo(X_{T}=x_T))}{P(X_{S'}=\zeros\cmid \doo(X_{T}))}
\label{aab}
\end{align}
where we have used Bayes' theorem. This implies that the selected variables equal $\zeros$ with strictly positive probability for 
any possible assignment to $X_{V_m}$.

Bayes' theorem also tells us that
\begin{equation}
P(X_{V_m},X_{V \setminus {V_m}}=0\cmid X_{S_\DAG}=\zeros)=P(X_{V_m}\cmid X_{S_\DAG}=\zeros)=\frac{P(X_{S_\DAG}=\zeros\cmid X_{V_m})}{P(X_{S_\DAG}=\zeros)}P(X_{V_m}).
\label{aaa}
\end{equation}
Also, \eqref{eq_everything_proof2a} implies $P(X_{S_\DAG}=\zeros)>0$, 
and the reasoning after \eqref{aab} implies $P(X_{S_\DAG=\zeros}\mid X_{V_m})>0$. Therefore:
\begin{equation}
P(X_{V_m},X_{V \setminus {V_m}}=0\cmid X_{S_\DAG}=\zeros) >0 \iff
P(X_{V_m})>0. \label{aa-equiv}
\end{equation}

This implies that, when the interventional distributions follow \eqref{eq_everything_proof2b}, perfect correlation between the variables of $X_m$ in the observational distribution after selection \eqref{eq_everything_proof2a} is possible if and only if  perfect correlation between the variables of $X_m$ in the observational distribution is possible without selection.

Therefore, we can now appeal to the same argument from \citet[Lemma 8]{ansanelli2024everything}: without selection, the only way for a set of vertices to be perfectly correlated is if they share a common ancestor~\citep{Steudel_Ay}, which will be the source of the perfect correlation. In this case, the common ancestor cannot be in $V\setminus V_m$, because all such variables are fixed to $0$. Furthermore, it also cannot be in $V_m$, otherwise the remaining variables of $V_m$ could not continue being perfectly correlated after a do-intervention on this common ancestor,  contradicting \eqref{eq_everything_proof2b}. Therefore, such a common ancestor must be a common marginalized parent, which is \emph{not} present in $\can(\calG')$.   

\end{proof}

Now, we prove that smDGs that only differ in their selected independece system are Observe\&Do distinguishable.

\begin{restatable}[Different selected independence system implies distinguishable]{lemma}{DiffSelectedStructure} \label{le_diff_selected_structure}
Let $\calD$ be a DAG with vertices $V \dotcup M \dotcup S$
and $\calD'$ a DAG with vertices $V \dotcup M' \dotcup S'$.
If the selected latent projections $\calG=\slp(\calD,V \mid S)$ and $\calG'=\slp(\calD',V\mid S')$
have the same directed structure and marginal independence system
but 
the selected independence system of $\calG$
contains a maximal face $V_s$ that is not in $\calG'$,
then $\calD$ is not interventionally dominated by $\calD'$; that is, $\calC(\calD,V \mid S) \not \subseteq \calC(\calD',V\mid S')$
\end{restatable}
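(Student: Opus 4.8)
The plan is to reduce to the canonical DAGs and then exhibit a single Observe\&Do intervention whose selected outcome distribution exposes a genuine $|V_s|$-way statistical interaction that only a selected face covering all of $V_s$ can produce. Since canonicalization preserves the SMI model (\Cref{prop_exog_term,prop_merging,prop:splitarrows,lemma_interchangeedges,prop_remove_redund}), it suffices to show $\calC(\can(\calG),V\mid S)\not\subseteq\calC(\can(\calG'),V\mid S')$. The conceptual point is dual to the marginalized case of \Cref{le_diff_marginal_structure}: because a selected vertex depends on the \emph{sharp} (intervened) copies of its visible parents under the $\paN^\sharp$ convention, conditioning on selection couples the intervened values of $V_s$ themselves, and this coupling survives even when \emph{every} vertex of $V_s$ is intervened upon. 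This is why one intervention suffices and the family of subset-interventions used in \Cref{le_diff_marginal_structure} is not needed here.

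Concretely, I would take the intervention $Q(X_{V^\sharp})$ that sets the copies $X_{V_s^\sharp}\sim\Unif$ independently across $V_s$ and fixes $X_{(V\setminus V_s)^\sharp}$ to the constant $\zeros$, and then read off the selected marginal $P_Q(X_{V_s^\sharp}\mid X_S=\zeros)$. In $\can(\calG)$ the maximal face $V_s$ contributes a selected vertex $s$ whose parents are exactly $V_s$; I set its kernel to $P(X_s=0\mid X_{V_s^\sharp}=x)\propto R(x)$, where $R(x_{V_s})\propto\exp\!\big(\theta\prod_{v\in V_s}(2x_v-1)\big)$ with $\theta\neq 0$, while forcing every other selected vertex to select deterministically ($P(X_{s'}=0\mid\cdot)=1$) and every marginalized vertex to a constant. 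Because the sharp copies $X_{V_s^\sharp}$ are exogenous (fixed by $Q$), the directed edges, marginalized faces and special edges only influence the flat and marginalized variables, which are integrated out; hence the selected marginal over $X_{V_s^\sharp}$ is exactly $R$, whose log-linear (ANOVA) expansion contains the full $|V_s|$-way term $\theta\prod_{v}(2x_v-1)$ and therefore does \emph{not} factor as a product of functions each depending on a proper subset of $V_s$.

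The crux is to show that no kernels for $\can(\calG')$ reproduce this marginal. Under the same $Q$, the joint factorizes and I would first integrate out all flat visible variables (each integrates to $1$, since flat variables are never parents of selected vertices), and then all marginalized variables. Here the canonical structure of \Cref{def_inverse_slp} is decisive: every marginalized vertex of $\can(\calG')$ is either a face-marginal (only visible children) or a special-edge marginal $m_{cb}$ whose only selected child is $s_{cb}$, so \emph{no} marginalized vertex is shared between two selected vertices. Consequently the selection reweighting of $X_{V_s^\sharp}$ factorizes across selected vertices: a face-selected $s'$ yields a factor on the sharp copies of its visible parents, and since $\calG'$ has no selected face containing $V_s$ these parents meet $V_s$ in a proper subset; a special-edge selected $s_{cb}$ yields, after integrating its private $m_{cb}$ and $b^\flat$, a factor $h(X_{c^\sharp})$ depending only on the single vertex $c$, which again meets $V_s$ in a proper subset. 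Thus the reweighting, and hence the selected marginal (as $Q$ is a product), is a product of functions each supported on a proper subset of $V_s$, so it cannot carry the $|V_s|$-way interaction present in $R$; this contradicts reproducibility, giving $\calC(\can(\calG),V\mid S)\not\subseteq\calC(\can(\calG'),V\mid S')$.

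The main obstacle is exactly this factorization step for $\can(\calG')$: one must verify carefully that marginalizing the flat and marginalized variables leaves a reweighting of the intervened values $X_{V_s^\sharp}$ that is a product over proper subfaces, which relies on the canonical-form guarantees that selected vertices have only the two allowed parent patterns and that a marginalized vertex is never shared between two selected vertices. A secondary point needing care is choosing $R$ so that its non-factorizability over proper subsets is both manifest and realizable; the parity-type weight $\exp\!\big(\theta\prod_v(2x_v-1)\big)$ works because it is strictly positive (so the required kernel value $P(X_s=0\mid\cdot)\in(0,1]$ exists) and its log-linear decomposition is supported precisely on the top face $V_s$. Finally, the singleton case $V_s=\{a\}$ is the degenerate instance: there a ``$|V_s|$-way interaction'' simply means a non-constant reweighting of $X_{a^\sharp}$, which $\can(\calG)$ achieves through pure selection on $a$, whereas in $\can(\calG')$ the vertex $a$ lies in no selected face (so $a\notin\bigcup\calS'$ and $a$ is not a parent of any selected vertex), leaving $X_{a^\sharp}$ unreweighted and hence equal to $Q$.
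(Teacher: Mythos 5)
Your proof is correct, and it is worth noting exactly where it coincides with and where it departs from the paper's. The skeleton is the same: reduce to the canonical DAGs, apply the single intervention that makes $X_{V_s^\sharp}$ independent uniform bits while pinning $X_{(V\setminus V_s)^\sharp}$ to $\zeros$, and then exploit the canonical-form dichotomy that every selected vertex of $\can(\calG')$ is either face-selected (all parents visible, and never containing all of $V_s$, since otherwise $V_s\in\calS'$) or special-edge selected with one visible parent and a \emph{private} marginalized parent. The divergence is in the witness and the contradiction mechanism. The paper's witness is the parity distribution supported on even-weight strings of $\dom{V_s}$; its impossibility step shows (via a conditional-independence argument in the intervened graph) that each selection factor of $\can(\calG')$ is strictly positive as a function of a proper subset of $V_s^\sharp$, so the selected distribution would have full support, contradicting the parity-restricted support. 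Your witness is the strictly positive $R\propto\exp\!\big(\theta\prod_{v}(2x_v-1)\big)$, and your impossibility step is a factorization statement: after integrating the flat variables (which have no children under the $\paN^\sharp$ convention) and then the marginalized variables (no marginalized vertex of a canonical DAG has two selected children), the selection reweighting is a product of nonnegative functions each depending on a proper subset of $V_s^\sharp$; since matching the strictly positive $R$ forces every factor to be positive, logarithms exist and the top-order log-linear coefficient of the product vanishes, contradicting $\theta\neq 0$. Both proofs thus rest on the same structural fact about canonical DAGs, packaged differently: the paper's version is more elementary (pure support counting, no log-linear machinery), while yours buys a strictly positive separating distribution (the models are distinguished in the interior of the simplex, not only at its boundary) and a reusable factorization lemma that also subsumes the singleton case uniformly, since a product over proper subsets of a singleton is constant, so $\can(\calG')$ cannot reweight $X_{a^\sharp}$ at all — which matches both your separate treatment and the paper's case (a).
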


\begin{proof}

 By assumption, the vertices of ${V_s}$ have a shared selected child in $\can(\calG)$, but not in $\can(\calG')$.

We consider two cases:
(a) ${V_s}=\{b\}$, and
(b) $\lvert {V_s}\rvert>1$.
In case (a), $\can(\calG)$ has a pattern $b \to s$ that is not present in $\can(\calG')$.
In $\can(\calG)$, set every variable in the graph to a fixed value, 
set $X_s=X_b$, 
and apply the soft intervention $Q(X_{b^\sharp}) \sim \operatorname{Bernoulli}(\frac{1}{2})$.
We will see that the intervened distribution $P(X_{b^\sharp} \mid X_s=0)$
is a point distribution on $0$, and thus $P(X_{b^\sharp} \mid X_s=0)\neq Q(X_{b^\sharp})$, 
something that cannot happen in a model on $\can(\calG)$.

The rest of our proof addresses case (b).

We obtain data from an intervention $Q_\text{un}(X_{V^\sharp})$ that sets each variable of $V_s^\sharp$ to an independent uniform distribution $\operatorname{Bernoulli}(\frac{1}{2})$, and each variable of $V^\sharp\setminus V_s^\sharp$ to the fixed value $0$. Let the resulting distribution be:
\begin{equation}
	P_{Q_\text{un}(X_{V^\sharp})}(X_{V^\sharp}\cmid X_{S_\DAG}=\zeros)=\left(\frac{1}{2^{|{V_s}|-1}}\sum_{x_{V_s}}(1-XOR(x_{V_s}))[x_{V_s}]_{V_s^\sharp}\right) [0]_{V^\sharp\setminus{V_s^\sharp}} \label{eq_everything_proof3a}
\end{equation}
where $XOR(x_{V_s})$ is the parity of the sum of values in $x_{V_s}$ ($0$ if even and $1$ if odd), and $S_\DAG$ is again the set of selected vertices of the DAG in consideration. In \eqref{eq_everything_proof3a}, all of the visible variables that are not in $X_{V_s}$ are set to a point distribution on $0$, and the distribution over $X_{V_s}$ has support on all events with even parity. For example, when ${V_s}$ has three vertices, \eqref{eq_everything_proof3a} is $P_{Q_\text{un}(X_{V^\sharp})}(X_{V^\sharp}\cmid X_{S_\DAG}=\zeros)=\frac{1}{4}([000]_{V_s^\sharp}+[110]_{V_s^\sharp}+[101]_{V_s^\sharp}+[011]_{V_s^\sharp})[0]_{V^\sharp\setminus{V_s^\sharp}}$.  
This data is realizable by $\can(\calG)$, because the selected vertex $s\in S$ that is a shared child of ${V_s}$ can be such that $X_S=\zeros$ if and only if the parity of the sum of variables $X_{V_s}$ is even. We now show that this data is \emph{not} realizable by $\can(\calG')$.

Note that, from \eqref{eq_everything_proof3a}, every strict subset $X_A\subset X_{V_s}$ has full support in $P_{Q_\text{un}(X_{V^\sharp})}(X_{V^\sharp}\cmid X_{S_\DAG}=\zeros)$. If \eqref{eq_everything_proof3a} is realizable by $\can(\calG')$, this implies that every individual selected vertex$s\in S_{\can(\calG')}$ is such that
\begin{equation}
    P(X_s=0\cmid X_{A^\sharp}=x_A)>0 \quad \forall x_A\in \dom{A}, \forall A\subset V_s.
\end{equation}

By assumption, none of the selected vertices $s\in S_{\can(\calG')}$ is a child of every vertex of $V_s$. Thus, the following is a special case of the equation above:
\begin{equation}
    P(X_s=0\cmid X_{\pas{s}\cap V^\sharp_s}=x_{\pa{s}\cap V_s})>0 \quad \forall x_{\pa{s}\cap V_s}\in \dom{\pa{s}\cap V_s}.
\end{equation}

Now, consider the remaining variables of $V_s^\sharp$, i.e.~$X_{V_s^\sharp\setminus\pas{s}}$.  We will  show that in this distribution, these variables are conditionally independent of $X_s$ given $X_{\pas{s}\cap V^\sharp_s}$. Let $S_F\subseteq S_{\can(\calG')}$ be the subset of selected vertices of $\can(\calG')$ that correspond to \emph{facets} of $\calG'$, that is, selected vertices that have only visible parents. The remaining selected vertices $ S_{\can(\calG')}\setminus S_F$ are inside special edges, and have one marginalized parent and one visible parent. (i) When $s\in S_F$: if there is a chain with a visible mediary vertexbetween a variable of $X_{V_s^\sharp\setminus\pas{s}}$ and $s$ or a fork with a visible common cause between a variable of $X_{V_s^\sharp\setminus\pas{s}}$ and $s$, then this visible mediary or common cause is either in $\pas{s}\cap V^\sharp_s$, and the path is blocked, or it is in $V^\sharp\setminus V_s^\sharp$, and it cannot establish correlation because all of the variables of $X_{V^\sharp\setminus V_s^\sharp}$ are fixed to the value $0$. (ii) When $s\in S_{\can(\calG')}\setminus S_F$: in this case, $s$ is part of a special edge $a\to s\gets m\to b$. Since we intervene on all variables of $X_{V_s^\sharp}$ and all of the remaining visible variables are fixed to $0$, the causal connection $m\to b$ is broken regardless of whether $b$ is in $V_s$ or not. Therefore, $s$ behaves as a selected vertexthat has only one parent ($a$), and thus this case reduces back to case (i).

This conditional independence, together with the fact that  $X_{V^\sharp\setminus V_s^\sharp}$ are fixed to the value $0$, implies that:
\begin{equation}
    P\big(X_s=0\,\big|\,X_{V^\sharp_s}=x_{ V_s},X_{V^\sharp\setminus V_s^\sharp}=\zeros \big) > 0 \qquad \forall x_{ V_s}\in \dom{ V_s}.
    \label{ineq_app}
\end{equation}

Then:
\begin{align*}
&P_{Q_\text{un}}\big(X_{V_s^\sharp}=x_{V_s}, X_{V^\sharp\setminus V_s^\sharp}=\zeros,X_{S_{\can(\calG')}}=\zeros\big)\nonumber\\&=Q_\text{un}(X_{V\sharp})\prod_{s\in S_{\can(\calG')}} P\big(X_{s} \,\big|\, X_{V_s^\sharp}=x_{V_s}, X_{V^\sharp\setminus V_s^\sharp}=\zeros\big) & \\
&= \frac{1}{2^{|{V_s}|-1}}\prod_{s\in S_{\can(\calG')}} P\big(X_{s} \,\big|\, X_{V_s^\sharp}=x_{V_s}, X_{V^\sharp\setminus V_s^\sharp}=\zeros\big) & \text{($Q_\text{un}$ is uniform)} \\
&>0 \text{ for all }x_{V_s}. & \text{by }\eqref{ineq_app}
\end{align*}
But this implies that $P_{Q_\text{un}}\big(X_{V_s^\sharp}=x_{V_s}, X_{V^\sharp\setminus V_s^\sharp}=\zeros\mid X_{S_{\can(\calG')}}=\zeros\big) > 0$
for all assignments to $x_{V_s}$, which
contradicts \eqref{eq_everything_proof3a}, proving the result.
\end{proof}

Note that the proofs of \cref{le_diff_directed_edge,le_diff_marginal_structure,le_diff_selected_structure} do \emph{not} simultaneously involve an intervention on a vertex and an observation of the natural value of the variable associated with the same vertex; that is, the expressions \eqref{eq_everything_proof1}, \eqref{eq_everything_proof2}, and \eqref{eq_everything_proof3a} never simultaneously involve the $\flat$ and $\sharp$ version of a variable. Therefore, these proofs also show distinguishability of causal structures under a weaker probing scheme that will be defined in \cref{app_obs_or_do}, called \emph{Observe-or-Do probing scheme}. The same \emph{does not} hold for \cref{le_diff_self_loop}.

We are now in a position to prove the overall result below.

\smgraphSMIEquivalence*

\begin{proof}
The ``if'' side follows from 
\Cref{prop_exog_term,prop_merging,prop:splitarrows,lemma_interchangeedges,prop_remove_redund}.
The ``only if'' direction follows from 
\Cref{le_diff_self_loop,le_diff_directed_edge,le_diff_marginal_structure,le_diff_selected_structure}, 
because differences in the self-loops, directed edges, marginal structure, and selected structure 
exhaust all of the possible ways that two smDGs with the same vertices can differ.
\end{proof}

\section{A weaker probing scheme: Observe-or-do} \label{app_obs_or_do}
To begin with, we define the Observe-or-do probing scheme, 
generalizing \citet[Equation 24]{ansanelli2024everything}, 
to include selection effects.

\begin{definition}[SMI-Observe-Or-Do model]
Let $\calD$ be a DAG with vertices partitioned into $W=V\dotcup M\dotcup S$.
Consider the set of pairs
$P_{*}=\big\{\big(Q(X_Z),P_{Q(X_Z)}(X_V \mid X_S=\zeros)\big):Z \subseteq V,Q(X_Z) \in \calP(\dom{Z})\big\}$
where $\calP(\dom{Z})$ is the set of probability measures on the domain of $X_Z$.

We say that $P_*$ is a set of SMI-Observe-Or-Do pairs compatible with $\calD$ 
if each $\big(Q(X_Z),P_{Q(X_Z)}(X_V \mid X_S=\zeros)\big) \in P_{*}$ satisfies:
    \begin{align*}
    &P_{Q(X_Z)}(X_V \mid X_S=\zeros) = \frac{\int_{x_M} P_{Q(X_Z)}(X_V,X_M,X_S=\zeros) \, dx_{M}}{
    \int_{x_V} \int_{x_M} P_{Q(X_Z)}(X_V,X_M,X_S=\zeros) \, dx_{M} \, dx_{V}
    } \\
    &\text{for} \quad P_{Q(X_Z)}(X_V,X_M,X_S) = Q(X_Z) \prod_{y \in V \setminus Z} P(X_{y} \mid X_{\pa{y}}) 
\prod_{y \in M \dotcup S} P(X_{y} \mid X_{\pa{y}}).
    \end{align*}

The collection of all sets of  SMI-Observe-Or-Do pairs is called the SMI-Observe-Or-Do model.
\end{definition}

Note that in this definition each visible vertex is \emph{either} intervened upon (in which case it belongs to $Z$) \emph{or} observed (in which case it belongs to $V\setminus Z$). Contrast this with \cref{def_smi_pair,def_smi_model} for the Observe\&Do probing scheme.

\begin{proposition}
\label{prop_app_figs}
The DAGs \Cref{fig_obs_or_do1} and \Cref{fig_obs_or_do2} 
are not distinguishable by the Observe-or-do probing scheme.
\end{proposition}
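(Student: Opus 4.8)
The plan is to prove mutual dominance: every set of SMI-Observe-Or-Do pairs realizable by the DAG of \Cref{fig_obs_or_do1} (call it $\calD$) is realizable by the DAG of \Cref{fig_obs_or_do2} (call it $\calD'$), and conversely. Since here $V=\{v\}$, the only intervention targets are $Z=\emptyset$ and $Z=\{v\}$, so each set $P_*$ consists of a single \emph{observe} pair (from $Z=\emptyset$) together with the family of \emph{intervene} pairs $\big(Q(X_v),P_{Q(X_v)}(X_v\mid X_s=0)\big)$ over soft interventions $Q$ on $v$. In both graphs $v$ is visible with the single marginalized parent $m$, so $X_v=f(X_m)$ is deterministic, and the only stochastic kernels are $P(X_m)$ and the kernel of $s$; the graphs differ only by the edge $m\to s$.

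The first step is to record what data a model emits. For $\calD$ I would write $w(x):=\int_{\dom{m}} P(X_m{=}m)\,P(X_s{=}0\mid X_v{=}x,X_m{=}m)\,dm$ and $g(x):=\int_{f^{-1}(x)} P(X_m{=}m)\,P(X_s{=}0\mid X_v{=}x,X_m{=}m)\,dm$, so that $g(x)=P(X_v{=}x,X_s{=}0)$ is the observational joint density. A direct computation from the displayed SMI-Observe-Or-Do definition shows that intervening on $v$ severs the edge into $v$; after marginalizing $m$ and conditioning on $X_s=0$ one obtains, in \emph{either} graph, $P_{Q(X_v)}(X_v{=}x\mid X_s{=}0)\propto Q(x)\,w(x)$, while the observe pair is $P(X_v{=}x\mid X_s{=}0)\propto g(x)$. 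Hence a set $P_*$ is pinned down by the pair $(w,g)$ up to positive rescaling of $w$, and the analogous functions $(w',g')$ with $w'(x)=P'(X_s{=}0\mid X_v{=}x)$ describe $\calD'$, where crucially $g'(x)=P'(X_v{=}x)\,w'(x)$ factors because there is no $v$--$s$ confounding once $m\to s$ is absent.

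The direction ``$\calD'$ realizable by $\calD$'' is immediate: copy $P(X_m)$ and $f$ and let the $s$-kernel ignore $m$, i.e.\ $P(X_s\mid X_v,X_m):=P'(X_s\mid X_v)$, giving $w=w'$ and $g=g'$. The substantive direction is ``$\calD$ realizable by $\calD'$.'' Given kernels of $\calD$ producing $(w,g)$, I would set $\calD'$'s selection kernel to $P'(X_s{=}0\mid X_v{=}x):=w(x)$ (realized as a selection density; this is where the regular-conditional-probability machinery, and the fact that a density may exceed $1$, enter in the continuous case), which matches every intervene pair exactly. It then remains only to fix the observe pair through the marginal of $v$, which in $\calD'$ is free of the selection kernel: taking $P'(X_v{=}x)\propto g(x)/w(x)$, realized via $f'=\mathrm{id}$ and $P'(X_m{=}x)=P'(X_v{=}x)$, yields $P'(X_v{=}x\mid X_s{=}0)\propto P'(X_v{=}x)\,w(x)\propto g(x)$, as required.

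The hard part — and the conceptual reason the self-loop $v\to v$ is invisible to this scheme — is checking that this single choice of $\calD'$-kernels reproduces the observe pair and the \emph{entire} intervene family at once. I would verify that $P'(X_v{=}x)\propto g(x)/w(x)$ is well defined and normalizable: first $w(x)>0$ whenever $g(x)>0$, which holds since $f^{-1}(x)\subseteq\dom{m}$ forces $g(x)\le w(x)$; then $0\le g/w\le 1$ on the bounded domain together with $\int g=P(X_s{=}0)>0$ (nonzero by the positivity assumption on selection) makes $\int g/w$ finite and positive; and finally the set of interventions $Q$ for which the pair is defined agrees between the graphs because $P_Q(X_s{=}0)=\int Q\,w$ in both. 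Intuitively, $\calD'$ absorbs the entire discrepancy between $\calD$'s confounded observational conditional $g/P(X_v)$ and its intervention weight $w$ into the marginal of $v$, precisely because the Observe-or-Do scheme never reports the natural value $X_{v^\flat}$ alongside an intervention $X_{v^\sharp}$ — the lone joint measurement that would expose the path $v\to s\gets m\to v$.
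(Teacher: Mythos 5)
Your proposal is correct and follows the same overall plan as the paper's proof: mutual dominance, with the easy direction handled exactly as in the paper (realize $\calD'$ inside $\calD$ by letting the $s$-kernel ignore $m$), and the substantive direction handled by giving $\calD'$ the selection kernel $P'(X_s{=}0\cmid X_v{=}x):=w(x)=\int P(x_m)\,P(X_s{=}0\cmid x,x_m)\,dx_m$, which matches every intervene pair since in both graphs $P_{Q}(X_v{=}x\cmid X_s{=}0)\propto Q(x)\,w(x)$. Where you genuinely depart from the paper is in how the observe pair is then matched. The paper keeps $P'(X_m)=P(X_m)$ and tilts the \emph{conditional} kernel, setting $P'(X_v\cmid X_m)\propto P(X_v,X_s{=}0\cmid X_m)/w(X_v)$ with a claimed constant of proportionality $c$; you instead collapse $m$ entirely and prescribe the \emph{marginal} of $X_v$ directly, $P'(X_v{=}x)\propto g(x)/w(x)$ with $g(x)=P(X_v{=}x,X_s{=}0)$. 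This is more than cosmetic: the paper's cancellation step needs $c$ to be the same for every value $x_m$, and in general it cannot be. For instance, with binary variables, $X_v$ copying a uniform $X_m$, and $P(X_s{=}0\cmid v,m)$ equal to $1,\tfrac12,0,\tfrac12$ on $(v,m)=(0,0),(1,1),(0,1),(1,0)$, per-kernel normalization forces $c(x_m{=}0)=\tfrac12$ but $c(x_m{=}1)=1$, and the paper's construction then yields $P'(X_v{=}0\cmid X_s{=}0)=\tfrac12$ instead of the correct value $\tfrac23$, which your construction reproduces. Because you use a single global normalization rather than one per $x_m$, your construction sidesteps this defect; of the two write-ups, yours is the sounder one.

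One caveat, which affects your argument and the paper's equally: your well-definedness step ($g\le w$ pointwise, hence $g/w\le 1$ and integrable on a bounded domain) is really a discrete-domain argument. With continuous variables and a deterministic $f$, $g$ is a density that carries a Jacobian/disintegration factor, so the pointwise bound $g\le w$ can fail (take $X_m$ uniform on $[0,1]$, $f(m)=\sqrt m$, and $X_s\equiv 0$: then $w\equiv 1$ but $g(x)=2x>1$ for $x>\tfrac12$), and in such settings normalizability of $g/w$ becomes a genuine constraint rather than an automatic fact (one can cook up kernels where $\int g/w$ diverges, in which case no choice of $P'(X_v)$ works). So your proof, like the paper's, is airtight for finite state spaces, but beyond that it needs further measure-theoretic hypotheses or care that neither write-up supplies.
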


\begin{proof}
Let $\calD$ be the graph $m \to v \to s$; $m\to s$ from \Cref{fig_obs_or_do1}, 
and $\calD'$ be the graph $m \to v \to s$ from \Cref{fig_obs_or_do2}. $\calD'$ is a subgraph of $\calD$, so clearly the SMI-Obs-Or-Do model of 
$\calD$ dominates that of $\calD'$. For the remainder of the proof, we will establish that the opposite is also true.

Let $P_*$ be a set of SMI-Obs-Or-Do  pairs in the SMI-Obs-Or-do model of $\calD$, that is generated from kernels $P(X_s\cmid X_m,X_v)$, $P(X_m)$ and $P(X_v\cmid X_m)$. Make the following choice of kernels for $\calD'$:
\begin{itemize}
\item $P'(X_m)=P(X_m)$,
    \item $P'(X_s=0\cmid X_v) = \int_{x_m} P(X_s=0\cmid x_m,X_v)\,P(x_m)\,dx_m$,
    \item $P'(X_v\cmid X_m) =\begin{cases} c \frac{P(X_v,X_s=0\cmid X_m)}{P'(X_s=0\cmid X_v)}=c\frac{ P(X_v\mid X_m)P(X_s=0\cmid X_m,X_v)}{\int_{x_m} P(X_s=0\cmid x_m,X_v)\,P(x_m)\,dx_m} & \text{if }P'(X_s=0\cmid X_v)\neq 0
    \\
    0 & \text{if }P'(X_s=0\cmid X_v)= 0,\end{cases}$
\end{itemize}
where the constant of proportionality $c$ is chosen so that $\int_{x_v}P'(x_v\cmid X_m) \,dx_v=1$.

Now, we will show that this choice of kernels can reproduce $P_*$ in $\calD'$. First, we show that it gives rise to all of the correct pairs $\big(Q(X_Z),P_{Q(X_Z)}(X_V \mid X_S=\zeros)\big)$ when $Z=\{v\}$, that is, when we perform an intervention $Q(X_v)$ on $v$. 
\begin{align*}
P'_{Q(X_v)}(X_v\cmid X_s=0) &= \frac{P'_{Q(X_v)}(X_v,X_s=0)}{P'_{Q(X_v)}(X_s=0)} = \frac{Q(X_v)\,P'(X_s=0\cmid X_v)}{\int_{x'_v}Q(x'_v)\,P'(X_s=0\cmid X_v=x'_v) \, dx'_{v}} \\
&= \frac{Q(X_v)\int_{x_m}P(X_s=0\cmid x_m,X_v)\,P(x_m) \,dx_{m}}{\int_{x'_v}Q(x'_v)\int_{x'_m}P(X_s=0\cmid x'_m,x'_v)\,P(x'_m) \,dx'_{m}\, dx'_{v}} \\
&= \frac{P_{Q(X_v)}(X_v,X_s=0)}{P_{Q(X_v)}(X_s=0)} = P_{Q(X_v)}(X_v\cmid X_s=0).
\end{align*}

Therefore, our choice of kernels for $\calD'$ can reproduce all of the pairs $\big(Q(X_Z),P_{Q(X_Z)}(X_V \cmid X_S=\zeros)\big)\in P_*$ where $Z=\{v\}$. Now, we look at the pair where $Z=\emptyset$, that is, when we make a passive observation of $v$.
\begin{align*}
P'(X_v\cmid X_s=0) &= \frac{P'(X_v,X_s=0)}{P'(X_s=0)} = \frac{\int_{x_m}P'(X_s=0\cmid X_v)\,P'(X_v\cmid x_m)\,P(x_m)\,dx_m}{\int_{x'_v}\int_{x'_m}P'(X_s=0\cmid x'_v)\,P'(x'_v\cmid x'_m)\,P(x'_m)\,dx'_m\,dx'_v} \\
&= \frac{P'(X_s=0\cmid X_v)\frac{P(X_v,X_s=0)}{P'(X_s=0\cmid X_v)}}{\int_{x'_v}P'(X_s=0\cmid X_v=x_v')\frac{P(x'_v,X_s=0)}{P'(X_s=0\cmid X_v=x_v')}\,dx'_{v}}= \frac{P(X_v,X_s=0)}{\int_{x'_v} P(x'_v,X_s=0)\,dx'_{v}} \\
&= \frac{P(X_v,X_s=0)}{P(X_s=0)} = P(x_v\cmid X_s=0).
\end{align*}

Therefore, our choice of kernels for $\calD'$ reproduces the entire set of SMI pairs $P_*$.

\end{proof}

\section{Proofs of rules to show observational equivalence}

\subsection{Proofs for \cref{sec_added_marg_node}}
\label{app_selected_to_marginal}

We begin by restating \Cref{prop:selected-to-marginal}.

\SelectedToMarginalObsEq*

\begin{proof}
This proof will use the idea of \emph{district factorization} \citep{evans2018margins}. For the definition of districts and district factorization, we refer the reader to that paper.

Since every marginalized face $V_m\in \calL$ such that $V_s\cap V_m\neq \emptyset$ is also such that $V_m\subseteq V_s$, the vertices of $V_s$ are \emph{not} part of the same district  as any  vertex outside of $V_s$ in $\calG$. Thus, using district factorization together with the fact that $\pa{\calG}{V_s}\subseteq V_s$, we obtain:
\begin{equation}\label{eq_distr_fact}
    P(X_V\mid X_S=0)=P(X_{V_s}\mid X_S=0)\,P(X_{V\setminus V_s}\mid X_{V_s}, X_S=0).
\end{equation}

By assumption, all of the vertices $v\in V_s$ have access to local randomness (because each one belongs to some marginalized face in $\calG$). Together with the fact that all of them share one selected child $s\in S$, this implies that the DAG $\can(\calG)$ can realize \emph{any} conditional distribution $P(X_{V_s}\cmid X_S=0)$, by simply setting the variables in $X_{V_s}$ to be  uniformly distributed and choosing the selection condition on $X_s$ according to the desired $P(X_{V_s}\mid X_S=0)$.

In $\calG'$, it is also true that the vertices of $V_s$ are not in the same district as any vertex outside of $V_s$. Therefore, distributions realizable by $\calG'$ can also be factorized as in \eqref{eq_distr_fact}. Furthermore, the only difference between $\can(\calG)$ and $\can(\calG')$ is the addition of a  common marginalized parent between the vertices of $V_s$, a modification that can only possibly affect the term $P(X_{V_s}\mid X_S=0)$ of \eqref{eq_distr_fact}. However, as we saw, $\can(\calG)$ itself can already realize \emph{any} conditional distribution $P(X_{V_s}\mid X_S=0)$; therefore, the addition of this extra marginalized vertex cannot change the set of realizable distributions.  

\end{proof}

\subsection{Proofs for \Cref{sec_remove_special_obs}}
\label{app_proof_special_edge_obs_removal}

In this subsection, we present the proof of \Cref{prop:special-edge-in-district-removal}.

\ObsRemoveSpecialEdges* 

\begin{proof}
Let $\calD=\can(\calG)$ be the canonical DAG associated with $\calG$. Then, if its vertices are partitioned as $(V,M,S)$, $\calD$ contains a path $a \to s \gets m \to b$ where $\pa{\calD}{m}=\ch{\calD}{s}=\emptyset$,  $\pa{\calD}{s}=\{a,m\}$ and $\ch{\calD}{m}=\{s,b\}$. Furthermore,  $a$ and $b$ share a common parent $m'$, and $a,b \in V$, $s \in S$, and $m,m' \in M$. Let $\calD'$ be the DAG obtained by removing the vertex $m$ from $\calD$. The SLP of $\calD'$ is $\calG'$.

Clearly $\calD$ can reproduce any distribution from $\calD'$
since $\calD$ is a subgraph of $\calD$, so what remains is to 
prove that the converse is also true.

Let $P(X_V|X_S=0)$ be an arbitrary observational distribution that is realizable by $\calD$, with associated kernels $P(X_y \mid X_{\pa{\calD}{y}}), y \in V\dotcup M\dotcup S$. 

Now, we will define a set of kernels $P'$ on the DAG $\calD'$ as follows:
\begin{itemize}
    \item The vertex $m'$ in $\calD'$ contains two variables: $X_{m'}=(\bar{X}_{m'},X_m)$. They are distributed as $X_{m'}$ and $X_m$ from $\calD$, i.e.,  $P'(X_{m'}=(\bar{x}_{m'},x_m))=P(X_{m'}=\bar{x}_{m'})P(X_m=x_m)$.
\item The variable $X_b$ responds to $\bar{X}_{m'}$ and $X_m$ as it used to respond to $X_{m'}$ and $X_m$ in $\calD$, i.e., $P'\left(X_{b}|X_{m'}=(\bar{x}_{m'},x_m),X_{\pa{\calD}{b}\setminus\{m',m\}}\right)=P\left(X_b|X_{m'}=\bar{x}_{m'},X_m=x_m,X_{\pa{\calD}{b}\setminus\{m',m\}}\right)$.
\item The variable $X_s$ responds to $X_a$ as $P'(X_s=0|X_a)=P(X_s=0|X_a)$.
    \item The variable $X_a$ is such that: 
\begin{align*}
        & P'\left(X_{a}|X_{m'}=(\bar{x}_{m'},x_m),X_{\pa{\calD}{a}\setminus\{m'\}}\right)  \\
        &=\begin{cases}        \frac{P(X_s=0|X_a,X_m=x_m)}{P(X_s=0|X_a)}P(X_a|X_{m'}=\bar{x}_{m'},X_{\pa{\calD}{a}\setminus\{m'\}}) & \text{if } P(X_s=0|X_a)\neq 0 \\ 0 &\text{if } P(X_s=0|X_a)=0 
    \end{cases}.
    \end{align*}
\item The remaining variables respond to their parents in the same way as they did in $\calD$.
\end{itemize}

With these kernels, the distribution $P'(X_V,X_S=0)$ realized by $\calD'$ is:

\begin{align}
    & P'(X_V, X_S=0) \nonumber\\&= \nonumber\int_{ x_{m'}} P'(X_a|X_{m'},X_{\pa{\calD}{a}\setminus\{m'\}})P'(X_{b}|X_{m'},X_{\pa{\calD}{b}\setminus\{m',m\}})P'(X_s=0|X_a)P(X_{m'})\cdot \\ &\cdot\prod_{y\in V\cup M\setminus\{a,b,m',m\}}P'(X_y|X_{\pa{\calD}{y}})\prod_{w\in  S \setminus\{s\}}P'(X_w=0|X_{\pa{\calD}{w}})  dx_{m'} \\ &=
    \nonumber\int_{ \bar{x}_{m'}, x_m}\hspace{-7mm} P(X_s=0|X_a,X_m)P(X_a|X_{m'},X_{\pa{\calD}{a}\setminus\{m'\}})P(X_{b}|X_{m'},X_m,X_{\pa{\calD}{b}\setminus\{m',m\}})P(X_{m'})P(X_m) \\ &\cdot\prod_{y\in V\cup M\setminus\{a,b,m',m\}}P(X_y|X_{\pa{\calD}{y}})\prod_{w\in  S \setminus\{s\}}P(X_w=0|X_{\pa{\calD}{w}})  d\bar{x}_{m'} dx_m \\
    & = P(X_V, X_S=0).
\end{align}

Therefore, every observational distribution $P(X_V,X_S=0)$ realizable by $\calD$ can be reproduced by $\calD'$, and thus $\calD$ and $\calD'$  are observationally equivalent.
\end{proof}

\subsection{Proofs for \cref{subsec:remove-selected}} \label{app_selected_removal2}

In this subsection, we will prove \Cref{prop:smDG-split}, that says that when 
an smDG $\calG$ satisfies certain conditions, 
one can split a selected face into many singleton selected faces without any change to the SMO model:

\SelectedNodeRemovalObsEqTwo*

We will make this proof in steps, presenting several lemmas first.

\begin{lemma} \label{le:no-unshielded-colliders}
Let $\calD$ be a DAG with vertices partitioned as $(V, M, S)$, and let $s\in S$ be a selected vertex of $\calD$. If the set of vertices $\an{\calD}{s}$ (including $s$ itself) contains no unshielded colliders, then $\calD$  is observationally equivalent to the DAG $\calD_{-s}$ obtained by starting from $\calD$ and deleting $s$.

\end{lemma}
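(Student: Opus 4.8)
The plan is to establish the two observational inclusions $\calM(\calD_{-s},V\mid S\setminus\{s\}) \subseteq \calM(\calD,V\mid S)$ and $\calM(\calD,V\mid S)\subseteq\calM(\calD_{-s},V\mid S\setminus\{s\})$ separately, after first reducing to the case in which $s$ is childless. For the reduction, I would observe that removing the outgoing edges of $s$ leaves $\an{\calD}{s}$ and its induced subgraph untouched (an outgoing edge of $s$ never lies on a directed path into $s$), so the no-unshielded-collider hypothesis is preserved; by part (b) of \Cref{prop_exog_term} the terminalized graph $\calD_{\underline{s}}$ is observationally equivalent to $\calD$, and since $(\calD_{\underline{s}})_{-s}=\calD_{-s}$ it suffices to prove the lemma assuming $s$ has no children.

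First I would dispatch the easy inclusion $\calM(\calD_{-s})\subseteq\calM(\calD)$: given any parametrization of $\calD_{-s}$, keep all kernels and give $s$ the constant kernel with $P(X_s=0\mid X_{\pa{\calD}{s}})=1$. Because $s$ is childless, introducing it alters no other factor, and conditioning on $X_S=\zeros$ is then vacuous on the $s$-coordinate, so $\calD$ reproduces the same selected-marginal distribution.

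The substance is the reverse inclusion. Here I would take a distribution $P$ Markov to $\calD$ (with $P(X_s=0)>0$, as the SMO model requires) and show that the conditional $P(X_{W\setminus\{s\}}\mid X_s=0)$ is itself Markov to $\calD_{-s}$ with kernels that stay deterministic on the visible vertices; conditioning this on $X_{S\setminus\{s\}}=\zeros$ and marginalizing $X_M$ then recovers the original SMO distribution. Since $s$ is childless, $P(X_{W\setminus\{s\}}\mid X_s=0)\propto \phi_0\prod_{w\neq s}P(X_w\mid X_{\pa{\calD}{w}})$ where $\phi_0:=P(X_s=0\mid X_{\pa{\calD}{s}})$ is a ``conditioning potential'' supported on $\pa{\calD}{s}$. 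I would remove $\phi_0$ by absorbing it into the existing kernels in reverse topological order: at each step, if the current potential is supported on a set $C$, pick the topologically last vertex $p\in C$, multiply the potential into $P(X_p\mid X_{\pa{\calD}{p}})$, renormalize over $X_p$, and let the resulting normalizing factor become the next potential (now supported on $\pa{\calD}{p}$).

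The crux—and the step I expect to be the main obstacle—is an invariant that the no-unshielded-collider hypothesis guarantees: \emph{every potential that arises is supported on a clique of $\calD$ contained in $\an{\calD}{s}$.} The base case holds because any two parents of $s$ form a collider at $s\in\an{\calD}{s}$ and so must be adjacent, making $\pa{\calD}{s}$ a clique; the inductive step holds because after eliminating $p$ the new support lies in $\pa{\calD}{p}$, and any two parents of $p\in\an{\calD}{s}$ would otherwise be an unshielded collider at $p$ inside $\an{\calD}{s}$, hence are adjacent. The clique property is exactly what forces $C\setminus\{p\}\subseteq\pa{\calD}{p}$, so each potential is genuinely a function of $X_p$ and $X_{\pa{\calD}{p}}$ and can legitimately be folded into $p$'s kernel. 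I would then verify that this absorption preserves determinism: if $P(X_p\mid X_{\pa{\calD}{p}})$ is a point mass, multiplying by the potential and renormalizing returns the same point mass (the potential cancels), so the modification is a no-op on visible vertices and genuinely reweights only the stochastic marginalized or selected ancestors, precisely where arbitrary kernels are allowed. The final piece of care is the continuous case, where the sums become integrals and the determinism cancellation must be phrased through regular conditional probabilities as in \Cref{app:regular-conditional-probabilities}—the same device already used in the proof of \Cref{lemma_interchangeedges}.
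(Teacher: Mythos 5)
Your proposal is correct, but it takes a genuinely different route from the paper. The paper's proof is path-based: assuming a conditional dependence of $X_v$ on $X_{v'}$ given $X_{\pa{\calD}{v}}$ and $X_s=0$, it extracts a d-connecting path, observes that every collider on it must lie in $\an{\calD}{s}$, and repeatedly contracts each (shielded) collider $v_1 \to z \gets v_2$ to the shielding edge $v_1 \to v_2$ until a collider-free, hence directed, path $v \pathto v'$ remains; this establishes the local Markov property of the conditioned distribution with respect to $\calD_{-s}$. You instead give a constructive factorization argument in the style of decomposable/moralized models: write $P(X_{W\setminus\{s\}}\mid X_s=0)\propto \phi_0\prod_{w\neq s}P(X_w\mid X_{\pa{\calD}{w}})$ with $\phi_0$ supported on $\pa{\calD}{s}$, and eliminate the potential in reverse topological order, with the no-unshielded-collider hypothesis supplying exactly the invariant (each potential lives on a clique inside $\an{\calD}{s}$, so $C\setminus\{p\}\subseteq\pa{\calD}{p}$) that makes each absorption legal. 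Your route buys two things the paper leaves implicit. First, the paper's final step ("local Markov $\Rightarrow$ realizable by $\calD_{-s}$") silently needs the SMO model's requirement that visible kernels be deterministic; your observation that absorbing a potential into a point-mass kernel is a no-op, so only marginalized and selected kernels are ever reweighted, addresses this head-on. Second, the paper's line "since the node $s$ does not have children" is an unstated assumption for the lemma as literally phrased; your preliminary reduction via \Cref{prop_exog_term}(b), together with the check that terminalization neither changes $\an{\calD}{s}$ nor unshields any collider there (a shielding edge out of $s$ into $\an{\calD}{s}$ would create a cycle), closes that gap cleanly. What the paper's approach buys in exchange is brevity and less measure-theoretic bookkeeping: it needs only soundness of d-separation, whereas your elimination requires care with zero normalizers $Z(x_{\pa{\calD}{p}})=0$ and, in the continuous case, the regular-conditional-probability machinery of \Cref{app:regular-conditional-probabilities}, which you correctly flag. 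One small point to make explicit when writing it up: the new potential after absorbing at $p$ is supported on the \emph{full} parent set $\pa{\calD}{p}$, not just $C\setminus\{p\}$, so the inductive clique claim must be stated for $\pa{\calD}{p}$ (which holds, since any two parents of $p\in\an{\calD}{s}$ collide at $p$ and are therefore adjacent).
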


\begin{proof}

Let $P(X_V \mid X_s=0)$  be a distribution that is realizable by $\calD$, and let $v,v'\in V$ be two vertices such that $X_{v'}$ is conditionally \emph{dependent} of $X_v$ in $P$ given the parents of $v$ and selection on $s$. That is, $P(X_v \mid X_{\pa{\calD}{v}},X_{v'},X_s=0)\neq P(X_v \mid X_{\pa{\calD}{v}},X_s=0)$. Then, by the d-separation criterion, there exists a path $\pi:v \upathto v'$ that is active in $\calD$ given $\pa{\calD}{v}$ and $s$.

Suppose that the path $\pi$ contains colliders. All of the colliders $v_1\to z\gets v_2$ of $\pi$ need to be such that $z$ is an ancestor of either $\pa{\calD}{v}$ or $s$, otherwise $\pi$ would not be active given these vertices. The collider that is closest to $v$ cannot be an ancestor of $\pa{\calD}{v}$ (or it would induce a cycle), so it is an ancestor of $s$. This implies that $v\in \an{\calD}{s}$, which in turn implies that \emph{every} collider in $\pi$ must be in $\an{\calD}{s}$.

By assumption, there are no unshielded colliders in $\an{\calD}{s}$. Therefore, if  $v_1\to z\gets v_2$ is a collider in $\pi$, then $\calD$ includes either the arrow $v_1\to v_2$ or $v_1\gets v_2$. Consider now the path $\pi'$ that is the same as $\pi$ up until $v_1$, but then it goes directly to $v_2$ (without passing through $z$). This path is active in $\calD$ given $\pa{\calD}{v}\cup\{s\}$: its non-collider vertices are a subset of those in $\pi$, and if $\pi'$ has a collider in either $v_1$ or $v_2$, this is not a problem because $v_1$ and $v_2$ are both ancestors of $s$ (since they are parents of $z$).

Since $\pi$ has finite length, by repeating this process 
until we have no colliders, we will eventually 
obtain a path with no colliders, 
that is still active given $\pa{v} \cup \{s\}$.
Since the path is active given $\pa{v} \cup \{s\}$, 
and it has no colliders, it can only be of the form
$v \pathto v'$. 

That is, $v'$ is a descendant of $v$ in $\calD$. Since the node $s$ does not have children, $v'$ is also a descendant of $v$ in $\calD_{-s}$. So, if we define  the distribution $P'(X_V):=P(X_V \mid X_s=0)$, we have that $P'(X_V \mid X_{\pa{v}}, X_{v'}) \neq P(X_v \mid X_{\pa{v}})$ implies that  $v'\in \de{v}_{\calD_{-s}}$. This is equivalent to saying that $P'(X_V)$ is realizable by $\calD_{-s}$, which establishes the observational equivalence of $\calD$ and $\calD_{-s}$.

\end{proof}

\begin{lemma}\label{lemma_eq_tilde}
    Let $\calG$ be an smDG that satisfies the 
conditions 
of \Cref{prop:smDG-split}.

Let $\tilde{\calD}$ be constructed from $\can(\calG)$ by performing the following transformations:
\begin{enumerate}[label=\Alph*)]
\item Identify every path in $\can(\calG)$ of the form $a \to s' \gets m' \to b$ where $a,b \in V$,
$a$ and $b$ \emph{do not} share a marginalized parent, $s'\in S$ and $m' \in M$. For each such path, delete the edge $m' \to s'$ and add the edge $a \to b$. 
\item Prior to the transformation, $m'$ had precisely two children, $\{s', b\}$. Now $m'$ has only one child, namely, $b$. If $b$ has any other marginalized parents, remove the marginalized vertex $m'$ per $\mathtt{RmvRedM}$. \label{step:RmvRedM}
\item For every remaining path $a \to s \gets m \to b$ with $s\in S$ and $m \in M$, where $a$ and $b$ \emph{do} share a marginalized parent, delete the vertex $m$.
\end{enumerate}

Then $\tilde{\calD}$ is a DAG, and it is observationally equivalent to $\can(\calG)$.
\end{lemma}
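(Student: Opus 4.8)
The plan is to prove the two assertions in turn: that $\tilde{\calD}$ is observationally equivalent to $\can(\calG)$, and that $\tilde{\calD}$ is acyclic. For the equivalence, I would check that each of the three transformations is an instance of a reduction already shown to preserve the selected-marginal observational model. Transformation A replaces a special edge $a \to s' \gets m' \to b$ (whose endpoints share no marginalized parent) by a direct edge $a \to b$; this is exactly the reverse direction of the equivalence proved in \Cref{lemma_interchangeedges}, so it preserves Observe\&Do equivalence and a fortiori observational equivalence. Transformation B deletes a marginalized vertex $m'$ whose only remaining child is $b$, in the case where $b$ has a second marginalized parent $m''$; since then $\ch{m'} = \{b\} \subseteq \ch{m''}$, this is an application of $\mathtt{RmvRedM}$ and preserves equivalence by \Cref{prop_remove_redund}(a). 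Transformation C deletes the vertex $m$ of a special edge $a \to s \gets m \to b$ whose endpoints $a$ and $b$ \emph{do} share a marginalized parent, which is precisely the canonical-DAG operation underlying \Cref{prop:special-edge-in-district-removal}. Chaining the three then yields $\calM(\can(\calG), V \mid S) = \calM(\tilde{\calD}, V \mid S')$, where $S'$ is the selected set of $\tilde{\calD}$.

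To apply \Cref{lemma_interchangeedges} one conversion at a time I need each intermediate graph to be a DAG, so acyclicity is really the heart of the matter. I would first note that marginalized vertices stay parentless and selected vertices stay childless throughout (A only deletes $m' \to s'$ and adds a visible-to-visible edge, while C only deletes a marginalized vertex), so any directed cycle of $\tilde{\calD}$ consists of visible vertices joined by directed edges. The set of directed edges among visible vertices in $\tilde{\calD}$ is exactly the directed structure of $\calG$ with the shared-parent special edges deleted: non-special edges are untouched, no-shared-parent special edges become direct edges under A, and shared-parent special edges are dismantled under C rather than converted. Hence a directed cycle of $\tilde{\calD}$ is also a directed cycle of $\calG$, and by liftability (\Cref{prop:acyclic-canonical-dag}) it must contain an edge $a \to b$ with $a \in \bigcup \calS$ and $b \in \bigcup \calL$, i.e.\ a special edge. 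Since the cycle survived into $\tilde{\calD}$, that special edge was converted by A, so $a$ and $b$ lie in no common marginalized face.

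The main obstacle is ruling out such a cycle using the hypotheses of \Cref{prop:smDG-split}. The lever is condition (a), which forbids cycles in $\calG_{\an{\calG}{V_s}}$, combined with the fact that the directed edges of $\tilde{\calD}$ form a subset of those of $\calG$, so that ancestry in $\tilde{\calD}$ is contained in ancestry in $\calG$. Consequently, if a directed cycle of $\tilde{\calD}$ met $\an{\calG}{V_s}$, then every vertex of the cycle, being an ancestor of that vertex and hence of $V_s$, would itself lie in $\an{\calG}{V_s}$; the whole cycle would then sit inside $\calG_{\an{\calG}{V_s}}$, contradicting condition (a). The delicate step, and where conditions (b)--(d) together with the canonicity of $\can(\calG)$ must be brought to bear, is confirming that a surviving cycle necessarily meets $\an{\calG}{V_s}$ — that is, that the offending no-shared-parent special edge cannot be isolated from the ancestry of $V_s$ once redundant selected and marginalized vertices have been cleared away (recall that canonicity already forces the endpoint of any special edge into a selected or marginalized face of size at least two, rather than a singleton). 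Granting this confinement, condition (a) closes the argument and $\tilde{\calD}$ is a DAG; the remaining checks — that no vertex changes type and that the three reductions compose without reintroducing special edges — are routine.
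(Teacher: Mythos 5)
Your treatment of the equivalence half matches the paper's proof: step A is the reverse direction of \Cref{lemma_interchangeedges}, step C is \Cref{prop:special-edge-in-district-removal}, and your explicit appeal to \Cref{prop_remove_redund}(a) for step B even supplies a citation that the paper's proof leaves implicit. The problem is the acyclicity half. You correctly observe that condition (a) of \Cref{prop:smDG-split}, as written, only forbids cycles in $\calG_{\an{\calG}{V_s}}$, and you then hang the whole argument on an unproven ``confinement'' claim: that any directed cycle surviving step A must meet $\an{\calG}{V_s}$. You defer this to conditions (b)--(d) plus canonicity, but these cannot deliver it, because (b)--(d) are exactly as local to $\an{\calG}{V_s}$ as (a) is: they say nothing about a cycle living in a part of the graph disjoint from the ancestry of $V_s$. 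Concretely, take $V=\{a,b,c,d\}$ with directed edges $a\to b$ and $b\to a$, faces $\{a\}\in\calS$ and $\{b\}\in\calL$, and a separate component in which $V_s=\{c,d\}$ is a maximal selected face with $\{c,d\}\in\calL$ as well. This smDG is liftable by \Cref{prop:acyclic-canonical-dag} (the cycle's edge $a\to b$ has $a\in\bigcup\calS$ and $b\in\bigcup\calL$), and conditions (a)--(d) hold for $V_s=\{c,d\}$ since $\an{\calG}{V_s}=\{c,d\}$ carries no directed edges; yet $a$ and $b$ share no marginalized parent in $\can(\calG)$, so step A converts the special edge into a direct edge and $\tilde{\calD}$ contains the directed cycle $a\to b\to a$. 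So the confinement claim is not merely delicate but false under your reading of the hypotheses. Your parenthetical that canonicity forces the endpoints of special edges into faces of size at least two is also incorrect: singleton faces are explicitly permitted and meaningful in smDGs (in the example, $\{a\}\in\calS$ is generated by the special edge's own selected vertex), and they do not block the construction.

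The paper's proof takes a blunter route that sidesteps all of this: it reads condition (a) as asserting that $\calG$ contains no cycles at all, whence step A trivially cannot create one, and then cites \Cref{lemma_interchangeedges} and \Cref{prop:special-edge-in-district-removal} just as you do. Your scruple about the locality of condition (a) thus exposes a genuine imprecision in the statement being relied upon, but your attempted repair cannot be completed: the lemma goes through only under the global no-cycles reading (equivalently, with the cycle analysis confined to $\an{\calG}{V_s}$, where all the downstream lemmas actually operate), and under that reading no confinement argument is needed.
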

\begin{proof}
     Condition a) of \Cref{prop:smDG-split} says     that $\calG$ contains no cycles, so
     step A) does not create cycles. 
     Due to \Cref{lemma_interchangeedges}, we know that step A) preserves Observe\&Do equivalence. Due to \Cref{prop:special-edge-in-district-removal}, we know that step C) preserves observational equivalence.
\end{proof}

\begin{restatable}{lemma}{lemmaremoveS}  \label{le:remove-s-noncanon}
Let $\calG$ be an smDG that satisfies the 
conditions of \Cref{prop:smDG-split}, and let $\tilde{\calD}$ be defined as in \Cref{lemma_eq_tilde}.  
Then, ${\calM(\tilde{\calD},V \mid S) = \calM(\tilde{\calD}_{-s},V \mid S\setminus\{s\})}$.
\end{restatable}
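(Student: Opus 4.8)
The plan is to reduce the claim to a moralization criterion in the spirit of \Cref{le:no-unshielded-colliders}, applied to the single selected vertex $s$ of $\tilde{\calD}$ that corresponds to the maximal face $V_s$. First I note that $s$ survives the construction of $\tilde{\calD}$ unchanged: the transformations A)--C) of \Cref{lemma_eq_tilde} only act on the selected and marginalized vertices internal to special edges, whereas $s$ is a face vertex whose parents are exactly the visible set $V_s$. Hence $\pa{\tilde{\calD}}{s}=V_s$, and since $\tilde{\calD}$ is a DAG (by \Cref{lemma_eq_tilde}, using condition a)) the usual d-separation reasoning applies. It then suffices to show that every collider in the ancestry of $s$ is appropriately shielded.

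The subtlety is that \Cref{le:no-unshielded-colliders} only permits colliders shielded by a directed edge, whereas here two parents of a collider may instead be joined by a common marginalized parent. So I would first record a strengthening of \Cref{le:no-unshielded-colliders}: $s$ may be deleted whenever every collider $v_1\to z\gets v_2$ with $z\in\an{\tilde{\calD}}{s}$ is \emph{m-shielded}, meaning $v_1,v_2$ are connected by a directed edge \emph{or} share a common marginalized parent. This follows by the same argument as \Cref{le:no-unshielded-colliders}: given an active path $\pi$ between visible vertices relative to $\pa{\tilde{\calD}}{v}\cup\{s\}$, whenever $\pi$ traverses a collider whose parents share a marginalized parent $m_{12}$ rather than a directed edge, I reroute $\pi$ through the fork $v_1\gets m_{12}\to v_2$. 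The rerouted segment stays active: $m_{12}$ is an un-conditioned fork, and should $v_1$ or $v_2$ become a collider on the new path it is activated because it is an ancestor of the conditioned vertex $s$ (exactly as the original colliders were). Iterating eliminates all colliders and leaves an active directed path $v\pathto v'$, whence $v'\in\de{\tilde{\calD}_{-s}}{v}$, which is what the argument needs.

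It then remains to verify that every collider in $\an{\tilde{\calD}}{s}$ is m-shielded. Because marginalized vertices are parentless and selected vertices other than $s$ are terminal, the only collider nodes in $\an{\tilde{\calD}}{s}$ are $s$ itself and the visible vertices of $\an{\tilde{\calD}}{s}$. For a collider with two visible parents $v_1,v_2$ (the pair sharing a visible child $z$, or, when $z=s$, the pair lying in the selected face $V_s$), condition b) of \Cref{prop:smDG-split} gives that $v_1,v_2$ are neighbours in $\calG$ or lie in a common marginalized face. A common marginalized face yields a shared marginalized parent that survives A)--C), so the collider is m-shielded; a neighbour relation gives a $\calG$-edge $v_1\to v_2$ that, in $\can(\calG)$, is either a direct edge (surviving as a directed edge) or a special edge, which A) turns into the directed edge $v_1\to v_2$ when the endpoints share no marginalized parent, and which is m-shielded by the shared parent otherwise. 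For a collider whose parents are a marginalized vertex $m$ of a face $V_m$ and a visible vertex $u$, I would invoke condition d): any external parent $u\in\pa{\tilde{\calD}}{V_m}\setminus V_m$ is a parent of \emph{every} vertex of $V_m$, so the association that conditioning on $s$ would induce between $u$ and the other children of $m$ is already carried by the direct edges $u\to V_m$ present in $\tilde{\calD}_{-s}$; condition c) (disjointness of the relevant faces) makes this accounting unambiguous.

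The main obstacle is precisely this last family of configurations---colliders whose parents are connected only through a marginalized common cause, and in particular colliders that directly involve a marginalized parent---since these are exactly the cases outside the directed-edge moralization of \Cref{le:no-unshielded-colliders}; the crux is to confirm that the rerouting argument of paragraph two remains valid in them and that conditions b)--d) are jointly strong enough to m-shield every such collider. Once the m-shielding claim is established for all colliders of $\an{\tilde{\calD}}{s}$, the strengthened version of \Cref{le:no-unshielded-colliders} yields $\calM(\tilde{\calD},V\mid S)=\calM(\tilde{\calD}_{-s},V\mid S\setminus\{s\})$, completing the proof.
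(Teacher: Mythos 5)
You take a genuinely different route from the paper, and it founders on exactly the family of configurations you flag yourself. The paper never strengthens \Cref{le:no-unshielded-colliders}: instead (\Cref{lemma_topological_ordering,lemma_eq_shielded}) it first \emph{adds} edges to $\tilde{\calD}$ --- fully connecting each district $\ch{\tilde{\calD}}{m}$ with directed edges along a topological order (harmless because, by condition d), all external parents are shared across the district, so the district subgraph is saturated and district factorization shows intra-district edges do not enlarge the observational model), and adding $v \to m$ whenever a visible $v$ and a marginalized $m$ share a child (inverse exogenization, \Cref{prop_exog_term}a) --- so that in $\tilde{\calD}^{\text{shd}}$ every collider in $\an{\tilde{\calD}^{\text{shd}}}{s}$ is shielded by a \emph{directed} edge, applies \Cref{le:no-unshielded-colliders} verbatim, and then removes the added edges. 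Your m-shielding dichotomy (directed edge or common marginalized parent) fails precisely on colliders $u \to z \gets m$ with $m$ marginalized and $u \notin \ch{\tilde{\calD}}{m}$: no directed edge between $u$ and $m$ need exist, and since marginalized vertices of $\tilde{\calD}$ are parentless, $u$ and $m$ cannot share a marginalized parent. Such colliders are unavoidable: they occur in $\tilde{\calD}$ itself (any external parent of a district shares a child with that district's marginalized vertex), and your own fork-rerouting manufactures new ones --- if the path reaches $v_1$ through an arrowhead, rewriting $v_1 \to z \gets v_2$ into $v_1 \gets m_{12} \to v_2$ creates the collider $u \to v_1 \gets m_{12}$. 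Saying that condition d) ``already carries the association'' is the correct intuition but is not a step of the path argument; what is missing is a third rewrite rule: replace a segment $\ldots u \to z \gets m \to w \ldots$ by $\ldots u \to w \ldots$, the edge $u \to w$ existing because, by conditions c) and d), an external parent of one child of $m$ is a parent of \emph{every} child of $m$ (and collider/non-collider statuses of $u$ and $w$ are unchanged by the bypass). This rule is the path-level counterpart of the paper's operation (ii), and without it the m-shielding hypothesis of your strengthened lemma is simply not verified by $\tilde{\calD}$, so the proof does not go through as written.

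Two further repairs would also be needed. First, termination: unlike the directed-edge shortcut of \Cref{le:no-unshielded-colliders}, your fork step does not shorten the path and can strictly increase the number of colliders (both $v_1$ and $v_2$ may become colliders), so ``iterating eliminates all colliders'' requires an explicit measure; one that works is the number of \emph{visible} vertices on the path, which all three rewrites (directed shortcut, fork insertion, bypass) strictly decrease. Second, a corner case your fork step hits but the paper's construction avoids: if $m_{12} \in \pa{\tilde{\calD}}{v}$ for the start vertex $v$, then $m_{12}$ lies in the conditioning set $\pa{\tilde{\calD}}{v} \cup \{s\}$ and blocks the rerouted path (and if $m_{12}$ already lies on the path, vertex-distinctness fails); in the paper this never arises, since district saturation shields such colliders with a directed edge between $v_1$ and $v_2$, so the marginalized vertex is never inserted --- handling it in your in-place framework essentially forces you to re-derive the saturated-district argument of \Cref{lemma_eq_shielded}. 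In short: a legitimately different strategy (strengthen-and-reroute versus the paper's add-edges, apply, then undo), but with a genuine gap at the marginal-parent colliders, plus unaddressed termination and distinctness issues, all of which are exactly the work that conditions c), d) and the paper's district machinery are doing.
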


Before proving \Cref{le:remove-s-noncanon}, let us spell out 
what the conditions of \Cref{prop:smDG-split}
will imply about the structure of $\tilde{\calD}$.

\begin{lemma} \label{le:tildeG}
Let $\calG$ be an smDG that satisfies the conditions of \Cref{prop:smDG-split} and 
define $\tilde{\calD}$ as in \Cref{le:remove-s-noncanon}.

Let $M'$ be the marginalized vertices of $\an{\tilde{\calD}}{s}$ 
that have only visible children (i.e.~none of the vertices of $M'$ are part of a special edge). 

Then, $\tilde{\calD}$ will satisfy the following:
\begin{enumerate}
\item It is invariant under the operations 
{\tt SplitM->S}, $\mathtt{MergeS}$, $\mathtt{MergeM}$, $\mathtt{Exog}$, $\mathtt{Term}$ and $\mathtt{RmvRedM}$. \label{cond:remove-s-nearly-canon}
\item \label{cond:remove-s-abcd}
\begin{enumerate}[label=\alph*'),ref=\labelcref{cond:remove-s-abcd}\alph*')]
    \item $\tilde{\calD}$ contains no special edges. 
    \label{cond:tildeG1}
    \item Visible vertices in $\an{s}$ that share a child or are in the same 
    selected face in $\calG_{\an{s}}$
    are neighbours 
    or share a marginalized parent in $\tilde{\calD}$. \label{step:merge}
 \label{cond:tildeG2}
    \item \label{cond:tildeG3}
    Letting $V'_m := \ch{\tilde{\calD}}{m}$, the sets $\{V'_m\}_{m \in M'}$ are disjoint.
    \item \label{cond:tildeG4}
    For every $m \in M'$,
    the parents of $V'_m$ that are not in $V'_m$ 
    are parents shared by every vertex in $V'_m$,
    i.e.~${\pa{\calG}{V'_m} \setminus V'_m \subseteq \bigcap_{v \in V'_m}\pa{\calG}{v}}$.
    \end{enumerate}
\end{enumerate}

\end{lemma}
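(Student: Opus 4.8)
The plan is to prove all of the claims by cataloguing the effect of the three transformation steps of \Cref{lemma_eq_tilde} on $\can(\calG)$ and then translating conditions (a)--(d) of \Cref{prop:smDG-split}. The key preliminary observation is that Steps A--C only act on the special edges of $\can(\calG)$: Step A rewrites a special edge $a\to s'\gets m'\to b$ (with $a,b$ not sharing a marginalized parent) into a directed edge $a\to b$, leaving $s'$ with the single parent $a$ and $m'$ with the single child $b$; Step B then deletes this $m'$; and Step C deletes the marginalized vertex $m$ of every remaining special edge $a\to s\gets m\to b$ (those whose endpoints do share a marginalized parent). Crucially, whenever $a\to b$ is a special edge of $\can(\calG)$ we have $b\in\bigcup\calL$, so $b$ already possesses a face marginalized parent distinct from $m'$; hence Step B always fires and every special-edge marginalized vertex is removed. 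The upshot, which I would record as a lemma-internal claim, is that in $\tilde{\calD}$ (i) there are no special edges, (ii) the only marginalized vertices are the face vertices, each with child set equal to the maximal marginalized face it represents, and (iii) no marginalized vertex has a parent and no selected vertex has a child, while the former selected heads $s',s$ survive as harmless singleton selected faces.

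Claim 2(a') is then immediate, since Steps A and C between them process every special edge. For (b') I would first note that the directed edges of $\tilde{\calD}$ among visible vertices form a subset of the directed edges of $\calG$, whence $\an{\tilde{\calD}}{s}\cap V\subseteq\an{\calG}{V_s}\cup V_s$; condition (b) of \Cref{prop:smDG-split} then applies to any pair $a,b$ sharing a child or a selected face in $\calG_{\an{s}}$ and yields that $a,b$ are neighbours or lie in a common marginalized face of $\calG$. In the first case the edge survives as a regular edge or is restored by Step A, so $a,b$ are neighbours in $\tilde{\calD}$ --- unless the edge was a special edge whose endpoints share a marginalized parent, in which case that shared parent is itself the witness; in the second case $a,b$ retain their common face marginalized parent, which no step removes. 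Claims (c') and (d') are then literal restatements: by (i)--(ii) above each $m\in M'$ is a face vertex with $V'_m=\ch{\tilde{\calD}}{m}$ a maximal marginalized face meeting $\an{\calG}{V_s}$, so condition (c) gives the disjointness of $\{V'_m\}_{m\in M'}$ and condition (d) gives $\pa{\calG}{V'_m}\setminus V'_m\subseteq\bigcap_{v\in V'_m}\pa{\calG}{v}$.

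To prove Claim 1 I would check each operation against the catalogue. Invariance under $\mathtt{Exog}$ and $\mathtt{Term}$ is immediate from (iii). Invariance under $\mathtt{MergeM}$, $\mathtt{MergeS}$ and $\DropMtoS$ all follow from (i): with no special edges remaining, every marginalized vertex has only visible children and every selected vertex has only visible parents, so no selected vertex has two marginalized parents (nothing for $\mathtt{MergeM}$ to merge), no marginalized vertex has two selected children (nothing for $\mathtt{MergeS}$), and there is no edge $m\to s$ at all (nothing for $\DropMtoS$ to split). Invariance under $\mathtt{RmvRedM}$ uses (ii): the surviving marginalized vertices are exactly the face vertices, whose child sets are unchanged from $\can(\calG)$; deleting the special-edge marginalized vertices cannot create a new subset relation among unchanged child sets, so the non-redundancy inherited from the canonicity of $\can(\calG)$ is preserved (and among the faces meeting $\an{\calG}{V_s}$ this is in any case guaranteed by the disjointness condition (c)).

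I expect the main obstacle to be the ancestry bookkeeping underlying the translation between $\tilde{\calD}$ and $\calG$. The claims are phrased via $\an{\tilde{\calD}}{s}$, whereas the hypotheses (b)--(d) of \Cref{prop:smDG-split} are phrased via $\an{\calG}{V_s}$, and these ancestor sets genuinely differ: a special edge $a\to b$ of $\calG$ contributes a directed ancestry relation that is preserved by Step A but destroyed by Step C. The careful step is therefore to establish the one-sided inclusion $\an{\tilde{\calD}}{s}\cap V\subseteq\an{\calG}{V_s}\cup V_s$ (so that the hypotheses of \Cref{prop:smDG-split} may be imported wholesale) and to verify that $m\mapsto V'_m$ is a child-set-preserving bijection between $M'$ and the maximal marginalized faces of $\calL$ that meet $\an{\calG}{V_s}$. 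Once these correspondences are pinned down, every remaining assertion is a direct transcription of a hypothesis of \Cref{prop:smDG-split}.
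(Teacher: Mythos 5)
Your proof is correct and follows essentially the same route as the paper's: catalogue the effect of Steps A--C on $\can(\calG)$ and then transcribe conditions a)--d) of \Cref{prop:smDG-split} through the resulting correspondence between the vertices of $M'$ and the maximal marginalized faces of $\calL$ meeting $\an{\calG}{V_s}$. If anything, you are more explicit than the paper in two places it glosses over: that Step B always fires (since a special edge into $b$ forces $b \in \bigcup\calL$, so $b$ has a face marginalized parent and every special-edge marginalized vertex is removed), and the ancestry inclusion $\an{\tilde{\calD}}{s} \cap V \subseteq \an{\calG}{V_s}$ needed to import the hypotheses of \Cref{prop:smDG-split}.
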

\begin{proof}
Condition \labelcref{cond:remove-s-nearly-canon} holds because removing some 
edges $m'\to s'$ in the canonical graph 
will yield a graph that is still invariant under 
\texttt{SplitM->S}, $\mathtt{MergeS}$, $\mathtt{Exog}$, $\mathtt{Term}$. Then, step \labelcref{step:RmvRedM}
maintains these properties while also ensuring invariance under 
$\mathtt{RmvRedM}$.

Proof that $\tilde{\calD}$ satisfies Condition~\labelcref{cond:tildeG1}:Straightforward from the definition of $\tilde{\calD}$. 

Proof that $\tilde{\calD}$ satisfies Condition~\labelcref{cond:tildeG2}: 
If any two visible vertices share a child or are in 
the same selected face in $\calG_{\an{V_s}}$
then by~\labelcref{cond:smDG2},
they will be neighbours in $\calG$
or they will share a marginalized parent (which will be an element of $M'$) in both
$\can(\calG)$ and $\tilde{\calD}$, thereby satisfying~\labelcref{cond:tildeG2}.

Proof that $\tilde{\calD}$ satisfies Condition~\labelcref{cond:tildeG3}: $\can(\calG)$ clearly satisfies Condition~\labelcref{cond:tildeG3}, 
and $\tilde{\calD}$ only adds a single marginalized parent to visible variables
that lack any marginalized parent.

Proof that $\tilde{\calD}$ satisfies Condition~\labelcref{cond:tildeG4}: By  \Cref{prop:smDG-split}'s Condition~\labelcref{cond:smDG4}, all of the parents of $V'_m$ that are not in $V'_m$ are parents of all of the vertices of $V'_m$ in the smDG $\calG$. The arrows between these parents and the vertices of $V'_m$ could represent either normal edges or special edges in $\can(\calG)$. When constructing $\tilde{D}$, however, all of these will turn into normal edges: because of Condition \labelcref{cond:smDG3} of \Cref{prop:smDG-split}, we know that the parents of $V'_m$ that are not in $V'_m$  do not share a marginal parent with any vertex of $V'_m$ in $\can(\calG)$. Therefore, condition A) of \cref{le:remove-s-noncanon} will transform all of those edges that are special into normal edges of $\tilde{D}$.  With this, for every $m \in M$, the parents $\pa{\calG}{\ch{\can(\calG)}{m}}\setminus \ch{\can(\calG)}{m}$ are shared parents of every $v \in \ch{\can(\calG)}{m}$ by \Cref{prop:smDG-split}'s Condition~\labelcref{cond:smDG4}.
In $\tilde\calD$ visible variables have the same marginalized parents as in $\can(\calG)$,
except that some visible variables that lack a marginalized parent in $\can(\calG)$ acquire one,
so we also have that 
the parents $\pa{\calG}{\ch{\calD_{-s}}{m}}$ are shared parents of every $v \in \ch{\calD_{-s}}{m}$.

\end{proof}

We can now prove that there exists an ordering over the marginal variables $M'$ 
with certain properties, that will ultimately allow us to 
factorize the model according to the districts of the graph~\citep{evans2018margins}.

\begin{lemma}
\label{lemma_topological_ordering}
     Let $\calG$ be an smDG that satisfies the conditions of \Cref{prop:smDG-split} and define $\tilde{\calD}$ as in \Cref{lemma_eq_tilde}. Define $M'$ and $V'_m$ as per \Cref{le:tildeG}. Then, $\tilde\calD$ is consistent with a topological ordering of the vertices of $\tilde\calD\cap \an{\tilde\calD}{s} \setminus \{s\}$ that takes the form:
\[
m_i \prec 
v^{i,1} \prec \ldots v^{i,l} 
\prec m_j \ldots
\]
where: 
\begin{itemize}
    \item $m_i,m_j,\ldots \in M'$ are the marginalized vertices with only visible children.
    \item $v^{i,1} \ldots, v^{i,l}$ are the visible children of $m_i$;
\end{itemize}
\end{lemma}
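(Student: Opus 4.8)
The plan is to build the required ordering by collapsing each marginalized vertex together with all of its children into a single super-vertex, verifying that the resulting contracted digraph is acyclic, and then expanding the super-vertices back out in a topological order. First I would collect the structural facts about $\tilde\calD$ furnished by \Cref{le:tildeG}: since $\tilde\calD$ contains no special edges (Condition~\labelcref{cond:tildeG1}), every marginalized vertex of $\an{\tilde\calD}{s}$ lies in $M'$ and has only visible children; the children sets $V'_m=\ch{\tilde\calD}{m}$ for $m\in M'$ are pairwise disjoint (Condition~\labelcref{cond:tildeG3}); and for each $m\in M'$ every parent of $V'_m$ lying outside $V'_m$ is a parent of \emph{every} vertex of $V'_m$ (Condition~\labelcref{cond:tildeG4}). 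I would also observe that, because selected vertices are terminal, $s$ is the only selected vertex in $\an{\tilde\calD}{s}$, so $\an{\tilde\calD}{s}\setminus\{s\}$ consists solely of visible vertices together with the marginalized vertices of $M'$.

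Next I would partition $\an{\tilde\calD}{s}\setminus\{s\}$ into blocks: for each $m\in M'$ take $B_m:=\{m\}\cup V'_m$, and let every visible vertex that is the child of no $m\in M'$ form its own singleton block. Disjointness of the $V'_m$ (Condition~\labelcref{cond:tildeG3}) guarantees that this is a genuine partition. I then form the contracted digraph $\hat\calD$ whose nodes are these blocks, placing an arc $B\to B'$ whenever $\tilde\calD$ has an edge from some vertex of $B$ to some vertex of $B'$. Because each $m\in M'$ is parentless (invariance under $\mathtt{Exog}$) and its children all lie inside $B_m$, every arc leaving a block issues from a visible vertex of that block and every arc entering $B_m$ lands in $V'_m$.

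The hard part is showing that $\hat\calD$ is acyclic, and this is precisely where Condition~\labelcref{cond:tildeG4} is indispensable. I would argue by contradiction: suppose $\hat\calD$ had a cycle of blocks $B_1\to B_2\to\cdots\to B_k\to B_1$, realized by visible edges $a_i\to b_i$ with $a_i\in B_i$ and $b_i\in B_{i+1}$ (indices taken modulo $k$). For each target block I would produce an arc from $a_i$ to the tail $a_{i+1}$ of the following edge: if $B_{i+1}=\{m_{i+1}\}\cup V'_{m_{i+1}}$ is a multi-vertex block, then $b_i\in V'_{m_{i+1}}$ and, by disjointness, $a_i\notin V'_{m_{i+1}}$, so Condition~\labelcref{cond:tildeG4} makes $a_i$ a common parent of all of $V'_{m_{i+1}}$ and in particular yields $a_i\to a_{i+1}$; if $B_{i+1}$ is a singleton then $b_i=a_{i+1}$ and the arc $a_i\to a_{i+1}$ is the original edge. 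Chaining these gives a genuine directed cycle $a_1\to a_2\to\cdots\to a_k\to a_1$ in $\tilde\calD$, contradicting that $\tilde\calD$ is a DAG (\Cref{lemma_eq_tilde}). Hence $\hat\calD$ is acyclic.

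Finally I would take any topological ordering of $\hat\calD$ and expand it: within each block $B_m$ list $m$ first (it is a source, being parentless, and is a parent of every vertex of $V'_m$) followed by the vertices of $V'_m$ in a topological order of the induced sub-DAG, and list each singleton block as its lone vertex. Concatenating these expansions produces an ordering of $\an{\tilde\calD}{s}\setminus\{s\}$ that respects $\tilde\calD$, since intra-block edges are respected by construction while inter-block edges run from an earlier block to a later one by the ordering of $\hat\calD$. By construction each $m_i\in M'$ is immediately followed by its visible children $v^{i,1},\dots,v^{i,l}$, which is exactly the advertised form.
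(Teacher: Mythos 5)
Your proof is correct and follows essentially the same route as the paper's: both argue by contradiction that any obstruction to the ordering yields a cyclic sequence of blocks $\{m\}\cup V'_m$, and both use the disjointness of the $V'_m$ together with Condition~d$'$) (shared external parents) to upgrade that block-level cycle into a genuine directed cycle among visible vertices, contradicting the acyclicity of $\tilde\calD$ from \Cref{lemma_eq_tilde}. Your explicit quotient-graph construction and block-expansion step merely formalize what the paper's terser argument leaves implicit.
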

\begin{proof}
We will assume that there is no such ordering, and prove a contradiction. 

If there is no topological ordering, 
then note that the marginal variables have no parents, 
so there must be a cycle in the visible variables.
Specifically, there must exist a cyclic sequence of distinct marginal variables 
$m_1,\ldots,m_N=m_1$ in $M'$ 
such that for each $i$ there are vertices 
$v_i \in V'_{m_i}$ and $v'_{i-1} \in V'_{m_{i-1}}$ with 
$v'_{i-1} \in \an{\calG}{v_i}$. By Condition \labelcref{cond:tildeG4}, if a vertex that is not in $V'_{m_i}$ is a parent of some $v_i \in V'_{m_i}$, then it is a parent of  every vertex in $V'_{m_i}$. This would imply that $\tilde{\calD}$ contains a cycle $v'_{i-1} \pathto v'_i \pathto v'_{i+1} \pathto \ldots \pathto v'_{i-1}$, which 
contradicts the assumption that $\tilde{\calD}$ is a DAG.

\end{proof}

\begin{lemma}
\label{lemma_eq_shielded}
    Let $\calG$ be an smDG that satisfies the conditions  of \Cref{prop:smDG-split} and
define $\tilde{\calD}$ as in \Cref{lemma_eq_tilde}. Considering the topological ordering defined in \Cref{lemma_topological_ordering}, start from $\tilde\calD$ and:

\begin{itemize}
  \item[(i)] (Fully connect each district) For every $v^{i,l} \prec v^{i,l'}$, add $v^{i,l} \to v^{i,l'}$.
  \item[(ii)] (Parents outside a district) For every $v^{i,l} \prec m^{i'}$, $i\neq i'$, where $\ch{v^{i,l}} \cap \ch{m^{i'}} \neq \emptyset$, add $v^{i,l} \to m^{i'}$.
\end{itemize}

Then, the resulting graph $\tilde{\calD}^\text{shd}$ is a DAG which is observationally equivalent to $\tilde\calD$. Furthermore, $\an{\tilde{\calD}^\text{shd}}{s}$ has no unshielded colliders.

\end{lemma}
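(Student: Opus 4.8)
The plan is to verify the three assertions of the lemma---that $\tilde{\calD}^\text{shd}$ is acyclic, that it is observationally equivalent to $\tilde{\calD}$, and that $\an{\tilde{\calD}^\text{shd}}{s}$ has no unshielded colliders---one at a time, using throughout the structural facts about $\tilde\calD$ collected in \Cref{le:tildeG} and the grouped topological order of \Cref{lemma_topological_ordering}, in which each $m_i\in M'$ is immediately followed by its children $V'_{m_i}=\ch{\tilde\calD}{m_i}$ before the next marginalized vertex appears. For acyclicity I would first note that both operations add edges only between vertices of $A:=\an{\tilde\calD}{s}$, always oriented forward in $\prec$, and that no added edge points into $s$. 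A backward-reachability argument (the predecessors of any $A$-vertex under an old edge lie in $A$ by ancestral closure, and under a new edge lie in $A$ by construction) then gives $\an{\tilde{\calD}^\text{shd}}{s}=A$. Any directed cycle must use at least one new edge, since $\tilde\calD$ is acyclic, and a new edge has both endpoints in $A$; hence every vertex of the cycle reaches $s$ and so lies in $A$. But every edge of the induced subgraph $\tilde{\calD}^\text{shd}_A$, old or new, respects the total order $\prec$, so no such cycle can exist.

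For observational equivalence, the inclusion $\calM(\tilde\calD,V\mid S)\subseteq\calM(\tilde{\calD}^\text{shd},V\mid S)$ is immediate, since adding parents only enlarges the model. For the converse I would treat the two edge types separately. A type-(ii) edge $v\to m^{i'}$ is dispatched by exogenization: by \Cref{prop_exog_term}(a), exogenizing $m^{i'}$ replaces $v\to m^{i'}$ with edges from $v$ to every child of $m^{i'}$, i.e.\ to all of $V'_{m^{i'}}$; but since $v$ shares a child with $m^{i'}$ it is, by condition~\labelcref{cond:tildeG4}, already a parent of all of $V'_{m^{i'}}$, so exogenization introduces no new edge and preserves the model. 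The type-(i) edges lie inside a single district $V'_{m_i}$, and I would handle them via district factorization \citep{evans2018margins}: the selected-marginal model factorizes into district-conditional kernels $P(X_{V'_{m_i}}\mid X_{\Pi_i})$ where $\Pi_i:=\pa{V'_{m_i}}\setminus V'_{m_i}$ is, by condition~\labelcref{cond:tildeG4}, common to every member of the district. Because the district shares the exogenous latent $m_i$ and each visible child is a deterministic function of $(m_i,\Pi_i)$, the latent $m_i$ can encode an arbitrary library indexed by $\dom{\Pi_i}$, so this kernel already ranges over the saturated model in $\tilde\calD$; the internal directed edges cannot enlarge it, and since the districts and their parent sets are identical in the two graphs, the full models coincide.

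For the collider condition I would show every collider $x\to z\gets y$ with $z\in A$ is shielded, by cases on $z$. If $z=s$ or $z$ is visible with two visible parents, then $x,y$ either share the child $z$ or (for $z=s$) lie in the same selected face, so condition~\labelcref{cond:tildeG2} makes them neighbours or co-members of a district, and in the latter case the forward type-(i) edge between them is the shield. If $z$ is visible with a marginalized parent $x=m^k$ (unique by the disjointness~\labelcref{cond:tildeG3}) and a visible parent $y$, then either $y\in V'_{m^k}$, where $m^k\to y$ already shields, or $y$ is an outside parent, where $\ch{y}\cap\ch{m^k}\ni z$ together with the contiguity of the groups in $\prec$ forces $y\prec m^k$, so the type-(ii) edge $y\to m^k$ shields. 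Finally, if $z=m^{i'}$ is marginalized, its parents arise only from type-(ii) edges and are therefore, by~\labelcref{cond:tildeG4}, parents of all of $V'_{m^{i'}}$, hence share a child and are shielded via~\labelcref{cond:tildeG2}.

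I expect the main obstacle to be the type-(i) half of the equivalence argument: making rigorous that completing each district into a DAG leaves the selected-marginal model invariant requires carefully combining the saturation of the district-conditional kernel---which leans on both the determinism assumption and the shared-parent condition~\labelcref{cond:tildeG4}---with the district factorization in the presence of the selection event $X_s=\zeros$, so that the within-district modification does not leak into other factors.
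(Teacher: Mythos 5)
Your proposal is correct and follows essentially the same route as the paper's proof: acyclicity because all added edges respect the topological order of \Cref{lemma_topological_ordering}, observational equivalence by dispatching type-(ii) edges through exogenization (\Cref{prop_exog_term}a) and type-(i) edges through district factorization plus saturation of each district kernel via conditions \labelcref{cond:tildeG3}--\labelcref{cond:tildeG4}, and the collider claim by the same case analysis using \labelcref{cond:tildeG1}--\labelcref{cond:tildeG2} and the newly added edges as shields. If anything, your write-up is slightly more explicit than the paper's in two spots the paper leaves terse---the ancestral-closure step in the acyclicity argument and the shielding of colliders whose collider vertex is a marginalized vertex that acquired type-(ii) parents---so no gaps relative to the paper's own argument.
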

\begin{proof}
    Since the transformations (i)--(ii) only add an arrow $a\to b$ in cases where $a\prec b$ in the topological ordering of \Cref{lemma_topological_ordering}, the resulting graph is a DAG.

    To show that operation (i) preserves observational equivalence, we will use district factorization~\citep{evans2018margins}.
    We consider a factorization of the distribution
    by districts (the 
    set of children of a marginal variable $m^i$), 
    and wherever a visible or selected variable $z$ lacks 
    marginal parents, we assign it to its own district $\{z\}$. Even though \citet{evans2018margins} only considers mDAGs, the idea can be extended to the case when there are selected faces by simply treating selected vertices as visible.  
    Consider a district $V'_{m^i}=\ch{\tilde\calD}{m^i}$ 
    belonging to a marginal variable $m^i$. By \Cref{le:tildeG}, all of the visible parents of a vertex in this district are shared between all of the vertices of the district. Therefore, the district subgraph composed by $V'_{m^i}$ and their parents (where the vertices of $\pa{\tilde\calD}{V'_{m^i}}\setminus V'_{m^i}$ are fixed vertices, following Definition 2.9 of \citet{evans2018margins}) is \emph{saturated}, in the sense that it can realize \emph{any} kernel $P\big(X_{V'_{m^i}}\big|X_{\pa{\tilde\calD}{V'_{m^i}}\setminus V'_{m^i}}\big)$. Because of that, adding edges inside this district does not increase the set of observational distributions that are realizable by the causal structure, thus showing that operation (i) preserves observational equivalence. 
    
    Operation (ii) also preserves observational equivalence, as a consequence of \Cref{prop_exog_term}(a).

    To see that $\an{\tilde{\calD}^\text{shd}}{s}$ has no unshielded colliders in $\tilde{\calD}^\text{shd}$, note that:
    \begin{itemize}
        \item Due to \Cref{le:tildeG}(b'), two visible vertices of   $\an{\tilde{\calD}^\text{shd}}{s}$ that share a child are either already connected by an edge in $\tilde\calD$, or are children of the same $m^i$, in which case they become connected by an edge in  $\tilde{\calD}^\text{shd}$ due to operation (i). Therefore, all of the colliders of  $\an{\tilde{\calD}^\text{shd}}{s}$ that take the form $v\to\cdot\gets v'$ for visible $v, v'$ are shielded in $\tilde{\calD}^\text{shd}$.
        \item  Since there are no special edges in $\tilde{\calD}$, there are no colliders of the form $v\to s\gets m$ for visible $v$, selected $s$ and marginalized $m$. 
        \item  Since there are no special edges in $\tilde{\calD}$ and because of \Cref{le:tildeG}c'), there are no colliders of the form  $m\to v\gets m'$  for marginalized $m$ and $m'$.
        \item  Since there are no special edges, colliders of the form $v\to v'\gets m$ for $v,v'$ visible and $m$ marginalized can 
        only arise when $m$ only has visible children. Such colliders are either shielded by an edge $m\to v$, in case $v$ and $v'$ belong to the same district, or by an edge $v\to m$ that is added by operation (ii), in case $v$ and $v'$ belong to different districts.
    \end{itemize}

\end{proof}

Now, we can prove \Cref{le:remove-s-noncanon}:

\begin{proof}[Proof of \Cref{le:remove-s-noncanon}]
Define  $\tilde{\calD}$ as in \Cref{lemma_eq_tilde}, and  $\tilde{\calD}^\text{shd}$ as in \Cref{lemma_eq_shielded}. By \Cref{le:no-unshielded-colliders}, we know that the DAG
$\tilde{\calD}^\text{shd}_{-s}$
is
observationally equivalent to $\tilde{\calD}^\text{shd}$, which is in turn observationally equivalent to $\tilde{\calD}$.

We can then perform the inverse of
operations (i-ii) (\Cref{lemma_eq_shielded}) on  $\tilde{\calD}^\text{shd}_{-s}$ to obtain the observationally equivalent DAG $\tilde{\calD}_{-s}$, which is the subgraph of $\tilde{\calD}$
on variables other than $s$.
   Thus, ${\calM(\tilde{\calD},V \mid S) = \calM(\tilde{\calD}_{-s},V \mid S\setminus\{s\})}$.
\end{proof}

With  \Cref{le:remove-s-noncanon}, we can finally prove \Cref{prop:smDG-split}, which we restate below.

\SelectedNodeRemovalObsEqTwo*

\begin{proof}
    Clearly, $\calG$ observationally dominates $\calG'$. We proceed to prove the converse.

    Due to \Cref{lemma_eq_tilde}, we know that $\can(\calG)$ is observationally equivalent to $\tilde\calD$. Due to  \Cref{le:remove-s-noncanon}, we know that $\tilde\calD$ is observationally equivalent to $\tilde\calD_{-s}$. In $\tilde\calD_{-s}$, we identify every edge $a\to b$ that was created in step A) of \Cref{le:remove-s-noncanon}. All of these were originally special edges $a\to s'\gets m'\to b$ in $\can(\calG)$. Then, we construct a new graph $\calD'$ by starting from $\tilde\calD_{-s}$ and reverting this step, that is, by transforming these normal edges back into special edges. Due to \Cref{lemma_interchangeedges}, we know that $\tilde\calD_{-s}$ and $\calD'$ are Observe\&Do equivalent. Now, we identify every special edge that was broken in step C) of of \Cref{le:remove-s-noncanon}. Construct the DAG $\calD''$ by starting from $\calD'$ and reverting this step, recovering those special edges. Due to \Cref{prop:special-edge-in-district-removal}, $\calD''$ and $\calD'$ are observationally equivalent.
Finally, note that $\calD''=\can(\calG')$.  
    This completes the proof that $\calG$ and $\calG'$ are observationally equivalent. \blk
\end{proof}

\section{A graphical criterion for smDG constraints} \label{sec:sm-sep}
Given any smDG, it is possible to deduce the conditional independence constraints by examining the 
canonical DAG, identifying the deterministic variables (those that lack 
marginal parents) and then applying the
D-separation criterion \citep{geiger1990identifying}.
If, however, we prefer to deduce them directly from the smDG, 
we can devise an alternative criterion called sm-separation, 
which is analogous to m-separation in mDAGs \citep{lauritzen1990independence}.
Graphical criteria such as m-separation have been applied in other proofs 
that identify causal quantities, for example \citet{nabi2020full} and \citet{malinsky2019potential}.

D-separation is differs from d-separation in that it involves 
one additional step. 
Before checking for active paths, the conditioning set $Z$
is expanded to include the functionally determined variables
---those whose values are a deterministic
function of $Z$.

\begin{restatable}[Functionally determined variables]{definition}{FunctionallyDeterminedVariables}

To obtain
the \emph{functionally determined variables} $\lceil Z \rceil_\calG$ for visible variables $Z$ in a DAG (or smDG)
$\calG$, begin with assigning $Z$ to $\lceil Z \rceil_\calG$, then add to $\lceil Z \rceil_\calG$ 
every visible vertex $v\in V$
such that
$\pa{\calG}{v} \subseteq \lceil Z \rceil_\calG$ 
\(\big(\)and such that $v$ is not in $\bigcup_B B \in \calL$, in the 
case of an smDG $(V,\calE,\calL,\calS)$\(\big)\)
until convergence. 

\end{restatable}

Note that this procedure halts even in a cyclic graph, so long as the number 
of nodes is finite. D-separation is defined as follows.

\begin{definition}
We say that $X$ is D-separated from $Y$ conditioned on $Z$, denoted $X \perp_D Y \mid Z$, if $X \perp_d Y \mid \lceil Z \rceil_\calG$.
\end{definition}

\begin{lemma} \label{le_closure}
    Let $\calD$ be a canonical DAG 
    with SLP $\calG$.
    Then, $\lceil Z \rceil_{\calD}=\lceil Z \rceil_{\calG}$.
\end{lemma}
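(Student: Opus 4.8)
The plan is to show that the two closure operators --- the one defined on the canonical DAG $\calD$ and the one defined on its SLP $\calG$ --- are literally the same iterative process, so that, started from the common seed $Z\subseteq V$, they reach the same least fixed point. Both operators only ever adjoin visible vertices and both begin from $Z$, so it suffices to prove that for every visible vertex $v$ and every candidate set $T\subseteq V$, the vertex $v$ is \emph{eligible to be added given $T$} in $\calD$ if and only if it is eligible given $T$ in $\calG$. Here ``eligible in $\calD$'' means $\pa{\calD}{v}\subseteq T$, while ``eligible in $\calG$'' means $v\notin\bigcup_{B\in\calL}B$ \emph{together with} $\pa{\calG}{v}\subseteq T$.

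First I would record two structural facts about the SLP of a canonical DAG. (1) From $\calL=\bigcup_{m\in M}\cl(\ch{\calD}{m}\cap V)$ we get $\bigcup_{B\in\calL}B=\bigcup_{m\in M}(\ch{\calD}{m}\cap V)$, so $v$ lies in some marginalized face of $\calG$ exactly when $v$ has a marginalized parent in $\calD$; this includes the endpoint $b$ of any special edge $a\to s\gets m\to b$, since then $\{b\}\in\calL$. (2) By the construction of the directed structure in \Cref{def:slp-special-case}, every edge $a\to v$ of $\calG$ arises either from a direct edge $a\to v$ of $\calD$ (with $a,v\in V$) or from a special edge $a\to s\gets m\to v$ of $\calD$, and in the latter case $v$ acquires the marginalized parent $m$.

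The equivalence of eligibility then follows by a two-case argument on whether $v$ has a marginalized parent in $\calD$. If it does, then on the $\calD$ side $\pa{\calD}{v}$ contains a marginalized vertex, which can never lie in $T\subseteq V$, so $v$ is never eligible; on the $\calG$ side $v\in\bigcup\calL$ by fact (1), so $v$ is never eligible either. If $v$ has no marginalized parent, then fact (1) gives $v\notin\bigcup\calL$, so the face condition is vacuous, and every $\calD$-parent of $v$ is visible (selected vertices are terminal in a canonical DAG, hence never parents). Fact (2) then shows there is no special edge into $v$, so $\pa{\calG}{v}$ equals the set of visible parents of $v$ in $\calD$, which is all of $\pa{\calD}{v}$; hence $\pa{\calD}{v}\subseteq T\iff\pa{\calG}{v}\subseteq T$. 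In both cases the two eligibility predicates agree.

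Since the closures are the least fixed points of two monotone operators with identical ``add-one-vertex'' rules and identical seed $Z$, their fixed points coincide, giving $\lceil Z\rceil_{\calD}=\lceil Z\rceil_{\calG}$; a clean way to finish is a short induction showing that the partially-built sets remain equal at each iteration. The one delicate point --- the step I would check most carefully --- is the bookkeeping around special edges: the vertex $a$ of a special edge becomes a genuine $\calG$-parent of $b$, so one must verify that this never spuriously alters eligibility. It does not, precisely because such a $b$ always carries the singleton marginalized face $\{b\}$ and is therefore excluded by the face condition on the $\calG$ side (and by its marginalized parent $m$ on the $\calD$ side), so it is inert in both closures.
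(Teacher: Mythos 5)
Your proposal is correct and follows essentially the same route as the paper's own proof: both arguments reduce the claim to showing that the one-step addition rules agree, using exactly the two facts that membership in a marginalized face of $\calG$ corresponds to having a marginalized parent in $\calD$, and that $\pa{\calG}{v}=\pa{\calD}{v}$ whenever $v$ has no marginalized parent (so no special edge terminates at $v$). Your version merely makes explicit the fixed-point/induction bookkeeping that the paper leaves implicit.
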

\begin{proof}
In $\calG$, the set $\lceil Z \rceil_{\calG}$ 
    is enlarged to include any $v'$ that 
    is not in any marginal face,
    and where $\pa{\calG}{v'} \subseteq \lceil Z \rceil_{\calG}$.
Any such $v'$ must  have no marginal parent in $\calD$, 
and so $\pa{\calG}{v'} = \pa{\calD}{v'}$, 
meaning that $v'$ is also added to $\lceil Z \rceil_\calD$.
Conversely, if $v'$ is in a marginal face, 
or has a parent in $\calG$ that is not in $\lceil Z \rceil_\calG$, 
then in $\calD$, $v'$ has a marginal parent, 
or has a visible parent that is not in $\lceil Z \rceil_\calD$, 
so it cannot be added to $\lceil Z \rceil_\calD$.
\end{proof}

Now, we define an smDG path.

\begin{definition}[smDG path]
In an smDG $\calG$, a path is a sequence $\langle v_1, t_1,v_2,\ldots t_k,v_k\rangle$ of distinct vertices and edges
where:
\begin{itemize}
    \item each $v_i$ is a vertex in the smDG,
    \item each edge type $t_i$ is either ``$\leftarrow$'' ``$\leftrightarrow$'', ``$\to$'', or ``$-\!\!-$'', 
    that corresponds to an edge between $v_i$ and $v_{i+1}$ that is present in the smDG. That is, the two vertices $v_i$ and $v_{i+1}$ are either connected through a directed edge, or are in the same marginalized face, or are in the same selected face. 
\end{itemize}
In a sequence $v_i,t_i,v_{i+1},t_{i+1},v_{i+1}$, the node $v_{i+1}$
is called a collider if $t_i$ and $t_{i+1}$ have arrowheads at $v_{i+1}$, 
and a non-collider otherwise.
\end{definition}

Now we present the graphical condition for independence in smDGs, termed sm-separation.

\begin{definition}[sm-separation]
In an smDG $\calG = \langle V, \mathcal{E}, \calL,\calS \rangle$, given a path $\langle v_1,t_1,\ldots,t_k,v_k \rangle$, 
we say that the path is active at $v_i$ given vertices $Z$ if:
\begin{itemize}
    \item if $v_i$ is a non-collider, then it is not in $\lceil Z \rceil_\calG$.
    \item if $v_i$ is a collider, then it has a descendant in 
    $Z$ or that is in some selected face $V_s \in \calS$.
\end{itemize}
A path is active if it is active at every vertex in its path.
If, for vertex sets $X,Y$, there exists a path from some 
$x \in X$ to $y \in Y$, active given $Z$, then we say 
$X$ and $Y$ are sm-connected, written as $X \not \perp_{sm} Y \mid Z$.
Otherwise, we say that $X$ and $Y$ are sm-separated given $Z$:
$X \perp_{sm} Y \mid Z.$
\end{definition}

For example, in the smDG from \Cref{fig:example-slp},
we could say that $b$ is sm-connected to $d$ given $c$, 
due to the active path $b \undir c \undir d$.

\begin{lemma} \label{le:selected-descendant}
    Let $\calG$ be an smDG. 
    A visible vertex $v$ is in $\an{\can(\calG)}{S}$
    if and only if $v \in \an{\calG}{A}$ for some $A \in \calS$.
\end{lemma}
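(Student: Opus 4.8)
The plan is to exploit the rigid structure of the canonical graph $\can(\calG)$ furnished by \Cref{def_inverse_slp}: marginalized vertices are created parentless, selected vertices are created childless, and the only edges joining two \emph{visible} vertices are the regular edges contributed by step~(2). Every other visible--visible relationship of $\E$ is realized as a special edge $a \to s_{ab} \gets m_{ab} \to b$, which cannot be crossed by a directed path because its midpoint $s_{ab}$ is a terminal selected vertex. I would first record these observations as a short structural lemma, since both directions hinge on them.

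For the forward direction, take a directed path $v = u_0 \to \cdots \to u_k = s^\ast$ in $\can(\calG)$ from the visible vertex $v$ to a selected vertex $s^\ast \in S$. Each $u_i$ with $i \geq 1$ has an incoming edge, so it cannot be marginalized (those are parentless), and a selected vertex is childless, so it can only occur as the endpoint; hence $u_0,\dots,u_{k-1}$ are visible and $u_k = s^\ast$ is selected. Consequently every edge among $u_0,\dots,u_{k-1}$ is a visible--visible edge, i.e.\ a regular edge of $\E$, so $v \to \cdots \to u_{k-1}$ is a directed path of $\calG$. I then split on the origin of $s^\ast$: if $s^\ast$ is the face vertex of a maximal $V_s \in \calS$, its visible parents are exactly $V_s$, whence $u_{k-1} \in V_s$ and $v \in \an{\calG}{V_s}$; if instead $s^\ast = s_{ab}$ comes from a special edge, then $u_{k-1}$ is its unique visible parent $a$, and step~(1) of \Cref{def_inverse_slp} forces $a \in \bigcup\calS$, so $\{a\} \in \calS$ by downward closure and $v \in \an{\calG}{\{a\}}$. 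In either case $v \in \an{\calG}{A}$ for some $A \in \calS$.

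For the converse, fix $a \in A$ with a $\calG$-directed path $v = u_0 \to \cdots \to u_n = a$, and embed $A$ in a maximal face $V_s \in \calS$, which supplies a selected face vertex $s$ with $a \to s$ in $\can(\calG)$. The obstruction is exactly the one flagged above: some edge $u_j \to u_{j+1}$ of this path may be realized as a special edge and therefore fail to lift to a directed edge of $\can(\calG)$. I would take $j$ minimal with this property. The prefix $u_0 \to \cdots \to u_j$ then consists solely of regular edges and lifts to a directed path of $\can(\calG)$; moreover an edge of $\E$ is special only when its tail lies in $\bigcup\calS$, so $u_j \in \bigcup\calS$ and hence $u_j$ is a parent of some selected face vertex $s'$, giving $v \in \an{\can(\calG)}{s'} \subseteq \an{\can(\calG)}{S}$. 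If no edge of the path is special, the whole path lifts and the edge $a \to s$ closes the argument, so $v \in \an{\can(\calG)}{S}$ in all cases.

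The main obstacle, and the only delicate point, is precisely this mismatch between the directed structure $\E$ of $\calG$ and its image in $\can(\calG)$: a genuine directed edge of $\calG$ can be ``broken'' into an untraversable special edge, so directed reachability is not preserved verbatim. The resolution I would stress is the \emph{first-special-edge} argument — the tail of any special edge is itself an ancestor of selection in $\can(\calG)$ — which lets the converse go through despite the loss of direct reachability.
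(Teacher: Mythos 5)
Your proof is correct and follows essentially the same route as the paper's: the forward direction extracts the visible prefix of a directed path in $\can(\calG)$ (using that marginalized vertices are parentless and selected vertices childless) and identifies the terminal parent as lying in a selected face, while the converse truncates a $\calG$-path at the first edge realized as a special edge, whose tail is necessarily a parent of a selected vertex. Your version is merely more explicit than the paper's (in particular the case split between face vertices and special-edge vertices $s_{ab}$, and the use of downward closure of $\calS$ to get $\{a\} \in \calS$), but the underlying argument is identical.
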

\begin{proof}
From \Cref{def_smgraph}, any directed path in $\can(\calG)$ to $s \in S$ is in $\calG$ and terminates at some $A \in \calS$.
Conversely, any directed path in $\calG$ to $A \in \calS$
either is in $\can(\calG)$ and terminates at $S$ or 
at the first missing edge, we can truncate the path there, and 
the missing edge $v \to v'$ must have $v \in \pa{\calD}{s}$.
\end{proof}

We can now prove that sm-separation in an smDG is 
equivalent to D-separation in the canonical DAG.

\begin{theorem}
Let $\calG$ be an smDG with
disjoint sets of vertices $X,Y,Z$, and let $S$ be the set of selected vertices of $\can(\calG)$.
Then, $X \perp_{sm} Y \mid Z$ in $\calG$ 
if and only if $X \perp_D Y \mid Z,S$ in $\can(\calG)$.
\end{theorem}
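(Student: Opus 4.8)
The plan is to prove the biconditional by setting up a tight correspondence between \emph{active smDG paths} in $\calG$ (which witness $X\not\perp_{sm}Y\mid Z$) and \emph{active paths} in $\can(\calG)$ (which witness $X\not\perp_D Y\mid Z,S$), and showing that an active path of either kind can be converted into an active path of the other. Accordingly I will work with the contrapositive $X\not\perp_{sm}Y\mid Z\iff X\not\perp_D Y\mid Z,S$. Throughout I rely on the D-separation convention of \citet{geiger1990identifying}, in which functionally determined variables block non-colliders but a collider is opened only by a descendant in the \emph{original} conditioning set $Z\cup S$; this is precisely the asymmetry that the definition of sm-separation mirrors (closure $\lceil\cdot\rceil$ for non-colliders, raw $Z$ and selected faces for colliders).

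First I would align the two conditioning sets. Since every selected vertex of $\can(\calG)$ is childless (\Cref{def_inverse_slp}), no selected vertex is a parent of anything, so adjoining $S$ does not enlarge the functionally determined closure; hence $\lceil Z\cup S\rceil_{\can(\calG)}=\lceil Z\rceil_{\can(\calG)}\cup S$. Together with \Cref{le_closure}, which gives $\lceil Z\rceil_{\can(\calG)}=\lceil Z\rceil_\calG$ (the closure only ever adds visible vertices), the blocking set for non-colliders is literally the same object $\lceil Z\rceil_\calG$ on both sides. Next I would record the edge-by-edge dictionary read off from \Cref{def_inverse_slp}: a directed edge $a\to b$ of $\calG$ becomes either a regular edge $a\to b$ or a special edge $a\to s_{ab}\gets m_{ab}\to b$; an smDG edge placing $a,b$ in a common marginalized face becomes a fork $a\gets m\to b$; and an smDG edge placing them in a common selected face becomes a collider $a\to s\gets b$. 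In every case the arrowhead/tail marks at the visible endpoints $a,b$ are preserved, so a visible vertex is a collider on a translated path iff it is a collider on the original. The auxiliary vertices are automatic: each marginalized auxiliary ($m_{ab}$ or $m$) is a non-collider that, being non-visible, never lies in $\lceil Z\rceil_\calG\cup S$ and is always open, while each selected auxiliary ($s_{ab}$ or $s$) is a collider lying in $S$ and is therefore always open.

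With this dictionary, interior visible non-colliders translate to interior non-colliders with identical blocking status (through $\lceil Z\rceil_\calG$), so the remaining and main obstacle is reconciling the collider-activation condition for visible colliders $v$, which is where the special edges and \Cref{le:selected-descendant} carry the argument. sm-activation of $v$ asks for a $\calG$-descendant in $Z$ or in some selected face $V_s\in\calS$; D-activation asks for a $\can(\calG)$-descendant in $Z\cup S$. If a $\calG$-directed path $v\leadsto z\in Z$ uses no special edge it is literally a $\can(\calG)$-directed path to $z$; if it does use a special edge $a\to b$, the path in $\can(\calG)$ reaches the selected vertex $s_{ab}\in S$ before it breaks, supplying a descendant of $v$ in $S$. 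Conversely a $\can(\calG)$-directed path to a visible $z\in Z$ can pass through neither a marginalized auxiliary (a source) nor a selected auxiliary (a sink), so it is a $\calG$-directed path to $z$; and a $\can(\calG)$-descendant of $v$ in $S$ is, by \Cref{le:selected-descendant}, exactly the relation $v\in\an{\calG}{V_s}$ for some $V_s\in\calS$. Hence the two collider conditions coincide in both directions.

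The one residual care point, which I would dispose of at the start of the forward translation, is that an smDG path traversing the \emph{same} face on two consecutive edges would force the reuse of a single auxiliary vertex and hence fail to give a simple $\can(\calG)$ path. I would handle this by passing to a minimal active smDG path and shortcutting any two consecutive edges inside one marginalized (resp.\ selected) face $a\leftrightarrow b\leftrightarrow c$ (resp.\ $a\undir b\undir c$) to the single edge $a\leftrightarrow c$ (resp.\ $a\undir c$), which exists because independence systems are closed under subsets and which leaves the marks at $a$ and $c$, and thus their collider status, unchanged. In the reverse direction no analogous issue arises, since a simple $\can(\calG)$ path visits each auxiliary once and projects onto a path with distinct visible vertices. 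Assembling the forward translation and the reverse projection, each producing an active path of the other type, establishes $X\not\perp_{sm}Y\mid Z$ iff $X\not\perp_D Y\mid Z,S$, i.e.\ the stated equivalence $X\perp_{sm}Y\mid Z\iff X\perp_D Y\mid Z,S$.
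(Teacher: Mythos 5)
Your proof is correct and takes essentially the same route as the paper's: the same edge-by-edge dictionary between smDG paths and canonical-DAG paths (fork $a\gets m\to b$ as $\leftrightarrow$, collider $a\to s\gets b$ as an edge in a selected face, special edge $a\to s_{ab}\gets m_{ab}\to b$ as $a\to b$), with \Cref{le_closure} aligning the non-collider blocking sets and \Cref{le:selected-descendant} aligning the collider-activation conditions. If anything, you are more careful than the published argument on two points it glosses over, namely the identity $\lceil Z\cup S\rceil_{\can(\calG)}=\lceil Z\rceil_{\can(\calG)}\cup S$ (valid because selected vertices are childless) and the repair, via subset-closure shortcutting, of smDG paths that traverse the same face on consecutive edges and would otherwise reuse an auxiliary vertex in the reverse translation.
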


The idea of the proof is that any active path in an smDG 
can be extended to obtain an active path in the canonical DAG.
Conversely, any active path in the canonical DAG may be contracted 
to obtain an active path in its latent projection.

\begin{proof}
\ding{212}\emph{Proof that $X \not \perp_D Y \mid Z,S$ in $\can(\calG)$ $\implies$ 
$X \not \perp_{sm} Y \mid Z$ in $\calG$:}

For any path $p$ in $\can(\calG)$,
perform the following replacements to obtain a path $q$ in the smDG $\calG$.
We will prove that $q$ is active given $\lceil Z \rceil_\calG$ 
iff $p$ is active given $\lceil Z \rceil_{\can(\calG)}\cup S $.

\begin{itemize}
    \item any $a \gets m \to b$ becomes $a \leftrightarrow b$;
    \item any $a \to s \gets b$ becomes $a \undir b$;
    \item any $a \to s \gets m \to b$ 
    where $a,b \in V,m \in M,s \in S$
    becomes an edge $a \to b$ 
    (which will satisfy
    $a \in \pa{\calD}{\bigcup_{V_s \in \calS}V_s}$
    and
    $b \in \ch{\calD}{\bigcup_{V_m \in \calS}V_m}$);
    \item any other $a \to b$ 
    becomes an edge $a \to b$
    (which will satisfy
    $a \not \in \pa{\calD}{\bigcup_{V_s \in \calS}V_s}$
    or
    $b \not \in \ch{\calD}{\bigcup_{V_m \in \calS}V_m}$).
\end{itemize}
Every vertex in $q \setminus p$ is active.
Every vertex that is in $p \cap q$ is a collider in $q$ iff it is 
a collider in $p$. 
Every non-collider in $\calG$ 
is active iff it is not in $\lceil Z \rceil_\calG$, so it is active in $\calG$
iff it is active in 
$\can(\calG)$ (\Cref{le_closure}).
Every collider in $\calG$ is active iff it has a descendant in $Z \cup \bigcup_{A \in \calS}A$,
so it is active in $q$ iff it is active in $p$.
Therefore,  $q$ is active given $\lceil Z \rceil_\calG$ 
iff $p$ is active given $\lceil Z \rceil_{\can(\calG)}\cup S $.

\ding{212}\emph{Proof that $X \not \perp_D Y \mid Z,S$ in $\can(\calG)$ $\impliedby$ 
$X \not \perp Y \mid Z$ in $\calG$.}
For any path $q$ in $\calG$ that is active given $\lceil Z \rceil_\calG$, 
invert the operation above 
to obtain a path $p$ in $\calD$
that is active given $\lceil Z \rceil_{\can(\calG)}\cup S $, proving the result.
\end{proof}

Since D-separation is known to be a sound and complete criterion for conditional independence in DAGs 
\citep{geiger1990identifying},
and the distribution of an smDG is equal to the selected-marginal distribution from the canonical DAG (\Cref{thm:diff-slp-implies-diff-model}),
this means that sm-separation is a sound and complete criterion for conditional independence 
for any liftable smDG.

\bibliography{refs}

\end{document}